\numberwithin{equation}{section}
\newtheorem{Theorem}{Theorem}[section]
\newtheorem{Lemma}[Theorem]{Lemma}
\newtheorem{Corollary}[Theorem]{Corollary}
\newtheorem{Definition}[Theorem]{Definition}
\theoremstyle{definition}
\newtheorem{Example}[Theorem]{Example}
\newtheorem{Remark}[Theorem]{Remark}
\newcommand\hook{\mathbin{\raise0.5pt\hbox{\hbox{{\vbox{\hrule height.4pt width6pt depth0pt}}}\vrule height6pt width.4pt depth0pt}\,}}
\newcommand\CR{{{\mathcal R}^\infty}}
\newcommand\reals{\mathbb{R}}
\newcommand\CL{{\bf L}}
\newcommand\LD{{\mathcal L}}
\newcommand\CD{{\mathcal D}}
\newcommand\CE{{\mathcal E}}
\newcommand\CF{{\mathcal F}}
\newcommand\CS{{\mathcal S}}
\newcommand\CZ{{\mathcal Z}}
\newcommand\pr{{\rm pr}}
\newcommand\CV{{\mathcal V}}
\newcommand\AQ{{Q\,}}
\newcommand\CI{{\mathcal I}}
\newcommand\CJ{{J^\infty(\reals^2,\reals)}}
\newcommand\EJ{J^\infty(\reals,\reals)}
\newcommand\sE{{\scaleto{E}{4pt}}}
\newcommand\sEL{{\scaleto{\mkern-2.1mu E}{4pt}}}
\newcommand\sH{{\scaleto{\mkern-1.5mu H}{4pt}}     }
\newcommand\sV{{\scaleto{\mkern-1.0mu V}{4pt}}}
\newcommand\tdV{{d _\sV}}
\newcommand\bFQ{{\bf F}_Q}
\newcommand\bLP{{\bf L}_P}
\newcommand\tQ{Q}
\newcommand\ts{s}
\newcommand\tr{r}
\newcommand\tR{R}
\newcommand\tL{L}
\newcommand\tP{P}
\newcommand\tH{H}
\newcommand\tg{g}
\newcommand\Eop{{\bf E}}
\newcommand\tdH{{d_\sH}}
\newcommand\bbeta{{\bm \beta}}
\title[The Cohomology of $u_t=K$]{Variational Operators, Symplectic Operators, and the Cohomology of Scalar Evolution Equations}                                     
\author{M.E. Fels}      
\author{E. Yasar}                 
\address{
Mark E. Fels\\               
Department of Mathematics and Statistics, Utah State University, Logan Utah, 84322\\            
mark.fels@usu.edu             
}
\address{
Emrullah Yasar\\               
Department of Mathematics, Uludag University \\            
eyasar@uludag.edu.tr        
}
\begin{document}

\maketitle

\begin{abstract}    For a scalar evolution equation $u_t=K(t,x,u,u_x,\ldots, u_n), n\geq 2$ the cohomology spaces $H^{1,s}(\CR)$  vanishes for $s\geq 3$ while the space $H^{1,2}(\CR)$ is isomorphic to the space of variational operators. The cohomology space $H^{1,2}(\CR)$ is also shown to be isomorphic to the space of symplectic operators for $u_t=K$ for which the equation is Hamiltonian. Third order scalar evolution equations admitting a first order symplectic (or variational) operator are characterized. The symplectic nature of the potential form of a bi-Hamiltonian evolution equation is also presented. 
\end{abstract}

\section{Introduction}

Given a scalar differential equation $\Delta =0$, the multiplier problem in the calculus of variations consists in determining whether there exists a smooth function $m$ (the multiplier) and a smooth Lagrangian $L$ such that
\begin{equation}
m \cdot \Delta  = \Eop (L)
\label{InvProb}
\end{equation}
where $\Eop$ is the Euler-Lagrange operator and $\Eop(L)$ is the Euler-Lagrange expression for $L$. The problem of determining whether $m$ and $L$ exists has a long history and is known as the inverse problem in the calculus of variations  \cite{anderson-thompson:1992a, fels:1996a, Douglas:1941a, anderson-duchamp:1984a, saunders:2010a, do-prince:1941a,krupka:1981a}.

The variational bicomplex \cite{anderson:2016a,anderson:1992a,tsujishita:1982a} can be used to provide an invariant solution to the inverse problem by utilizing the Helmholtz conditions.  In terms of the variational bicomplex, the existence of a solution to \ref{InvProb} can be expressed by the existence of an element of the cohomology space $H^{n-1,2}$ for the equation of a special algebraic nature where $n$ is the number of independent variables.  The existence of a non-trivial cohomology class for the equation $\Delta$ can then in principle be expressed in terms of invariants of the equation such as in \cite{anderson-thompson:1992a, fels:1996a}.

One of the goals of this article is to give a complete interpretation of the cohomology space $H^{1,2}(\CR)$ for scalar evolution equations $u_t=K(t, x, u, u_x,\ldots)$ which extends the interpretation of the special elements which control the solution to the inverse problem. The result is a natural generalization of the inverse problem in equation \ref{InvProb} we call the variational operator problem. Given a differential equation $\Delta=0$, does there exist a differential operator $\CE$ and Lagrangian $L$ such that
\begin{equation}
\CE(\Delta) = \Eop(L)
\label{GenInv}
\end{equation}
A simple example is given by the potential cylindrical KdV equation, $ u_t = u_{xxx}+\frac{1}{2}u_x^2-\frac{u}{2t}$ which admits $\CE=tD_x$ as a first order variational operator,
$$
tD_x \left( u_t - u_{xxx}-\frac{1}{2}u_x^2+\frac{u}{2t}\right)=
\Eop\left( -\frac{1}{2}t u_x u_t +\frac{1}{2}t u_x u_{xxx}+\frac{1}{6}tu_x^3 \right).
$$
The variational operator problem in equation  \ref{GenInv} can be studied for either the case of scalar or systems of ordinary or partial differential equations. Here we restrict our attention to problem \ref{GenInv} in the case where $\Delta$ is a scalar evolution equations in order to relate this problem to the theory of symplectic and Hamiltonian operators for integrable systems.  

\medskip
In Section \ref{prelim} we give a quick summary of the relevant facts about the variational bicomplex for the case
we need.  Sections \ref{CFNU} and \ref{CFU} provide normal forms for the cohomology spaces $H^{r,s}(\CR)$ 
in the variational bicomplex associated with the equation $\Delta=0$. These forms are then used in Section \ref{VOH12} to show there exists a one to one correspondence between the solution to \ref{GenInv} and the cohomology space $H^{1,2}(\CR)$. Even order evolution equations don't admit variational operators but we have the following theorem for odd order equations (the summation convention is assumed).

\begin{Theorem}  \label{THA}  Let $\CE = r_i(t, x, u, u_x,\ldots) D_x^i$ $i=0,\ldots, k$ be a $k^{th}$ order differential operator and let the zero set of $\Delta=u_t-K(t,x,u,u_x,\ldots, u_{2m+1}), m\geq 1$ define an odd order evolution equation.

\begin{enumerate}
\item The operator $\CE$ is a variational operator for $\Delta$ if and only if  $\CE$ is skew-adjoint and
\begin{equation}
\omega = dx \wedge \theta^0 \wedge \epsilon -dt \wedge \sum _{j=1}^{2m+1}  \left( \sum_{a=1}^{j} (-X)^{a-1}\left(\frac{\partial K}{\partial u_{j}} \epsilon\right)\wedge \theta^{j-a} \right)
\label{omegad}
\end{equation}
is $d_H$ closed on $\CR$, where  $\epsilon = -\frac{1}{2} r_i \theta^i$ and $\theta^i$ are given in equation \ref{DT}.

\item Let $\CV_{op}(\Delta)$ be the vector space of variational operators for $\Delta$. The function $\Phi:\CV_{op}(\Delta)\to H^{1,2}(\CR)$ defined from equation \ref{omegad} by
\begin{equation}
\Phi(\CE) =[\, \omega \, ],
\label{FISO}
\end{equation}
is an isomorphism.
\end{enumerate}
\end{Theorem}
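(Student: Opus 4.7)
The plan is to combine the normal forms for $H^{1,2}(\CR)$ developed in Sections~\ref{CFNU} and~\ref{CFU} with the identification of $H^{1,2}(\CR)$ with solutions of \eqref{GenInv} established in Section~\ref{VOH12}; the content of Theorem~\ref{THA} is then to pin down this identification with the concrete formula $\Phi(\CE) = [\omega]$ of \eqref{FISO}.

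For part (1), I would start from the Helmholtz conditions: the Fr\'echet derivative of an Euler--Lagrange expression is formally self-adjoint. Taking the Fr\'echet derivative of $\CE(\Delta) = \Eop(L)$ with $\Delta = u_t - K$, the $u_t$-dependence of the left-hand side enters only through $\CE \circ (D_t - K_\ast)$, where $K_\ast$ is the Fr\'echet derivative of $K$ along $D_x$. Splitting the self-adjointness condition by order in $D_t$ gives $\CE^\ast = -\CE$ (skew-adjointness) from the $D_t$-coefficient, and a further $D_x$-side identity which, transported to the variational bicomplex via the contact forms $\theta^i$ of \eqref{DT}, is exactly $d_H \omega = 0$ for the $\omega$ in \eqref{omegad}. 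The iterated terms $(-X)^{a-1}(\tfrac{\partial K}{\partial u_j}\epsilon)$ are the form-level image of the integration-by-parts occurring in $K_\ast^\ast$. For the converse, skew-adjointness together with $d_H \omega = 0$ allow one to recover a Lagrangian $L$ via a vertical homotopy applied to the source form attached to $\omega$, thereby establishing $\CE(\Delta) = \Eop(L)$.

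For part (2), the bijection produced in Section~\ref{VOH12} already guarantees that $\Phi$ is a well-defined linear isomorphism once one verifies that the class attached to $(\CE, L)$ under that bijection is represented by the formula \eqref{omegad}. Linearity is immediate since $\omega$ is linear in the $r_i$ through $\epsilon$. Injectivity can be read off directly: the coefficient of $dx \wedge \theta^0 \wedge \theta^i$ in $\omega$ is $-\tfrac{1}{2} r_i$, and the normal-form uniqueness from Section~\ref{CFU} implies that no $d_H$-primitive can cancel these leading contact terms, forcing every $r_i = 0$ whenever $[\omega] = 0$. Surjectivity then follows by applying the inverse of the Section~\ref{VOH12} correspondence to any normal-form representative of a class in $H^{1,2}(\CR)$ and reading off the skew-adjoint operator from its coefficients.

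The main obstacle will be the bookkeeping in part (1): identifying the $d_H$-closure of the specific expression \eqref{omegad} with the Helmholtz identity for $\CE(\Delta) - \Eop(L)$. Matching the terms $(-X)^{a-1}(\tfrac{\partial K}{\partial u_j}\epsilon) \wedge \theta^{j-a}$ against the formal-adjoint side of the Helmholtz conditions is delicate but mechanical, and hinges on skew-adjointness of $\CE$ to cancel the cross terms; once this match is made, both statements of the theorem follow from the general framework.
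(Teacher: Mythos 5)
Your forward direction is workable and close in spirit to the paper's: restricting the Helmholtz identity for $\CE(\Delta)$ to $\CR$ does yield skew-adjointness (the paper gets this by applying $I\circ d_\sV$ to equation~\ref{dVE2}) together with $\theta^0\wedge \CL_\Delta^*(\epsilon)=0$, which by the computation in Theorem~\ref{Thmcf2} is equivalent to $d_\sH\omega=0$ for the specific $\omega$ in equation~\ref{omegad}; one should only be careful that the Helmholtz identity holds on $J^\infty(\reals^2,\reals)$ with extra terms proportional to total derivatives of $\Delta$, and it is its \emph{pullback} to $\CR$ that is $d_\sH\omega=0$, not the identity itself. The genuine gap is in your converse. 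The hypothesis $d_\sH\omega=0$ is an on-shell condition, an identity among functions of $(t,x,u,u_x,\ldots)$ only, whereas "recovering a Lagrangian via a vertical homotopy applied to the source form attached to $\omega$" requires the Helmholtz conditions for $dt\wedge dx\wedge \vartheta^0\cdot\CE(\Delta)$ to hold \emph{identically} on $J^\infty(\reals^2,\reals)$. Skew-adjointness of $\CE$ plus the on-shell identity do not hand you this: off shell the difference ${\bf F}_{\CE(\Delta)}-{\bf F}^*_{\CE(\Delta)}$ still contains terms built from total derivatives of $\Delta$, and showing these assemble to zero identically is precisely the hard step, not a mechanical transcription.

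This on-shell-to-off-shell passage is what the paper's Theorem~\ref{conf_2} supplies, and it costs real machinery that your sketch hides. One first needs the $d_\sV$-closed representative $\omega=d_\sV( dx\wedge\theta^0\cdot Q - dt\wedge\gamma)$ of Theorem~\ref{TCF3} — whose proof depends on the vanishing $H^{1,3}(\CR)=0$ of Theorem~\ref{ZC} — to produce the function $Q$ with $\CE=\bFQ^*-\bFQ$; note that a skew-adjoint $\CE$ need not admit such a potential $Q$ a priori (this is the symplectic condition of Lemma~\ref{SyminP}), so $Q$ comes from the cohomology class, not from $\CE$ alone. One then needs the snake lemma (Lemma~\ref{exlam}) to produce $\lambda=L\,dt\wedge dx$ with $d_\sH\eta=d_\sV\lambda$, and finally the careful lift to $J^\infty(\reals^2,\reals)$ using the splitting $D_t=\tilde T+V$, $D_x=\tilde X+W$ of equation~\ref{defTXVW}, arriving at $(\bFQ^*-\bFQ)(\Delta)=\Eop(Q\Delta+L)$ (equation~\ref{thm_m}). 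The shape of the output already signals the problem with a naive homotopy: the Lagrangian is $Q\Delta+L$, with a term vanishing on $\CR$ that no homotopy applied to the source form alone will reconstruct. Since your surjectivity argument in part (2) invokes exactly this converse (Corollary~\ref{CU3}), the gap propagates there as well; your injectivity argument via the uniqueness of the normal form (Theorem~\ref{Urep}) is correct and matches the paper's construction of $\hat\Phi$ as the two-sided inverse of $\Phi$.
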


The isomorphism property of $\Phi$ in Theorem \ref{THA} implies that a scalar evolution equation admits a variational operator if and only if  $ H^{1,2}(\CR)\neq 0$. Moreover the operator $\CE$ (and subsequently the function $L$) in \ref{GenInv} are easily determined from $[\omega]\in H^{1,2}(\CR)$  see Theorem \ref{conf_2}.  

Theorem \ref{THA} converts the solution to the operator problem \ref{GenInv} for a differential equation into a cohomology computation for the equation. The techniques developed for solving the multiplier inverse problem in terms of cohomology \cite{anderson-thompson:1992a, fels:1996a,anderson-duchamp:1984a} can then be used to solve the operator problem.

\medskip

A related problem  to \ref{GenInv} in the theory of integrable systems is the notion 
of a symplectic Hamiltonian evolution equation \cite{dorfman:1993a}  which is reviewed in Section \ref{FF2} in terms of the variational bicomplex. In the time independent case, a scalar evolution $u_t=K(x,u,u_x,\ldots,u_n)$ equation is said to be Hamiltonian with respect to a time independent symplectic operator $\CS= s_i( x, u, u_x,\ldots) D_x^i$ if 
\begin{equation}
\CS(K) = \Eop(L).
\label{SymOp}
\end{equation}
For a time dependent equation and  operator, condition \ref{SymOp} is given in \ref{HVF2t} in terms of the symplectic potential. Symplectic operators exists on a different space than variational operators but there is a natural identification (see Remark \ref{thm_rmk}) between symplectic operators and operators which can be variational operators. With  this identification, problems \ref{GenInv} and \ref{SymOp} are shown to be the same and in Section \ref{Symplectic} and we have the following theorem.

\begin{Theorem} \label{TIS2}  Let $\CS = s_i(t, x, u, u_x,\ldots) D_x^i$ be a differential operator and let $\Delta=u_t-K(t,x,u,u_x,\ldots,u_n)$. The operator $\CS$ is a symplectic operator and $\Delta=0$ is a symplectic Hamiltonian system for $\CS$ if and only if $\CS$ is a variational operator for $\Delta$.
\end{Theorem}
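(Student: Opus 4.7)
The strategy is to apply Theorem \ref{THA} with $\CE=\CS$ and match its two output conditions against the combined requirements ``$\CS$ symplectic $+$ $\Delta=0$ Hamiltonian for $\CS$''. By that theorem, $\CS$ is a variational operator for $\Delta$ if and only if (i) $\CS$ is skew-adjoint and (ii) the two-form $\omega$ built from $\epsilon=-\tfrac{1}{2}s_i\theta^i$ as in equation \ref{omegad} is $d_H$-closed on $\CR$. Skew-adjointness is already part of the definition of a symplectic operator, so under the identification of Remark \ref{thm_rmk} condition (i) coincides with the algebraic half of ``$\CS$ is symplectic''. What remains is to show that condition (ii) simultaneously captures the closedness of the symplectic two-form attached to $\CS$ and the Hamiltonian property \ref{SymOp}.

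For this I would decompose $\omega$ by bidegree in $dx$ and $dt$,
\begin{equation*}
\omega = dx\wedge\omega_V - dt\wedge\omega_K,\qquad \omega_V=\theta^0\wedge\epsilon,
\end{equation*}
where $\omega_K$ collects the $K$-dependent terms in \ref{omegad}. A direct bicomplex computation reduces $d_H\omega=0$ on $\CR$ to the single identity $D_t\omega_V+D_x\omega_K=0$ on $\CR$. On $\CR$, $D_t$ acts as the total time derivative along the prolonged evolutionary field of $u_t=K$, and the explicit prolongation formula splits its action on $\omega_V$ into a part independent of $K$, which detects the $d_V$-closure obstruction of $\omega_V$, together with a $K$-dependent part that must balance $D_x\omega_K$.

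The last step is to read off the two pieces. The $K$-independent contribution vanishes exactly when $\omega_V$ is closed, which is the standard closedness condition making $\CS$ a symplectic operator. The $K$-dependent contribution matching $D_x\omega_K$ is precisely the variational-bicomplex rendering of $\CS(K)=\Eop(L)$, with $L$ recovered from $\omega_K$ as the horizontal antiderivative supplied by Theorem \ref{conf_2}. Hence (i)--(ii) combined are equivalent to ``$\CS$ symplectic'' plus ``$\Delta=0$ Hamiltonian for $\CS$'', yielding both implications.

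The main obstacle is the book-keeping in this splitting: one must carefully separate, inside $D_t\omega_V$, the intrinsic $d_V$-closure terms from those arising from the prolonged action of $u_t=K$, and verify the resulting identification of $L$ with the Lagrangian given by Theorem \ref{conf_2}. Once this is done the equivalence follows immediately from Theorem \ref{THA}.
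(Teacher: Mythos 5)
Your overall architecture is the paper's: Theorem \ref{THA} handles the variational side, and the missing content is an equivalence between ``$\CS$ symplectic and $\Delta=0$ Hamiltonian for $\CS$'' and $d_\sH\omega=0$ for the canonical form $\omega$ — this is exactly the paper's Theorem \ref{keyT}, which composed with Theorem \ref{THA} (via the isomorphisms $\Psi$ and $\Phi$) yields Theorem \ref{TIS2}. Your reduction of $d_\sH\omega=0$ to the single identity $T(\theta^0\wedge\epsilon)+X(\bbeta(\epsilon))=0$ on $\CR$ is also correct. The genuine gap is the claimed splitting of this identity into a ``$K$-independent part $=$ symplecticity of $\CS$'' and a ``$K$-dependent part $=$ $\CS(K)=\Eop(L)$''. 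Since $K$ is a fixed function, the identity does not decompose into separately vanishing pieces; and even formally, the only $K$-free terms in $T(\theta^0\wedge\epsilon)+X(\bbeta(\epsilon))$ are the $\partial_t r_i$ terms, which do not express the Helmholtz-type condition $\delta_\sV\bigl(dx\wedge\theta^0_\sEL\wedge\CS(\theta^0_\sEL)\bigr)=0$. Concretely: for a $t$-independent skew-adjoint $\CS$ that is \emph{not} symplectic, your ``$K$-independent contribution'' vanishes identically while symplecticity fails, so the claimed equivalence is false. In the paper, symplecticity in the converse direction is not read off termwise from $d_\sH\omega=0$ at all — it is a cohomological consequence of $H^{1,3}(\CR)=0$ (Theorem \ref{ZC}, through Lemma \ref{phc}): $d_\sV\omega$ is $d_\sH$-closed in $\Omega^{1,3}(\CR)$, hence $d_\sH$-exact, and applying $I_\sEL\circ\Pi$ kills it. You need this step, or an equivalent argument, and nothing in your bidegree bookkeeping supplies it.

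There is a second, linked error: the target identity $\CS(K)=\Eop(L)$ is wrong for the time-dependent operators in the statement of Theorem \ref{TIS2}. By equation \ref{HVF2t}, the Hamiltonian condition for $\CS=s_i(t,x,u,\ldots)D_x^i$ with symplectic potential $P$ is $\frac{1}{2}P_t+\CS(K)=\Eop(H)$, so the $\partial_t$ terms you assigned to the ``symplecticity'' side in fact belong to the Hamiltonian side; the clean identity $\CS(K)=\Eop(H)$ holds only for $t$-independent operators, where the paper proves the equivalence with the variational property separately (Lemma \ref{CSDL} and Theorem \ref{TFAE}). What the paper does in place of your splitting is to encode ``Hamiltonian'' as the projected Lie derivative condition $\LD^\natural_T\Sigma=0$ with $T=\partial_t+\pr(K\partial_u)$ (Definition \ref{tHVF}, Lemma \ref{IT1}), and then match $d_\sH\omega$ against this condition using $dt\wedge\theta^i_\sEL=dt\wedge\theta^i$ and $X=D_x$, with Corollary \ref{CFbeta} pinning down $\xi=-\Pi(\bbeta(\epsilon))$; the Lagrangian/Hamiltonian is then produced by the homotopy chain of Theorem \ref{conf_2}, not as a direct horizontal antiderivative of the $dt$-component. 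Replacing your splitting step with these two ingredients — the $H^{1,3}(\CR)=0$ argument for symplecticity and the Lie-derivative reformulation for the Hamiltonian property — repairs the proof.
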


Theorem \ref{TIS2} shows that symplectic operators and variational operators for $u_t=K$ are the same so that Theorem \ref{THA} implies the following. 

\begin{Theorem}  \label{THC} The function $\Phi$ in equation \ref{FISO} defines an isomorphism between the vector space of symplectic operators  $\CS=\CE=r_i(t, x, u,u_x,\ldots) D_x^i$ for which $\Delta=u_t-K$ is Hamiltonian, and the cohomology space $H^{1,2}(\CR)$. 
\end{Theorem}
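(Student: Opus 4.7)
The plan is to derive Theorem \ref{THC} directly as a corollary of Theorem \ref{THA} and Theorem \ref{TIS2}. Theorem \ref{THA} already provides the isomorphism $\Phi:\CV_{op}(\Delta)\to H^{1,2}(\CR)$, so what remains is to identify the domain in Theorem \ref{THC} (symplectic operators $\CS$ for which $u_t=K$ is Hamiltonian) with the domain $\CV_{op}(\Delta)$ of Theorem \ref{THA} (variational operators for $\Delta$), as vector spaces, and then transport $\Phi$ across this identification.

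First I would invoke Theorem \ref{TIS2}, which asserts that the condition ``$\CS$ is symplectic with $\Delta=0$ symplectic Hamiltonian for $\CS$'' is equivalent (under the identification in Remark \ref{thm_rmk} between symplectic operators and candidates for variational operators) to the condition ``$\CS$ is a variational operator for $\Delta$''. Because both conditions, spelled out via skew-adjointness of $\CS$ together with the $d_H$-closure in \ref{omegad} (respectively the symplectic potential relation \ref{HVF2t}), are linear in the coefficients $s_i$, the set of operators satisfying either condition is a vector subspace of the space of $D_x$-polynomial operators. Thus Theorem \ref{TIS2} upgrades from a pointwise equivalence to an equality of vector spaces, i.e.\ the collection of symplectic operators for which $u_t=K$ is Hamiltonian coincides (as a vector space) with $\CV_{op}(\Delta)$ once Remark \ref{thm_rmk} is used to identify the ambient spaces.

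Second, I would define the map in Theorem \ref{THC} to be $\Phi$ of Theorem \ref{THA} composed with this identification. Given a symplectic $\CS = r_i D_x^i$ for which $\Delta$ is Hamiltonian, set $\epsilon = -\tfrac{1}{2} r_i \theta^i$, form $\omega$ via \ref{omegad}, and assign $\Phi(\CS) = [\omega]\in H^{1,2}(\CR)$. Linearity is inherited from the formula; injectivity and surjectivity follow immediately from the corresponding statements in Theorem \ref{THA} because $\Phi$ is already a bijection on $\CV_{op}(\Delta)$ and the domain has merely been re-labeled.

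Of the two ingredients, Theorems \ref{THA} and \ref{TIS2}, all serious analytic content has been absorbed into them; the only obstacle in the present argument is bookkeeping the identification of Remark \ref{thm_rmk}, ensuring that this identification is genuinely an isomorphism of vector spaces (not just of sets) and that it commutes with the $\Phi$-construction, so that no new cohomological computation is required. Once this is verified the theorem follows.
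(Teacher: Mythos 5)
Your proposal is correct and follows essentially the same route as the paper, which states explicitly that Theorem \ref{TIS2} identifies symplectic operators with variational operators so that Theorem \ref{THA} yields Theorem \ref{THC}. The only cosmetic difference is that the paper packages the vector-space identification inside the linear isomorphism $\Psi$ of Theorem \ref{BST} (proving Theorems \ref{TIS2} and \ref{THC} simultaneously via $\Phi^{-1}\circ\Psi$), whereas you verify by hand that the defining conditions are linear in the coefficients $s_i$ and that the identification of Remark \ref{thm_rmk} is linear; both arguments carry the same content.
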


With Theorem \ref{THC} in hand, the determination of a symplectic Hamiltonian formulation of $u_t=K$ is resolvable in terms of the cohomology $H^{1,2}(\CR)$ of the differential equation $u_t=K$ and subsequently the invariants of $\Delta$.  This characterization of symplectic Hamiltonian evolution equations in terms of $H^{1,2}(\CR)$ allows the techniques in \cite{anderson-thompson:1992a, fels:1996a,anderson-duchamp:1984a} to be used in their study.  

A key idea that directly explains the interplay between the symplectic Hamiltonian formulation for an evolution equation and the cohomology $H^{1,2}(\CR)$ is the fact that the equation manifold $\CR$ is canonically diffeomorphic to $\reals \times J^\infty(\reals,\reals)$. The cohomology of the equation is expressed in terms 
of the geometric structure that arises from the embedding into $J^\infty(\reals^2,\reals)$ while the symplectic Hamiltonian formulation of an equation is expressed in terms of the contact structure on $\reals\times J^\infty(\reals,\reals)$. Theorem \ref{keyT}  shows how these are related and this leads to Theorem \ref{THC}.

\smallskip

In Section \ref{FirstOrder} the case of first order operators for third order equations are examined in detail and the following characterization is found.

\begin{Theorem} \label{FOT}  A third order scalar evolution equation $u_t=K(t, x, u , u_x, u_{xx},u_{xxx})$  admits a first order symplectic operator (or variational operator) $\CE=2RD_x+D_x R$ if and only if $\kappa$ is a trivial conservation law, where
\begin{equation}
\kappa=\hat K_2 \, dx+  \left( -2K_0 +K_1\hat K_2 -\frac{1}{2}\left(X(K_3)\hat K_2 ^2 +K_3 \hat K_2^3\right) +X\left(K_3X(\hat K_2)\right)\right) dt
\label{kappadef}
\end{equation}
and  $K_{i} = \partial_{i} K$,  $ \hat K_2 = \frac{2}{3K_{3}}(K_{2} - X(K_{3}))$, and
$X$ is the total $x$ derivative on $\CR$.

Furthermore, when  $\kappa= d_H (\log R) $  then $u_t=K$ admits the first order symplectic (or variational) operator $\CE=   2R D_x+D_xR $
\end{Theorem}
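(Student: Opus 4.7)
The plan is to specialize Theorem \ref{THA}(1) to first-order operators on third-order evolution equations and translate the $d_H$-closedness of $\omega$ into the stated condition on $\kappa$. For $\CE = r_1 D_x + r_0$ one computes $\CE^* = -r_1 D_x + r_0 - D_x(r_1)$, so $\CE + \CE^* = 0$ forces $r_0 = \tfrac{1}{2} D_x(r_1)$; writing $r_1 = 2R$ produces the normal form $\CE = 2R D_x + D_x R$ stated in the theorem, with $\epsilon = -\tfrac{1}{2}(D_x R)\,\theta^0 - R\,\theta^1$. Substituting $\epsilon$ into (\ref{omegad}) with $m=1$ and expanding the inner sum over $j=1,2,3$ together with the $(-X)^{a-1}$ applications yields an explicit expression for $\omega$ as a $(1,2)$-form on $\CR$ in the basis $\{\theta^i \wedge \theta^j : 0 \leq i < j \leq 2\}$. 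Applying $d_H$ on $\CR$, where the total $t$-derivative is determined by $u_t = K$, produces a $(2,2)$-form whose vanishing decomposes in the basis $\{dx \wedge dt \wedge \theta^i \wedge \theta^j : 0 \leq i < j \leq 3\}$ into a finite system of scalar PDEs for $R$.

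The decisive reduction is at the top vertical order. The $\theta^2 \wedge \theta^3$ component of $d_H \omega$ receives contributions only from $dx \wedge \theta^0 \wedge \epsilon$ and from the three $K_3$-terms of the $dt$-piece, namely $K_3 \epsilon \wedge \theta^2$, $-X(K_3 \epsilon) \wedge \theta^1$, and $X^2(K_3\epsilon) \wedge \theta^0$. Balancing these contributions should yield precisely $X(\log R) = \hat K_2$, with the factor $\tfrac{2}{3K_3}$ and the combination $K_2 - X(K_3)$ emerging from the combinatorial weights in this reduction. Inserting $X(\log R) = \hat K_2$ back into the remaining components (those involving $\theta^i \wedge \theta^j$ with $j \leq 2$, or $\theta^i \wedge \theta^3$ with $i \leq 1$) should collapse them, after simplification, into a single further scalar equation identifying the $t$-derivative of $\log R$ on $\CR$ with the $dt$-coefficient displayed in (\ref{kappadef}). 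Together these two relations say exactly $d_H(\log R) = \kappa$.

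The theorem then follows. A function $R$ giving a skew-adjoint $\CE = 2R D_x + D_x R$ that satisfies the hypotheses of Theorem \ref{THA}(1) exists if and only if $\kappa$ is $d_H$-exact on $\CR$, which by definition is the statement that $\kappa$ is a trivial conservation law; conversely, given $\kappa = d_H(\log R)$, the operator $\CE = 2R D_x + D_x R$ satisfies both conditions of Theorem \ref{THA}(1) and is therefore a variational operator, and by Theorem \ref{TIS2} a symplectic operator for which $\Delta = u_t - K$ is Hamiltonian.

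The principal obstacle is the algebraic bookkeeping in the decomposition of $d_H \omega$. Producing the precise form of $\hat K_2$ in the $dx$-coefficient and the exact $dt$-coefficient of $\kappa$ requires expanding $X$ and $X^2$ applied to $K_3 \epsilon$ via the Leibniz rule and regrouping by vertical bidegree, with careful attention to signs arising from moving $dx$ and $dt$ past the various $\theta^i$; the $\tfrac{2}{3}$ factor in $\hat K_2$ reflects the way the three $K_3$-terms in the $dt$-piece redistribute onto the $\theta^2 \wedge \theta^3$ coefficient versus the single contribution of $dx \wedge \theta^0 \wedge \epsilon$. Once this top-order identity is secured, the remaining reductions are mechanical substitutions.
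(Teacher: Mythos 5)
Your overall strategy coincides with the paper's: put the skew-adjoint first-order operator in the normal form $\CE = 2RD_x + D_xR$ with $\epsilon = -R\,\theta^1 - \tfrac{1}{2}X(R)\,\theta^0$, translate Theorem \ref{THA}(1) into scalar equations for $R$, obtain $X(\log R) = \hat K_2$ and $T(\log R)$ equal to the $dt$-coefficient in \ref{kappadef}, and conclude that a nonvanishing $R$ (taken positive, $R = e^f$ when $\kappa = d_\sH f$) exists if and only if $\kappa$ is a trivial conservation law, with Theorem \ref{TIS2} supplying the symplectic statement. The paper phrases the closedness condition as $\theta^0 \wedge \CL_\Delta^*(\epsilon) = 0$ via Theorems \ref{rho_e} and \ref{Urep} rather than expanding $d_\sH\omega$ directly, but these are the same computation: by equation \ref{rho_d} in the proof of Theorem \ref{Thmcf2}, for $\omega$ in the canonical form \ref{omegad} one has $d_\sH \omega = -\,dt \wedge dx \wedge \theta^0 \wedge \CL_\Delta^*(\epsilon)$.

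That identity, however, exposes a concrete error in your ``decisive reduction,'' and as written that step would fail. Since $d_\sH\omega$ is a multiple of $\theta^0 \wedge \CL_\Delta^*(\epsilon)$, every surviving component contains $\theta^0$: the telescoping identity \ref{XB1} collapses $X(\bbeta(\epsilon))$ to $\sum_i \left( -(-X)^i(K_i\epsilon)\wedge\theta^0 + K_i\epsilon\wedge\theta^i \right)$, and the terms $K_i\epsilon\wedge\theta^i$ cancel against $T(\theta^0)\wedge\epsilon = K_i\theta^i\wedge\epsilon$ coming from the $dx\wedge\theta^0\wedge\epsilon$ piece. Consequently the $\theta^2\wedge\theta^3$ coefficient of $d_\sH\omega$ is identically zero for \emph{every} $R$; balancing those contributions yields $0=0$, not $X(\log R)=\hat K_2$. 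Your basis is also truncated too early: the components run up to $\theta^0\wedge\theta^4$, whose coefficient $-RK_3 + RK_3$ vanishes identically. The actual conditions sit in the $\theta^0\wedge\theta^j$ coefficients: $j=3$ gives $3K_3 X(R) = 2\left(K_2 - X(K_3)\right)R$, which is $X(\log R) = \hat K_2$ (this is where the $\tfrac{2}{3K_3}$ comes from, not from any redistribution onto $\theta^2\wedge\theta^3$); the $j=2$ coefficient then vanishes automatically; and $j=1$, simplified using the $j=3$ relation, gives $T(\log R)$ equal to the $dt$-coefficient of \ref{kappadef}. With the decisive components relocated in this way, the remainder of your argument — $d_\sH(\log R) = \kappa$, hence existence of $\CE$ if and only if $\kappa$ is $d_\sH$-exact — is exactly the paper's proof.
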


In Section \ref{FirstOrder} we examine the relationship between the Hamiltonian form of evolution equations and their potential form. In \cite{Nutku:2002a} it is shown that the (first order) potential form of a Hamiltonian equation admits a variational operator. We examine this in more detail, as well as the role of bi-Hamiltonian systems as in \cite{pavlov:2017a}. This theory is used in Example \ref{HD1} where the Krichever-Novikov equation (or Schwartzian KdV) is shown to be the potential form of the Harry-Dym equation. The symplectic operators (or variational operators) for the Krichever-Novikov equation arise as the lift of the Hamiltonian operators of the Harry-Dym equation as described in Section \ref{biht}.

Theorem \ref{FOT} should be contrasted to the problem of determining a Hamiltonian formulation of a scalar evolution equation in terms of a  Hamiltonian operator. An evolution equation $u_t=K$ is Hamiltonian with respect to a Hamiltonian operator $\CD$ if there exists  a Hamiltonian function $H$ (see \cite{vino:1986a,dorfman:1993a,Olver:1993a}) such that
\begin{equation}
u_t = \CD \circ \Eop(H)
\label{Ham}
\end{equation}
Conditions for the existence of $\CD$ and $H$ in equation \ref{Ham} in terms of the invariants of $u_t=K$ is unknown. We illustrate the difference in these problems with the cylindrical KdV and its potential form. The potential form of the cylindrical KdV is easily shown to admit at least two variational (or symplectic) operators. Section \ref{biht} then suggests that the cylindrical KdV is a bi-Hamiltonian system. See Example \ref{ECKDV} where a bi-Hamiltonian formulation of the cylindrical KdV is proposed (\cite{wang:2002a} states that no Hamiltonian exists for the cylindrical KdV).

Lastly, in Appendix \ref{SdV} we identify the elements of $H^{1,1}(\CR)$ which don't arise as the vertical differential of a conservation law with a family of variational operators. Example \ref{HD2} demonstrates the theory.


\section{Preliminaries}\label{prelim}

In this section we review some basic facts on the variational bicomplex associated with scalar evolution equations, see \cite{anderson-kamran:1997a} for more details.

\subsection {The Variational Bicomplex on $J^\infty(\reals^2,\reals)$}

The $t$ and $x$ total derivative vector fields on $J^\infty(\reals^2,\reals)$ with coordinates $(t,x,u,u_t,u_x,u_{tt},u_{tx},u_{xx},\ldots )$  are given by
$$
\begin{aligned}
D_t & = \partial_t + u_t \partial_u + u_{tt}\partial_{u_t} + u_{tx} \partial_{u_x} + \ldots\\
D_x & = \partial_x + u_x \partial_u  + u_{tx} \partial_{u_t} + u_{xx}\partial_{u_x} + \ldots \ .
\end{aligned}
$$
The contact forms on $J^\infty(\reals^2,\reals)$  are
\begin{equation}
\begin{aligned}
\vartheta^0 & = du -u_t dt -u_x dx\\
\vartheta^{i} & = D^i_x\left(du- u_t dt - u_x dx\right)= du_i -u_{t,i}dt-u_i dx, \quad i \geq 1 \\
\quad \zeta^{a,j} & = D^i_x D^a_t \left(du- u_t dt - u_x dx\right),   \quad a \geq 1, \ i \geq 0
\end{aligned}
\label{VTD}
\end{equation}
where $u_i = D_x^i(u)$ and $u_{a,i} = D^i_xD^a_t( u) = u_{tttt\ldots,xxx\ldots }$.  

The free variational bicomplex on $\CJ$ is denoted by $\Omega^{r,s}(\CJ)$ where $\omega \in \Omega^{r,s}(\CJ)$ is a differential form of degree $r+s$ which is horizontal of degree $r$ and vertical of degree $s$ (see Section 2 in \cite{anderson-kamran:1997a}). In particular if $\omega\in \Omega^{1,2}(\CJ)$ then
$$
\omega= dt \wedge \alpha + dx \wedge \beta,\quad
\alpha , \beta \in {\rm span} \{ \vartheta^i \wedge \vartheta^j, \vartheta^i \wedge \zeta^{a,j},  \zeta^{a,i}\wedge  \zeta^{b,j} \}.
$$
The horizontal and vertical differentials are anti-derivations
$$
d_\sH : \Omega^{r,s}(\EJ) \to \Omega^{r+1,s}(\EJ), \quad d_\sV : \Omega^{r,s}(\EJ) \to \Omega^{r,s+1}(\EJ)
$$
which satisfy 
$$
\begin{aligned}
d_\sH  f  =D_t(f) dt + D_x( f ) dx,\quad  
d_\sV f =\frac{\partial f}{\partial u_{i}} \vartheta^i+\frac{\partial f}{\partial u_{t,i}} \zeta^{1,i}+\ldots,\\
d_\sH \vartheta^i= -\zeta^{1,i} \wedge dt -\vartheta^{i+1} \wedge dx ,  \quad \tdV  \vartheta^I =0,
\end{aligned}
$$
where $f\in C^\infty(\CJ)$. Since $d= d_\sH+d_\sV$ this implies,
$$
d_\sH^2=0,\quad d_\sV^2=0,\quad {\rm and}\quad \tdH \tdV +\tdV\tdH = 0.
$$

The integration by parts operator $I: \Omega^{2,s}(J^\infty(\reals^2,\reals)) \to \Omega^{2,s}(J^\infty(\reals^2,\reals) )$ is defined by 
\begin{equation}
I(\omega)  = \frac{1}{s} \vartheta^0 \wedge \sum_{a=0,i=0}^\infty  (-1)^{i+a} D_x^iD_t^a( (\partial_{u_{a,i}} \hook \omega) ,
\label{IE2}
\end{equation}
and it has the following properties \cite{anderson:1992a}, \cite{anderson:2016a},
\begin{equation}
I^2=I, \quad
\omega= I(\omega) +\tdH \eta, \qquad {\rm for \ some \ } \eta \in \Omega^{1,s}(J^\infty(\reals^2,\reals) ).
\label{IEP2}
\end{equation}
If we let $J: \Omega^{r,s}(J^\infty(\reals^2,\reals) \to \Omega^{r-1,s}(J^\infty(\reals^2,\reals) )$ be
\begin{equation}
J(\kappa) = \sum_{a,i=0}^\infty  (-1)^{i+a} D_x^iD_t^a( (\partial_{u_{a,i}} \hook \kappa) 
\label{IEP2P}
\end{equation}
then $ I (\kappa) = \frac{1}{s} \vartheta^0 \wedge J(\kappa)$. Both $J$ and $I$ satisfy,
\begin{equation}
\ker J= \ker I = {\rm Im}\, d_\sH \ .
\label{IEP3}
\end{equation}
The operator $J$ is the interior Euler operator, see page 292 in \cite{anderson-kamran:1997a} or page 43 in \cite{anderson:2016a}.

Let $\CE= r_{ia} D_x^i D_t^a $ be a total differential operator. The formal adjoint  $ \CE^*$ 
is the total differential operator characterized as follows. For any $ \rho \in \Omega^{0,s}(\CJ)$ and $ \omega \in \Omega^{0,s'}(\CJ)$ there exists $ \zeta \in \Omega^{1,s+s'}(\CJ)$ depending on $\rho$ and $\omega$ such that
\begin{equation}
( \rho \wedge  \CE(\omega) - \CE^*( \rho)\wedge \omega) \wedge dt \wedge dx  = d_H \tilde \zeta.
\label{d_ad}
\end{equation}
This leads to 
$$
\CE^*(\alpha) = (-1)^{i+a}D_x^iD_t^a( r_{ai} \alpha) \ , \quad \alpha \in \Omega^{r,s}(\CJ).
$$
It follows from \ref{d_ad} that the formal adjoint satisfies $(\CE^*)^*=\CE$.

Let $\Delta $ be a smooth function on $J^\infty(\reals^2,\reals)$. The Fr\'echet derivative of $\Delta$ \cite{Olver:1993a} is the total differential operator ${\bf F}_\Delta$  satisfying $ d_\sV \Delta = {\bf F}_\Delta(\vartheta^0) $. If
 $$
\Delta= u_t - K(t,x,u,u_x,\ldots, u_n),
$$
then
\begin{equation}
d_\sV \Delta = \vartheta_t - K_i \vartheta^i   \qquad {\rm where} \quad 
K_i = \frac{\partial \ } {\partial u_i}   K(t,x,u,u_x,\ldots, u_n), \quad  i = 0,\ldots, n.
\label{dVF}
\end{equation}
The Fr\'echet derivative of  $\Delta$ is determined from equation \ref{dVF} to be the total differential operator
\begin{equation}
{\bf F}_\Delta= D_t -\sum_{i=0}^n  K_i D_x^i \ .
\label{CL_d}
\end{equation}
The adjoint of the operator  in \ref{CL_d} is,
$$
{\bf F}_\Delta ^*( \rho )= -D_t (\rho)-\sum_{i=0}^n (-D_x)^i ( K_i  \rho), \quad \rho \in \Omega^*(J^\infty(\reals^2,\reals).
$$

\subsection {The Variational Bicomplex on $\CR$ and $H^{r,s}(\CR)$}

Let $\Delta=u_t - K(t,x,u,u_x,\ldots, u_n)$ and let
$\CR$ be the infinite dimensional manifold which is the zero set of the prolongation of $\Delta=0$ 
in $J^\infty(\reals^2,\reals)$. With coordinates $(t,x,u,u_x,u_{xx}, \ldots )$ on $\CR$
the embedding  $\iota : \CR \to J^\infty(\reals^2,\reals)$ is given by
\begin{equation}
\iota = [t=t,x=x,u=u,u_t=K, u_x=u_x, u_{tt}= T(K) , u_{tx} = X(K), u_{xx} = u_{xx}, \ldots ],
\label{defiota}
\end{equation}
where $T$ and $X$ are the restriction of $D_t$ and $D_x$ to $\CR$ given by,
\begin{equation}
\begin{aligned}
X& = \partial_x + u_x \partial_u + u_{xx} \partial_{u_x}+\ldots \\
T& = \partial_t + K \partial_u + X(K) \partial_{u_x} + \ldots 
\end{aligned}
\label{DTX}
\end{equation}
and satisfy $[X,T]=0$. The Pfaffian system  $\CI= \{ \theta^ i\} _{i\geq 0}$ on $\CR$ is generated by the pullback of $\vartheta^i$ in equation \ref{VTD}
\begin{equation}
\theta^i= \iota^* \vartheta^i =  du_i-X^i(K) dt -u_{i+1} dx .
\label{DT}
\end{equation}
  
The forms
\begin{equation}
\{ dt,\   dx,  \theta^i = du^i - X^i(K) dt - u_{i+1} dx \}\quad i=0,1,\ldots
\label{CoF}
\end{equation}
form a coframe on $\CR$, and give rise to a vertical and horizontal splitting in 
the complex of differential forms leading to the bicomplex $\Omega^{r,s}(\CR)$, $r=0,1,2$ and $s=0,1,\ldots$.  For example if $\omega \in \Omega^{1,2}(\CR)$ then 
$$
\omega = dx \wedge \alpha + dy \wedge \beta
$$
where $\alpha = a_{ij} \theta^i\wedge \theta^j$ and $\beta = b_{ij} \theta^i\wedge \theta^j$, $a_{ij}, b_{ij} \in C^\infty(\CR)$. The bicomplex $\Omega^{r,s}(\CR)$ is the pullback of the free bicomplex $\Omega^{r,s}(\CJ)$ by the embedding $\iota:\CR \to \CJ$.

The horizontal exterior derivative $d_\sH:\Omega^{r,s}(\CR)\to  \Omega^{r+1,s}(\CR)$ and vertical exterior derivative $d_\sV:\Omega^{r,s}(\CR)\to  \Omega^{r,s+1}(\CR)$ are computed from the equations,
\begin{equation}
d_\sH(\omega) = dx\wedge X(\omega) + dt \wedge T(\omega), \quad d_\sV= d- d_\sH.
\label{dHd}
\end{equation}
The horizontal and vertical differentials satisfy
\begin{equation}
d^2_\sH = 0 \qquad d^2_\sV =0, \qquad d_\sH d_\sV = - d_\sV d_\sH.
\label{dHP2}
\end{equation}
The structure equations of $\CI$ are computed using \ref{DT} to be
\begin{equation}
d_\sH \theta^i = dx \wedge \theta^{i+1}  + dt \wedge X^i(d_\sV K )  \quad  {\rm  and} \quad  d_\sV \theta^i = 0.
\label{SE_c}
\end{equation}

Since $d_\sH^2=0$, the complex $d_\sH :\Omega^{r,s}(\CR)\to \Omega^{r+1,s}(\CR)$ is a differential complex  and $H^{r,s}(\CR)$ is its cohomology,
$$
H^{r,s}(\CR) = \frac{ \ker \{d_\sH :\Omega^{r,s}(\CR) \to \Omega^{r+1,s}(\CR) \} }{ {\rm Im} 
\{d_\sH:\Omega^{r-1,s}(\CR) \to \Omega^{r,s}(\CR) \}  }.
$$
The conservation laws of $\Delta$ are the $d_\sH$ closed forms in $ \Omega^{1,0}(\CR)$ and $H^{1,0}(\CR)$ is the space of equivalence classes of conservation laws modulo the horizontal derivative of a function $d_\sH f$, $f\in C^\infty(\CR)$.

The vertical complex $d_\sV :\Omega^{r,s}(\CR) \to \Omega^{r,s+1}(\CR)$ is a differential complex whose cohomology is trivial \cite{anderson:2016a}, \cite{anderson-kamran:1997a}.  Specifically, $d_\sV$ is the ordinary exterior derivative in the variables $u_i$, and the DeRham homotopy formula (in $u_i$ variables with  parameter) applies. The property $d_\sH d_\sV =-d_\sV d_\sH$ make $d_\sV : H^{r,s} (\CR) \to H^{r,s+1}(\CR)$ a co-chain map up to sign, see Appendix \ref{SdV}.

\begin{Remark} \label{thm_rmk}  Every function $Q(t, x, u, u_x, u_{xx}, \ldots,u_k )$ on $J^\infty(\reals^2,\reals)$ factors through $\pi:J^\infty(\reals^2,\reals) \to \CR$, 
$\pi(t, x, u, u_t, u_x,u_{tt},u_{tx},u_{xx},\ldots)=(t, x, u, u_x,u_{xx}, \ldots)$ which is a left inverse of $\iota$ in equation \ref{defiota}.   Therefore, by an abuse of notation, we view a function $Q(t, x, u, u_x, u_{xx}, \ldots,u_k )$ either on $J^\infty(\reals^2,\reals)$ or $\CR$ where the context will determine which. For example,
$$
{\bf F}_ Q = Q_i D_x^i
$$
is a differential operator on $J^\infty(\reals^2,\reals)$ while
$$
{\bf L}_Q = Q_i X^i
$$
is a differential operator on $\CR$ where $\pi_*({\bf F}_ Q )= {\bf L}_ Q$ .  
A differential operator $\bar \CE = r_i(t, x, u, u_x,\ldots) X^i$ on $\CR $ lifts to $\CE= r_i D_x^i$ and $\pi_* \CE = \bar \CE$. The formal adjoint of $\bar \CE$ acting on a form $\omega$ is $(-X_i)^i(r_i \omega)$. The operator $\bar \CE$ is skew-adjoint if and only if $\CE$ is skew adjoint.
\end{Remark}

\section{ Canonical forms for $H^{1,s}(\CR) $ and characteristic forms}\label{CFNU}

The universal linearization (see \cite{anderson-kamran:1997a}) of $\Delta=u_t-K(t, x, u, u_x,\ldots, u_n)$ on $\CR$ is the differential operator (on $\CR$),
\begin{equation}
{\CL}_\Delta= T-\sum_{i=0}^n K_i X^i
\label{Ad_L}
\end{equation}
where $K_i=\partial_{u_i} K$, and the vector fields $T$ and $X$ are defined in equation \ref{DTX}. The operator $\CL_\Delta$ is the restriction of the Fr\'echet derivative of $\Delta$ to $\CR$. The adjoint of $\CL_\Delta$ is given by 
$$ 
{\CL}^*_\Delta(\rho) = -T(\rho) - \sum_{i=0}^n (- X)^i (K_i \rho), \qquad \rho\in \Omega^*(\CR).
$$

This next theorem provides a normal form for a representative 
of the cohomology classes in $H^{1,s}(\CR)$ and is analogous to Theorem 5.1 in \cite{anderson-kamran:1997a}.

\begin{Theorem} \label{Thmcf1} Let $u_t=K(t, x, u, u_x,\ldots, u_n)$ be an $n^{th}$ order evolution equation and $H^{r,s}(\CR)$ its cohomology. For any $[\omega] \in H^{1,s}(\CR)$ there exists a representative,
\begin{equation}
\begin{aligned}
\omega= dx \wedge \theta^0 \wedge \rho   - dt \wedge \beta, \qquad 
\end{aligned}
\label{CF1}
\end{equation}
where $\rho \in \Omega^{0,s-1}({\mathcal R}^\infty), \ 
\beta \in \Omega^{0,s}({\mathcal R}^\infty)$ and  ${\CL}_\Delta^*(\rho) =0$.
\end{Theorem}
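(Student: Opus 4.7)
The plan is to bring any representative of $[\omega] \in H^{1,s}(\CR)$ to the claimed canonical form via integration by parts in $X$, and then extract the adjoint condition $\CL_\Delta^*(\rho) = 0$ from horizontal closedness.

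For the normalization, write an arbitrary representative as $\omega = dx \wedge \alpha + dt \wedge \gamma$ with $\alpha, \gamma \in \Omega^{0,s}(\CR)$. Since $X(\theta^i) = \theta^{i+1}$ (read off from \ref{SE_c}) and $X$ is a derivation on vertical forms, the identity
$$\theta^j \wedge \mu = X(\theta^{j-1} \wedge \mu) - \theta^{j-1} \wedge X(\mu)$$
lowers the minimum contact-form index in every monomial of $\alpha$ modulo $X$-derivatives. Because $X(\eta)$ is the $dx$-component of $\tdH\eta$, these corrections are absorbed as $\tdH$-exact terms, with the $dt$-component $T(\eta)$ modifying $\gamma$ into a new form, renamed $-\beta$. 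Iterating until each monomial of $\alpha$ carries a factor of $\theta^0$ yields the normal form $\omega = dx \wedge \theta^0 \wedge \rho - dt \wedge \beta$ with $\rho \in \Omega^{0,s-1}(\CR)$.

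Next, I compute $\tdH \omega$ using the structure equation $\tdH \theta^0 = dx \wedge \theta^1 + dt \wedge \tdV K$ and the Leibniz rule. A direct calculation yields
$$\tdH \omega = dt \wedge dx \wedge \bigl( \tdV K \wedge \rho + \theta^0 \wedge T(\rho) + X(\beta) \bigr),$$
so $\tdH\omega = 0$ is equivalent to the $(0,s)$-form identity $\tdV K \wedge \rho + \theta^0 \wedge T(\rho) + X(\beta) = 0$. Iterating the integration-by-parts identity above gives $\theta^j \wedge \mu \equiv (-1)^j \theta^0 \wedge X^j(\mu) \pmod{\mathrm{Im}\, X}$, and expanding $\tdV K = K_i \theta^i$ then produces
$$\tdV K \wedge \rho \equiv \theta^0 \wedge \sum_i (-X)^i(K_i \rho) \pmod{\mathrm{Im}\, X}.$$
Substitution reduces the identity to $\theta^0 \wedge \CL_\Delta^*(\rho) \in \mathrm{Im}\, X$, since $-\CL_\Delta^*(\rho) = T(\rho) + \sum_i (-X)^i(K_i \rho)$.

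Finally, I would argue that $\theta^0 \wedge \mu \in \mathrm{Im}\, X$ forces $\mu = 0$ via a contact-form index count: if $\theta^0 \wedge \mu = X(\eta)$ and $\eta$ has maximal contact-form index $N$, then $X(\eta)$ contains a $\theta^{N+1}$ contribution with the same leading coefficient, which cannot match $\theta^0 \wedge \mu$ once $N$ exceeds the order of $\mu$; iterating forces $\eta = 0$ and hence $\CL_\Delta^*(\rho) = 0$. The principal obstacle is bookkeeping: verifying at every integration-by-parts step that the $X$-exact error terms generated at the $(0,s)$-form level on $\CR$ consistently lift to $\tdH$-exact contributions at the $(1,s)$-form level without disturbing the already-normalized $\theta^0$-factored $dx$-component. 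This mirrors the role of the interior Euler operator on $\CJ$ recalled in \ref{IE2}--\ref{IEP3}, but must be executed on $\CR$, where only $X$-integration by parts is available.
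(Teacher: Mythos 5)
Your normalization step and your computation of $d_\sH\omega$ are sound, and they reproduce in substance the integration-by-parts machinery the paper uses in Theorem \ref{Thmcf2}; but the final step of your argument rests on a false lemma, and this is a genuine gap. It is not true that $\theta^0\wedge\mu\in\mathrm{Im}\,X$ forces $\mu=0$ once $\mu$ has positive vertical degree: for any $g\in C^\infty(\CR)$,
$$
X\bigl(g\,\theta^0\wedge\theta^1\bigr)=X(g)\,\theta^0\wedge\theta^1+g\,\theta^0\wedge\theta^2=\theta^0\wedge\bigl(X(g)\,\theta^1+g\,\theta^2\bigr),
$$
so $\theta^0\wedge\mu$ is $X$-exact with $\mu=X(g)\theta^1+g\theta^2\neq0$. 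Your index count breaks down precisely because the top-index monomial of $\eta$ may itself carry a $\theta^0$ factor, in which case the $\theta^{N+1}$ contribution of $X(\eta)$ \emph{can} be matched by $\theta^0\wedge\mu$ with $\mu$ of order $N+1$, and nothing bounds the order of $\mu$ in advance. (Your argument is correct for $s=1$, where $\mu$ is a function, but the theorem's main application is $s\geq 2$.) Moreover the failure is not a bookkeeping accident: the $X$-exact ambiguity at the $(0,s)$ level corresponds exactly to changing the representative by $d_\sH(\Omega^{0,s}(\CR))$ --- adding $d_\sH(g\,\theta^0\wedge\theta^1)$ preserves your normal form while replacing $\rho$ by $\rho+X(g)\theta^1+g\theta^2$ --- so a generic normalized representative satisfies only $\theta^0\wedge\CL_\Delta^*(\rho)\in\mathrm{Im}\,X$, and the whole content of the theorem is that a special representative in this coset can be chosen with $\CL_\Delta^*(\rho)=0$ exactly.

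This is why the paper does not argue on $\CR$ alone: it lifts to $J^\infty(\reals^2,\reals)$, writes $d_\sH\tilde\omega=dt\wedge dx\wedge(\Delta\tilde\zeta+d_\sV\Delta\wedge\tilde\rho)$, and applies the interior Euler operator $J$ of equation \ref{IEP2P}, whose property $\ker J=\mathrm{Im}\,d_\sH$ (equation \ref{IEP3}) is exactly the tool that annihilates the exact ambiguity you cannot remove by inspection on $\CR$; this yields the exact condition $\CL_\Delta^*(\iota^*\tilde\rho)=0$, and the horizontal homotopy operator $h^{2,s}_\sH$ then rebuilds a representative whose characteristic form is precisely $\rho=\frac1s\iota^*\tilde\rho$. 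To rescue your route you would need a substitute for this step --- for $s=2$ one can skew-symmetrize, $\epsilon=\frac12(\rho-\rho^*)$, and then show that a skew-adjoint $\epsilon$ with $\theta^0\wedge\CL_\Delta^*(\epsilon)$ $X$-exact satisfies $\CL_\Delta^*(\epsilon)=0$ (in effect Theorem \ref{Urep} together with Corollary \ref{CFbeta}, which the paper again proves via the free bicomplex) --- but no such argument appears in your proposal, and the weak conclusion you actually establish would not support the downstream uses of the exact condition, for instance the vanishing argument in Theorem \ref{ZC}.
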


\begin{proof} The proof follows Theorem 5.1 of \cite{anderson-kamran:1997a}. Choose $\tilde \omega_0 \in \Omega^{1,s} (J^\infty(\reals^2,\reals))$ such that
\begin{equation}
\iota^*(\tilde \omega_0) \in [ \omega].
\label{pbw0}
\end{equation}
where $\iota:\CR\to J^\infty(\reals^2,\reals)$ is given in equation \ref{defiota}.
Since $\iota^*( d_\sH \tilde \omega_0 ) =0$, it there exists $\tilde \zeta_{ab} \in \Omega^{0,s} (J^\infty(\reals^2,\reals)), \tilde \mu_{ab} \in \Omega^{0,s-1} (J^\infty(\reals^2,\reals))$ such that (see Lemma 5.2 in \cite{anderson-kamran:1997a})
\begin{equation}
d_\sH \tilde \omega_0 = dt \wedge dx \wedge (D_t^aD_x^b(\Delta)  \tilde \zeta_{ab} + D_t^aD_x^b (d_\sV \Delta) \wedge \tilde \mu_{ab}),
\label{r_hot0}
\end{equation}
Applying the identical integration by parts argument on page 292 \cite{anderson-kamran:1997a}  to \ref{r_hot0}, implies there exists $ \tilde \zeta \in \Omega^{0,s} (J^\infty(\reals^2,\reals)), \tilde \rho \in \Omega^{0,s-1} (J^\infty(\reals^2,\reals))$ and $\tilde \omega\in \Omega^{1,s}(J^\infty(\reals^2,\reals))$ such that $ \tilde \omega = \tilde \omega_0 + \tilde d_\sH \tilde \eta$ and $\iota^* \tilde \eta =0$ (hence $\iota^* \tilde \omega =\omega$) and where
\begin{equation}
d_\sH \tilde \omega = dt \wedge dx \wedge (\Delta  \tilde \zeta + d_\sV \Delta \wedge \tilde \rho).
\label{r_hot}
\end{equation}

We now apply $\iota^* \circ J $ to equation  \ref{r_hot},  where $ J $ is defined in equation \ref{IEP2P}.
For the  first term in right hand side of equation \ref{r_hot} we find
\begin{equation}
\iota^*\circ J(\Delta \tilde \zeta)= \iota^*\left( \Delta \partial_u \hook( \tilde \zeta )
 -D_t (   \Delta  \partial_{u_t} \hook   \tilde \zeta)
 -D_x ( \Delta \partial_{u_x} \hook \tilde \zeta) +\ldots \right) =0
\label{JE1}
\end{equation}
since each term contains a total derivative of $\Delta$, and these vanish under pullback to $\CR$.

We now apply $\iota^*\circ J$ to the second term in the right hand side of \ref{r_hot},
\begin{equation}
\begin{aligned}
\iota^*J(d_\sV \Delta \wedge \tilde \rho) &= \iota^*\left(
\sum_{a=0,i=0}^\infty  (-1)^{i+a} D_x^iD_t^a( (\partial_{u_{a,i}} \hook d_\sV\Delta) \tilde \rho -(\partial_{u_{a,i}} \hook \tilde \rho) d_\sV\Delta) \right) \\
&= \iota^*\left(
\sum_{a=0,i=0}^\infty  (-1)^{i+a} D_x^iD_t^a( (\partial_{u_{a,i}} \hook d_\sV\Delta) \tilde \rho)  \right) 
\end{aligned}
\label{PA1}
\end{equation}
because $\iota^* ( D_t^aD_x^i d_\sV\Delta) = 0$. Now 
$$
\partial_{u_{1,0}} \hook d_\sV\Delta=1, \quad \partial_{u_{0,i}} \hook d_\sV\Delta=-K_i
$$
with all other $\partial_{u_{a,i}} \hook d_\sV\Delta=0$,  so that equation \ref{PA1} becomes,
\begin{equation}
\iota^*J(d_\sV \Delta \wedge \tilde \rho)=\iota^*( -D_t( \tilde \rho) + \sum_{i=0}^n (-1)^i D_x^i( -K_i \tilde \rho))= \CL_\Delta^*( \iota^* \tilde \rho).
\label{FIJ2}
\end{equation}
By equation \ref{IEP3} $J (d_\sH \tilde \omega) = 0$, so that applying $\iota^* \circ J$ to  equation \ref{r_hot} implies $\iota^*J(d_\sV \Delta \wedge \tilde \rho)=0$, and so  equation \ref{FIJ2} gives $\CL_\Delta^*(\iota^* \tilde \rho)=0$.

We now turn to showing that equation \ref{CF1} holds using the horizontal homotopy operator (equations 5.15, 5.16 and below 5.16 in \cite{anderson-kamran:1997a}), see also proposition 4.12 page 117 of \cite{anderson:2016a} or equation 5.133 in \cite{Olver:1993a}.  Using the notation $h^{r,s}_\sH$ from \cite{anderson:2016a}, this operator satisfies  
$$
\tilde \omega = h_\sH^{2,s}(d_\sH \tilde \omega) +d_\sH( h_\sH^{1,s} \tilde \omega ), \quad \tilde \omega \in \Omega^{1,s}(J^\infty(\reals^2,\reals)).
$$ 
Applying the pull back  by $\iota$ to this formula gives the representative for $[\omega]$,
\begin{equation}
\omega = \iota^* h_\sH^{2,s}(d_\sH \tilde \omega)  
\label{PBW1}
\end{equation}
with $d_\sH \tilde \omega $ in \ref{r_hot}.

To utilize the formula in \cite{anderson:2016a} for $h^{2,s}_\sH$  let $(x^1=t, x^2=x)$ and so for example $\vartheta^{1122}=D_x D_x D_t D_t \vartheta^0$ and let $k$ be the max of $ |I| $ (number of derivatives) of $\vartheta^I$ terms in $ (\vartheta_t -K_i \vartheta^i)\wedge \rho$.  Then by definition 4.13 on page 117 in \cite{anderson:2016a} (or 5.134 in \cite{Olver:1993a}) 

\begin{equation}
\begin{aligned}
h_\sH^{2,s}(d_\sH \tilde \omega) & =
\frac{1}{s} \sum_{|I|=0}^{k-1} D_I\left( \vartheta^0 \wedge J^{Ij} ((d_\sH \tilde \omega)_j)\right)\\
& =\sum_{|I|=0}^{k-1} D_I\left( \vartheta^0 \wedge  (-1)^{j} d\hat x_j \wedge J^{Ij}(\Delta  \tilde \zeta+d_\sV \Delta \wedge \tilde \rho)\right)
\end{aligned}
\label{HFH}
\end{equation}
where $(d_\sH \tilde \omega)_j  = D_{x^j} \hook  d_\sH \tilde \omega=   (-1)^{j+1} d\hat x_j \wedge (\Delta  \tilde \zeta+ d_\sV \Delta \wedge \tilde \rho)$,  $(\hat x_1=x, \hat x_2=t)$,   $I =(i_1,\ldots, i_l)$, $|I|=l$, and  
\begin{equation}
J^{Ij}(\Delta  \tilde \zeta+d_\sV \Delta \wedge \tilde \rho) = \sum _{|L|=0}^{k-|I|-1} { { |I|+|L|+1} \choose{ |L|}}(-D)_L ( \partial_{u_{(jIL)}} \hook (\Delta  \tilde \zeta+d_\sV \Delta \wedge \tilde \rho)).
\label{defJIj}
\end{equation}

Applying $\iota^*$ to equation \ref{HFH} we have the $\iota^* h^{2,s}_\sH\left(dt\wedge dx\wedge ( \Delta  \tilde \zeta )\right)=0$ because all terms in \ref{defJIj} on $\Delta  \tilde \zeta $ involve total derivatives of $\Delta$. Therefore using equation \ref{HFH}, equation \ref{PBW1} becomes,
\begin{equation}
\begin{aligned}
\omega =\frac{1}{s} \iota^* \bigg( \!\!\!\!\!\!\!  & & dx \wedge
\sum_{|I|=0}^{k-1} D_I\left[  \vartheta^0  \wedge 
\sum _{|L|=0}^{k-|I|-1} { { |I|+|L|+1} \choose{ |L|}}(-D)_L ( (\partial_{u_{(1IL)}} \hook d_\sV \Delta)\tilde \rho ) \right] \quad \ \  \\
& &\  -dt \wedge
 \sum_{|I|=0}^{k-1} D_I\left[    \vartheta^0  \wedge 
\sum _{|L|=0}^{k-|I|-1} { { |I|+|L|+1} \choose{ |L|}}(-D)_L ( (\partial_{u_{(2IL)}} \hook d_\sV \Delta) \tilde\rho ) \right] \quad  \bigg)
\end{aligned}
\label{BP1}
\end{equation}

 


Consider the first term in equation \ref{BP1}. The only non-zero interior product is (with $u_{(1)}=u_t, u_{(2)}=u_x$ etc.)
$$
\partial_{u_{(1)}}\hook d_\sV \Delta = 1, 
$$
since $\Delta$ does not depend on derivatives such as $u_{tx}$.
Therefore the only non-zero terms have $|I|=0, |L|=0$ in the first term of \ref{BP1} giving,
\begin{equation}
dx \wedge \sum_{|I|=0}^{k-1} D_I\left[  \vartheta^0  \wedge 
\sum _{|L|=0}^{k-|I|-1} { { |I|+|L|+1} \choose{ |L|}}(-D)_L ( (\partial_{u_{(1IL)}} \hook d_\sV \Delta)\tilde \rho )\right] = dx \wedge \vartheta^0 \wedge \tilde \rho
\label{AD1}
\end{equation}
Combining equation \ref{AD1} with \ref{BP1} we have
\begin{equation}
\omega= \frac{1}{s} \iota^*\bigg( dx \wedge \vartheta^0 \wedge \tilde \rho 
-dt \wedge
 \sum_{|I|=0}^{k-1} D_I\left[    \vartheta^0  \wedge 
\sum _{|L|=0}^{k-|I|-1} { { |I|+|L|+1} \choose{ |L|}}(-D)_L ( (\partial_{u_{(2IL)}} \hook d_\sV \Delta) \tilde\rho \right] \bigg) 
\label{endom}
\end{equation}
which produces equation \ref{CF1} with $\rho =\frac{1}{s} \iota^* \tilde \rho$ . Equations \ref{endom} and \ref{FIJ2} shows that $\rho =\frac{1}{s} \iota^* \tilde \rho$ satisfies $\CL_\Delta^*(\rho) = 0$.
\end{proof}

If $s=1$  in Theorem \ref{Thmcf1}, then $\rho\in C^\infty(\CR)$ and is the characteristic function for the cohomology class $[\omega]\in H^{1,1}(\CR)$, see Theorem \ref{Thmcf2} and Theorem \ref{CLisdV}. In general $\rho$ in equation \ref{CF1} is called a characteristic form for $[\omega]$ see \cite{anderson-kamran:1997a}. The form $\beta$ in \ref{CF1} is given in terms of $\rho$ by formula \ref{endom} which is simplified in Theorem \ref{Thmcf2} for $H^{1,1}(\CR)$ and $H^{1,2}(\CR)$. The term $dx \wedge \theta^0 \wedge \rho$ in equation \ref{CF1} generalizes the conserved density of a conservation law, and plays a critical role in Section \ref{Symplectic}.


\begin{Theorem} \label{ZC} For $s\geq 3$,  $H^{1,s} (\CR) = 0$.
\end{Theorem}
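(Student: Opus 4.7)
By Theorem \ref{Thmcf1}, every cohomology class $[\omega] \in H^{1,s}(\CR)$ admits a representative
\[
\omega = dx \wedge \theta^0 \wedge \rho - dt \wedge \beta, \qquad \rho \in \Omega^{0,s-1}(\CR), \quad \CL_\Delta^*(\rho) = 0,
\]
with $\beta$ determined by $\rho$ through formula \ref{endom}. My plan is to show that when $s \geq 3$ the characteristic form $\rho$ can be reduced to zero modulo $d_\sH$-exact corrections, after which the vanishing of the remaining piece follows from an elementary primitive-in-$t$ argument.

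Since $s \geq 3$, the form $\rho$ has vertical degree at least two, so it is a sum of wedge products $f\cdot\theta^{i_1}\wedge\cdots\wedge\theta^{i_{s-1}}$ involving at least two contact forms. Any $\theta^0$-component of $\rho$ contributes $\theta^0\wedge\theta^0\wedge(\cdots)=0$ to $\theta^0\wedge\rho$ and may be dropped, so I may assume every contact index of $\rho$ is $\geq 1$. To eliminate the remaining terms $\theta^0\wedge\theta^i\wedge(\cdots)$ with $i\geq 1$, I would subtract $d_\sH$-exact forms of the shape $d_\sH\!\left(g\cdot\theta^0\wedge\theta^{i-1}\wedge\cdots\right)$: by the structure equation $d_\sH\theta^{i-1}=dx\wedge\theta^i+dt\wedge X^{i-1}(d_\sV K)$ together with $d_\sH\theta^0=dx\wedge\theta^1+dt\wedge d_\sV K$, such a subtraction yields the required $dx\wedge\theta^0\wedge\theta^i\wedge(\cdots)$ contribution up to new terms in which the contact multi-indices have been shifted. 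Iterating, the lowest contact index in $\rho$ strictly decreases until it reaches $0$, where the corresponding term vanishes by antisymmetry; the adjoint equation $\CL_\Delta^*(\rho)=0$ together with formula \ref{endom} keeps the concomitant modifications of $\beta$ consistent throughout.

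Once $\rho$ has been reduced to zero, the representative becomes $\omega=-dt\wedge\beta$, and closure $d_\sH\omega=0$ forces $X(\beta)=0$. The form $\eta(t)=-\int\beta\,dt$ is well defined locally in $t$ and satisfies $T(\eta)=-\beta$ and $X(\eta)=-\int X(\beta)\,dt=0$, whence
\[
d_\sH\eta = dx\wedge X(\eta) + dt\wedge T(\eta) = -dt\wedge\beta = \omega,
\]
showing $[\omega]=0$ in $H^{1,s}(\CR)$.

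The main obstacle I anticipate is the bookkeeping in the inductive elimination: verifying that the shifted terms introduced at each step are themselves removable by the same procedure, and that the process terminates in finitely many steps. This relies essentially on $\rho$ having vertical degree at least $2$, which provides the "room" needed to trade one contact factor against a $d_\sH$-exact correction. For $s = 2$ the characteristic form is only a $1$-form and the index trade is impossible at the outset, which is precisely why $H^{1,2}(\CR)$ is generically nonzero and realized by variational/symplectic operators as in Theorem \ref{THA}.
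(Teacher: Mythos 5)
Your proposal has a genuine gap at its core: the inductive elimination of $\rho$ never uses the hypothesis $\CL_\Delta^*(\rho)=0$, and the exactness scheme as described cannot succeed. The formal ``index trade'' --- cancelling $\theta^0\wedge\theta^i\wedge(\cdots)$ against $d_\sH\bigl(g\,\theta^0\wedge\theta^{i-1}\wedge\cdots\bigr)$ --- is available verbatim when $s=2$: there $\rho=r_i\theta^i$ and the correction $g\,\theta^0\wedge\theta^{i-1}$ lies in $\Omega^{0,2}(\CR)$, which is exactly the right degree for $d_\sH$ to land in $\Omega^{1,2}(\CR)$. Since $H^{1,2}(\CR)\neq 0$ for odd-order equations by Theorem \ref{THA}, your claim that the iteration terminates with $\rho=0$ would prove too much, so the termination must fail somewhere you have not identified. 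Concretely, the subtraction generates junk you cannot control: since $X(\theta^0)=\theta^1$, the $dx$-component of $d_\sH\bigl(g\,\theta^0\wedge\theta^{i-1}\wedge\cdots\bigr)$ contains $g\,\theta^1\wedge\theta^{i-1}\wedge\cdots$, which has no $\theta^0$ factor and destroys the normal form of Theorem \ref{Thmcf1}, while the $dt$-component acquires $T$-terms your bookkeeping does not track. The paper's proof is of a different nature entirely: it is not an exactness argument but a direct leading-order analysis of the linear equation $\CL_\Delta^*(\rho)=0$. Writing $\rho=A_{i_1\ldots i_s}\theta^{i_1}\wedge\cdots\wedge\theta^{i_s}$ with $s\geq 2$ and isolating the coefficients of $\theta^{m_1+n}\wedge\theta^{m_2}\wedge\cdots$ and $\theta^{m_1+n-1}\wedge\theta^{m_2+1}\wedge\cdots$ (equations \ref{AHO0}--\ref{AHO2}), one finds that $\CL_\Delta^*(\rho)=0$ forces $(-1)^n\, n\, K_n A_{m_1\ldots m_s}=0$, and this coefficient never vanishes, so $\rho=0$ identically --- no exact corrections are needed. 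This is also where $s\geq 3$ genuinely enters, and it is not where you placed it: the next-to-leading coefficient $(-1)^n n K_n A$ arises from distributing $X^n$ across \emph{two distinct} contact factors of $\rho$, which is impossible when $\rho$ has vertical degree one; for $n$ odd the lone leading coefficient $(-1-(-1)^n)K_n r$ then vanishes identically, which is precisely why skew-adjoint solutions (variational operators) survive in $H^{1,2}(\CR)$. Your heuristic that degree $\geq 2$ gives ``room to trade'' exact terms does not reflect this mechanism.

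Your final step is also incorrect as written. On $\CR$ the vector field $T=\partial_t+K\partial_u+X(K)\partial_{u_x}+\cdots$ is the total $t$-derivative, with $T(\theta^i)=X^i(d_\sV K)\neq 0$, so a naive $t$-primitive $\eta=-\int\beta\,dt$ satisfies neither $T(\eta)=-\beta$ nor $X(\eta)=0$; you are conflating $T$ with $\partial_t$ and treating the contact form $\beta$ as a function of $t$ alone. Fortunately no primitive is needed: once $\rho=0$, closure gives $X(\beta)=0$ with $\beta\in\Omega^{0,s}(\CR)$, and since $X$ raises the top contact index of any nonzero contact form, $X(\beta)=0$ forces $\beta=0$ outright --- a fact the paper uses repeatedly (e.g.\ in the proof of Theorem \ref{Thmcf2}). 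Thus the representative itself vanishes, not merely its class, which is how the paper concludes.
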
 
\begin{proof}

Suppose $\omega$ is a representative for an 
element of $H^{1,s+1}(\CR)$, $(s\geq 2)$, in the form \ref{CF1}, where 
$\rho \in \Omega^{0,s}(\CR)$ is given by
\begin{equation}
\rho = A_{i_1\ldots  i_s} \theta^{i_1}\wedge \ldots \theta^{i_s}.
\label{rho2p}
\end{equation}
and satisfies $\CL_\Delta^*(\rho) =0$.

Suppose for $\rho$ in \ref{rho2p} that the highest form order for (no sum) $A_{m_1\ldots m_s} \theta^{m_1}\wedge\theta^{m_2}\wedge \ldots  \theta^{m_s} $ where we use max of ($m_1> m_2>\ldots>m_s$) to determine highest order. We first claim that in $ \CL_\Delta^*(\rho) $ the  coefficient of $\theta^{m_1+n} \wedge \theta^{m_2} \wedge \ldots \theta^{m_s}$  is
\begin{equation}
[\CL_\Delta^*(\rho)]_{m_1+n,m_2,\ldots,m_s} = (-1-(-1)^n)  K_n A_{m_1\ldots m_s} ,
\label{AHO0}
\end{equation}
and that the coefficient of $\theta^{m_1+n-1} \wedge \theta^{m_1} \wedge \ldots \theta^{m_s}$ when $m_1>m_2+1$ is
\begin{equation}
[\CL_\Delta^*(\rho)]_{m_1+n-1,m_2+1,\ldots,m_s} =
(-1)^n n K_n A_{m_1\ldots m_s}
\label{AHO}
\end{equation}
and when $m_1=m_2+1$ and $n$ is odd, the coefficient of $\theta^{m_1+n-1} \wedge \theta^{m_1} \wedge \ldots \theta^{m_s}$ 
\begin{equation}
[\CL_\Delta^*(\rho)]_{m_1+n-1,m_1,\ldots,m_s} =
-(-1)^n n K_n A_{m_1\ldots m_s}.
\label{AHO2}
\end{equation}
Therefore if $\CL_\Delta^*(\rho)=0$ implies $A_{m_1\ldots m_s} =0$, then $\rho=0$ and  $ \omega = dt \wedge \beta $. The condition $d_\sH \omega =0$ gives $X(\beta)  = 0$. This implies $\beta=0$ since $\beta \in \Omega^{0,s+1}(\CR)$ and so $\omega =0$.

We compute $ \CL_\Delta^*(\rho) = -T( \rho) -(-X)^i(K_i \rho)$ to find equations \ref{AHO0},\ref{AHO},\ref{AHO2},
\begin{equation}
\begin{aligned}
{\CL_\Delta^*}(\rho) = & -T(A_{i_1\ldots  i_s}) \theta^i_1 \ldots \wedge \theta^{i_s} -A_{i_1\ldots  i_s} T(\theta^{i_1})\ldots \wedge \theta^{i_s}-A_{i_1\ldots  i_s} \theta^{i_1}\wedge T(\theta^{i_2}) \ldots \wedge \theta^{i_s}+\ldots \\
&-(-1)^n X^n(K_nA_{i_1\ldots  i_s} \theta^{i_1}\wedge \ldots \theta^{i_s}) 
- (-1)^{n-1} X^{n-1}(K_{n-1}A_{i_1\ldots  i_s} \theta^i_1\wedge \ldots \theta^{i_s}) \ldots \label{CLrhol}
\end{aligned}
\end{equation}
where from equation \ref{SE_c} we have $T(\theta^i) = K_n\theta^{i+n}+{\rm lower \ order}$. Consider also the highest order terms in while expanding $X^n(K_nA_{i_1\ldots  i_s} \theta^{i_1}\wedge \ldots \theta^{i_s}) $,
\begin{equation}
\begin{aligned}
& \qquad \qquad \qquad X^n(K_nA_{m_1\ldots  m_s} \theta^{m_1}\wedge \theta^{m_2} \ldots \wedge \theta^{m_s})  \\
&=X^n(K_nA_{m_1\ldots  m_s} ) \theta^{m_1}\wedge \theta^{m_2}  \ldots \wedge \theta^{m_s}\\ 
&\quad + K_nA_{m_1\ldots  m_s}  \theta^{m_1+n} \wedge \theta^{m_2} \ldots \wedge \theta^{m_s} +nK_n
A_{m_1\ldots  m_s} \theta^{m_1+n-1}\wedge \theta^{m_2+1}  \ldots \wedge \theta^{m_s}+\ldots
\\
&  \quad + K_nA_{m_1\ldots  m_s} \theta^{m_1} \wedge \theta^{m_2+n} \ldots \wedge \theta^{m_s} + \ldots \\
& =  K_nA_{m_1\ldots  m_s}  \theta^{m_1+n} \wedge \theta^{m_2}  \ldots \wedge \theta^{m_s} +nK_n
A_{m_1\ldots  m_s}\theta^{m_1+n-1}\wedge \theta^{m_2+1}  \ldots\wedge \theta^{m_s}\\  &\quad  -
K_n A_{m_1\ldots  m_s} \theta^{m_2+n} \wedge \theta^{m_1} \ldots\wedge \theta^{m_s}+\{\rm \ essentially \ lower \ order \}.
\ldots 
\end{aligned}
\label{Xnrho}
\end{equation}

The coefficient of $\theta^{m_1+n} \wedge \theta^{m_2}\wedge \ldots  \theta^{m_s}$ (which is the highest order) occurring in equation \ref{CLrhol} comes from the second term on the right hand side in \ref{CLrhol} and the first term on the last right hand side in equation \ref{Xnrho} to give \ref{AHO0}.

We consider the next highest order term in \ref{CLrhol}. From equation \ref{CLrhol}, the only possible term that can contain  $\theta^{m_1+n-1} \wedge \theta^{m_2+1}\wedge \theta^{m_3}\ldots \theta^{m_s}$ when $m_1>m_2+1$  is from second term on the last right hand side of equation \ref{Xnrho}. Therefore \ref{Xnrho} produces \ref{AHO}.

In the case when $m_1=m_2+1$ we have the third term in \ref{CLrhol} at highest order giving
\begin{equation}
-A_{m_1\ldots  m_s} \theta^{m_1}\wedge T(\theta^{m_2}) \ldots \wedge \theta^{m_s}=
-A_{m_1\ldots  m_s} \theta^{m_1}\wedge (K_n\theta^{m_2+n})\ldots \wedge \theta^{m_s}+{\rm lower \ order},
\end{equation}
then using $m_1=m_2+1$ the first term equals
\begin{equation}
K_n A_{m_1\ldots  m_s}  \theta^{m_1-1+n}\wedge \theta^{m_1} \ldots \wedge \theta^{m_s}.
\label{shot1}
\end{equation}
From the other two terms on the last right hand side in equation \ref{Xnrho} we have the two terms
\begin{equation}
n A_{m_1\ldots  m_s}\theta^{m_1+n-1}\wedge \theta^{m_2+1}\ldots \wedge\ldots \theta^{m_s}-
K_n A_{m_1\ldots  m_s} \theta^{m_1+n-1}\wedge \theta^{m_2+1}  \ldots \wedge \theta^{m_s}.
\label{shot2}
\end{equation}
Combining equations \ref{shot1} and using $m_1=m_2+1$ and $n$ is odd in equation \ref{shot2}, gives equation \ref{AHO2}.
\end{proof}

We also have as a corollary of Theorem \ref{Thmcf1}.

\begin{Corollary}  If $u_t=K(t,x,u,\ldots, u_{2m})$ is an even order evolution equation, then $H^{1,2}(\CR)=0$. 
\end{Corollary}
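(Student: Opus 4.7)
My plan is to run the same argument used for Theorem \ref{ZC} in the one case that result does not cover, namely vertical degree two. By Theorem \ref{Thmcf1} any $[\omega] \in H^{1,2}(\CR)$ has a representative
$$
\omega = dx \wedge \theta^0 \wedge \rho - dt \wedge \beta,
$$
with $\rho \in \Omega^{0,1}(\CR)$, $\beta \in \Omega^{0,2}(\CR)$, and $\CL_\Delta^*(\rho)=0$. Write $\rho = A_i \theta^i$ and let $m_1$ be the largest index with $A_{m_1}\neq 0$.

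The highest-order computation used in the proof of Theorem \ref{ZC}, specialized to $s=1$, shows that the coefficient of $\theta^{m_1+n}$ in $\CL_\Delta^*(\rho)$ is $(-1-(-1)^n) K_n A_{m_1}$, exactly as in equation \ref{AHO0}. Here $n=2m$ is even, so this prefactor is $-2$ rather than $0$ and we obtain $K_{2m} A_{m_1}=0$. By hypothesis the equation is genuinely of order $2m$, so $K_{2m}\neq 0$ and therefore $A_{m_1}=0$, contradicting the choice of $m_1$. Descending on $m_1$ yields $\rho=0$.

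With $\rho=0$ the representative collapses to $\omega = -dt\wedge\beta$, and the closing step of the proof of Theorem \ref{ZC} applies verbatim: writing $\beta = B_{ij}\,\theta^i\wedge\theta^j$ and using the structure equations \ref{SE_c}, the condition $d_\sH\omega=0$ is equivalent to the vanishing of the $dx$-component of $d_\sH\beta$, which reads
$$
X(B_{ij})\,\theta^i\wedge\theta^j + B_{ij}\,\theta^{i+1}\wedge\theta^j - B_{ij}\,\theta^i\wedge\theta^{j+1} = 0.
$$
Letting $M$ be the largest index appearing in a non-zero $B_{ij}$, the coefficient of $\theta^p\wedge\theta^{M+1}$ only receives the contribution $-B_{p,M}$, forcing $B_{p,M}=0$ for every $p$ and contradicting the choice of $M$ unless $\beta=0$. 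Thus $\omega=0$ and $H^{1,2}(\CR)=0$. The only substantive point is the observation that evenness of $n$ is precisely what makes the prefactor $(-1-(-1)^n)$ non-vanishing; once Theorem \ref{Thmcf1} is in hand there is no further obstacle.
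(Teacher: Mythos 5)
Your proof is correct and takes essentially the same route as the paper: the paper's own proof likewise extracts the leading coefficient $-2\,r_k K_{2m}\theta^{2m+k}$ of $\CL_\Delta^*(\rho)$ (your factor $-1-(-1)^n=-2$ for $n=2m$ even) to force $\rho=0$, and then $\beta=0$ follows from $X(\beta)=0$ exactly as at the end of the proof of Theorem \ref{ZC}. One cosmetic slip only: since $X$ acts as a degree-zero derivation, $X(\theta^i\wedge\theta^j)=\theta^{i+1}\wedge\theta^j+\theta^i\wedge\theta^{j+1}$ with both signs positive, but this does not affect your coefficient extraction for $\beta$.
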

\begin{proof} We show that the only $\rho \in \Omega^{0,1}(\CR)$ satisfying $\CL_\Delta^*(\rho) =0$ is $\rho =0$.  Suppose $\rho=r_i\theta^i,\ i=0,\ldots, k $ then by direct computation, 
$$
\CL_\Delta^*(\rho) =-T(\rho)-(-X)^i(K_i \rho)  \equiv -2 \, r_k K_{2m} \theta^{2m+k}  \qquad \mod \theta^0,\ldots, \theta^{2m+k-1}
$$
which is non-zero unless $r_k=0$. Therefore $\rho = 0$. 
\end{proof}

We now refine Theorem \ref{Thmcf1} which gives a formula for $\beta$ in equation \ref{CF1}.

\begin{Theorem} \label{Thmcf2} Let $u_t=K(t, x, u, u_x,\ldots, u_n)$ be an $n^{th}$ order evolution equation. For any $[\omega] \in H^{1,s}(\CR)$, $s=1,2$  there exists a representative,
$\omega  \in \Omega^{1,s}({\mathcal R}^\infty)$, $s=1,2$ such that
\begin{equation}
\omega = dx \wedge \theta^0 \wedge \rho   - dt \wedge \bbeta(\rho) , \quad
\bbeta(\rho)=  \sum _{i=1}^n  \left( \sum_{a=1}^{i} (-X)^{a-1}(K_{i} \rho)\wedge \theta^{i-a} \right),
\label{CF2}
\end{equation}
where $\rho \in \Omega^{0,s-1}({\mathcal R}^\infty)$ ($s=1,2$) satisfies $\theta^0 \wedge \CL_\Delta^*(\rho)=0$. If $s=1$ then $ \rho \in C^\infty(\CR)$ and the representative \ref{CF2} is unique.
\end{Theorem}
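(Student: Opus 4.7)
The plan is to begin from the explicit representative furnished by Theorem \ref{Thmcf1} (equation \ref{endom}) and simplify its $dt$-summand directly. Since $\CL_\Delta^*(\rho)=0$ automatically implies $\theta^0\wedge\CL_\Delta^*(\rho)=0$, the $\rho$ appearing in Theorem \ref{Thmcf1} already fulfills the weaker hypothesis required here; only the formula for $\beta$ needs to be reduced to $\bbeta(\rho)$.

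Because $\Delta=u_t-K(t,x,u,u_x,\ldots,u_n)$ involves only the time derivative $u_t$ and the pure spatial derivatives $u_0,\ldots,u_n$, the interior products $\partial_{u_{(2IL)}}\hook d_\sV\Delta$ appearing in \ref{endom} vanish unless the multi-index $(2IL)$ consists entirely of $x$-entries, i.e., $(2IL)=(2,2,\ldots,2)$ of length $j$ with $1\le j\le n$, in which case the interior product equals $-K_j$. Writing $|I|=p$, $|L|=q$ (so $p+q=j-1$) and expanding $D_x^p[\vartheta^0\wedge\cdot]$ via the Leibniz rule with $D_x^m\vartheta^0=\vartheta^m$, the $dt$-part of \ref{endom} regroups as
\[
-\iota^*\sum_{j=1}^n\sum_{m=0}^{j-1}\Bigl(\sum_{q=0}^{j-1-m}(-1)^q\binom{j}{q}\binom{j-1-q}{m}\Bigr)\vartheta^m\wedge D_x^{j-1-m}(K_j\tilde\rho).
\]
The inner binomial sum evaluates to $(-1)^{j-1-m}$, verifiable by induction on $m$ or a Vandermonde-type manipulation. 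Setting $a=j-m$, pulling back via $\iota$, and noting that $\tilde\rho=s\,\rho$ absorbs the $1/s$ prefactor, one recovers exactly the formula \ref{CF2} for $\bbeta(\rho)$, with the wedge-ordering convention determined by whether $\rho$ is a function ($s=1$) or a $1$-form ($s=2$).

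For uniqueness when $s=1$: if $\omega_1,\omega_2$ of the form \ref{CF2} with characteristics $\rho_1,\rho_2$ represent the same cohomology class, then $\omega_1-\omega_2=d_\sH\eta$ for some $\eta=\sum_i f_i\theta^i\in\Omega^{0,1}(\CR)$ with only finitely many $f_i$ nonzero. Expanding $d_\sH(f_i\theta^i)$ using the structure equations \ref{SE_c} and matching coefficients of $dx\wedge\theta^{k+1}$ at the highest jet index $k$ of $\eta$ iteratively forces $f_k=f_{k-1}=\cdots=f_0=0$, whereupon the residual $dx\wedge\theta^0$ coefficient yields $\rho_1=\rho_2$. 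The principal technical obstacle is the binomial identity together with the bookkeeping of the wedge-ordering sign picked up when reordering the $(s-1)$-form $(-X)^{a-1}(K_j\rho)$ past $\theta^{j-a}$; these signs conspire so that the single formula \ref{CF2} serves uniformly for $s=1$ and $s=2$.
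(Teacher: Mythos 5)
Your proposal is correct in substance but takes the opposite route from the paper: you derive $\bbeta(\rho)$ by direct evaluation of the $dt$-summand of the homotopy formula \ref{endom}, which is precisely the ``rather lengthy calculation'' the authors say they discarded (see the remark immediately following the paper's proof of Theorem \ref{Thmcf2}). The paper instead argues indirectly: it first verifies, using the telescoping identity \ref{XB1}, that the ansatz $\omega = dx\wedge\theta^0\wedge\rho - dt\wedge\bbeta(\rho)$ satisfies $d_\sH\omega = -\,dt\wedge dx\wedge\theta^0\wedge\CL_\Delta^*(\rho)$, hence is $d_\sH$-closed under the hypothesis; it then subtracts this closed form from the representative of Theorem \ref{Thmcf1} built on the same $\rho$ and concludes $\beta=\bbeta(\rho)$ from $X(\beta-\bbeta(\rho))=0$, since $X$ annihilates no nonzero contact form. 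Your uniqueness argument for $s=1$ coincides with the paper's (equation \ref{Us1}). For the record, your binomial identity does hold: $\sum_{q=0}^{j-1}(-1)^q\binom{j}{q}(1+x)^{j-1-q} = \bigl(x^j-(-1)^j\bigr)/(1+x) = \sum_{m=0}^{j-1}(-1)^{j-1-m}x^m$, so $c_{j,m}=(-1)^{j-1-m}$, and with $\rho=\tfrac{1}{s}\iota^*\tilde\rho$ your regrouping reproduces $\bbeta(\rho)$ exactly in the $s=2$ case. What each approach buys: yours explains where $\bbeta$ comes from, at the cost of the multi-index and Leibniz bookkeeping; the paper's avoids the homotopy-operator combinatorics entirely and proves something extra that your existence-only derivation does not, namely that the \emph{weaker} condition $\theta^0\wedge\CL_\Delta^*(\rho)=0$ already makes the ansatz closed --- a converse used repeatedly downstream (Theorems \ref{Urep} and \ref{keyT}, and the first-order analysis of Section \ref{FirstOrder}).

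One caution: your parenthetical claim that the wedge-ordering ``signs conspire so that the single formula \ref{CF2} serves uniformly for $s=1$ and $s=2$'' glosses the one genuinely delicate point. The uniform output of your computation is the ordering $\theta^{i-a}\wedge(-X)^{a-1}(K_i\rho)$; commuting this past $\theta^{i-a}$ costs a sign exactly when $\rho$ is a $1$-form, so the calculation yields \ref{CF2} on the nose for $s=2$, while for $s=1$ it yields $dx\wedge\theta^0\,\rho + dt\wedge\bbeta(\rho)$. A quick check with $u_t=u_x$ and the conservation law $\kappa=u(dx+dt)$ confirms this: $d_\sV\kappa = -dx\wedge\theta^0 - dt\wedge\theta^0$, i.e.\ $\rho=-1$ and $\beta=\theta^0=-\bbeta(\rho)$, not $+\bbeta(\rho)$. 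This sign wobble is inherited from the paper's own statement --- the closedness computation in the paper's proof is carried out with $s=2$ commutation conventions --- so it does not invalidate your argument relative to the paper, but the signs do not in fact conspire, and you should state the $s=1$ case with its own sign rather than asserting uniformity.
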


\begin{proof}  First suppose $\rho \in \Omega^{0,s-1}(\CR)$ and satisfies $\theta^0 \wedge \CL_\Delta^*(\rho)=0$, and let $\omega \in \Omega^{1,s}(\CR)$ be as in equation \ref{CF2}. We compute $d_\sH \omega$,
\begin{equation}
d_\sH \omega= dt \wedge dx \wedge [ T(\theta^0 \wedge \rho) + X(\bbeta(\rho))].
\label{dHB1}
\end{equation}
To compute $X(\bbeta(\rho))$ we need the telescoping identity,
\begin{equation}
X\left(\sum_{a=1}^{i} (-X)^{a-1}(K_i \rho)\wedge\theta^{i-a}\right) =- (-X)^i(K_i \rho)\wedge \theta^0 + K_i\rho \wedge \theta^i
\qquad ({\rm \ no \ sum \ on } \ i ).
\label{XB1}
\end{equation}
Using equation \ref{XB1} in the formula for $\bbeta(\rho)$ in equation \ref{CF2} gives
\begin{equation}
\begin{aligned}
X(\bbeta(\rho)) & =  \sum _{i=1}^n X\left[ \left( \sum_{a=1}^{i} (-X)^{a-1}(K_{i} \rho)\wedge \theta^{i-a} \right) \right], \\
&= \sum _{i=1}^n -(-X)^i(K_i \rho) \wedge \theta^0 + K_i\rho \wedge \theta^i .
\end{aligned}
\label{XB2}
\end{equation}
We then use
$$
T(\theta^0 \wedge \rho ) = T(\theta^0) \wedge \rho + \theta^0 \wedge  T(\rho)
$$
so that together with  equation \ref{XB2},  equation \ref{dHB1} becomes (adding and subtracting $K_0 \theta^0 \wedge \rho$)
\begin{equation}
d_\sH \omega  = dt \wedge dx \wedge  (T(\theta^0) - \sum_{i=0}^n K_i \theta^i) \wedge \rho
+dt \wedge dx \wedge \theta^0 \wedge \left( T(\rho) +  \sum _{i=0}^n (-X)^i(K_i \rho)\right)
\label{rho_d}
\end{equation}
Now  by equation \ref{dVF}, $\iota^* d_\sV(- \Delta)=-T(\theta^0) + \sum_{i=0}^n K_i \theta^i= 0 $, 
and so equation \ref{rho_d} becomes,
$$
d_\sH \omega  = dt \wedge dx \wedge \theta^0 \wedge \left( T(\rho) +  \sum _{i=0}^n (-X)^i(K_i \rho)\right)
=- dt \wedge dx \wedge \theta^0 \wedge \CL_\Delta^*(\rho) =0.
$$

Now  by  Theorem \ref{Thmcf2} there exists representative
$ \hat \omega = dx \wedge \theta^0 \wedge \rho - dt \wedge \beta$
where $\CL^*(\rho) = 0$. Let $\omega $ be the form in \ref{CF2} using this $\rho$.
The form $\omega' = \hat \omega - \omega$ satisfies 
\begin{equation}
0= d_\sH \omega'= d_\sH(  dt \wedge (\beta - \bbeta(\rho)) = dx \wedge dt \wedge X(\beta -\bbeta(\rho)) .
\label{dDB}
\end{equation}
This implies $X( \beta-\bbeta(\rho)) = 0$, where $\beta-\bbeta(\rho) \in \Omega^{0,s}(\CR), s=1,2$. However, the only contact form satisfying this condition is the zero form. So $\beta = \bbeta(\rho)$.  This proves equation \ref{CF2}.

For the final statement in the theorem, suppose  $\omega_a = dx \wedge \theta^0 \cdot Q_a - dt \wedge \beta_a$, $a=1,2$ where $Q_a \in C^\infty(\CR)$ and $\beta_a \in \Omega^{0,1}(\CR)$ satisfy $[\omega_1]=[\omega_2]\in H^{1,1}(\CR)$. This implies there exists $ \xi = g_j \theta^j$ such that $\omega_1-\omega_2 = d_\sH \xi$ so that 
\begin{equation}
dx \wedge \theta^0 (Q_1-Q_2) = dx \wedge X( g_j \theta^j).
\label{Us1}
\end{equation}
Since $X(\theta^i)=\theta^{i+1}$, equation \ref{Us1} can only be satisfied when $Q_1=Q_2$ and $g_j=0$.  Then the condition $d_\sH( \omega_1-\omega_2) = dx \wedge dt \wedge X( \beta_1-\beta_2) = 0$ implies $X(\beta_1-\beta_2)=0$ which in turn implies $\beta_1=\beta_2$ because  $\beta_1-\beta_2 \in \Omega^{0,1}(\CR)$.
Therefore the form in equation \ref{CF2} for $s=1$ is unique.
\end{proof}

The form $\omega$ in \ref{CF2} was originally derived by a rather lengthy calculation of the second term in \ref{endom}.

\begin{Corollary} \label{CFbeta} If $[\omega]\in H^{1,2}(\CR)$ with $\epsilon = r_i \theta^i,\ i=0,\dots ,k$ and representative 
$$
\omega =  dx \wedge \theta^0 \wedge \epsilon - dt \wedge \beta , \quad \epsilon \in  \Omega^{0,1}(\CR) ,\ \beta \in \Omega^{0,2}(\CR)
$$
where $\epsilon^*=(-X)^i(r_i \theta^0) = - \epsilon$, then $\CL^*_\Delta(\epsilon)=0$ and $\beta= \bbeta(\epsilon)$ is given by equation \ref{CF2}.
\end{Corollary}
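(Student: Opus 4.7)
The plan is to derive, from $d_\sH\omega = 0$, the identity
\[
\theta^0 \wedge \CL^*_\Delta(\epsilon) = X\bigl(\beta - \bbeta(\epsilon)\bigr),
\]
then upgrade it to the two equalities $\CL^*_\Delta(\epsilon) = 0$ and $\beta = \bbeta(\epsilon)$ by combining the skew-adjoint hypothesis with the injectivity of $X$ on $\Omega^{0,2}(\CR)$.

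For the identity I would replay the computation in the proof of Theorem \ref{Thmcf2}. Expanding $d_\sH \omega$ using the structure equations \ref{SE_c}, closure amounts to $K_i\theta^i \wedge \epsilon + \theta^0\wedge T(\epsilon) + X(\beta) = 0$, while the telescoping identity \ref{XB1} gives $X(\bbeta(\epsilon)) = \sum_{i\geq 1}\bigl[\theta^0\wedge(-X)^i(K_i\epsilon) - K_i\theta^i\wedge\epsilon\bigr]$. Subtracting and recognizing that $T(\epsilon) + \sum_{i\geq 0}(-X)^i(K_i\epsilon) = -\CL^*_\Delta(\epsilon)$ produces the displayed identity. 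This step uses only the closure of $\omega$, not the skew-adjoint hypothesis.

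The main obstacle is deducing $\CL^*_\Delta(\epsilon) = 0$. My strategy is to apply Theorem \ref{Thmcf1} to obtain a second representative $\omega_0 = dx\wedge \theta^0\wedge\rho - dt\wedge \bbeta(\rho)$ of $[\omega]$ with $\CL^*_\Delta(\rho) = 0$, write $\omega - \omega_0 = d_\sH \xi$ for some $\xi\in\Omega^{0,2}(\CR)$, and match horizontal components to obtain $X(\xi) = \theta^0\wedge(\epsilon-\rho)$. Setting $\delta := \epsilon-\rho$, the goal reduces to $\CL^*_\Delta(\delta)=0$. Let $\bar\CE$ denote the total differential operator on $\CR$ with $\bar\CE(\theta^0) = \epsilon$ and $\bar\CE^* = -\bar\CE$. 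The crucial driver is the direct computation $\CL_\Delta(\theta^0) = T(\theta^0) - K_i X^i(\theta^0) = K_i\theta^i - K_i\theta^i = 0$, which together with the adjoint relation $(\CL^*_\Delta\bar\CE)^* = \bar\CE^*\CL_\Delta$ yields $(\CL^*_\Delta\bar\CE)^*(\theta^0) = 0$. Pairing this with the integration-by-parts formula \ref{d_ad} (extended to $\CR$ via Remark \ref{thm_rmk}), the constraint $X(\xi) = \theta^0\wedge\delta$, and the identity from the first step should force $\CL^*_\Delta(\epsilon) = 0$. Making this rigorous---most likely by lifting to $\CJ$ and invoking the interior Euler operator $J$ of \ref{IEP2P} to extract the source-form content---is the subtle part of the argument.

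With $\CL^*_\Delta(\epsilon) = 0$ in hand, the identity from the first step collapses to $X\bigl(\beta - \bbeta(\epsilon)\bigr) = 0$. A highest-order argument identical to the one at the end of the proof of Theorem \ref{ZC}---for any nonzero $\eta\in\Omega^{0,2}(\CR)$, the leading coefficient $\eta_{m_1 m_2}\theta^{m_1}\wedge\theta^{m_2}$ produces an uncancellable term $\eta_{m_1 m_2}\theta^{m_1+1}\wedge\theta^{m_2}$ in $X(\eta)$---shows that $X$ is injective on $\Omega^{0,2}(\CR)$. Therefore $\beta = \bbeta(\epsilon)$, completing the proof.
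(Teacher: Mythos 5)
Your first and last steps are correct and coincide with the paper's: the closure identity $\theta^0\wedge\CL_\Delta^*(\epsilon) = X\bigl(\beta-\bbeta(\epsilon)\bigr)$ is exactly the computation in the proof of Theorem \ref{Thmcf2} (equations \ref{dHB1}--\ref{rho_d}), and injectivity of $X$ on $\Omega^{0,2}(\CR)$ is indeed how the paper extracts $\beta=\bbeta(\epsilon)$ once the vanishing of $\CL_\Delta^*(\epsilon)$ is known. The genuine gap is the middle step, and it is not merely an omitted verification. The identity you call the ``crucial driver,'' $(\CL_\Delta^*\bar\CE)^*(\theta^0)=\bar\CE^*\CL_\Delta(\theta^0)=0$, is a tautology: since $\CL_\Delta(\theta^0)=0$, it holds for \emph{every} operator $\bar\CE$, skew-adjoint or not, variational or not, and therefore carries no information whatsoever about $\CL_\Delta^*(\epsilon)=\CL_\Delta^*(\bar\CE\theta^0)$. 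All of the content must come from $d_\sH$-closure processed through the interior Euler operator, which is precisely the part you defer (``should force \dots the subtle part''). Moreover your reduction stalls short of the claim even if pursued: pushing the constraint $X(\xi)=\theta^0\wedge\delta$ through the integration-by-parts operator only yields $\delta^*=\delta$, hence $\epsilon=\frac{1}{2}(\rho-\rho^*)$ with $\CL_\Delta^*(\rho)=0$; to conclude you would still need $\CL_\Delta^*(\rho^*)=0$, which is not available at this point. (Nor may you appeal to Theorem \ref{Urep} or Corollary \ref{Curep} to shortcut this, since the paper's proof of Theorem \ref{Urep} cites this very corollary.) Finally, note that your closure identity can at best give $\theta^0\wedge\CL_\Delta^*(\epsilon)=0$, i.e.\ $\CL_\Delta^*(\epsilon)$ proportional to $\theta^0$, whereas the corollary asserts full vanishing.

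The paper closes the gap by exactly the lift-to-$\CJ$ computation you gesture at but do not carry out. It lifts the given representative to $\tilde\omega_0 = dx\wedge\vartheta^0\wedge\tilde\epsilon_0 - dt\wedge\tilde\beta$ with $\tilde\epsilon_0=r_i\vartheta^i$, computes $d_\sH\tilde\omega_0$, and integrates by parts using the explicit telescoping form $\tilde\eta$ of equation \ref{TT3b} (which satisfies $\iota^*\tilde\eta=0$) to reach the normal form \ref{r_hot}. Skew-adjointness enters precisely here: $\tilde\epsilon_0^*=-\tilde\epsilon_0$ makes the two $d_\sV\Delta$-terms combine, so that $\tilde\rho=2\tilde\epsilon_0$ in equation \ref{TT4}. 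The $\iota^*\circ J$ argument already established inside the proof of Theorem \ref{Thmcf1} ($J\circ d_\sH=0$ by \ref{IEP3}, together with equation \ref{FIJ2}) then gives $\CL_\Delta^*\bigl(\tfrac{1}{2}\iota^*\tilde\rho\bigr)=\CL_\Delta^*(\epsilon)=0$ in full, after which Theorem \ref{Thmcf2} yields $\beta=\bbeta(\epsilon)$ as in your last step. Supplying this telescoping computation on $\CJ$ is the missing piece of your proposal.
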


\begin{proof}  Write $\beta = B_{ab} \theta^a \wedge \theta^b$, and 
choose in equation \ref{pbw0}, $\tilde \omega_0 =dx \wedge \vartheta^0 \wedge \tilde \epsilon_0- dt\wedge \tilde \beta$, where $\tilde \epsilon_0 =  \tr_{i} \vartheta^{i}, \tilde \beta= B_{ab} \vartheta^a \wedge \vartheta^ b $ (see Remark \ref{thm_rmk}).  
Then there exists $s_{ab} \in C^\infty(\CJ)$ such that,
\begin{equation}
\begin{aligned}
d_\sH(\tilde \omega_0) &=dt \wedge dx \wedge (  \vartheta_t \wedge \tilde \epsilon_0 + \vartheta_0 \wedge D_t(\tilde \epsilon_0) + D_x(\tilde \beta) ) \\
&= dt \wedge dx \wedge (  (d_\sV\Delta+ K_m \vartheta^m) \wedge \tilde \epsilon_0+ \vartheta_0  \wedge D_t(\tilde \epsilon_0)+D_x(\tilde \beta) )\\
&= dt \wedge dx \wedge (  (d_\sV\Delta+ K_m \vartheta^m) \wedge \tilde \epsilon_0 + \vartheta_0 \wedge (\tr_{i,t} \vartheta^i + \tr_i \vartheta^i_t)+D_x(\tilde \beta))\\
&= dt \wedge dx \wedge ( d_\sV\Delta \wedge \tilde \epsilon_0+ \vartheta^0 \wedge(\tr_i D_x^i( d_\sV \Delta)) +s_{ab} \vartheta^a \wedge \vartheta^b )
\end{aligned}
\label{TT2}
\end{equation}
since $D_t(\vartheta^0)= d_\sV\Delta+ K_m \vartheta^m$.  Now using equations \ref{XB1} and \ref{XB2} with $X=D_x$, $\rho=\vartheta^0$, $K_i=r_i$, and $\theta^0= d_\sV \alpha$ while adding and subtracting $\tr_0 \vartheta^0 \wedge d_\sV \Delta$ we have
\begin{equation}
dt \wedge dx \wedge \vartheta^0 \wedge ( \tr_i D_x^i( d_\sV \Delta))  = 
dt \wedge dx \wedge   (-D_x)^i( \tr_i \vartheta^0) \wedge  d_\sV \Delta - d_\sH\tilde \eta 
\label{TT3}
\end{equation}
where
\begin{equation}
\tilde \eta=  dt \wedge 
\sum_{i=1}^k \sum_{j=1}^i (-D_x)^{j-1}(\tr_i \vartheta^0) \wedge D_x^{i-j}( d_\sV \Delta) 
\label{TT3b}
\end{equation}
and $\tilde \eta$ satisfies $\iota^* \tilde \eta_0 =0$. Since $\epsilon ^* = -\epsilon$, we have $\tilde\epsilon_0^* = - \tilde \epsilon_0$, and combining this with equation \ref{TT2} and \ref{TT3} we have
\begin{equation}
d_\sH(\tilde \omega_0) = dt \wedge dx \wedge (  d_\sV\Delta \wedge 2 \tilde \epsilon_0+s_{ab} \vartheta^a \wedge \vartheta^b ) - d_\sH \tilde \eta
\label{TT4}
\end{equation}
Therefore comparing equations \ref{TT4} with equation \ref{r_hot} we have $\tilde \rho=2 \tilde \epsilon_0$. By equations  \ref{endom} in the proof of Theorem \ref{Thmcf1}, $\rho= \frac{1}{2} \iota^* \tilde \rho  = \epsilon$ satisfies $\CL^*_\Delta(\epsilon)=0$.  Finally Theorem \ref{Thmcf2} implies $\beta= \bbeta(\epsilon)$.
\end{proof}

\section{ Canonical forms for  $H^{1,2}(\CR)$} \label{CFU}
 
By  refining Theorem \ref{Thmcf1} we will produce a canonical form for elements of $H^{1,2}(\CR)$, by determining a unique representative. A form $\rho  \in \Omega^{0,1}(\CR)$ can always be written $\rho = r_iX^i(\theta^0)$. We define the adjoint of $\rho$ by $\rho^* = (-X)^i(r_i \theta^0)$ while $(\rho^*)^*=\rho$ because the operator $r_i X^i$ has this property, see Remark \ref{thm_rmk}.

\begin{Theorem} \label{Urep} Let $[\omega] \in H^{1,2}(\CR)$. There exists a unique representative having the form
\begin{equation}
\omega = dx \wedge \theta^0 \wedge \epsilon - dt \wedge \bbeta(\epsilon)\ , \quad \epsilon \in \Omega^{0,1}(\CR)
\label{CFO}
\end{equation}
where $ \epsilon^*  =  -\epsilon$, and $\bbeta(\epsilon) $ is given by the formula in \ref{CF2} and $\theta^0 \wedge \CL_\Delta^*(\epsilon)=0$.
\end{Theorem}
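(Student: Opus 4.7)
The plan is to start from the existence result in Theorem \ref{Thmcf2}, which furnishes a representative $\omega = dx\wedge\theta^0\wedge\rho - dt\wedge\bbeta(\rho)$ with $\theta^0\wedge\CL^*_\Delta(\rho) = 0$, and then to project $\rho$ onto its skew-adjoint part. Writing $\rho = r_i\theta^i = L(\theta^0)$ for the total operator $L = r_iX^i$ on $\CR$, decompose $L = L_+ + L_-$ with $L_\pm = \tfrac{1}{2}(L\pm L^*)$. Set $\epsilon = L_-(\theta^0)$ and $\sigma = L_+(\theta^0)$, so that $\epsilon^* = -\epsilon$, $\sigma^* = \sigma$, and $\rho = \epsilon + \sigma$. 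Define the candidate $\omega' := dx\wedge\theta^0\wedge\epsilon - dt\wedge\bbeta(\epsilon)$; the goal is to show $[\omega'] = [\omega]$ in $H^{1,2}(\CR)$.

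The key technical input is a Lagrange identity: since $L_+$ is self-adjoint, integration by parts in $X$ produces an explicit $\eta\in\Omega^{0,2}(\CR)$ (built from the bilinear concomitants of $L_+$) with $\theta^0\wedge\sigma = \tfrac{1}{2} X(\eta)$. Rewriting $dx\wedge X(\eta) = d_\sH\eta - dt\wedge T(\eta)$ gives
$$\omega - \omega' \;=\; dx\wedge\theta^0\wedge\sigma - dt\wedge\bbeta(\sigma) \;=\; d_\sH(\tfrac{1}{2}\eta) - dt\wedge\gamma,$$
where $\gamma := \tfrac{1}{2} T(\eta) + \bbeta(\sigma) \in \Omega^{0,2}(\CR)$.

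The main obstacle is proving $\gamma = 0$. The plan is to show first that $\omega'$ is $d_\sH$-closed, equivalently $\theta^0\wedge\CL^*_\Delta(\epsilon) = 0$; then $\omega - \omega'$ is $d_\sH$-closed, so $d_\sH(dt\wedge\gamma) = -dx\wedge dt\wedge X(\gamma) = 0$, and $X(\gamma) = 0$ for a contact $(0,2)$-form forces $\gamma = 0$ by the $X$-triviality argument used at the end of the proof of Theorem \ref{Thmcf2}. The identity $\theta^0\wedge\CL^*_\Delta(\epsilon) = 0$ should reduce, by a second integration by parts exploiting $\sigma^* = \sigma$, to the given $\theta^0\wedge\CL^*_\Delta(\rho) = 0$. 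An alternative route that sidesteps this calculation is to run the proof of Corollary \ref{CFbeta} in reverse: lift $\omega$ to $\tilde\omega_0 = dx\wedge\vartheta^0\wedge\tilde\rho_0 - dt\wedge\tilde\beta$ on $\CJ$, split $\tilde\rho_0 = \tilde\epsilon_0 + \tilde\sigma_0$, and absorb the $\tilde\sigma_0$-contribution to equation \ref{r_hot} into an $\iota^*$-trivial horizontal correction; Theorem \ref{Thmcf1} then returns $\rho = \iota^*\tilde\epsilon_0 = \epsilon$, and Corollary \ref{CFbeta} supplies both $\bbeta(\epsilon)$ and the stronger $\CL^*_\Delta(\epsilon) = 0$.

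For uniqueness: if $\omega_a = dx\wedge\theta^0\wedge\epsilon_a - dt\wedge\bbeta(\epsilon_a)$ ($a = 1,2$) both represent $[\omega]$ with $\epsilon_a^* = -\epsilon_a$, then $\delta := \epsilon_1 - \epsilon_2$ is skew-adjoint and $dx\wedge\theta^0\wedge\delta - dt\wedge\bbeta(\delta) = d_\sH\xi$ for some $\xi\in\Omega^{0,2}(\CR)$. Matching the $dx$-parts gives $\theta^0\wedge\delta = X(\xi)$. Writing $\delta = d_i\theta^i$ and comparing top-order contributions using $X(\theta^j) = \theta^{j+1}$, together with the recursive constraints imposed by $\delta^* = -\delta$ on the coefficients $d_i$, inductively forces $\delta = 0$, in the spirit of the uniqueness step for $H^{1,1}(\CR)$ at the end of the proof of Theorem \ref{Thmcf2}.
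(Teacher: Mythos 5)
Your existence step through the telescoping identity is exactly the paper's opening move: with $\eta$ as in \ref{rad1} one has $X(\eta)=\theta^0\wedge(\rho+\rho^*)$, and the paper likewise forms $\hat\omega=\omega-\tfrac{1}{2}d_\sH\eta$, whose $dx$-part is $dx\wedge\theta^0\wedge\epsilon$ with $\epsilon=\tfrac{1}{2}(\rho-\rho^*)$. The genuine gap is in your primary route to $\gamma=0$. You assert that $\theta^0\wedge\CL_\Delta^*(\epsilon)=0$ ``should reduce, by a second integration by parts exploiting $\sigma^*=\sigma$,'' to the hypothesis $\theta^0\wedge\CL_\Delta^*(\rho)=0$; by linearity this amounts to $\theta^0\wedge\CL_\Delta^*(\sigma)=0$, which is \emph{false} for a general self-adjoint $\sigma$: for $K=u_{xxx}$ and $\sigma=u\,\theta^0$ one computes $\theta^0\wedge\CL_\Delta^*(\sigma)=3u_{xx}\,\theta^0\wedge\theta^1+3u_x\,\theta^0\wedge\theta^2\neq 0$. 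So any proof must use the special origin of $\sigma$, and no formal manipulation on $\CR$ supplies this: from what you have established, the only available relation is the $d_\sH$-closedness of $\hat\omega=dx\wedge\theta^0\wedge\epsilon-dt\wedge(\bbeta(\epsilon)+\gamma)$, which by the computation \ref{dHB1}--\ref{rho_d} reads $\theta^0\wedge\CL_\Delta^*(\epsilon)=X(\gamma)$ --- a single equation tying your two unknowns together. Your scheme derives ``$\gamma=0$'' from ``$\omega'$ closed,'' and ``$\omega'$ closed'' from a statement equivalent, given this relation, to ``$X(\gamma)=0$'': it is circular. (Applying $I_\sE$ to the relation only shows that the operator $M$ with $M(\theta^0)=\CL_\Delta^*(\epsilon)$ is self-adjoint, not that it vanishes.) The missing input is exactly Corollary \ref{CFbeta}, whose proof lifts to $\CJ$ and uses the interior Euler operator with $\ker J={\rm Im}\,d_\sH$ --- a step that cannot be reproduced by integration by parts confined to $\CR$.

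Your fallback route is therefore the correct one, and it is in fact the paper's proof: the paper forms the same $\hat\omega$ and cites Corollary \ref{CFbeta} to conclude $\CL_\Delta^*(\epsilon)=0$ and $\bbeta(\epsilon)=\bbeta(\rho)+\tfrac{1}{2}T(\eta)$, i.e.\ your $\gamma=0$. You need not ``run CFbeta in reverse'': $\hat\omega$ is already a closed representative of the required shape with skew-adjoint $\epsilon$, so the corollary applies as stated (its proof internally performs the symmetric/skew split you describe, producing $\tilde\rho=2\tilde\epsilon_0$ after the $\iota^*$-trivial correction \ref{TT3b}). For uniqueness your argument genuinely differs from the paper's, which lifts $dx\wedge\theta^0\wedge\delta=dx\wedge X(\xi)$ to $\CJ$ and applies $I$, using $\ker I={\rm Im}\,d_\sH$ together with skewness. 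Your elementary alternative does work once completed: the leading-order analysis of $\theta^0\wedge\delta=X(\xi)$ first forces $\xi=m\,\theta^0\wedge\theta^1$, whence $\delta=d_0\theta^0+X(m)\theta^1+m\theta^2$, and $\delta^*=-\delta$ kills $m$ (the $X^2$-coefficient of $\delta+\delta^*$ is $2m$) and then $d_0$; finish by noting $\delta=0$ gives $\bbeta(\epsilon_1)=\bbeta(\epsilon_2)$ by linearity of $\bbeta$, hence $\omega_1=\omega_2$. This buys a self-contained uniqueness proof at the cost of a coefficient computation, where the paper's version trades that for the $I$-operator machinery already in place.
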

\begin{proof}  We begin by utilizing equation \ref{XB2} and make the substitution $\rho = \theta^0$, $K_i = r_i$ giving the identity,
\begin{equation}
X \left( \sum _{i=1}^k  \left( \sum_{a=1}^{i} (-X)^{a-1}(r_{i} \theta^0)\wedge \theta^{i-a} \right) \right)= 
\sum _{i=1}^k \left(-(-X)^i(r_i \theta^0) \wedge \theta^0 + r_i\theta^0 \wedge \theta^i \right).
\label{FXH1}
\end{equation}
If we now write $\rho= \sum_{i=1}^k r_i \theta^i $ and let
\begin{equation}
\eta=\sum _{i=1}^k  \left( \sum_{a=1}^{i} (-X)^{a-1}(r_{i} \theta^0)\wedge \theta^{i-a} \right),
\label{rad1}
\end{equation}
the identity \ref{FXH1} gives
\begin{equation}
X( \eta) = \theta^0 \wedge( \rho ^* +\rho ) 
\label{rad}
\end{equation}

Suppose now $[\omega ] \in H^{1,2}(\CR)$ with representative $\omega=dx \wedge \theta^0 \wedge \rho- dt \wedge \bbeta(\rho)$ with $\rho=r_i\theta^i$ from Theorem \ref{CF1}. Let $\hat \omega = \omega - \frac{1}{2} d_\sH(\eta)$ so that $[\hat \omega]=[\omega]$ and where $\eta $ is given in equation \ref{rad1}.  We  then use  equation  \ref{rad} to replace $X(\eta)$ in the following,
\begin{equation}
\begin{aligned}
\hat \omega  & = dx \wedge \theta^0 \wedge \rho - dt \wedge \bbeta(\rho) - \frac{1}{2}dx \wedge X(\eta) - \frac{1}{2} dt \wedge T(\eta) \\
& = dx \wedge  ( \theta^0 \wedge \rho -\frac{1}{2} X(\eta)) - dt \wedge ( \bbeta(\rho) + \frac{1}{2} T(\eta)) \\
& =  dx \wedge \left(\theta^0 \wedge \rho - \frac{1}{2} ( \theta^0 \wedge \rho +\theta^0 \wedge \rho^* )\right)  - dt \wedge ( \bbeta(\rho)+\frac{1}{2} T(\eta)) \\
& = dx \wedge \theta^0 \wedge \frac{1}{2}( \rho - \rho^*) - dt \wedge (\bbeta(\rho)+\frac{1}{2}T(\eta)).
\end{aligned} 
\label{oh3}
\end{equation}
The representative $\hat \omega$ in \ref{oh3}  satisfies the skew adjoint condition in the theorem with 
$$
\epsilon =\frac{1}{2}(\rho -\rho^*),
$$
while Corollary \ref{CFbeta} shows $\bbeta(\epsilon) = \bbeta(\rho)+\frac{1}{2} T(\eta)$. 

We now show the representative \ref{CFO} unique. Suppose that
\begin{equation}
\omega_{\alpha} = dx \wedge \theta^0 \wedge  \epsilon_\alpha -dt \wedge \check \bbeta(\epsilon_a), \qquad \alpha=1,2
\label{hrhocf}
\end{equation}
where $ \epsilon_\alpha^* = - \epsilon_\alpha$ and $[\omega_1]=[\omega_2]$. This implies
there exists $\xi= \xi_{ab} \theta^a \wedge \theta^b \in \Omega^{0,2}(\CR)$ such that
\begin{equation}
dx \wedge \theta^0 \wedge (\epsilon_1-\epsilon_2) = dx \wedge X(\xi_{ab} \theta^a \wedge \theta^b ).
\label{Unp1}
\end{equation}

Now let $\tilde \epsilon_\alpha= r_{i,\alpha} \vartheta^i \in \Omega^{0,1}(\CJ)$ and $\tilde \xi =\xi_{ab} \vartheta^a \wedge \vartheta^b \in \Omega^{0,2}(\CJ)$ so that $\iota^* \tilde \epsilon_\alpha = \epsilon_\alpha$, $\iota^* \tilde \xi = \xi$. Equation \ref{Unp1} implies
\begin{equation}
dt \wedge dx \wedge \vartheta^0 \wedge (\tilde \epsilon_1 -\tilde \epsilon_2) =dt \wedge [D_x (\tilde \xi) ]=  - d_\sH( dt \wedge \tilde \xi).
\label{dheq}
\end{equation}
Applying the integration by parts operator $I$ (using \ref{IEP}) to equation \ref{dheq} and that $\tilde \epsilon^* _\alpha= -\tilde \epsilon_\alpha$ gives
$$
dt \wedge dx \wedge \theta^0 \wedge (\tilde \epsilon_1 -\tilde \epsilon_2)=0.
$$
Since $\tilde \epsilon_1-\tilde \epsilon_2$ is skew-adjoint, this implies $\tilde \epsilon_1=\tilde \epsilon_2$
and that $\epsilon_1=\epsilon_2$. This implies $\bbeta(\epsilon_1)=\bbeta(\epsilon_2)$ and so $\omega_1=\omega_2$.
\end{proof}

\begin{Corollary}\label{Curep} If $[\omega] \in H^{1,2}(\CR)$ with representative $\omega=dx \wedge \theta^0 \wedge \rho-dt \wedge \bbeta(\rho)$ then the unique representative in Theorem \ref{Urep} has
\begin{equation}
\epsilon = \frac{1}{2} \left( \rho - \rho^*\right).
\end{equation}
\end{Corollary}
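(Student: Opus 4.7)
The plan is to simply extract the explicit formula for $\epsilon$ from the constructive argument already carried out in the proof of Theorem \ref{Urep}, so the corollary becomes a bookkeeping statement rather than a new computation. First I would write $\rho = r_i \theta^i$ and define
\[
\eta = \sum_{i=1}^k \sum_{a=1}^{i} (-X)^{a-1}(r_i \theta^0) \wedge \theta^{i-a},
\]
which is exactly the $(1,1)$-form appearing in equation \ref{rad1}. Then I would form the modified representative $\hat\omega = \omega - \tfrac{1}{2} d_\sH \eta$, which lies in $[\omega]$.

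Next I would invoke the telescoping identity \ref{rad}, namely $X(\eta) = \theta^0 \wedge (\rho^* + \rho)$, to compute the $dx$-component of $\hat\omega$: it equals
\[
dx \wedge \Big(\theta^0 \wedge \rho - \tfrac{1}{2}\theta^0 \wedge (\rho + \rho^*)\Big) = dx \wedge \theta^0 \wedge \tfrac{1}{2}(\rho - \rho^*).
\]
Setting $\epsilon := \tfrac{1}{2}(\rho - \rho^*)$, the involutive property $(\rho^*)^* = \rho$ of the adjoint gives $\epsilon^* = -\epsilon$, so $\hat\omega$ satisfies the skew-adjointness requirement of Theorem \ref{Urep}. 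Corollary \ref{CFbeta} then forces the $dt$-component of $\hat\omega$ to be $\bbeta(\epsilon)$, since both $\hat\omega$ and the canonical representative agree in their horizontal $dx$-part and are $d_\sH$-closed.

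Finally I would appeal to the uniqueness clause of Theorem \ref{Urep}: since $\hat\omega$ is a representative of $[\omega]$ of the canonical form given there, it must coincide with the unique canonical representative, and hence its $\epsilon$ is the $\epsilon$ of Theorem \ref{Urep}. There is essentially no obstacle here because the key identity \ref{rad} and the construction of $\epsilon$ are already in hand from the proof of Theorem \ref{Urep}; the only thing to verify is that the manipulation in \ref{oh3} can be read off without the additional hypotheses that were in force during that proof, which is immediate since the argument only used the existence of the representative $\omega = dx \wedge \theta^0 \wedge \rho - dt \wedge \bbeta(\rho)$ supplied by Theorem \ref{Thmcf2}.
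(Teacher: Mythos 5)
Your proposal is correct and follows essentially the same route as the paper: the paper states Corollary \ref{Curep} without a separate proof precisely because the construction inside the proof of Theorem \ref{Urep} --- the form $\eta$ of equation \ref{rad1}, the telescoping identity \ref{rad}, the modification $\hat\omega = \omega - \tfrac{1}{2} d_\sH \eta$ computed in \ref{oh3}, and the appeal to Corollary \ref{CFbeta} to identify the $dt$-component as $\bbeta(\epsilon)$ --- already yields $\epsilon = \tfrac{1}{2}(\rho - \rho^*)$, and the uniqueness clause of Theorem \ref{Urep} then pins down this representative exactly as you argue. Your closing observation that the manipulation uses nothing beyond the existence of the representative supplied by Theorem \ref{Thmcf2} is also accurate, so the corollary is indeed pure bookkeeping.
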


We now refine Theorem \ref{Thmcf1} and provide a third (non-unique) normal form.

\begin{Theorem} \label{TCF3}  Let  $[\omega] \in H^{1,2}(\CR)$ then there exists a representative $\omega$ such that 
\begin{equation}
\omega= dx \wedge  \theta^0  \wedge d_\sV \AQ   - dt \wedge d_\sV \gamma = d_\sV\left( dx \wedge \theta^0 \cdot  \AQ   - dt \wedge \gamma \right)
\label{eta1}
\end{equation}
where $\AQ$ is a smooth function  on $\CR$ and $\gamma \in \Omega^{0,1}(\CR)$.
\end{Theorem}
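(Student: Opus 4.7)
My plan is to build a $d_\sV$-closed representative of $[\omega]$ by combining Theorem \ref{ZC} with the acyclicity of $d_\sV$, integrate it vertically to land in $\Omega^{1,1}(\CR)$, and then identify the resulting primitive with a class in $H^{1,1}(\CR)$, so that Theorem \ref{Thmcf2} at $s=1$ supplies the canonical form $dx\wedge\theta^0\cdot Q-dt\wedge\gamma$.

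First I would take any representative $\omega$ of $[\omega]$. The anticommutation $d_\sH d_\sV=-d_\sV d_\sH$ together with $d_\sH\omega=0$ makes $d_\sV\omega\in\Omega^{1,3}(\CR)$ horizontally closed, so Theorem \ref{ZC} gives $d_\sV\omega=d_\sH\mu$ for some $\mu\in\Omega^{0,3}(\CR)$. Then $d_\sH(d_\sV\mu)=-d_\sV d_\sH\mu=0$; since $d_\sH=dx\wedge X+dt\wedge T$ on $\Omega^{0,\ast}(\CR)$ this forces $X(d_\sV\mu)=0$, and the injectivity of $X$ on $\Omega^{0,s}(\CR)$ for $s\geq 1$ (proved en route to Theorem \ref{ZC}) yields $d_\sV\mu=0$. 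The vertical Poincar\'e lemma then provides $\tilde\mu\in\Omega^{0,2}(\CR)$ with $\mu=d_\sV\tilde\mu$, and the form $\omega'=\omega+d_\sH\tilde\mu\in[\omega]$ satisfies $d_\sV\omega'=0$.

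Next, vertical Poincar\'e on $\Omega^{1,2}(\CR)$ writes $\omega'=d_\sV\nu$ for some $\nu\in\Omega^{1,1}(\CR)$. The form $d_\sH\nu\in\Omega^{2,1}(\CR)$ is $d_\sV$-closed, since $d_\sV d_\sH\nu=-d_\sH\omega'=0$, hence equals $d_\sV\chi$ for some $\chi=h\,dx\wedge dt\in\Omega^{2,0}(\CR)$. Invoking $H^{2,0}(\CR)=0$, which follows from the contractibility of the base of $\CR\cong\reals\times\EJ$, write $\chi=d_\sH\phi$ with $\phi\in\Omega^{1,0}(\CR)$; replacing $\nu$ by $\nu+d_\sV\phi$ preserves $d_\sV\nu=\omega'$ while making $d_\sH\nu=d_\sV(\chi-d_\sH\phi)=0$. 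Thus $[\nu]\in H^{1,1}(\CR)$, and Theorem \ref{Thmcf2} at $s=1$ supplies a representative $\nu_\ast=dx\wedge\theta^0\cdot Q-dt\wedge\gamma$ of $[\nu]$, so $\nu=\nu_\ast+d_\sH\xi$ for some $\xi\in\Omega^{0,1}(\CR)$.

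Finally, applying $d_\sV$ and using $d_\sV d_\sH=-d_\sH d_\sV$ gives
\begin{equation*}
\omega'=d_\sV\nu=d_\sV\nu_\ast-d_\sH(d_\sV\xi),
\end{equation*}
so that $\omega''=\omega'+d_\sH(d_\sV\xi)\in[\omega]$ equals $d_\sV(dx\wedge\theta^0\cdot Q-dt\wedge\gamma)$, the form required in \ref{eta1}. The main delicate step will be the horizontal-closure adjustment of $\nu$, which rests on $H^{2,0}(\CR)=0$; this fact is standard for scalar evolution equations over a contractible base, and it is exactly what lets the obstruction $d_\sH\nu$ be killed by a $d_\sV$-exact modification of $\nu$ without disturbing $d_\sV\nu=\omega'$.
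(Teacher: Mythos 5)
Your first stage is sound and coincides with the paper's: using Theorem \ref{ZC} to write $d_\sV\omega=d_\sH\mu$, observing $d_\sH(d_\sV\mu)=0$ forces $d_\sV\mu=0$ (injectivity of $X$ on contact forms), and correcting $\omega$ by $d_\sH\tilde\mu$ to get a $d_\sV$-closed representative. The fatal step is your invocation of $H^{2,0}(\CR)=0$. This is false in general: contractibility of the base kills only de Rham cohomology, whereas $H^{2,0}(\CR)=\Omega^{2,0}(\CR)/d_\sH\Omega^{1,0}(\CR)$ is the top horizontal cohomology of the bicomplex, which is not controlled by the topology of the base (already on the free jet space $J^\infty(\reals^2,\reals)$ the analogous space is the space of Lagrangians modulo total divergences, e.g.\ $\tfrac12 u_x^2$ is not a divergence there). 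Indeed the paper is structured around the nontriviality of this space: Lemma \ref{exlam} and Corollary \ref{LamHom} define the map $\Lambda:H^{1,2}(\CR)\to H^{2,0}(\CR)$ precisely because the obstruction $d_\sH\nu=d_\sV\chi$ you encounter cannot in general be removed, and Theorem \ref{kerPi} shows that a $d_\sH$-closed vertical primitive $\nu$ (i.e.\ $[\omega]=d_\sV[\nu]$ with $[\nu]\in H^{1,1}(\CR)$) exists if and only if $\Lambda([\omega])=0$. If your step were valid, $\Lambda$ would vanish identically, Theorem \ref{kerPi} would be vacuous, and the discussion in Example \ref{HD1} (where the triviality of the classes $[\lambda_i]\in H^{2,0}(\CR)$ is explicitly noted as difficult and likely false) would make no sense. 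So your strategy of routing through $H^{1,1}(\CR)$ and Theorem \ref{Thmcf2} at $s=1$ does not merely have a reparable gap --- it cannot work for classes with $\Lambda([\omega])\neq 0$.

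The paper's proof avoids horizontal closure of the primitive altogether. After producing $\tilde\omega=d_\sV\eta$ with $\eta=dx\wedge\alpha-dt\wedge\gamma$ and $\alpha=a_j\theta^j$, $j=0,\ldots,m$, it performs an elementary order-reduction: replacing $\eta$ by $\eta-d_\sH(a_m\theta^{m-1})$ and simultaneously $\tilde\omega$ by $\tilde\omega+d_\sH d_\sV(a_m\theta^{m-1})$ preserves the relation $\hat\omega=d_\sV\hat\eta$ while cancelling the top-order term $a_m\theta^m$ in the $dx$-component (since $X(a_m\theta^{m-1})=X(a_m)\theta^{m-1}+a_m\theta^m$). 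Induction on $m$ reduces $\alpha$ to $\theta^0\cdot Q$, yielding \ref{eta1} with no constraint on $d_\sH\eta$ --- and in fact $d_\sH\eta=d_\sV\lambda$ with $[\lambda]\in H^{2,0}(\CR)$ generally nontrivial. To repair your argument, replace the entire second half (from ``Invoking $H^{2,0}(\CR)=0$'' onward) with this order-reduction induction applied directly to the $dx$-component of the vertical primitive.
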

\begin{proof}  We start with equation \ref{CF1} in Theorem \ref{Thmcf1}
 where a representative for $[\omega]$ can be written
$$
\omega =dx \wedge \theta \wedge  \rho - dt \wedge   \beta ,
$$
where w.l.o.g. $\rho = r_a\theta^a , a=1,\ldots,m$. Now $d_\sV \omega \in H^{1,3}(\CR)$, and so by Theorem \ref{ZC} there exists $\xi\in \Omega^{0,3}(\CR)$ such that
$$
d_\sV  \omega  = d_\sH \xi.
$$
Writing $\xi = A_{ijk} \theta^i \wedge \theta^j\wedge 
\theta^k $, this gives
\begin{equation}
dx \wedge X( A_{ijk} \theta^i \wedge \theta^j\wedge \theta^k)   = dx \wedge \theta^0 \wedge (d_\sV \rho).
\label{fix}
\end{equation}
We now show $\xi$ has the form
\begin{equation}
\xi = A_i \theta^ i \wedge \theta ^1 \wedge \theta^0, \quad i=0,\ldots, m-1
\label{cdxi}
\end{equation}
Suppose there is a term in $\xi$ with $\theta^{M_1}\wedge \theta ^{M_2} \wedge \theta ^{M_3}$, $1 \leq M_1 < M_2 < M_3$, and assume we have the one with the highest $M_3$. On the left
side of \ref{fix} there will be 
$$
dx \wedge X(  \theta^{M_1}\wedge \theta ^{M_2} \wedge \theta ^{M_3})
$$
which contains $dx \wedge \theta^{M_1}\wedge \theta ^{M_2} \wedge \theta ^{M_3+1}$, which
can't occur on the right side since there is no $\theta^0$.  Suppose now that there are terms in $\xi$ of the form $\theta^0 \wedge \theta ^{M_2} \wedge \theta ^{M_3}$ 
with $1<M_2 < M_3$. Consider the maximal $M_3$, and again 
$$
dx \wedge X(  \theta^0 \wedge \theta ^{M_2} \wedge \theta ^{M_3}),
$$
will contain a term   $dx \wedge \theta^0 \wedge \theta ^{M_2} \wedge \theta ^{M_3+1}$ which can't occur on the right hand side of equation \ref{fix}. This shows equation \ref{cdxi}.

Now
$$
d_\sH d_\sV \xi = 0
$$
but since $\xi \in \Omega^{0,3}(\CR)$, this implies $d_\sV\xi = 0$. We apply vertical exactness, and let $\zeta \in \Omega^{0,2}(\CR)$ be such that
$ d_\sV \zeta = \xi $.  By the vertical homotopy on $\xi$ we may assume $\zeta= A \theta^0\wedge \theta^1$
Finally, we let 
\begin{equation}
\tilde \omega = \omega + d_\sH \zeta= dx \wedge  \theta^0 \wedge \tilde \rho - dt \wedge \tilde \beta ,
\label{to}
\end{equation}
where $\tilde \rho =  \rho +X(A\theta^1 \wedge)$ and $\tilde \beta = \beta - T(\zeta)$. Therefore, 
$$
d_\sV\tilde \omega = d_\sV \omega + d_\sV d_\sH \zeta =d_\sV \tilde \omega -d_\sH d_\sV \zeta =d_\sV \tilde \omega -d_\sH \xi= 0.
$$  
This proves there is a representative $\tilde \omega$ for $[\omega]$ with $d_\sV\tilde \omega=0$.

Again we use  $d_\sV$ exactness to find $\eta\in \Omega^{1,1}(\CR)$ such that,
\begin{equation}
\tilde \omega = d_\sV \eta 
\label{dVo}
\end{equation}
where 
\begin{equation}
\eta  =  dx\wedge \alpha  -dt \wedge \gamma ,\qquad \alpha,\gamma \in \Omega^{0,1}(\CR).
\label{dVo2}
\end{equation}
Writing $\alpha= a_j \theta^j$, $j=0,\ldots,m$, equation \ref{dVo} and \ref{dVo2} give
$$
\theta^0 \wedge \tilde \rho = - d_\sV ( a_j \theta^j).
$$
We now modify $\eta$ in equation \ref{dVo2} and the representative $\tilde \omega$ for $[\omega]$ in equation \ref{to}  by
$$
\hat \eta = \eta - d_\sH ( a_m \theta^ {m-1} ),\qquad \hat \omega = \tilde \omega + d_\sH d_\sV( a_m \theta^{m-1}), \quad {\rm no \ summation}
$$
so that $\hat \omega = d_\sV \hat \eta$. In particular we note
$$
\hat \eta = dx \wedge (\hat a_j \theta ^j) - dt \wedge \hat \gamma \qquad j= 0, \ldots m-1.
$$
Continuing by induction, there exists a representative $\bar \omega$ for $[\omega]$ and an $\bar \eta \in \Omega^{1,1}(\CR)$, where $\bar \omega = d_\sV \bar \eta$ and
\begin{equation}
\bar \eta  =  dx\wedge \theta^0 \cdot  \AQ  -dt \wedge \bar \gamma 
\label{cfe2}
\end{equation}
where $\AQ$ is a smooth function on $\CR$. This also implies
$$
\bar \omega = dx \wedge \theta^0 \wedge d_\sV Q - dt \wedge d_\sV \bar \gamma .
$$
\end{proof}

Combining Theorem \ref{TCF3} and Corollary \ref{Curep} gives the following.

\begin{Corollary}\label{Curep2} If $[\omega] \in H^{1,2}(\CR)$ with representative $\omega=d_\sV( dx \wedge \theta^0 \cdot Q- dt \wedge \gamma)$ from Theorem \ref{TCF3} then the unique representative in Theorem \ref{Urep} is determined by
\begin{equation}
\epsilon = \frac{1}{2} \left( \CL_Q - \CL_Q^*\right)\theta^0
\label{epsQ}
\end{equation}
where $ \CL_Q = Q_i X^i $. 
\end{Corollary}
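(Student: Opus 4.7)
The plan is to rerun the antisymmetrization procedure from the proof of Theorem~\ref{Urep} with the specific characteristic form $\rho = d_\sV Q$ coming from the Theorem~\ref{TCF3} representative, and to conclude by the uniqueness half of Theorem~\ref{Urep}. Two elementary identifications do most of the work. First, using the structure equation $X(\theta^i) = \theta^{i+1}$ (so $X^i(\theta^0) = \theta^i$) together with $\CL_Q = Q_i X^i$,
$$
\rho \;=\; d_\sV Q \;=\; Q_i \theta^i \;=\; Q_i X^i(\theta^0) \;=\; \CL_Q\,\theta^0.
$$
Second, by the adjoint formula from Remark~\ref{thm_rmk}, $(r_i X^i)^*(\alpha) = (-X)^i(r_i \alpha)$, so
$$
\rho^* \;=\; (-X)^i(Q_i \theta^0) \;=\; \CL_Q^*\,\theta^0.
$$

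Given these, I would form $\eta$ as in equation~\ref{rad1} with $r_i = Q_i$ and set $\hat\omega = \omega - \tfrac{1}{2}d_\sH\eta$, which represents the same cohomology class. Using the telescoping identity~\ref{rad}, $X(\eta) = \theta^0\wedge(\rho + \rho^*)$, the horizontal part of $\hat\omega$ simplifies:
$$
\theta^0\wedge\rho - \tfrac{1}{2}X(\eta) \;=\; \theta^0\wedge\rho - \tfrac{1}{2}\theta^0\wedge(\rho + \rho^*) \;=\; \tfrac{1}{2}\,\theta^0\wedge(\rho-\rho^*).
$$
Thus $\hat\omega$ has the form $dx\wedge\theta^0\wedge\epsilon - dt\wedge\beta$ with $\epsilon = \tfrac{1}{2}(\rho-\rho^*) = \tfrac{1}{2}(\CL_Q - \CL_Q^*)\theta^0$. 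By construction $\epsilon^* = -\epsilon$, so Corollary~\ref{CFbeta} automatically forces $\beta = \bbeta(\epsilon)$, placing $\hat\omega$ in the canonical form prescribed by Theorem~\ref{Urep}. The uniqueness part of Theorem~\ref{Urep} then identifies $\hat\omega$ as the unique canonical representative of $[\omega]$, giving $\epsilon = \tfrac{1}{2}(\CL_Q - \CL_Q^*)\theta^0$ as claimed.

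The argument is essentially a direct specialization of the proof of Theorem~\ref{Urep} to the case $\rho = d_\sV Q$; the only new content is the two identifications $\rho = \CL_Q\theta^0$ and $\rho^* = \CL_Q^*\theta^0$, which convert the abstract antisymmetrization $\tfrac{1}{2}(\rho-\rho^*)$ into the operator-theoretic expression $\tfrac{1}{2}(\CL_Q-\CL_Q^*)\theta^0$. The main bookkeeping obstacle is keeping the signs straight in equations~\ref{rad1} and~\ref{rad} and in the adjoint formula for operators on $\CR$, but once the two identifications above are in place the rest is formal.
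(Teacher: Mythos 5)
Your proposal is correct and is essentially the paper's argument: the paper proves this corollary by simply combining Theorem \ref{TCF3} with Corollary \ref{Curep} (which records the antisymmetrization $\epsilon = \tfrac{1}{2}(\rho-\rho^*)$ extracted from the proof of Theorem \ref{Urep}), and your two identifications $\rho = d_\sV Q = \CL_Q\theta^0$ and $\rho^* = \CL_Q^*\theta^0$ are exactly the translation the paper intends. The only difference is that you inline the $\eta$-based telescoping computation from the proof of Theorem \ref{Urep} rather than citing Corollary \ref{Curep} directly; note only the trivial bookkeeping point that the sum defining $\eta$ in equation \ref{rad1} starts at $i=1$, which is harmless since the $Q_0\theta^0$ term drops out of both $\theta^0\wedge\rho$ and $\rho-\rho^*$.
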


The snake lemma from the variational bicomplex is the following.

\begin{Lemma} \label{exlam} Let $\omega \in \Omega^{1,2}(\CR)$ and $d_\sH\omega=0$, $d_\sV\omega=0$. Let $\eta \in \Omega^{1,1}(\CR)$ such that $d_\sV \eta= \omega$. Then
there exists $\lambda =L \, dt \wedge dx \in \Omega^{2,0}(\CR)$  such that
$d_\sH \eta = d_\sV \lambda$.
\end{Lemma}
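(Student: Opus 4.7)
The plan is to exploit the anti-commutativity $d_\sH d_\sV = -d_\sV d_\sH$ (equation \ref{dHP2}) together with the vertical exactness of the bicomplex on $\CR$ noted in Section \ref{prelim}. Both are supplied by the paper, so the argument reduces to a short diagram chase.

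First I would verify that $d_\sH \eta$ is vertically closed. Since $d_\sV \eta = \omega$ by hypothesis, and $d_\sH \omega = 0$,
\begin{equation*}
d_\sV ( d_\sH \eta ) = - d_\sH ( d_\sV \eta ) = - d_\sH \omega = 0 .
\end{equation*}
Thus $d_\sH \eta \in \Omega^{2,1}(\CR)$ is a $d_\sV$-closed form of vertical degree one.

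Next I would invoke the triviality of the vertical cohomology on $\CR$ in positive vertical degree (the DeRham homotopy argument in the $u_i$-variables mentioned just after equation \ref{SE_c}). Applied to $d_\sH \eta$, this produces $\lambda \in \Omega^{2,0}(\CR)$ with $d_\sV \lambda = d_\sH \eta$. Finally, since any element of $\Omega^{2,0}(\CR)$ is of the form $L\, dt \wedge dx$ with $L \in C^\infty(\CR)$, the form $\lambda$ automatically has the shape demanded by the statement.

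There is no real obstacle here: the lemma is essentially a formal consequence of the two structural facts $d_\sH d_\sV + d_\sV d_\sH = 0$ and vertical exactness. The only point that requires a moment's care is ensuring one is in positive vertical degree when applying the vertical Poincar\'e lemma, which is guaranteed since $d_\sH \eta \in \Omega^{2,1}(\CR)$.
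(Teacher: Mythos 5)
Your proposal is correct and follows essentially the same route as the paper: the paper's proof likewise computes $d_\sV d_\sH \eta = -d_\sH d_\sV \eta = -d_\sH \omega = 0$ and then invokes vertical exactness of the bicomplex to produce $\lambda$. Your added remark that any $\lambda \in \Omega^{2,0}(\CR)$ automatically has the form $L\, dt \wedge dx$ is a point the paper leaves implicit, but otherwise the arguments coincide.
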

\begin{proof}  We have
$$
d_\sV d_\sH \eta= - d_\sH d_\sV \eta= -d_\sH \omega = 0,
$$
and the vertical exactness of the bicomplex immediately implies the lemma.
\end{proof}
The relationship between $\omega$, $\lambda$ and $\eta$ in Lemma \ref{exlam} is represented by the diagram,
\begin{equation*}
\begin{gathered}
\begindc{\commdiag}[30]
\obj(0, 25)[Z]{$0$}
\obj(0, 12)[KN]{$\omega$}
\obj(0, 0)[HD]{$ \eta $\,}
\obj(15, 0)[PHD]{$d_\sH \eta $}
\mor{KN}{Z}{$ d_\sV$}[\atright, \solidarrow]
\mor{HD}{KN}{$ d_\sV$}[\atright, \solidarrow]
\mor{HD}{PHD}{$d_\sH$}[\atright, \solidarrow]
\obj(15, -12)[PL]{$\lambda$}
\mor{PL}{PHD}{$d_\sV$}[\atright, \solidarrow]
\enddc
\end{gathered}
\end{equation*}


\begin{Corollary}\label{LamHom} Let $[\omega] \in H^{1,2}(\CR)$ and let $\omega $ be the $d_\sV$ closed representative from equation \ref{eta1}, and let $\lambda \in \Omega^{2,0}(\CR)$ be as in Lemma \ref{exlam}, so that $d_\sH \eta = d_\sV \lambda$. 
The  linear map $\Lambda :H^{1,2}(\CR) \to H^{2,0}(\CR)$ given by 
\begin{equation}
\Lambda([\omega]) = [\lambda]
\label{defLambda}
\end{equation} 
is well defined.
\end{Corollary}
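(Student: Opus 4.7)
The strategy is to verify that $[\lambda] \in H^{2,0}(\CR)$ depends only on $[\omega] \in H^{1,2}(\CR)$, by examining the three sources of ambiguity in its construction: the choice of $\lambda$ given $d_\sH\eta$, the choice of $\eta$ given $\omega$, and the choice of $d_\sV$-closed representative $\omega$ of the class $[\omega]$. The first two are routine. Any $d_\sV$-closed element of $\Omega^{2,0}(\CR)$ has the form $L(t,x)\,dt\wedge dx$, which is $d_\sH$-exact via $L(t,x)\,dt\wedge dx = d_\sH\!\left(B(t,x)\,dx\right)$ for any $B$ with $\partial_t B = L$; so the ambiguity in $\lambda$ modulo $d_\sH \Omega^{1,0}(\CR)$ is harmless. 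Similarly, if $\eta' - \eta$ is $d_\sV$-closed, vertical exactness of the bicomplex on $\CR$ produces $\mu \in \Omega^{1,0}(\CR)$ with $\eta' - \eta = d_\sV\mu$, and the identity $d_\sH d_\sV = -d_\sV d_\sH$ yields $d_\sH\eta' - d_\sH\eta = -d_\sV d_\sH\mu$, so $\lambda$ changes by $-d_\sH\mu$, which is $d_\sH$-exact.

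The crux is independence from the $d_\sV$-closed representative. If $\omega_0, \omega_1$ are two such representatives with $\omega_1 - \omega_0 = d_\sH\sigma$ for some $\sigma \in \Omega^{0,2}(\CR)$, then the $d_\sV$-closedness of both gives $d_\sH d_\sV \sigma = 0$, so $d_\sV\sigma \in \Omega^{0,3}(\CR)$ is $d_\sH$-closed. The hard part is to show $d_\sV\sigma$ is in fact zero. I will establish $H^{0,s}(\CR) = 0$ for $s \geq 1$ below; granting this, $d_\sV\sigma = 0$, and vertical exactness produces $\tau \in \Omega^{0,1}(\CR)$ with $\sigma = d_\sV\tau$. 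Setting $\zeta := -d_\sH\tau$ yields $d_\sV\zeta = d_\sH d_\sV\tau = d_\sH\sigma$, so $\eta_1 := \eta_0 + \zeta$ satisfies $d_\sV\eta_1 = \omega_1$, while $d_\sH\zeta = 0$ forces $d_\sH\eta_1 = d_\sH\eta_0$; hence the same $\lambda$ suffices for both representatives, giving $[\lambda_0] = [\lambda_1]$ in $H^{2,0}(\CR)$.

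To establish $H^{0,s}(\CR) = 0$ for $s \geq 1$, I adapt the top-order analysis from Theorem \ref{ZC}. Suppose $\rho = A_I \theta^I \in \Omega^{0,s}(\CR)$ is a finite sum satisfying $d_\sH\rho = 0$; in particular $X(\rho) = 0$. Using $X(\theta^i) = \theta^{i+1}$, let $M$ denote the largest $\theta^i$-index appearing in $\rho$. The only way $\theta^{M+1}$ can appear in any $X(\theta^I)$ is by incrementing an index $i_k = M$, which by maximality must be $i_s$; hence the coefficient of $\theta^{i_1}\wedge\cdots\wedge\theta^{i_{s-1}}\wedge\theta^{M+1}$ in $X(\rho)$ equals exactly $A_{i_1\ldots i_{s-1} M}$, and this must vanish. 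Iteration on the maximum remaining index forces every $A_I = 0$, completing the argument.
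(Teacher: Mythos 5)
Your proof is correct and takes essentially the same route as the paper's: both rest on vertical exactness of the bicomplex, the fact that a $d_\sH$-closed form in $\Omega^{0,s}(\CR)$ must vanish, and the triviality of closed two-forms on the $(t,x)$ base (your explicit primitive $B\,dx$ with $\partial_t B = L$ is just the paper's appeal to the de Rham cohomology of $\reals^2$ made concrete). The only differences are organizational: you factor the well-definedness into three separate ambiguities and supply your own top-order proof that $d_\sH\mu=0$ forces $\mu=0$ for $\mu\in\Omega^{0,s}(\CR)$ --- a fact the paper asserts parenthetically --- whereas the paper absorbs all three ambiguities into a single chain ending in $\lambda_1-\lambda_2=d_\sH\kappa+\mu$ with $\mu\in\Omega^2(\reals^2)$.
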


\begin{proof}  Suppose $\omega_a \in \Omega^{1,2}(\CR), a=1,2$ where $[\omega_1]=[\omega_2]$ and that $\omega_a=d_\sV \eta_a, a=1,2$ are two $d_\sV$ closed representatives. Let $\lambda_a\in \Omega^{2,0}(\CR)$ satisfy $d_\sH\eta_a = d_\sV \lambda_a, a=1,2$. Since $[\omega_1]=[\omega_2]$, there exists $\xi \in \Omega^{0,2}(\CR)$ such that,
$$
d_\sV(\eta_1 -\eta_2) = d_\sH \xi
$$
This implies $ d_\sV d_\sH \xi = -d_\sH d_\sV \xi = 0$ which then implies $d_\sV \xi = 0$ since $H^{0,2}(\CR) = 0$ (or $d_\sH \mu =0, \mu \in \Omega^{0,s}(\CR) $ implies $\mu=0$). Therefore there exists $\phi \in \Omega^{0,1}(\CR)$ such that
$$
\xi = d_\sV \phi
$$
which implies
$$
d_\sV (\eta_1 -\eta_2) = d_\sH( d_\sV \phi) 
$$
and that
$$
d_\sV( \eta_1- \eta_2 + d_\sH \phi) = 0.
$$
By vertical exactness there exists $\kappa\in \Omega^{1,1}(\CR)$ such that
$$
\eta_1 - \eta_2 +d_\sH \phi = d_\sV \kappa
$$
Taking $d_\sH$ of this equation gives
$$
\begin{aligned}
d_\sH \eta_1- d_\sH \eta_2  &= d_\sH d_\sV \kappa \\
d_\sV \lambda_1 - d_\sV \lambda_2 & = - d_\sV d_\sH \kappa
\end{aligned}
$$
so
$$
d_\sV( \lambda_1 - \lambda_2 + d_\sH \kappa) = 0.
$$
Therefore
\begin{equation}
\lambda_1 - \lambda_2 = d_\sH \kappa + \mu
\label{dellam}
\end{equation}
where $\mu \in \Omega^2(\reals^2)$. The deRham cohomology of $\reals^2$ is trivial so $\mu = d \alpha= d_\sH\alpha, \alpha \in \Omega^1(\reals^2)$. Therefore equation \ref{dellam} becomes
$$
\lambda_1 - \lambda_2 = d_\sH (\kappa +\alpha)
$$
and $[\lambda _1]=[\lambda_2]\in H^{2,0}(\CR)$. 
\end{proof}
The relevance of the kernel of $\Lambda$ is given in Theorem \ref{kerPi}.

\section{Variational Operators and $H^{1,2}(\CR)$}\label{VOH12}

A scalar evolution equation with  $\Delta =u_t- K(t,x,u,u_x,\ldots, u_n)$ is said to admit a variational operator  of order $k$ if there exists a differential operator
$$
{\CE}=\sum_{i=0}^k r_i(t, x, u, u_x,u_{xx}, \ldots)  D_x^i 
$$ 
$r_k \neq 0$, and a function $L \in C^\infty(\reals^2, \reals)$ such that,
\begin{equation}
{\CE}( \Delta ) = \Eop\left( \, L(t,x,u,u_t,u_x,u_{tt},u_{tx},u_{xx},\ldots)\ \right)
\label{VOPe}
\end{equation}
where $\Eop( L)$ is the Euler-Lagrange expression of $ L\in C^\infty(J^\infty(\reals^2,\reals))$. We will prove a generalization  of Theorem 2.6 in \cite{anderson-thompson:1992a} which relates the existence of a multiplier (or zero order operator) to the cohomology of $\Delta$. We start with the following theorem.

\begin{Theorem}\label{e_o}  Let ${ \CE}=r_i D_x^i$ be a variational operator for $\Delta=u_t-K(t, x, u, u_x, \ldots , u_n)$ with Lagrangian $ L$ satisfying \ref{VOPe}. Then there exists $ \eta \in \Omega^{1,1}(J^\infty(\reals^2,\reals))$ such that
\begin{equation}
d_\sV (L dt \wedge dx) =dt \wedge dx \wedge \vartheta^0 \cdot   {\CE}( \Delta)+ d_\sH  \eta.
\label{dVE2}
\end{equation}
With $\iota:\CR \to J^\infty(\reals^2,\reals)$ in equation \ref{defiota} let, 
$$
\omega  = d_\sV(\, \iota^* \eta\, ) \in \Omega^{1,2}(\CR).
$$
Then $d_\sH\omega=0$. 
\end{Theorem}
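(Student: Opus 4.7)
The plan is to prove the two assertions in sequence: first obtain $\eta$ via the integration-by-parts identity \ref{IEP2}, then deduce $d_\sH \omega = 0$ from $d_\sV^2 = 0$ together with the anticommutation in \ref{dHP2}.

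For the first claim, I would apply the decomposition $\alpha = I(\alpha) + d_\sH \eta$ from \ref{IEP2} to the $(2,1)$-form $\alpha = d_\sV(L\, dt \wedge dx) = d_\sV L \wedge dt \wedge dx$ on $\CJ$. Computing $I(\alpha)$ directly from \ref{IE2}, and using that the vertical vector fields $\partial_{u_{a,i}}$ annihilate $dt$ and $dx$, one gets
\[
I(\alpha) = \vartheta^0 \wedge \sum_{a,i \geq 0}(-1)^{i+a}D_x^i D_t^a\bigl(\partial_{u_{a,i}} L\bigr)\, dt \wedge dx = \Eop(L)\, \vartheta^0 \wedge dt \wedge dx.
\]
Reordering $\vartheta^0 \wedge dt \wedge dx = dt \wedge dx \wedge \vartheta^0$ by two transpositions and substituting the hypothesis $\CE(\Delta) = \Eop(L)$ rewrites the right-hand side as $dt \wedge dx \wedge \vartheta^0 \cdot \CE(\Delta)$, producing equation \ref{dVE2} with the $\eta \in \Omega^{1,1}(\CJ)$ furnished by \ref{IEP2}.

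For the second claim, I would pull \ref{dVE2} back by $\iota$. Since $\CR$ is the infinite prolongation of $\Delta = 0$, every total derivative $D_x^i \Delta$ vanishes on $\CR$, so $\iota^*\bigl(\CE(\Delta)\bigr) = 0$; using also that $\iota^*$ commutes with $d_\sH$ and $d_\sV$ separately (the embedding respects the horizontal/vertical bigrading), the pulled-back identity collapses to
\[
d_\sV\bigl(\iota^*(L\, dt \wedge dx)\bigr) = d_\sH(\iota^* \eta).
\]
Applying $d_\sV$ to both sides and using $d_\sV^2 = 0$ yields $d_\sV d_\sH(\iota^* \eta) = 0$; the anticommutation \ref{dHP2} then converts this into $-d_\sH d_\sV(\iota^* \eta) = -d_\sH \omega$, so $d_\sH \omega = 0$.

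The argument is essentially formal bookkeeping inside the bicomplex; the only items needing genuine care are the sign tracking in the interior Euler step and the routine observation that $\iota^*(D_x^i \Delta) = 0$ for all $i$, which is immediate from the definition of $\CR$. I do not anticipate a real obstacle beyond those checks.
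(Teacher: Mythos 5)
Your proposal is correct and takes essentially the same route as the paper: the ``standard formula in the calculus of variations'' that the paper cites to obtain \ref{dVE2} is precisely the decomposition $\alpha = I(\alpha)+d_\sH\eta$ of \ref{IEP2} that you derive explicitly from \ref{IE2}, so your first half merely unpacks the paper's citation. The second half --- pulling back by $\iota$ so that $\CE(\Delta)=r_iD_x^i\Delta$ vanishes on $\CR$, then using $d_\sV^2=0$ and the anticommutation \ref{dHP2} to conclude $d_\sH\omega=0$ --- matches the paper's argument step for step.
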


\begin{proof}  Suppose $\CE$ and $L$ are given satisfying \ref{VOPe}, then using the standard formula in the calculus of variations (for example equation (3.2) in \cite{anderson:1992a}), we have on account of \ref{VOPe}
\begin{equation}
\begin{aligned}
d_\sV (L dt \wedge dx ) & = dt \wedge dx \wedge \vartheta^0 \cdot \Eop(L)+ d_\sH \eta \\
&= dt \wedge dx \wedge \vartheta^0 \cdot \CE(\Delta)+ d_\sH \eta
\end{aligned} 
\label{dVL}
\end{equation}
which shows \ref{dVE2}.

By applying $\iota^*$ to equation \ref{dVE2} we have
\begin{equation}
 d_\sV  \iota^* \lambda = d_\sH \iota^*  \eta.
\label{dVlam}
\end{equation}
Letting $\omega = d_\sV \iota^*\eta$, we compute $d_\sH \omega$ using equation \ref{dVlam} and get
$$
d_\sH \omega = d_\sH d_\sV \iota^* \eta = -d_\sV d_\sH\iota^*  \eta= -d_\sV (\iota^* d_\sV \lambda)) = -d_\sV^2(\iota^*\lambda) = 0.
$$ 
Therefore $d_\sH \omega=0$. 
\end{proof}

A formula for  $\omega$ in terms of $ \CE$ in Theorem \ref{e_o} is given in Theorem \ref{rho_e} just below. Before giving the theorem we note the following property of variational operators for evolutions equations.

\begin{Lemma} If $\Delta = u_t -K(t,x,u,u_x,\ldots, u_n)$ admits the $k^{th}$ order variational operator $
{\CE}=  r_i(t,x,u,u_x,\ldots) D_x^i,\ i =0, \ldots,k$  then ${\CE}$ is skew adjoint.
\end{Lemma}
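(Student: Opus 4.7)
The plan is to use the Helmholtz condition for a single Euler--Lagrange expression: since $\CE(\Delta) = \Eop(L)$, its Fr\'echet derivative ${\bf F}_{\CE(\Delta)}$ must be self-adjoint as a total differential operator on $J^\infty(\reals^2,\reals)$. First I would apply Leibniz to get
\begin{equation*}
{\bf F}_{\CE(\Delta)}(\phi) \;=\; \sum_{i=0}^k {\bf F}_{r_i}(\phi)\,D_x^i(\Delta) \;+\; \CE\bigl({\bf F}_\Delta(\phi)\bigr),
\end{equation*}
and pull back by $\iota$ from \ref{defiota}. Every $D_x^i(\Delta)$, hence every term of the first sum and of its formal adjoint, vanishes on $\CR$, so only the restriction of the second piece survives and the Helmholtz identity becomes, as operators on $\CR$,
\begin{equation*}
\bar\CE\circ\CL_\Delta \;=\; \CL_\Delta^*\circ\bar\CE^*,
\end{equation*}
where $\bar\CE = r_i X^i$ is the projection of $\CE$ described in Remark~\ref{thm_rmk}.

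Next I would extract skew-adjointness by isolating the top $T$-degree of this identity. Writing $\CL_\Delta = T - A$ and $\CL_\Delta^* = -T - A^*$ with $A = \sum K_i X^i$, both $\bar\CE,\bar\CE^*$ and $A,A^*$ are of pure degree zero in $T$. Using $[T,X]=0$ together with $T\circ f = f\circ T + T(f)$ to push every $T$ to the right of the coefficients, the degree-one-in-$T$ component of the identity collapses to
\begin{equation*}
\bar\CE\circ T \;=\; -\bar\CE^*\circ T,
\end{equation*}
with all commutator corrections landing safely at $T$-degree zero alongside the $\bar\CE\circ A$ and $A^*\circ\bar\CE^*$ terms. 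Since $T$ has locally surjective image on $C^\infty(\CR)$ (the equation $T\psi = f$ is a first-order ODE in $t$), the above identity forces $\bar\CE + \bar\CE^* = 0$, and by Remark~\ref{thm_rmk} the lift $\CE$ itself then satisfies $\CE^* = -\CE$.

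The main obstacle I anticipate is the $T$-degree bookkeeping in the second step: one must verify carefully that the commutator corrections produced when $T$ is moved past the coefficients of $\bar\CE^*$ do not leak into the degree-one comparison and instead balance cleanly against $\bar\CE\circ A$ and $A^*\circ\bar\CE^*$ at degree zero. Once that is organized, the skew-adjointness of $\CE$ is read off from the leading $T$-coefficient alone, with no need to analyze the Lagrangian $L$ or the detailed structure of the lower-order coefficients $r_0,\ldots,r_{k-1}$.
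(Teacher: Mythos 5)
Your proposal is correct, but it travels a genuinely different road than the paper. The paper never restricts to $\CR$ and never invokes the Helmholtz condition by name: it applies $I \circ d_\sV$ to the identity \ref{dVE2} on the free jet space $\CJ$, sets $\kappa = d_\sV(\CE(\Delta))$, and computes $I(dt\wedge dx \wedge \vartheta^0\wedge\kappa)$ modulo the spatial contact forms $\{\vartheta^j\}_{j\geq 0}$; the only surviving contributions are the $\vartheta_t$-terms, yielding $\vartheta^0\wedge\bigl(\CE(\vartheta_t)+\CE^*(\vartheta_t)\bigr)\equiv 0$ and hence $\CE^*=-\CE$. You instead use the classical Fr\'echet--Helmholtz criterion ${\bf F}_{\CE(\Delta)}={\bf F}_{\CE(\Delta)}^*$, kill the $D_x^i(\Delta)$-terms by pulling back to $\CR$ (legitimate, since $\iota^*$ is surjective on functions and every term of the adjoint of $\sum_i D_x^i(\Delta){\bf F}_{r_i}$ still carries a total derivative of $\Delta$, exactly as in \ref{JE1}), and then compare $T$-degrees in $\bar\CE\circ\CL_\Delta=\CL_\Delta^*\circ\bar\CE^*$. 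These are two avatars of one mechanism: the paper's ``mod $\{\vartheta^j\}$'' bookkeeping is precisely your $T$-degree filtration, with $\vartheta_t$ playing the role of the symbol of $T$. What the paper's route buys is that the operator $I$ automates the bookkeeping you must do by hand; what your route buys is transparency (skew-adjointness is read off the coefficient of the pure time-derivative direction) using only classical machinery. One soft spot, which you flagged yourself: surjectivity of $T$ alone does not close the argument, since from $(\bar\CE+\bar\CE^*)\circ T = D$ with $D$ of $T$-degree zero you would still need $D$ to annihilate $\ker T$. The clean fix is to evaluate on $\psi = t\phi$ with $\phi$ independent of $t$: since $X(t)=0$, one gets $B(T(t\phi)) = B\phi + t\,B(T\phi)$ for any $X$-differential operator $B$, and comparing with the evaluation on $\phi$ itself both proves the uniqueness of the $T$-degree decomposition (so the commutator corrections, whose coefficients are $T(r_i)$, genuinely cannot leak into degree one) and directly yields $(\bar\CE+\bar\CE^*)\phi=0$ for all $t$-independent $\phi$, which kills $\bar\CE+\bar\CE^*$ by jet surjectivity. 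With that lemma inserted, your proof is complete.
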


\begin{proof}  Suppose ${\CE}( u_t -K) = \Eop(L)$ then applying $I \circ d_\sV $ to equation \ref{dVE2} using  $d^2_\sV=0$ along with the property \ref{IEP3} for $I$, we have
\begin{equation}
I \circ d_\sV\left(\ dt \wedge dx\wedge \vartheta^0 \cdot {\CE}( \Delta )    \ \right)=0.
\label{IdVZ}
\end{equation}
With $\Delta = u_t -K$ let
\begin{equation}
\kappa =d_\sV\left(   \cdot {{\CE}}( \Delta )\right) =
  r_i D_x^iD_t(\vartheta^0)  +u_{t,i} d_\sV r_i  - d_\sV( r_i D_x^i(K)) .
\label{om2}
\end{equation}
so that condition \ref{IdVZ} gives
\begin{equation}
2 I(dt \wedge dx \wedge  \vartheta^0 \wedge \kappa) = dt \wedge dx \wedge  \vartheta^0 \wedge \left( \kappa \\
-  \sum_{(a,j)\neq(0,0)}^\infty  (-1)^{j+a} D_x^jD_t^a( \vartheta^0 \cdot \partial_{u_{a,j}} \hook \kappa) \right)
\label{IdVp}
\end{equation}
In the term $\partial_{u_{a,j}} \hook \kappa$ where $\kappa$ is given in equation \ref{om2}
we note that $\partial_{u_{a,j}}( \tr_j)=0, \partial_{u_{a,j}}(K)=0, a\geq 1,j\geq 0$. Therefore
the only possible non-zero terms in the summation term in equation \ref{IdVp}  with $\partial_{u_{a,j}}\hook $ with $ a\geq 1,j\geq 0$ satisfy
\begin{equation}
\begin{aligned}
-D_t (\partial_{u_t} \hook \kappa)&= -D_t( r_0 \vartheta^0) &\equiv&\  - r_0 \vartheta_t   \  & & \mod \{ \, \vartheta^j \, \}_{j \geq 0} \\
 D_xD_t (\partial_{u_{t,1}}  \hook \kappa)&=D_xD_t( r_1 \vartheta^0) &\equiv &\ D_x(  r_1 \theta_t)  \ & & \mod \{ \, \vartheta^j \, \}_{j\geq 0} \\
-(-1)^k D_x^{k}D_t(\partial_{u_{t,1}}  \hook \kappa)&=-(-1)^k D_x^k D_t( r_k \vartheta^0)& \equiv &\ (-1)^k D_x^k( r_k \theta_t) \  & & \mod \{ \, \vartheta^j \, \}_{j\geq 0} .
\end{aligned}
\label{exIntE}
\end{equation}
Writing the condition $I(dt \wedge dx \wedge \vartheta^0 \wedge \kappa) \mod \{ \vartheta^j\}_{j\geq 0} $ using equation \ref{IdVp} and \ref{exIntE} gives 
\begin{equation}
\begin{aligned} 
2I(dt \wedge dx \wedge \vartheta^0 \wedge \kappa) & \equiv dt \wedge dx \wedge\vartheta^0 \wedge \left(  r_i D_x^i\vartheta_t +
\sum_{i=0}^k  (-D_x)^i(r_i  \vartheta_t)\right) & & \mod \{ \vartheta^j \}_{j\geq 0}  \\
& \equiv dt \wedge dx \wedge \vartheta^0 \wedge \left(\ {\CE}(\vartheta_t) + {\CE}^*(\vartheta_t) \ \right)  & & \mod \{ \vartheta^i\}_{i\geq 0} .
\end{aligned}
\label{FEQ1}
\end{equation}
In order for the right side of equation \ref{FEQ1} to be zero we must have ${\CE}^* = - {\CE}$.
\end{proof}

\begin{Theorem}\label{rho_e} Let $\CE=r_i(t,x,u,u_x,\ldots) D_x^i, \ i=0,\ldots, k$ be a $ k^{th}$ order variational operator for $\Delta=u_t-K(t, x, u, u_x,\ldots,u_n)$ and let $[d_\sV \iota^* \eta] \in  H^{1,2}(\CR) $ from Theorem \ref{e_o}. Then the unique representative for $[d_\sV \iota^* \eta]$ in Theorem \ref{Urep} is
\begin{equation}
\omega = dx \wedge   \theta \wedge  \epsilon  - dt \wedge \bbeta(\epsilon), \quad \epsilon = - \frac{1}{2}\iota^*\CE(\vartheta^0) =-\frac{1}{2} r_i\theta^i.
\label{cf_E}
\end{equation}
\end{Theorem}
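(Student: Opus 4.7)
The plan is to identify the characteristic form $\tilde\rho$ on $\CJ$ associated with the class $[\omega]=[d_\sV\iota^*\eta]$, and then read off the unique skew-adjoint representative using Corollary \ref{Curep}.

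First I would lift the problem back to $\CJ$ by setting $\tilde\omega = d_\sV\eta \in \Omega^{1,2}(\CJ)$, so that $\omega = \iota^*\tilde\omega$. Applying $d_\sV$ to equation \ref{dVE2} and using $d_\sV^2=0$ and $d_\sH d_\sV = -d_\sV d_\sH$ gives
\begin{equation*}
d_\sH\tilde\omega \;=\; -\,dt\wedge dx \wedge \vartheta^0 \wedge d_\sV\bigl(\CE(\Delta)\bigr).
\end{equation*}
Since $d_\sV$ commutes with each $D_x^i$, I would expand $d_\sV\CE(\Delta) = \CE(d_\sV\Delta) + (d_\sV r_i)\,D_x^i\Delta$; the second summand carries factors $D_x^i\Delta$ and, after routine integration by parts, is absorbed into the $\Delta\tilde\zeta$ term of equation \ref{r_hot}.

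Next I would integrate by parts on $\vartheta^0\wedge\CE(d_\sV\Delta)\wedge dt\wedge dx$, iterating the $d_\sH$-equivalence
$r_i\vartheta^0 \wedge D_x(\cdot) \equiv -D_x(r_i\vartheta^0)\wedge(\cdot) \pmod{d_\sH}$
to move every $D_x$ off $d_\sV\Delta$ and onto $\vartheta^0$. The outcome is $\CE^*(\vartheta^0)\wedge d_\sV\Delta \wedge dt\wedge dx$ modulo $d_\sH$-exact terms, and the skew-adjointness $\CE^*=-\CE$ established in the preceding lemma then puts $d_\sH\tilde\omega$ into the form of equation \ref{r_hot} with
\begin{equation*}
\tilde\rho \;=\; -\CE(\vartheta^0) \;=\; -r_i\vartheta^i.
\end{equation*}

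Finally, the factor $1/s$ with $s=2$ in equation \ref{endom} gives the characteristic form on $\CR$ as $\rho = \tfrac{1}{2}\iota^*\tilde\rho = -\tfrac{1}{2} r_i\theta^i$. A direct computation from the definition of the adjoint, using $\CE^*=-\CE$ again, shows $\rho^*=-\rho$, so Corollary \ref{Curep} identifies the unique skew-adjoint representative of $[\omega]$ as $\epsilon = \tfrac{1}{2}(\rho-\rho^*) = \rho = -\tfrac{1}{2}\iota^*\CE(\vartheta^0)$, and Corollary \ref{CFbeta} supplies $\beta = \bbeta(\epsilon)$; together these give equation \ref{cf_E}. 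The main obstacle I anticipate is the careful sign and factor bookkeeping across the integration by parts on $\CJ$ and the wedge rearrangements; conceptually, the skew-adjointness of the variational operator is exactly what forces the characteristic $\tilde\rho$ to already be skew-adjoint, so no further symmetrization is required.
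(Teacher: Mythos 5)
Your proposal is correct and follows essentially the same route as the paper's own proof: the paper likewise applies $d_\sV$ to equation \ref{dVE2} to get $d_\sH\tilde\omega_0 = dt\wedge dx\wedge d_\sV(\vartheta^0\cdot\CE(\Delta))$, splits off the $D_x^i(\Delta)\,d_\sV r_i$ terms as non-contributing, integrates by parts via the identity \ref{TT3}--\ref{TT3b} to land in the form \ref{r_hot} with $\tilde\rho=\CE^*(\vartheta^0)=-\CE(\vartheta^0)$, and then concludes from $\rho=\frac{1}{2}\iota^*\tilde\rho$ and skew-adjointness exactly as you do via Corollaries \ref{Curep} and \ref{CFbeta}. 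The only blemish is a loose wedge ordering in your intermediate phrase ``$\CE^*(\vartheta^0)\wedge d_\sV\Delta\wedge dt\wedge dx$'' (equation \ref{r_hot} carries $d_\sV\Delta\wedge\tilde\rho$, so one transposition sign must be tracked there), but your final $\tilde\rho=-r_i\vartheta^i$ and $\epsilon=-\frac{1}{2}r_i\theta^i$ agree with the paper.
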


\begin{proof}  Let $\hat \omega_0 = d_\sV\tilde \eta $ where $\eta$ satisfies equation \ref{dVL}. We have
from equations \ref{dVL} and \ref{TT3}
\begin{equation}
\begin{aligned}
d_\sH \tilde \omega_0  &= 
-d_\sV d_\sH(\eta)=  dt \wedge dx \wedge d_\sV(\vartheta^0\cdot \CE(\Delta))  \\
&= 
dt \wedge dx \wedge  \left(  D_x^i(\Delta) d_\sV\tr_i \wedge \vartheta^0  +  r_i D_x^i( d_\sV \Delta) \wedge \vartheta^0 \right) \\
&=dt \wedge dx \wedge  \left(  D_x^i(\Delta) d_\sV\tr_i \wedge \vartheta^0  + d_\sV \Delta \wedge (- D_x)^i( r_i \vartheta^0)  \right)+ d_\sH \tilde \eta
\end{aligned}
\label{dV_to}
\end{equation}
where $\tilde \eta $ is given in equation \ref{TT3b} and satisfies $\iota^* \tilde \eta=0$.
As remarked in the 
proof of Theorem \ref{Thmcf1}, the term $ D_x^i(\Delta) d_\sV\tr_i \wedge \vartheta^0$ in \ref{dV_to} 
does not contribute to the form $\tilde \rho$ in equation \ref{r_hot}. Therefore we have
from equation \ref{dV_to}  that $\tilde \rho$ in equation \ref{r_hot} is,
$$
\tilde \rho = (- D_x)^i(\tr_i \vartheta) =  \CE^*( \vartheta^0).
$$
Since $\rho = \frac{1}{2} \iota^* \tilde \rho $ and $ \CE$ is skew-adjoint we get equation \ref{cf_E}. 
\end{proof}

We now come to the last main theorem in this section which proves the converse to Theorem \ref{e_o}. 
The proof is again a generalization of the argument given in  Theorem 2.6 of \cite{anderson-thompson:1992a} for the multiplier problem. 

\begin{Theorem}\label{conf_2}  Let $[\omega] \in H^{1,2}(\CR)$ with representative $\omega$ as in equation \ref{eta1} in Theorem \ref{TCF3} given by
\begin{equation}
\omega =dx\wedge \theta \wedge \rho-dt \wedge \bbeta(\rho) = d_\sV\eta 
\label{conf_2e}
\end{equation}
where 
\begin{equation}
\eta = dx \wedge \theta \cdot \AQ - dt \wedge \gamma.
\label{thm_eta}
\end{equation}
Let $\lambda= L dt \wedge dx$  satisfying $d_\sH\eta=d_\sV \lambda$ from Lemma \ref{exlam}.
Then $\CE= {\bf F}_{\tQ}^*-{\bf F}_{ \tQ}$ is a variational operator and,
\begin{equation}
 \CE(\Delta) = ({\bf F}_{\tQ}^*-{\bf F}_{ \tQ})(\Delta) = \Eop \left(\tQ \Delta +\tL \right).
\label{thm_m}
\end{equation}
\end{Theorem}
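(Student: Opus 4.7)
The plan is to verify the identity $\Eop(Q\Delta + L) = (\bFQ^* - \bFQ)(\Delta)$ directly; since $\CE = \bFQ^* - \bFQ$ is manifestly skew-adjoint, this will simultaneously establish that $\CE$ is a variational operator and that $Q\Delta + L$ is its Lagrangian.

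First, I would apply the Leibniz-type identity for the Euler-Lagrange operator on a product,
$$\Eop(Q\Delta) = \bFQ^*(\Delta) + {\bf F}_\Delta^*(Q),$$
which follows from the adjoint property \ref{d_ad} after integrating by parts in $d_\sV(Q\Delta) = \Delta\,\bFQ(\vartheta^0) + Q\,{\bf F}_\Delta(\vartheta^0)$. Hence
$$\Eop(Q\Delta + L) = \bFQ^*(\Delta) + {\bf F}_\Delta^*(Q) + \Eop(L),$$
so the theorem reduces to proving $\Eop(L) + {\bf F}_\Delta^*(Q) + \bFQ(\Delta) = 0$. A short expansion using that $Q$, $L$, and $K$ contain no $u_t$-derivatives shows that all $u_{t,i}$-terms cancel in ${\bf F}_\Delta^*(Q) + \bFQ(\Delta)$, leaving $-T(Q) - \sum_i(-X)^i(K_i Q) = \CL_\Delta^*(Q)$. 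Because $\Eop(L)$ is likewise independent of $u_t$-derivatives, the identity collapses to the on-$\CR$ statement
$$\Eop(L) = -\CL_\Delta^*(Q).$$

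Second, I would extract this equation from the hypothesis $d_\sH\eta = d_\sV\lambda$. Using $d_\sH = dt \wedge T + dx \wedge X$ and the structure equations \ref{SE_c}, direct calculation gives
$$d_\sH\eta = dt \wedge dx \wedge \bigl(T(Q)\theta^0 + Q K_i \theta^i + X\gamma\bigr), \qquad d_\sV\lambda = dt \wedge dx \wedge L_i \theta^i.$$
Equating coefficients yields $X\gamma = \sum_i P_i \theta^i$, where $P_0 = L_0 - Q K_0 - T(Q)$ and $P_i = L_i - Q K_i$ for $i \geq 1$. Writing the finite expansion $\gamma = g_j \theta^j$ ($j = 0, \ldots, M$) and using $X\theta^j = \theta^{j+1}$ produces the recursion
$$X(g_j) + g_{j-1} = P_j, \qquad g_{-1} = 0, \qquad g_{M+1} = 0.$$

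The proof is then closed by a telescoping identity: multiplying the recursion by $(-X)^j$ and summing over $j$,
$$\sum_{j \geq 0} (-X)^j P_j = \sum_{j=0}^{M+1} (-X)^j \bigl(X(g_j) + g_{j-1}\bigr) = -(-X)^{M+2} g_{M+1} = 0,$$
after a straightforward reindexing. Substituting the definitions of the $P_j$ and rearranging yields $\Eop(L) = T(Q) + \sum_j (-X)^j (K_j Q) = -\CL_\Delta^*(Q)$, which closes the reduction and establishes \ref{thm_m}. The main obstacle is this telescoping step: it is elementary but depends crucially on $\gamma$ being a bona fide form (so that $g_{M+1} = 0$ for some finite $M$); without this discrete termination the sum need not collapse.
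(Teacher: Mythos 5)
Your proposal is correct, but it takes a genuinely different route from the paper's proof. The paper argues upstairs on $J^\infty(\reals^2,\reals)$: it lifts $\eta$ to $\tilde\eta$, splits the total derivatives as $D_t=\tilde T+V$, $D_x=\tilde X+W$ with $V,W$ carrying all derivatives of $\Delta$ (equation \ref{defTXVW}), transports the hypothesis $d_\sH\eta=d_\sV\lambda$ through $\pi^*$, and then combines the first variational formula \ref{bp3}, the integration by parts in \ref{rw_dV2} (which produces $\Eop(Q\Delta)-\bFQ^*(\Delta)$), and $V(Q)=\bFQ(\Delta)$ to conclude that $d_\sH(\tilde\eta-\tilde\zeta)$ is a source form and hence zero, which is \ref{thm_m}. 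You instead run the verification head-on and entirely downstairs: the product rule $\Eop(Q\Delta)=\bFQ^*(\Delta)+{\bf F}_\Delta^*(Q)$ (the same identity, Olver's 5.80, that the paper invokes in Lemma \ref{CSDL}), together with the exact cancellation of all $u_{t,i}$-terms in ${\bf F}_\Delta^*(Q)+\bFQ(\Delta)$, reduces \ref{thm_m} to the single scalar identity
\begin{equation*}
\sum_i(-X)^i(L_i)\;=\;T(Q)+\sum_i(-X)^i(K_iQ)\;=\;-\CL_\Delta^*(Q),
\end{equation*}
which you then extract from $d_\sH\eta=d_\sV\lambda$ by matching $\theta^j$-coefficients, $X(g_j)+g_{j-1}=P_j$, and telescoping $\sum_j(-X)^jP_j=0$. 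I checked the computations: $d_\sH\eta=dt\wedge dx\wedge\bigl(T(Q)\theta^0+QK_i\theta^i+X(\gamma)\bigr)$ is correct; the $u_{t,i}$-cancellation holds identically on $J^\infty(\reals^2,\reals)$, not merely on $\CR$, so every remaining term is independent of the $u_{t,i}$-coordinates and the on-$\CR$ identity is equivalent to the free identity asserted in \ref{thm_m}, exactly via the identification of Remark \ref{thm_rmk}; and the termination worry you flag is vacuous here, since forms on $\CR$ have finite order by definition, so $g_{M+1}=0$ for some finite $M$. (Your aside on skew-adjointness is harmless but unnecessary: a variational operator is defined by \ref{VOPe}, with skew-adjointness a consequence, not a hypothesis.) As for what each route buys: yours is elementary and self-contained, avoids the lifting and splitting apparatus and the source-form-must-vanish step, and makes transparent that the snake-lemma data $(Q,L)$ encodes precisely the characteristic equation $\sum_i(-X)^i(L_i)=-\CL_\Delta^*(Q)$, your telescoping sum being the downstairs shadow of the paper's integration by parts; the paper's route stays inside the standard bicomplex machinery, produces intermediate objects reused elsewhere (e.g.\ in Theorem \ref{rho_e}), and generalizes more readily to systems and other classes of equations, as anticipated in the Conclusions.
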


The proof requires considerable care whether we are working on $\CR$ or $J^\infty(\reals^2,\reals)$, see Remark \ref{thm_rmk}.

\begin{proof} We start by writing $\gamma= g_{j} \theta^j$ in equation \ref{thm_eta} and define the form 
$\tilde \eta \in J^\infty(\reals^2,\reals))$ given by
$$
\tilde \eta = dx \wedge \vartheta^0 \cdot \tQ  - dt \wedge [g_{j} \vartheta^j ]
$$
where $\iota^* \tilde \eta = \eta$ in equation \ref{thm_eta},  and the forms $\vartheta^j$ are defined in \ref{VTD}. Now define the vector fields on $J^\infty(\reals^2,\reals)$,
\begin{equation}
\begin{aligned}
\tilde T &=  \partial_t + K \partial_u +  D_x(K)  \partial_{u_x} + D_x^2(K) \partial_{u_{xx}} + \ldots,\!\!
 &&V  = \Delta \partial_u + D_x( \Delta) \partial_{u_{x}}+ \ldots, \\
\tilde X &= \partial_x + u_x \partial_u  + u_{xx}\partial_{u_x} +D_x(K) \partial_{u_t} + \ldots, \!\!
&&W = D_x(\Delta)\partial_{u_t}  + D_x^2(\Delta) \partial_{u_{tx}}+\ldots 
\end{aligned}
\label{defTXVW}
\end{equation}
so that $D_t= \tilde T+  V, D_x=\tilde X+ W$. Then
\begin{equation}
\begin{aligned}
d_\sH \tilde \eta  & = dt \wedge dx \wedge [ D_t ( \vartheta^0\cdot \tQ )  + D_x(\tg _{j} \vartheta^j ) ] \\
&= dt \wedge dx \wedge[ \tilde T( \vartheta^0 \cdot \tQ) + \tilde X( \tg _{j} \vartheta^j)] + dt \wedge dx \wedge [V(\vartheta^0 \cdot \tQ) +W(g _{j} \vartheta^j)] \\
\end{aligned}
\label{dHvp0}
\end{equation}
Since $g_j = g_j(t, x, u, u_x,\ldots) $ then $W( g_j)=0$, while $dt \wedge dx \wedge D_x( \vartheta^j)=dt \wedge dx \wedge \tilde X(\vartheta^j)$ and $V(\vartheta^0) = d_\sV \Delta$. Equation \ref{dHvp0} then can be written,
\begin{equation}
d_\sH \tilde \eta = dt \wedge dx \wedge[ \tilde T(\tQ \vartheta^0) +\tilde X( \tg _{j} \vartheta^j)] + dt \wedge dx \wedge [ \vartheta^0\cdot V(\tQ )  + d_\sV \Delta \cdot \tQ ] . \\
\label{dHvp1}
\end{equation}

The condition $d_\sH \eta = d_\sV \lambda$ (on $\CR$) is
\begin{equation}
dt \wedge dx \wedge [ T (  \theta\cdot \AQ) - X(g_{j} \theta^j)  ]=dt \wedge dx \wedge \theta^a \cdot L_a
\label{dHid}
\end{equation}
on $\CR$. Now $\pi^* dt \wedge dx \wedge \theta^j = dt \wedge dx \wedge \vartheta^j$ (see Remark \ref{thm_rmk}), and using
the vector fields in \ref{defTXVW} we have
\begin{equation}
\begin{aligned}
\pi^* (d_\sV \lambda) &= d_\sV (\pi^* \lambda)  = d_\sV (\tL dt \wedge dx),\\
\pi^*( dt \wedge dx \wedge [ T (\AQ  \theta^0)]&= dt \wedge dx \wedge [ \tilde T (\tQ  \vartheta^0)]\\
\pi^*( dt \wedge dx \wedge [ X(g_{j} \theta^j)]&= dt \wedge dx \wedge [ \tilde X(g_{j} \vartheta^j)]\\
\end{aligned}
\label{pb2}
\end{equation}
Therefore applying $\pi^*$ to  \ref{dHid} and using \ref{pb2} we have
\begin{equation}
 dt \wedge dx \wedge[ \tilde T(\tQ \vartheta^0) + \tilde X(g_{j} \vartheta^j )] =d_\sV (\tL dt \wedge dx),
\label{dvlt}
\end{equation}

The first variational formula for $d_\sV (\tL dt \wedge dx)$ on $J^\infty(\reals^2,\reals)$ applied to the right side of \ref{dvlt} gives
\begin{equation}
dt \wedge dx \wedge [\tilde T(\tQ \vartheta^0) + \tilde X( g_{j} \vartheta^j )]
= dt \wedge dx \wedge   \vartheta^0 \cdot \Eop(\tL) + d_\sH \tilde \zeta_1
\label{bp3}
\end{equation}
where $\tilde \zeta \in \Omega^{1,1}(J^\infty(\reals^2,\reals))$. Inserting equation \ref{bp3}  into \ref{dHvp1} we have, 
\begin{equation}
d_\sH \tilde \eta  =    dt \wedge dx \wedge \vartheta^0 \cdot \Eop(\tL) + dt \wedge dx \wedge [\vartheta^0 \cdot   V( \tQ)+ d_\sV \Delta \cdot \tQ ] + d_\sH \tilde \zeta_1
\label{dHvp2}
\end{equation}

The terms $ d_\sV \Delta \cdot \tQ$ in equation \ref{dHvp2} can be written as
\begin{equation}
dt \wedge dx \wedge[\tQ d_\sV \Delta]  = dt \wedge dx \wedge [d_\sV ( \tQ \Delta) - \Delta d_\sV \tQ ]
\label{rw_dV}
\end{equation}
We now apply the integration by parts operator (see equation 2.8 in \cite{anderson:1992a}) 
and use the first variational formula for $d_\sV(\tQ \Delta dt \wedge dx)$, in equation \ref{rw_dV} and 
get
\begin{equation}
\begin{aligned}
dt \wedge dx \wedge[\tQ d_\sV \Delta]  & = 
dt \wedge dx \wedge\vartheta^0 [\Eop ( \tQ \Delta) - (-D_x)^i(\tQ_{\!,i}\Delta)] + d_\sH \tilde \zeta_2\\
& = 
dt \wedge dx \wedge\vartheta^0 [\Eop ( \tQ \Delta) - {\bf F}_{\tQ}^*(\Delta)] + d_\sH \tilde \zeta_2\\
\end{aligned}
\label{rw_dV2}
\end{equation}

Next we expand the term $V(\tQ) $ in equation \ref{dHvp2} using $V$ in \ref{defTXVW} and 
\begin{equation}
V(\tQ)= D_x^i(\Delta) \tQ_{\!,i}= {\bf F}_{\tQ}(\Delta)
\label{l_t3}
\end{equation}

Inserting \ref{rw_dV2}, and \ref{l_t3} into  \ref{dHvp2} and letting $\tilde \zeta=\tilde \zeta_1+\tilde \zeta_2$ gives,
$$
d_\sH \tilde \eta = dt \wedge dx \wedge\vartheta^0\dot  [\Eop(\tL) +{\bf F}_{\tQ}(\Delta) -{\bf F}_{\tQ}^*(\Delta) +\Eop(\tQ \Delta)] +d_\sH \tilde \zeta.
$$
This implies $d_\sH(\tilde \eta -\tilde \zeta) $ is a source-form. This is only possible if   $d_\sH(\tilde \eta -\tilde \zeta) =0$, and so
$$
({\bf F}_{\tQ}^* -{\bf F}_{\tQ})(\Delta) = \Eop(\tQ \Delta +\tL),
$$
which is equation \ref{thm_m} as required.
\end{proof}

\begin{Remark} \label{findL} In general three applications of the vertical homotopy operator are required to determine $\lambda\in \Omega^{2,0}(\CR)$ from 
$[\omega] \in H^{1,2}(\CR)$. The first is to find a representative $\omega \in H^{1,2}(\CR)$ with $d_\sV \omega =0$ (Theorem \ref{TCF3}). The second is to find $\eta$ such that $d_\sV\eta=\omega$, and the third is to find $\lambda$ such that $d_\sV\lambda = d_\sH \eta$. 
\end{Remark}

We now have the following corollaries.

\begin{Corollary} \label{CU3} Let $[\omega ] \in H^{1,2}(\CR)$ with unique representative  $\omega = dx \wedge \theta^0 \wedge \epsilon - dt \wedge \bbeta(\epsilon )$, $\epsilon = r_i \theta^i$ as in Theorem \ref{Urep}. Then  $\Delta$ admits $ \CE= -2\tr_i D_x^i  $ as a variational operator. 
\end{Corollary}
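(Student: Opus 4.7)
The plan is to combine Theorem \ref{TCF3}, Corollary \ref{Curep2}, and Theorem \ref{conf_2} to read the coefficients of $\CE$ directly off of $\epsilon$. First, Theorem \ref{TCF3} provides $\eta = dx \wedge \theta^0 \cdot Q - dt \wedge \gamma \in \Omega^{1,1}(\CR)$, for some $Q \in C^\infty(\CR)$ and $\gamma \in \Omega^{0,1}(\CR)$, such that $d_\sV \eta$ is another representative of $[\omega]$. Corollary \ref{Curep2} then identifies the unique skew-adjoint representative of the same class as $\epsilon = \tfrac{1}{2}(\CL_Q - \CL_Q^*)(\theta^0)$, so the coefficients $r_i$ in $\epsilon = r_i \theta^i$ are pinned down by $Q$.

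Next, I would apply Theorem \ref{conf_2} to this same $\eta$: Lemma \ref{exlam} yields $\lambda = L\, dt \wedge dx$ with $d_\sH \eta = d_\sV \lambda$, and the theorem asserts that $\CE := \bFQ^* - \bFQ$ is a variational operator for $\Delta$, with $\CE(\Delta) = \Eop(Q\Delta + L)$. It remains only to check that this $\CE$ equals $-2 r_i D_x^i$, which I would do by evaluating $\CE$ on the contact form $\vartheta^0$ and pulling back along $\iota$. Since $D_x^j(\vartheta^0) = \vartheta^j$, any operator of the form $\tilde r_j D_x^j$ applied to $\vartheta^0$ produces $\tilde r_j \vartheta^j$, so writing $\CE = \tilde r_j D_x^j$ gives $\iota^* \CE(\vartheta^0) = \tilde r_j \theta^j$. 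On the other hand, via Remark \ref{thm_rmk} the restriction identifies $\bFQ$ with $\CL_Q$ and $\bFQ^*$ with $\CL_Q^*$, so
$$
\iota^* \CE(\vartheta^0) \;=\; \CL_Q^*(\theta^0) - \CL_Q(\theta^0) \;=\; -2\epsilon \;=\; -2 r_i \theta^i.
$$
Matching coefficients forces $\tilde r_j = -2 r_j$, giving $\CE = -2 r_i D_x^i$ as claimed.

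There is essentially no real obstacle: the three earlier results do all the substantive work, and the only care required is in keeping straight the lift/restriction conventions between $\CR$ and $J^\infty(\reals^2,\reals)$ set up in Remark \ref{thm_rmk}, together with the elementary observation that $\CE$ is completely determined by its action on $\vartheta^0$ because the coefficients $\tilde r_j$ depend only on the $(t,x,u,u_x,\ldots)$-variables and hence descend to $\CR$.
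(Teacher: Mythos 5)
Your proposal is correct and takes essentially the same route as the paper's own proof: both combine Theorem \ref{TCF3} with Corollary \ref{Curep2} to identify $\epsilon = \frac{1}{2}(\CL_Q - \CL_Q^*)\theta^0 = r_i\theta^i$, and then invoke Theorem \ref{conf_2} (via the lift conventions of Remark \ref{thm_rmk}) to conclude that $\CE = \bFQ^* - \bFQ = -2r_i D_x^i$ is a variational operator. Your explicit evaluation on $\vartheta^0$ simply spells out the coefficient-matching step that the paper performs more tersely from the fact $\bFQ = Q_i D_x^i$.
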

\begin{proof}  Starting with equation \ref{conf_2e}, Corollary \ref{Curep2} implies
\begin{equation}
\epsilon = \frac{1}{2} (\CL_Q- \CL_Q^*) \theta^0= r_i \theta^i.
\label{WF1}
\end{equation} 
Equation \ref{WF1} together with the fact $\bFQ=\tQ_i D_x^i$ gives $\epsilon =\iota^* \frac{1}{2} (\bFQ-\bFQ^*)(\vartheta^0) = r^i \theta^i$, we have $\CE= \bFQ^*-\bFQ= -2\tr_iD_x^i$ is a variational operator by Theorem \ref{conf_2}.
\end{proof}

\begin{Corollary} Let $[\omega ] \in H^{1,2}(\CR)$ with  $\omega = dx \wedge \theta^0 \wedge (r_i\theta^i) - dt \wedge \bbeta(\rho)$ as in Theorem \ref{Thmcf1}. Then  $\Delta$ admits 
\begin{equation}
\CE=  (\tr_i D_x^i)^*-\tr_iD_x^i
\label{CEC4}
\end{equation}
as a variational operator.
\end{Corollary}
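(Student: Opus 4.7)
The plan is to reduce the given (non-unique) representative from Theorem \ref{Thmcf1} to the unique skew-adjoint representative of Theorem \ref{Urep}, and then invoke Corollary \ref{CU3}. By Corollary \ref{Curep}, starting from $\omega = dx \wedge \theta^0 \wedge \rho - dt \wedge \bbeta(\rho)$ with $\rho = r_i \theta^i$, the same cohomology class $[\omega]\in H^{1,2}(\CR)$ admits the unique representative given by
$$
\epsilon \;=\; \tfrac{1}{2}\bigl(\rho - \rho^*\bigr) \;=\; \tfrac{1}{2}\bigl(r_i \theta^i - (-X)^i(r_i \theta^0)\bigr).
$$

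Next I would interpret $\epsilon$ operator-theoretically. Define the operator $\bar\CE_0 = r_i X^i$ on $\CR$, so that $\bar\CE_0(\theta^0) = \rho$ and $\bar\CE_0^*(\theta^0) = (-X)^i(r_i \theta^0) = \rho^*$ in the sense of Remark \ref{thm_rmk}. Hence $\epsilon = \tfrac{1}{2}(\bar\CE_0 - \bar\CE_0^*)(\theta^0)$. Writing the skew-adjoint combination in left-normal form $\tfrac{1}{2}(\bar\CE_0 - \bar\CE_0^*) = s_j X^j$, we obtain the expansion $\epsilon = s_j \theta^j$ that exhibits $\epsilon$ in the skew-adjoint canonical form demanded by Theorem \ref{Urep}.

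Now I would apply Corollary \ref{CU3} to this unique representative: it produces $-2 s_j D_x^j$ as a variational operator for $\Delta$. But by the very construction above,
$$
-2\, s_j X^j \;=\; \bar\CE_0^* - \bar\CE_0 \;=\; (r_i X^i)^* - r_i X^i,
$$
and lifting from $\CR$ to $J^\infty(\reals^2,\reals)$ via the correspondence in Remark \ref{thm_rmk} (where $\pi_*(r_i D_x^i) = r_i X^i$ and adjoints are preserved) converts this into $\CE = (r_i D_x^i)^* - r_i D_x^i$, as asserted in equation \ref{CEC4}.

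There is no real obstacle: once Theorem \ref{Urep}, Corollary \ref{Curep}, and Corollary \ref{CU3} are in hand, the argument is pure bookkeeping. The only point that deserves care is keeping straight which adjoint is being taken on which space, and that is precisely what Remark \ref{thm_rmk} formalizes, so the lift of the operator identity from $\CR$ to $J^\infty(\reals^2,\reals)$ is automatic.
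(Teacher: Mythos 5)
Your proposal is correct and follows the paper's own proof essentially verbatim: the paper likewise passes from the given representative to the unique skew-adjoint one via Corollary \ref{Curep}, obtaining $\epsilon = \frac{1}{2}(\rho - \rho^*)$, and then invokes Corollary \ref{CU3}. The only difference is that you spell out the normal-form bookkeeping $-2s_jX^j = (r_iX^i)^* - r_iX^i$ and the lift via Remark \ref{thm_rmk}, which the paper leaves implicit.
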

\begin{proof}   By  Corollary  \ref{Curep} the unique representative 
$ \hat \omega = dx \wedge \theta^0 \wedge \epsilon - dt \wedge \bbeta(\epsilon)$ has $\epsilon = \frac{1}{2}\left(\rho-\rho^*\right) =\frac{1}{2} ( r_i\theta^i- (-X^i)(r_i \theta^0))$. Therefore by Corollary \ref{CU3}, $\CE$ in equation \ref{CEC4} is a variational operator.
\end{proof}

Finally we may also restate Theorem \ref{conf_2} without reference to the equation manifold $\CR$ as follows.

\begin{Corollary} \label{CFQ} The operator $\CE= r_i(t,x,u,u_x,\ldots) D_x^i$, $i=0,\ldots,k$ is a variational operator for $u_t=K$ if and only if there exists $Q(t, x, u,u_x,u_{xx}, \ldots)$ and $L(t, x, u, u_x,u_{xx}, \ldots)$ such that
\begin{equation}
\CE= { \bFQ^* - \bFQ} \quad {\rm and}\quad  \CE( u_t -K) = \Eop\left( Q(u_t-K) + L \right).
\label{CEEQS}
\end{equation}
\end{Corollary}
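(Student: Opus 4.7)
The \emph{if} direction is immediate: if $\CE = \bFQ^*-\bFQ$ and $\CE(u_t-K) = \Eop\bigl(Q(u_t-K)+L\bigr)$, then $\CE(u_t-K)$ is by the second equation the Euler-Lagrange expression of the Lagrangian $Q(u_t-K)+L$, and so $\CE$ qualifies as a variational operator in the sense of \ref{VOPe}.

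The plan for the \emph{only if} direction is to string together Theorems \ref{e_o}, \ref{TCF3}, Lemma \ref{exlam}, and Theorem \ref{conf_2} to extract the candidate pair $(Q,L)$, then invoke the uniqueness clause of Theorem \ref{Urep} to identify $\CE$ with $\bFQ^*-\bFQ$. Concretely, starting from $\CE(\Delta) = \Eop(L_0)$, Theorem \ref{e_o} produces a class $[\omega]\in H^{1,2}(\CR)$ with representative $\omega = d_\sV(\iota^*\eta)$ coming from \ref{dVE2}. Theorem \ref{TCF3} then replaces $\omega$ by a $d_\sV$-closed representative $d_\sV\eta_0$ with
\[
\eta_0 = dx\wedge \theta^0 \cdot Q - dt\wedge \gamma,
\]
which introduces the candidate function $Q$. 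Lemma \ref{exlam} produces $\lambda=L\,dt\wedge dx$ satisfying $d_\sH \eta_0 = d_\sV\lambda$, which introduces the candidate function $L$. Feeding $\eta_0$ and $\lambda$ into Theorem \ref{conf_2} gives
\[
(\bFQ^*-\bFQ)(\Delta) = \Eop(Q\Delta + L),
\]
which is the second condition of \ref{CEEQS}.

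It then remains to establish the operator identity $\CE = \bFQ^*-\bFQ$. For this I would appeal to the uniqueness part of Theorem \ref{Urep}: by Theorem \ref{rho_e} the unique normal-form representative of $[\omega]$ built from $\CE$ has skew-adjoint part $\epsilon_{\CE} = -\tfrac{1}{2}\iota^*\CE(\vartheta^0)$, while by Corollary \ref{Curep2} the unique normal-form representative built from $\eta_0$ has $\epsilon_Q = \tfrac{1}{2}(\CL_Q-\CL_Q^*)\theta^0$. Both represent the same class, so $\epsilon_{\CE}=\epsilon_Q$. A short direct computation rewrites $(\CL_Q-\CL_Q^*)\theta^0 = Q_i\theta^i - (-X)^i(Q_i\theta^0) = -\iota^*(\bFQ^*-\bFQ)(\vartheta^0)$, yielding $\iota^*\CE(\vartheta^0) = \iota^*(\bFQ^*-\bFQ)(\vartheta^0)$. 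Since the coefficients of both $\CE$ and $\bFQ^*-\bFQ$ are functions of $(t,x,u,u_x,\ldots)$ only, and since such functions are determined by their restriction to $\CR$ (Remark \ref{thm_rmk}), comparing the coefficients of $\theta^i$ on $\CR$ yields the equality $\CE = \bFQ^*-\bFQ$ as operators on $J^\infty(\reals^2,\reals)$.

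The main obstacle is really only bookkeeping: one must carefully distinguish operators on $\CR$ from their lifts to $J^\infty(\reals^2,\reals)$, and verify the adjoint identity $\iota^*(\bFQ^*-\bFQ)(\vartheta^0) = -(\CL_Q-\CL_Q^*)\theta^0$. All substantive content is already packaged in Theorem \ref{conf_2} and in the uniqueness of the normal form produced by Theorems \ref{Urep} and \ref{rho_e} together with Corollary \ref{Curep2}.
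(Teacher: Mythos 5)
Your proof is correct and takes essentially the same route as the paper: Corollary \ref{CFQ} is presented there as a direct restatement of Theorem \ref{conf_2}, with the trivial \emph{if} direction and the operator identity $\CE=\bFQ^*-\bFQ$ obtained, exactly as you do, by matching the unique skew-adjoint representative of Theorem \ref{rho_e} against that of Corollary \ref{Curep2} via the uniqueness clause of Theorem \ref{Urep}. Your verification of the adjoint identity $\iota^*(\bFQ^*-\bFQ)(\vartheta^0)=-(\CL_Q-\CL_Q^*)\theta^0$ and the observation that coefficients depending only on $(t,x,u,u_x,\ldots)$ are determined by their restriction to $\CR$ (Remark \ref{thm_rmk}) are precisely the bookkeeping the paper leaves implicit.
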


Lastly we combine the results of Theorems \ref{e_o} and \ref{conf_2} to prove Theorem \ref{THA}.

\begin{proof} (Theorem \ref{THA})   Define the linear transformation $\hat \Phi: H^{1,2}(\CR) \to \CV_{op}(\Delta) $ 
using Theorem \ref{Urep} by
\begin{equation}
\hat \Phi([\omega])=
\hat \Phi\left([ dx \wedge \theta^0 \wedge \epsilon - dt \wedge \bbeta(\epsilon) ] \right) = - 2\tr_iD_x^i
\label{PhiI}
\end{equation}
where $\epsilon = r_i \theta^i$ and is skew-adjoint. By Corollary \ref{CU3} $\Phi([\omega])\in \CV_{op}(\Delta)$. 

We check $\hat \Phi = \Phi^{-1}$. With $\CE =-2 \tr_i D_x^i$ a variational operator, let $\epsilon = r_i \theta^i$ we have from equation \ref{FISO} (or  Theorem \ref{rho_e}) and equation \ref{PhiI},
$$
\hat \Phi \circ \Phi( \CE) = \hat \Phi\left( [dx \wedge \theta^0 \wedge \epsilon - dt \wedge \bbeta(\epsilon)] \right)= \CE
$$
and
$$
\Phi \circ \hat \Phi \left(  [dx \wedge \theta^0 \wedge \epsilon - dt \wedge \bbeta(\epsilon)] \right) = \Phi (-2\tr_i D_x^i) =[ dx \wedge \theta^0 \wedge \epsilon - dt \wedge \bbeta(\epsilon)].
$$
Therefore $\Phi$ in equation \ref{FISO} is invertible with $\hat \Phi$ in equation \ref{PhiI} as the inverse.
\end{proof}

\section{Functional $2$-Forms, Symplectic Forms and Hamiltonian Vector Fields}\label{FF2}

In this section we quickly review the space of functional forms on $ J^\infty(\reals,\reals)$ as in \cite{anderson:2016a}, \cite{anderson:1992a} and relate these to symplectic forms and symplectic operator. 

\subsection{Functional Forms}\label{FFS}

On the space $\EJ$ with coordinates $(x,u,u_x, \ldots, u_i, \ldots)$ 
the contact forms are $ \theta^i = du_i -u_{i+1} dx $ and $D_x = \partial_x+ u_x \partial_u + \ldots u_{i+1} \partial_{u_{i}}+ \ldots $ is the total $x$ derivative operator. Again  $\Omega^{r,s}(\EJ)$ denotes the $r$ horizontal, $s$ vertical forms on $\EJ$.  The horizontal and vertical differentials  $\tdH : \Omega^{r,s}(\EJ) \to \Omega^{r+1,s}(\EJ)$, $\tdV : \Omega^{r,s}(\EJ) \to \Omega^{r,s+1}(\EJ)$, satisfy 
$$
\tdH  f  = D_x( f ) dx,\quad  \tdV f =\frac{\partial f}{\partial u_{i}} \theta^i=f_i \theta^i,\quad \tdH  \theta^i  =dx \wedge  \theta^{i+1} ,\quad \tdV  \theta^i =0,
$$
where $f\in C^\infty(\EJ)$. Since $d= d_\sH+d_\sV$ this implies,
$$
 d_\sH^2=0,\quad d_\sV^2=0,\quad {\rm and}\quad \tdH \tdV +\tdV\tdH = 0.
$$

The integration by parts operator $I: \Omega^{1,s}(\EJ) \to \Omega^{1,s}(\EJ)$ is
\begin{equation}
I(\Sigma)  = \frac{1}{s} \theta^0 \wedge \sum_{i=0}^\infty  (-1)^i (D_x)^i (\partial_{u_i} \hook \Sigma), \quad \Sigma \in \Omega^{1,s}(\EJ)
\label{IE}
\end{equation}
and $I$ satisfies the same properties as in \ref{IEP2},
\begin{equation}
\Sigma  = I(\Sigma) +\tdH \eta ,\qquad I^2=I, \qquad \ker I = {\rm Image} \ d_\sH.
\label{IEP}
\end{equation}

The space of functional $s$ forms $(s\geq 1)$ on $\EJ$, $\CF^s(\EJ)\subset \Omega^{1,s}(\EJ)$, is defined to be the image of $\Omega^{1,s}(E)$ under $I$,
\begin{equation}
\CF^s(\EJ) =  I ( \Omega^{1,s}(\EJ)).
\label{Fs}
\end{equation}
By definition \ref{Fs}, equation \ref{IE} shows that any $\Sigma \in \CF^s(E)$ can always be written,
\begin{equation}
\Sigma = dx \wedge \theta^0 \wedge \alpha , \qquad \alpha \in \Omega^{0,s}(\EJ).
\label{Crep}
\end{equation}
However, not every differential form $\Sigma\in \Omega^{1,s}(\EJ)$ written as \ref{Crep} is in the space $\CF^s(\EJ)$. In the case of $\CF^2(\EJ)$ the following is easy to show using the definition of $I$ in \ref{IE}, see also Proposition 3.7 in \cite{anderson:2016a}.

\begin{Lemma} \label{LF2} Let $\Sigma \in \CF^2(\EJ)$, then there exists a unique  skew-adjoint differential operator, $ \CS = s_i D_x^i$ such that,
\begin{equation}
\Sigma = dx \wedge \theta^0 \wedge \CS(\theta^0).
\label{SKF2}
\end{equation}
\end{Lemma}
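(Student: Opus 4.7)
The plan is to realize the skew-adjoint operator $\CS$ by antisymmetrizing a naive ``raw'' operator built from the standard representation of $\Sigma$, and then to use the defining projection identity $\Sigma = I(\Sigma)$ for $\Sigma \in \CF^2(\EJ)$ to see that the symmetric part is automatically killed.

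First, by equation \ref{Crep} I would write $\Sigma = dx \wedge \theta^0 \wedge \alpha$ for some $\alpha \in \Omega^{0,1}(\EJ)$, and expand $\alpha = a_i \theta^i$ with only finitely many nonzero $a_i$. This lets me introduce a tentative operator $\tilde \CS = a_i D_x^i$, so that (since $D_x^i \theta^0 = \theta^i$) one has $\tilde \CS(\theta^0) = \alpha$ and $\Sigma = dx \wedge \theta^0 \wedge \tilde \CS(\theta^0)$. Next, I would compute $I(\Sigma)$ directly from formula \ref{IE}. The interior products $\partial_{u_j} \hook \Sigma$ are evaluated using $\partial_{u_j} \hook \theta^k = \delta^k_j$, and then $D_x^j$ is distributed by Leibniz, $D_x^j(a_j \theta^0) = \sum_{k=0}^{j} \binom{j}{k} D_x^{j-k}(a_j)\, \theta^k$. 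After discarding all terms containing $\theta^0 \wedge \theta^0$ and relabeling, the result matches, term by term in $\theta^i$, the expansion of the formal adjoint $\tilde \CS^*(\theta^0) = \sum_i (-D_x)^i(a_i \theta^0)$, giving the clean identity $I(\Sigma) = \tfrac{1}{2} dx \wedge \theta^0 \wedge (\tilde \CS - \tilde \CS^*)(\theta^0)$. Setting $\CS = \tfrac{1}{2}(\tilde \CS - \tilde \CS^*)$ produces a skew-adjoint operator by construction, and because $\Sigma \in \CF^2(\EJ)$ implies $I(\Sigma) = \Sigma$, this yields $\Sigma = dx \wedge \theta^0 \wedge \CS(\theta^0)$, establishing existence.

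For uniqueness, suppose two skew-adjoint operators $\CS_1 = s_{1,i} D_x^i$ and $\CS_2 = s_{2,i} D_x^i$ both represent $\Sigma$. Then with $c_i = s_{1,i} - s_{2,i}$, the equation $dx \wedge \theta^0 \wedge \sum_i c_i \theta^i = 0$ holds. The $i=0$ summand vanishes automatically because of $\theta^0 \wedge \theta^0 = 0$, but the wedges $\{dx \wedge \theta^0 \wedge \theta^i\}_{i \geq 1}$ are linearly independent, forcing $c_i = 0$ for $i \geq 1$. This leaves only a possible multiplication operator $c_0$ as the difference $\CS_1 - \CS_2$. But a zeroth order operator is skew-adjoint if and only if $c_0 = -c_0$, so $c_0 = 0$ and $\CS_1 = \CS_2$.

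The only nontrivial step is the combinatorial bookkeeping in computing $I(\Sigma)$ and matching it to $\tilde \CS - \tilde \CS^*$, and that is routine Leibniz expansion against the definition of the formal adjoint; no conceptual obstacle is expected. The main idea being packaged here is simply that, restricted to $\Omega^{1,2}$ of the form $dx \wedge \theta^0 \wedge \tilde \CS(\theta^0)$, the interior Euler operator $I$ acts as the projection onto the skew-adjoint part of $\tilde \CS$, which fits the general principle that $\CF^s$ captures exactly the adjoint-symmetry type appropriate to degree $s$.
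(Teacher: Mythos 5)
Your proposal is correct and takes precisely the route the paper indicates for this lemma (the paper offers no written proof, only the remark that the result is easy to show from the definition of $I$ in \ref{IE}, citing Proposition 3.7 of \cite{anderson:2016a}): a direct computation of $I\bigl(dx\wedge\theta^0\wedge\tilde\CS(\theta^0)\bigr)=\tfrac{1}{2}\,dx\wedge\theta^0\wedge(\tilde\CS-\tilde\CS^*)(\theta^0)$ showing that $I$ acts as projection onto the skew-adjoint part, combined with $I(\Sigma)=\Sigma$ from \ref{IEP}. Your sign bookkeeping in the interior products and your uniqueness argument (linear independence of $dx\wedge\theta^0\wedge\theta^i$ for $i\geq 1$, with skew-adjointness forcing the residual zeroth-order coefficient $c_0=-c_0=0$) both check out, so there is nothing to repair.
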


The differential $\delta_\sV:\CF^s(\EJ) \to \CF^{s+1}(\EJ) $ is defined by
$$
\delta_\sV = I \circ \tdV : \CF^s(\EJ) \to \CF^{s+1}(\EJ),\qquad s=0,\ldots
$$
where we let $\CF^0(\EJ)=\Omega^{1,0}(\EJ)$. This leads to the differential complex
\begin{equation}
C^\infty(\EJ)\ \xrightarrow{d_\sH} \ \CF^0(\EJ)\ \xrightarrow{\delta_\sV} \ \CF^1(\EJ) \ \xrightarrow{\delta_\sV} \ \CF^2(\EJ) \ \ldots ,
\label{EComp}
\end{equation}
which is exact and is known as the Euler complex, see Theorem 2.7 \cite{anderson:1992a}.

\subsection{Symplectic Forms, Symplectic Operators, and Hamiltonian Vector Fields}\label{SFS}

Let $\Gamma$ be the Lie algebra of prolonged evolutionary vector fields on $\EJ$. We begin
by recalling the appropriate definitions (see Section 2.5 \cite{dorfman:1993a}).  

\begin{Definition} \label{SD1} An element $\Sigma \in \CF^2(\EJ)$ is a {\bf symplectic form} on $\Gamma$  if $\Sigma \neq 0$ and $\delta_\sV (\Sigma)= 0$.  A skew adjoint differential operator $\CS=s_i D_x^i$ is symplectic if $dx \wedge \theta^0 \wedge \CS(\theta^0)$ is a symplectic form.
\end{Definition}

Definition \ref{SD1} combined with Lemma \ref{LF2} shows there is a one-to-one correspondence between symplectic forms and symplectic operators. We now defines Hamiltonian vector fields.

\begin{Definition} \label{HVF0} Let $\Sigma$ be a symplectic form. A vector field $Y \in \Gamma$ is Hamiltonian if
\begin{equation}
\delta_\sV \circ I(Y\hook \Sigma) =0 .
\label{LC0}
\end{equation}
\end{Definition}

Definition \ref{HVF0} is equivalent to $\Sigma$ being invariant under the flow of $Y$ on $\CF^2(\EJ)$ as shown in the following theorem.
\begin{Theorem} \label{HVF} Let $\Sigma$ be a symplectic form. An evolutionary vector field $Y \in \Gamma$ is Hamiltonian with respect to $\Sigma$ if and only if
\begin{equation}
\LD^\natural_Y \Sigma = I \circ \pi^{1,2} \circ \LD_Y \Sigma = 0,
\label{LC1}
\end{equation}
where $\LD^\natural= I \circ \pi^{1,2}\circ  \LD $ is the projected Lie derivative on $\CF^2(\EJ)$, see Theorem 3.21 in \cite{anderson:2016a}.
\end{Theorem}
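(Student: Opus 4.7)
The plan is to apply Cartan's magic formula $\LD_Y = d \circ \iota_Y + \iota_Y \circ d$ and then exploit two structural facts: the $(r,s)$-bigrading on $\EJ$ collapses heavily because there is only one independent variable $x$, and the integration by parts operator $I$ from \ref{IE} annihilates $d_\sH$-exact forms (the property $\ker I = \mathrm{Im}\, d_\sH$ from \ref{IEP}).

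Since $\EJ$ has only the single independent variable $x$, $\Omega^{2,s}(\EJ) = 0$. With $\Sigma \in \CF^2(\EJ) \subset \Omega^{1,2}(\EJ)$ and $Y \hook \Sigma \in \Omega^{1,1}(\EJ)$ (because an evolutionary $Y$ is vertical), both $d_\sH \Sigma$ and $d_\sH(Y\hook \Sigma)$ necessarily vanish. Cartan's formula then reduces to
\begin{equation*}
\LD_Y \Sigma \;=\; d_\sV(Y\hook \Sigma) \,+\, Y\hook d_\sV \Sigma,
\end{equation*}
which already has bidegree $(1,2)$, so $\pi^{1,2} \LD_Y \Sigma = \LD_Y \Sigma$ and
\begin{equation*}
\LD^\natural_Y \Sigma \;=\; I\bigl(d_\sV(Y\hook \Sigma)\bigr) \,+\, I\bigl(Y\hook d_\sV \Sigma\bigr).
\end{equation*}

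For the first term I would write $Y \hook \Sigma = I(Y\hook \Sigma) + d_\sH \eta$ using \ref{IEP}, apply $d_\sV$, and then use $d_\sV d_\sH = -d_\sH d_\sV$ together with $I\circ d_\sH = 0$ to conclude $I(d_\sV(Y\hook \Sigma)) = \delta_\sV I(Y\hook \Sigma)$. For the second term I would use that $\Sigma$ is symplectic, so $\delta_\sV \Sigma = I(d_\sV \Sigma) = 0$, forcing $d_\sV \Sigma = d_\sH \mu$ for some $\mu \in \Omega^{0,3}(\EJ)$. The key identity I would invoke is the anti-commutation $\iota_Y d_\sH + d_\sH \iota_Y = 0$ for any evolutionary vector field $Y$; with this,
\begin{equation*}
Y \hook d_\sV \Sigma \;=\; Y \hook d_\sH \mu \;=\; -d_\sH(Y\hook \mu),
\end{equation*}
so $I(Y\hook d_\sV \Sigma) = 0$. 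Combining the two pieces yields $\LD^\natural_Y \Sigma = \delta_\sV \circ I(Y\hook \Sigma)$, from which the biconditional with Definition \ref{HVF0} is immediate.

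The main obstacle I expect is justifying $\{\iota_Y, d_\sH\} = 0$ cleanly. My plan is to derive it from the fact that for evolutionary $Y$ the ordinary Lie derivative $\LD_Y$ preserves the $(r,s)$-bigrading: splitting $\LD_Y = (d_\sH + d_\sV)\iota_Y + \iota_Y(d_\sH + d_\sV)$ into its four bihomogeneous pieces on a form of bidegree $(r,s)$, the two pieces of bidegree $(r+1,s-1)$ must cancel separately, giving exactly $\iota_Y d_\sH = -d_\sH \iota_Y$. Alternatively one can verify it directly on the generators $\{dx, \theta^i\}$ using $\theta^{i+1}(Y) = D_x(\theta^i(Y))$, a consequence of $[Y, D_x] = 0$ for evolutionary $Y$.
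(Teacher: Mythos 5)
Your proposal is correct, but it takes a genuinely more self-contained route than the paper. The paper's proof is three lines: it cites Lemma 3.24 of \cite{anderson:2016a} for the projected Cartan formula $\LD^\natural_Y \Sigma = I \circ d_\sV (Y \hook \Sigma) + I(Y \hook \delta_\sV \Sigma)$, kills the second term using $\delta_\sV \Sigma = 0$, and then identifies $I \circ d_\sV(Y \hook \Sigma)$ with $\delta_\sV \circ I(Y \hook \Sigma)$ via the identity $I \circ d_\sV \circ I = I \circ d_\sV$, which follows from $\omega = I(\omega) + d_\sH \eta$ and $\ker I = {\rm Im}\, d_\sH$. You instead prove the needed instance of that cited lemma from scratch: the ordinary Cartan formula plus the collapse $\Omega^{2,s}(\EJ) = 0$ (one independent variable) gives $\pi^{1,2} \LD_Y \Sigma = \LD_Y \Sigma = d_\sV(Y \hook \Sigma) + Y \hook d_\sV \Sigma$; your treatment of the first term is, in substance, the same $I \circ d_\sV \circ I = I \circ d_\sV$ identity the paper invokes; and your disposal of the second term --- writing $d_\sV \Sigma = d_\sH \mu$ from $\delta_\sV \Sigma = 0$ together with $\ker I = {\rm Im}\, d_\sH$, then using the anticommutation $Y \hook d_\sH \omega + d_\sH (Y \hook \omega) = 0$ for evolutionary $Y$ --- is exactly the content hidden inside the citation (note that Anderson's formula carries $Y \hook \delta_\sV \Sigma$ rather than $Y \hook d_\sV \Sigma$; your $d_\sH \mu$ maneuver is precisely what bridges the two expressions). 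Both of your justifications of the anticommutation identity are sound: the bidegree argument works because $d_\sH (Y\hook\,\cdot\,) + Y \hook d_\sH(\,\cdot\,)$ is the unique component of $\LD_Y$ shifting bidegree by $(+1,-1)$, and the generator check closes because the anticommutator of two anti-derivations is a derivation, so vanishing on $f$, $dx$, $\theta^i$ suffices. What the paper's citation buys is brevity and a formula valid in any number of independent variables; what your argument buys is a complete proof within the paper's own toolkit (equations \ref{IE} and \ref{IEP}), at the cost of relying on the degree collapse special to $\EJ = J^\infty(\reals,\reals)$ --- which is, however, exactly the setting of the theorem.
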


\begin{proof} Using Lemma 3.24 in \cite{anderson:2016a} and the fact that $\delta_\sV \Sigma=0$, we have
\begin{equation}
\LD^\natural_Y \Sigma = I \circ d_\sV (Y \hook \Sigma) + I( Y \hook \delta_\sV \Sigma) = I \circ d_\sV (Y \hook \Sigma).
\label{CLS}
\end{equation}
By the first property in equation \ref{IEP},  $ I \circ d_\sV \circ I = I \circ d_\sV$, so conditions \ref{LC0} and \ref{LC1} are  equivalent through equation \ref{CLS}.
\end{proof}

We now write out definition \ref{HVF0} in a more familiar form. The exactness of the Euler complex and the condition $\delta_\sV\circ I ( Y\hook \Sigma)=0$ implies there exists $\lambda=2H dx \in \CF^0(\EJ)$ such that
\begin{equation}
I(Y \hook \Sigma)=\delta_\sV\lambda= dx \wedge \theta^0 \cdot \Eop(2H).
\label{Lcond}
\end{equation}
Writing $Y = \pr (K \partial_u)$ and  $\Sigma= dx \wedge \theta^0 \wedge  \CS( \theta^0)$  where $\CS= s_i D_x^i$ is a skew-adjoint differential operator. The left side of equation \ref{Lcond} is then
\begin{equation}
\begin{aligned}
I(Y \hook \Sigma) &= I(dx \wedge \left(  s_i D_x^i( K ) \theta^0 - K s_i\theta^i )  \right)   \\
&= dx \wedge \theta^0 \left(  s_i D_x^i( K ) - (-D_x)^i( Ks_i)   \right)\\
&= dx \wedge \theta^0 \cdot 2 \CS (K).  
\end{aligned}
\end{equation}
Using this computation in \ref{Lcond} shows that condition \ref{LC0} (or \ref{LC1}) is then equivalent to the 
following.

\begin{Corollary}  \label{HVFnotLP} Let $\Sigma$ be a symplectic form with corresponding symplectic operator $\CS$. The evolutionary vector field
$Y= \pr  (K\partial_u)\in \Gamma$ is Hamiltonian if and only if there exists $H\in C^\infty(\EJ)$ such that
\begin{equation}
\CS( K) = \Eop(H).
\label{HVFnotP}
\end{equation}
\end{Corollary}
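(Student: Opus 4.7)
The plan is to translate Definition \ref{HVF0} into the coordinate condition \ref{HVFnotP} using two standard tools: the exactness of the Euler complex \ref{EComp} and an explicit coordinate computation with the integration-by-parts operator $I$.

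First I would observe that $Y \hook \Sigma \in \Omega^{1,1}(\EJ)$, so $I(Y\hook\Sigma) \in \CF^1(\EJ)$ by definition of $\CF^1$. Definition \ref{HVF0} says $Y$ is Hamiltonian iff $\delta_\sV\bigl(I(Y\hook\Sigma)\bigr)=0$, and by exactness of the Euler complex \ref{EComp} at $\CF^1(\EJ)$ this holds iff there exists $\lambda \in \CF^0(\EJ) = \Omega^{1,0}(\EJ)$ with $I(Y\hook\Sigma) = \delta_\sV\lambda$. Since every element of $\CF^0(\EJ)$ is of the form $2H\,dx$ for some $H \in C^\infty(\EJ)$, the task reduces to computing both sides of the proposed identity.

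For the left side, using $\Sigma = dx\wedge\theta^0\wedge \CS(\theta^0)$ with $\CS = s_i D_x^i$, and the facts $Y\hook dx = 0$ (since $Y$ is vertical) and $Y\hook \theta^i = D_x^i(K)$, I would expand
\[
Y\hook\Sigma \;=\; dx\wedge\bigl(s_i D_x^i(K)\,\theta^0 \;-\; K s_i\,\theta^i\bigr).
\]
Applying $I$ according to \ref{IE} integrates the second term by parts against $\theta^0$, giving
\[
I(Y\hook\Sigma) \;=\; dx\wedge\theta^0\cdot\bigl(s_i D_x^i(K) - (-D_x)^i(s_i K)\bigr) \;=\; dx\wedge\theta^0\cdot\bigl(\CS(K) - \CS^*(K)\bigr).
\]
The skew-adjointness $\CS^* = -\CS$, which is part of being a symplectic operator by Lemma \ref{LF2} and Definition \ref{SD1}, then collapses this to $dx\wedge\theta^0\cdot 2\,\CS(K)$. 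For the right side, $\delta_\sV(2H\,dx) = I\circ d_\sV(2H\,dx) = dx\wedge\theta^0\cdot \Eop(2H)$ is immediate from the definition of $I$ as the interior Euler operator acting on the vertical differential of a Lagrangian.

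Equating the two expressions and dividing by $2$ gives the equivalence $\CS(K) = \Eop(H)$, establishing both directions of the corollary. There is no serious obstacle: the exactness of the Euler complex is already recorded below \ref{EComp}, and the only bookkeeping that requires care is the sign in the integration-by-parts step, where skew-adjointness of $\CS$ is precisely what turns the potential cancellation $\CS - \CS^*$ into the doubling $2\CS$.
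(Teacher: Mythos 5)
Your proposal is correct and takes essentially the same route as the paper: the paper's own derivation of Corollary \ref{HVFnotLP} consists precisely of invoking exactness of the Euler complex to write $I(Y\hook\Sigma)=\delta_\sV(2H\,dx)=dx\wedge\theta^0\cdot\Eop(2H)$ and then computing $I(Y\hook\Sigma)=I\bigl(dx\wedge(s_iD_x^i(K)\,\theta^0-Ks_i\theta^i)\bigr)=dx\wedge\theta^0\cdot 2\,\CS(K)$. Your only (harmless) addition is making the skew-adjointness step $\CS(K)-\CS^*(K)=2\,\CS(K)$ explicit, which the paper absorbs silently into its last line.
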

Corollary \ref{HVFnotLP} just shows that Definition \ref{HVF0} agrees with the standard symplectic Hamiltonian formulation for time independent evolution equations \cite{dorfman:1993a}.

\subsubsection{Symplectic Potential}

If $\Sigma$ is a symplectic form, the exactness of the $\delta_\sV$ complex implies there exists $\psi \in \CF^1(\EJ)$ such that $\Sigma =\delta_\sV(\psi)$.  The functional form $\psi$ is a {\bf symplectic potential} for $\Sigma$.

\begin{Lemma} \label{SymPot} Let $\Sigma\in \CF^2(E)$ be symplectic (and so $\delta_\sV$ closed), then there exists a smooth function $P\in C^\infty(\EJ)$  such that
\begin{equation}
\Sigma = dx \wedge \theta^0 \wedge \CS(\theta^0),\quad {\rm where} \quad \CS =\frac{1}{2} ({\bf F}_P - {\bf F}_P^*)
\label{CF31}
\end{equation}
where ${\bf F}_P = P_i D_x^i $ is the Fr\'echet derivative of $P$.
\end{Lemma}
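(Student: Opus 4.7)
The plan is to produce $P$ as a scalar symplectic potential for $\Sigma$ and then extract the skew-adjoint operator form by integration by parts. Since $\Sigma$ is $\delta_\sV$-closed and the Euler complex \ref{EComp} is exact, there exists a symplectic potential $\psi \in \CF^1(\EJ)$ with $\delta_\sV \psi = \Sigma$. The canonical form \ref{Crep} for $\CF^1(\EJ)$ forces $\psi = P\, dx \wedge \theta^0$ for a single smooth function $P \in C^\infty(\EJ)$; this $P$ will be the function claimed in the statement.

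Next I would compute $d_\sV \psi$ directly. Using $d_\sV(dx) = 0$ and $d_\sV(\theta^0) = 0$,
\begin{equation*}
d_\sV \psi = d_\sV P \wedge dx \wedge \theta^0 = P_i\, \theta^i \wedge dx \wedge \theta^0 = dx \wedge \theta^0 \wedge \bFP(\theta^0),
\end{equation*}
where I use $\bFP(\theta^0) = P_i D_x^i(\theta^0) = P_i \theta^i$. I then apply the defining adjoint identity \ref{d_ad} in the one-variable setting with $\rho = \omega = \theta^0$ and $\CE = \bFP$. Reordering the wedge factors shows that
\begin{equation*}
dx \wedge \theta^0 \wedge (\bFP + \bFP^*)(\theta^0) = d_\sH \tilde\zeta
\end{equation*}
for some $\tilde\zeta$, so that
\begin{equation*}
dx \wedge \theta^0 \wedge \bFP(\theta^0) \equiv dx \wedge \theta^0 \wedge \tfrac{1}{2}(\bFP - \bFP^*)(\theta^0) \pmod{d_\sH}.
\end{equation*}

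Finally, applying $I$ to both sides and using $\ker I = \mathrm{Im}\, d_\sH$ from \ref{IEP}, I would reduce $\Sigma = \delta_\sV \psi = I(d_\sV \psi)$ to $\Sigma = I\bigl(dx \wedge \theta^0 \wedge \tfrac{1}{2}(\bFP - \bFP^*)(\theta^0)\bigr)$. The operator $\tfrac{1}{2}(\bFP - \bFP^*)$ is manifestly skew-adjoint, so by the bijection in Lemma \ref{LF2} the form $dx \wedge \theta^0 \wedge \tfrac{1}{2}(\bFP - \bFP^*)(\theta^0)$ already lies in $\CF^2(\EJ)$ and is therefore fixed by the projection $I$. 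This yields $\Sigma = dx \wedge \theta^0 \wedge \CS(\theta^0)$ with $\CS = \tfrac{1}{2}(\bFP - \bFP^*)$, as required. The only delicate step is the sign bookkeeping in the adjoint identity, which must be tracked carefully to confirm that it is $\bFP + \bFP^*$, and not $\bFP - \bFP^*$, that becomes $d_\sH$-exact; every other step is a direct application of the machinery assembled in Section \ref{FFS}.
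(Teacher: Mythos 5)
Your proposal is correct and follows essentially the same route as the paper: the paper's proof likewise invokes exactness of the $\delta_\sV$ complex to get $\psi = dx\wedge\theta^0\cdot P$ via the canonical form \ref{Crep} and then computes $\Sigma=\delta_\sV\psi = I(d_\sV\psi)$, with your adjoint-identity step simply making explicit the standard integration-by-parts fact $I(dx\wedge\theta^0\wedge\rho)=\tfrac{1}{2}\,dx\wedge\theta^0\wedge(\rho-\rho^*)$ that the paper leaves implicit. Your sign bookkeeping is right ($\bFP+\bFP^*$ is the part killed by $I$, since the vertical one-forms anticommute in \ref{d_ad}), and your appeal to Lemma \ref{LF2} for the fixed-point claim is consistent with how the paper itself uses the correspondence between skew-adjoint operators and functional two-forms.
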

\begin{proof} A symplectic potential  $\psi \in \CF^1(\EJ)$ for $\Sigma$ can be written using \ref{Crep} as
\begin{equation}
\psi = dx \wedge \theta^0 \cdot P , \quad P\in C^\infty(\EJ).
\label{cfeta}
\end{equation}
Writing $\Sigma =\delta_\sV \psi $ and using equation \ref{cfeta} produces \ref{CF31}.
\end{proof}

The Hamiltonian condition on $Y\in \Gamma$ in  terms of a symplectic potential $\psi$ is the following.

\begin{Lemma} \label{LDpsi} The evolutionary vector field $Y\in \Gamma$ is Hamiltonian for the symplectic form $\Sigma=\delta_\sV \psi $ if and only if there exists $\lambda \in \CF^0(\EJ)$ such that
\begin{equation}
\LD^\natural_Y \psi = \delta_\sV \lambda.
\label{CLpsi}
\end{equation}
\end{Lemma}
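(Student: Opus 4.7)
The plan is to reduce the statement to a commutation identity $\delta_\sV \LD^\natural_Y \psi = \LD^\natural_Y \Sigma$ and then apply Theorem \ref{HVF} together with exactness of the Euler complex \ref{EComp}.

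First I would apply the Cartan-like formula from Lemma 3.24 of \cite{anderson:2016a} (already used in the proof of Theorem \ref{HVF}) to both $\psi \in \CF^1(\EJ)$ and $\Sigma = \delta_\sV \psi \in \CF^2(\EJ)$. This gives
\begin{equation*}
\LD^\natural_Y \psi = I\, d_\sV(Y\hook \psi) + I(Y \hook \Sigma), \qquad
\LD^\natural_Y \Sigma = I\, d_\sV(Y\hook \Sigma),
\end{equation*}
where the second equation uses $\delta_\sV \Sigma = \delta_\sV^2 \psi = 0$. Next I would apply $\delta_\sV = I\circ d_\sV$ to the first equation and use the elementary identity $I \circ d_\sV \circ I = I \circ d_\sV$, which follows from \ref{IEP} (writing $I(\alpha)=\alpha - d_\sH\eta$ and noting $I\circ d_\sH = 0$ together with $d_\sV d_\sH = -d_\sH d_\sV$). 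Since $d_\sV^2 = 0$ the first term drops out, leaving
\begin{equation*}
\delta_\sV \LD^\natural_Y \psi = I\, d_\sV(Y \hook \Sigma) = \LD^\natural_Y \Sigma .
\end{equation*}

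With this identity in hand the equivalence is routine. By Theorem \ref{HVF}, $Y$ is Hamiltonian iff $\LD^\natural_Y \Sigma = 0$, which by the identity is equivalent to $\delta_\sV \LD^\natural_Y \psi = 0$. The forward direction of the lemma then follows by taking $\delta_\sV$ of \ref{CLpsi} and using $\delta_\sV^2 = 0$. For the converse, if $\delta_\sV \LD^\natural_Y \psi = 0$, then since $\LD^\natural_Y \psi \in \CF^1(\EJ)$, the exactness of the Euler complex \ref{EComp} at $\CF^1(\EJ)$ produces $\lambda \in \CF^0(\EJ)$ with $\LD^\natural_Y \psi = \delta_\sV \lambda$.

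The only step requiring genuine care is the commutation $\delta_\sV \LD^\natural_Y = \LD^\natural_Y \circ \delta_\sV$ on $\CF^1$, and the main obstacle is ensuring the intertwining $I \circ d_\sV \circ I = I \circ d_\sV$ is applied correctly when projecting from $\Omega^{1,s}(\EJ)$ back into $\CF^s(\EJ)$; everything else is formal once the Cartan-type formula of Lemma 3.24 of \cite{anderson:2016a} is invoked.
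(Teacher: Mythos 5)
Your proposal is correct and takes essentially the same route as the paper: the paper's proof likewise reduces the lemma to the condition $\delta_\sV \LD^\natural_Y \psi = 0$ via the commutation $\delta_\sV \circ \LD^\natural_Y = \LD^\natural_Y \circ \delta_\sV$ (which it cites directly from Lemma 3.24 of \cite{anderson:2016a}), then invokes Theorem \ref{HVF} and the exactness of the Euler complex \ref{EComp} exactly as you do. The only difference is presentational: you derive the commutation on $\psi$ from the Cartan-type formula together with $I \circ d_\sV \circ I = I \circ d_\sV$ rather than citing it outright, which is a valid unpacking of the same cited lemma (and note your labels ``forward'' and ``converse'' are swapped relative to the statement, though both implications are in fact established).
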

\begin{proof} Using the exactness of the $\delta_\sV$ complex we show $\delta_\sV \LD ^\natural_Y \psi=0$ which is equivalent to equation \ref{CLpsi}. By Theorem \ref{HVF}, $Y$ is Hamiltonian if and only if 
\begin{equation}
0=\LD^\natural_Y \delta_\sV \psi = \delta_\sV \LD^\natural _Y \psi
\label{CLD}
\end{equation}
where we have used $\LD^\natural _Y \circ \delta_\sV = \delta_\sV \circ \LD^\natural _Y$ (Lemma 3.24 \cite{anderson:2016a}). This proves the Lemma.
\end{proof}

Using either Lemma \ref{LDpsi} or equations \ref{CF31} and  \ref{HVF} we have the following simple corollary.

\begin{Corollary} \label{HVF2C} Let $\Sigma$ be a symplectic form with symplectic potential $\psi= dx \wedge \theta^0 \cdot P$. The evolutionary vector field
$V= \pr  (K\partial_u)\in \Gamma$ is Hamiltonian if and only if there exists $H\in C^\infty(\EJ)$ such that
\begin{equation}
\frac{1}{2}({\bf F}_P-{\bf F}_P^*)(K) =  \CS(K)  = \Eop(H)
\label{HVF2}
\end{equation}
where ${\bf F}_P$ is the Fr\'echet-derivative  of $P$ on $J^\infty(\reals,\reals)$. 
\end{Corollary}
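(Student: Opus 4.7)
The plan is to obtain the corollary as an immediate consequence of Lemma \ref{SymPot} combined with Corollary \ref{HVFnotLP}. Lemma \ref{SymPot} already supplies the key fact that the (unique skew-adjoint) symplectic operator associated with the symplectic form $\Sigma = \delta_\sV \psi$ whose potential has the normal form $\psi = dx \wedge \theta^0 \cdot P$ is precisely $\CS = \tfrac{1}{2}(\bFP - \bFP^*)$. Corollary \ref{HVFnotLP} then characterizes the Hamiltonian condition on $V = \pr(K\partial_u)$ by the existence of $H \in C^\infty(\EJ)$ with $\CS(K) = \Eop(H)$. Substituting the explicit expression for $\CS$ from Lemma \ref{SymPot} into this condition produces equation \ref{HVF2}, completing both directions of the biconditional.

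As a second route (the one the author flags via Lemma \ref{LDpsi}), one can compute $\LD^\natural_V \psi$ directly. Since $V$ is evolutionary, $\LD_V \theta^0 = d_\sV K$, so
\begin{equation*}
\LD_V \psi = dx \wedge (d_\sV K) \cdot P + dx \wedge \theta^0 \cdot V(P)
= dx \wedge P\cdot K_i \theta^i + dx \wedge \theta^0 \cdot P_i D_x^i(K).
\end{equation*}
Applying the projection $I \circ \pi^{1,2}$ from equation \ref{IE} (i.e.\ integrating the first term by parts to pull $\theta^0$ out in front) converts the first summand into $-\tfrac{1}{2}\, dx \wedge \theta^0 \cdot \bFP^*(K)$ and the second into $\tfrac{1}{2}\, dx \wedge \theta^0 \cdot \bFP(K)$, giving
\begin{equation*}
\LD^\natural_V \psi = \tfrac{1}{2}\, dx \wedge \theta^0 \cdot (\bFP - \bFP^*)(K) = dx \wedge \theta^0 \cdot \CS(K).
\end{equation*}
By Lemma \ref{LDpsi}, $V$ is Hamiltonian iff $\LD^\natural_V \psi = \delta_\sV \lambda$ for some $\lambda = H\, dx$, and by exactness of the Euler complex (equation \ref{EComp}) this is equivalent to $\CS(K) = \Eop(H)$, which is \ref{HVF2}.

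There is no real obstacle here: the statement is a packaging of already-proved facts, with the only minor bookkeeping being to track the factor $\tfrac{1}{2}$ and the sign coming from the integration by parts in $I$. These conventions were already fixed in the proof of Lemma \ref{SymPot}, so I would use the first route (direct substitution) as the cleanest presentation and mention the $\LD^\natural$ computation only as a remark if desired.
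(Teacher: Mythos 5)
Your first route is exactly the paper's proof: the paper disposes of this corollary in one line, citing precisely the combination you use — Lemma \ref{SymPot} (equation \ref{CF31}) to identify $\CS=\tfrac{1}{2}(\bFP-\bFP^*)$ as the symplectic operator of $\Sigma=\delta_\sV\psi$, together with Theorem \ref{HVF}/Corollary \ref{HVFnotLP} for the Hamiltonian condition $\CS(K)=\Eop(H)$ — so on the main line your proposal is correct and matches the paper.

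Your optional second computation, however, contains a genuine slip. On $\CF^1$ the operator $I$ carries the factor $\tfrac{1}{s}$ with $s=1$, so no factors of $\tfrac{1}{2}$ appear: $I$ fixes $dx\wedge\theta^0\cdot\bFP(K)$ outright, and integrating the first summand by parts gives
\begin{equation*}
I\bigl( dx \wedge P\,K_i\,\theta^i \bigr) = dx \wedge \theta^0 \cdot \sum_i (-D_x)^i(K_i P) = dx \wedge \theta^0 \cdot {\bf F}_K^*(P),
\end{equation*}
which is \emph{not} $-\tfrac{1}{2}\,dx\wedge\theta^0\cdot\bFP^*(K)$; by the product identity $\Eop(PK)={\bf F}_P^*(K)+{\bf F}_K^*(P)$ (Olver's equation 5.80, the same identity the paper invokes in Lemma \ref{CSDL}) one gets instead
\begin{equation*}
\LD^\natural_V \psi = dx \wedge \theta^0 \cdot \bigl( (\bFP - \bFP^*)(K) + \Eop(PK) \bigr).
\end{equation*}
The conclusion survives because the extra term $\Eop(PK)$ is itself an Euler--Lagrange expression and is absorbed into $\delta_\sV\lambda$: taking $\lambda=(2H+PK)\,dx$ in Lemma \ref{LDpsi} recovers \ref{HVF2} with $H$ shifted by $\tfrac{1}{2}PK$, so the biconditional is unaffected. (The shift is not cosmetic in general — compare the proof of Theorem \ref{TFAE}, where precisely such a $PK$-correction to the Hamiltonian appears.) Since you present this computation only as a remark, the proof as a whole stands, but as written the remark should either be corrected along the lines above or dropped.
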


A straight forward computation writing $\Sigma = \delta_V \psi$ classifies the first order symplectic operators, see also  Theorem 6.2 in \cite{dorfman:1993a}

\begin{Lemma} An element $\Sigma\in \CF^{2}(\EJ)$ of the first order form,
$$
\Sigma= dx \wedge \theta^0 \wedge \theta^1 \cdot A  \quad  A \in C^\infty(\EJ)
$$
is symplectic, if and only if there exists $P(x,u,u_x,u_{xx}) \in C^\infty(\EJ)$
depending on up to second order derivative, such that
\begin{equation}
A=\frac {\partial P}{\partial {u_x}}- D_x \left(\frac {\partial P}{\partial u_{xx} } \right).
\label{A1sym}
\end{equation}
\end{Lemma}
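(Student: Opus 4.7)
The plan is to separate the biconditional into two implications. Sufficiency is a short direct computation; necessity requires more care, and is where the technical work lies.

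\textbf{Sufficiency.} Given $P = P(x,u,u_x,u_{xx})$, I would form $\psi = dx \wedge \theta^0 \cdot P$ and invoke Lemma \ref{SymPot}: one has $\delta_\sV \psi = dx \wedge \theta^0 \wedge \CS(\theta^0)$ with $\CS = \tfrac12({\bf F}_P - {\bf F}_P^*)$. Since $P$ has order at most two, ${\bf F}_P = P_0 + P_1 D_x + P_2 D_x^2$; computing the adjoint by two integrations by parts collapses the difference to
\begin{equation*}
{\bf F}_P - {\bf F}_P^* = 2(P_1 - D_x P_2)\,D_x + D_x(P_1 - D_x P_2).
\end{equation*}
Setting $A = P_1 - D_x P_2$ gives $\CS(\theta^0) = A\theta^1 + \tfrac12 D_x(A)\theta^0$, whose wedge with $\theta^0$ is $A\,\theta^0\wedge\theta^1$. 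Hence $\delta_\sV \psi = A\, dx \wedge \theta^0 \wedge \theta^1 = \Sigma$, so $\Sigma$ is $\delta_\sV$-exact and in particular symplectic.

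\textbf{Necessity.} Given $\Sigma = A\,dx\wedge\theta^0\wedge\theta^1$ symplectic, Lemma \ref{SymPot} produces some $\tilde P \in C^\infty(\EJ)$, possibly of high order, with $\Sigma = \delta_\sV(dx\wedge\theta^0\cdot\tilde P)$. Uniqueness in Lemma \ref{LF2} identifies $\tfrac12({\bf F}_{\tilde P}-{\bf F}_{\tilde P}^*)$ with the first-order skew operator $A\, D_x + \tfrac12 D_x(A)$ attached to $\Sigma$. The remaining task is to replace $\tilde P$ by an order-$\leq 2$ potential. Two potentials of $\Sigma$ differ by a $\delta_\sV$-closed functional $1$-form, and exactness of the Euler complex \ref{EComp} says this is $\delta_\sV(L\,dx) = dx\wedge\theta^0\cdot\Eop(L)$; so $\tilde P$ is determined modulo Euler--Lagrange expressions. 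I would reduce the order inductively: if $\tilde P$ depends on $u_N$ with $N\geq 3$, the $D_x^N$-coefficient of ${\bf F}_{\tilde P}-{\bf F}_{\tilde P}^*$ is $(1-(-1)^N)\tilde P_N$ and must vanish, forcing $N$ even. The vanishing of the subsequent coefficients then constrains the top dependence of $\tilde P$ to have the shape $\Eop(L)$ for some Lagrangian $L$ of order $N/2$, so that $\tilde P - \Eop(L)$ has order strictly less than $N$. Iteration brings one to a $P$ of order $\leq 2$, and the sufficiency computation then gives $A = P_1 - D_x P_2$.

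\textbf{Main obstacle.} The hard part is the inductive reduction: showing that the cascading vanishing of high-order coefficients of ${\bf F}_{\tilde P}-{\bf F}_{\tilde P}^*$ is exactly what forces the top dependence of $\tilde P$ to lie in the image of $\Eop$, which is a miniature instance of the inverse problem of the calculus of variations. An alternative would be to bypass Lemma \ref{SymPot} entirely and compute $\delta_\sV \Sigma = I(d_\sV \Sigma) = 0$ directly as explicit Helmholtz-type PDEs on $A$, then construct $P(x,u,u_x,u_{xx})$ from these by successive antidifferentiation in $u_{xx}$; the bookkeeping differs but the conclusion is the same.
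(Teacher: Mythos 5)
Your sufficiency argument is complete, correct, and is precisely the computation the paper has in mind: the paper's entire ``proof'' of this lemma is the one-line remark that a straightforward computation writing $\Sigma=\delta_\sV\psi$ does it, together with a citation of Theorem 6.2 of Dorfman, and your identity ${\bf F}_P-{\bf F}_P^*=2(P_1-D_xP_2)D_x+D_x(P_1-D_xP_2)$ with $A=P_1-D_xP_2$ is exactly that computation, correctly executed via Lemma \ref{SymPot}. For necessity the paper supplies no argument of its own, so your route---invoke Lemma \ref{SymPot} to get a potential $\tilde P$ of a priori unbounded order, observe via the exactness of the Euler complex \ref{EComp} that potentials are determined modulo ${\rm Image}\,\Eop$, and reduce the order inductively---is a genuinely different, self-contained alternative to the citation; the direct Helmholtz computation on $A$ that you mention at the end is in fact closer to Dorfman's own method, so you have both routes in hand.

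The one place your plan is thinner than it should be is the pivotal inductive step, which you assert rather than prove: that the vanishing of the lower coefficients of ${\bf F}_{\tilde P}-{\bf F}_{\tilde P}^*$ forces the top dependence of $\tilde P$ to be of the form $\Eop(L)$ with $L$ of order $N/2$. This is true but needs three concrete observations, none of which appears in your sketch. First, for even $N\geq 4$ the vanishing of the $D_x^{N-1}$-coefficient reads $2\tilde P_{N-1}=N\,D_x\tilde P_N$; since the right side contains $N\,u_{N+1}\,\partial_{u_N}\tilde P_N$ while the left has order $\leq N$, this forces $\partial^2_{u_N}\tilde P=0$, i.e.\ $\tilde P=a\,u_N+b$ with ${\rm ord}\,a,\,{\rm ord}\,b\leq N-1$. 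Second, comparing $u_N$-coefficients in the same identity gives $(2-N)\,\partial_{u_{N-1}}a=0$, and the analogous comparisons in the $D_x^{N-3}$, $D_x^{N-5},\ldots$ coefficient identities must be cascaded to force ${\rm ord}\,a\leq N/2$; without this, no Lagrangian $L$ of order $N/2$ can satisfy $(-1)^{N/2}\partial^2_{u_{N/2}}L=a$, so the subtraction $\tilde P-\Eop(L)$ you propose cannot even be set up. Third, once ${\rm ord}\,a\leq N/2$ one builds $L$ by antidifferentiating $a$ twice in $u_{N/2}$, and then $\partial_{u_N}(\tilde P-\Eop(L))=0$ drops the order strictly. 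You correctly flag this cascade as the main obstacle and as a miniature inverse problem, but since it is exactly where all the content of the necessity direction lives, a finished proof must carry it out (or fall back to the direct Helmholtz route); as it stands your necessity direction is a viable plan with its crux deferred, whereas the paper's is a deferral to the literature.
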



\subsection{Time Dependent Systems}

Most of the definitions and results from Sections \ref{FFS} and \ref{SFS} extend immediately to the case of time dependent systems. Let $E= \reals\times J^\infty(\reals,\reals)$,  and label the extra $\reals$ with the parameter $t$. The contact forms are 
\begin{equation}
\theta^i_\sEL=du_i-u_{i+1}dx
\label{CFE}
\end{equation}
and we let $\Omega^{r,s}_{\rm t_{sb}}(E) $ be the bicomplex of $t$ semi-basic forms on $E$,
$$
\Omega^{r,s}_{\rm t_{sb}}(E) = \{\ \omega \in \Omega^{r,s}(E) \ | \ \partial_t \hook \omega=0 \ \quad ,  r=0,1; s=0\ldots \ \} .
$$
A generic form $\omega \in \Omega^{1,2}_{\rm t_{sb}}(E) $  is given by
$$
\omega= dx \wedge \theta^i_\sEL\wedge \theta^j_\sEL \cdot  \xi_{ij}, \qquad \xi_{ij} \in C^\infty(E).
$$ 
The anti-derivations $d_\sH^\sE: \Omega^{r,s}_{\rm t_{sb}}(E) \to \Omega^{r+1,s}_{\rm t_{sb}}(E) $ and $d^\sE_\sV:\Omega^{r,s}_{\rm t_{sb}}(E) \to \Omega^{r,s+1}_{\rm t_{sb}}(E) $ are determined by
\begin{equation}
d_\sH^\sE (f) = D_x(f) dx,\quad d_\sH^\sE \theta^i_E = dx \wedge \theta^{i+1}_\sE, \quad  d_\sV^\sE (f) = f_i \theta^i_\sEL, \quad d_\sV^\sE \theta^i_\sE=0,
\label{dHdVE}
\end{equation}
and satisfy $(d_\sH^\sE )^2=0, (d_\sV^\sE )^2=0, d_\sH^\sE d_\sV^\sE+d_\sV^\sE d_\sH^\sE =0$. However $d\neq d_\sH^\sE +d_\sV^\sE $. The integration by part operator $I$ induces a map $I_\sEL:\Omega^{r,s}_{\rm t_{sb}}(E)  \to \Omega^{r,s}_{\rm t_{sb}}(E) $ having the formula \ref{IE} and properties \ref{IEP}. We let 
\begin{equation}
\CF_{\rm t_{sb}}^s(E)= I_\sE \left(\Omega^{1,s}_{\rm t_{sb}}(E)\right).
\label{CFsb}
\end{equation}
The mapping $\delta_\sV^\sE= I_\sE \circ d_\sV ^\sE$ gives rise to the exact sequence as in \ref{EComp}. A form 
$\Sigma\in \CF_{\rm t_{sb}}^2(E)$ is symplectic if $\delta_\sV^\sEL \Sigma =0$ and Lemma \ref{SymPot}  becomes the following.

\begin{Lemma}\label{SyminP} An element $\Sigma=dx \wedge \theta_\sEL^0 \wedge \CS(\theta^0_\sEL) \in \CF_{\rm t_{sb}}^2(E)$ where $\CS=s_i D_x^i$, and $s_i\in C^\infty(E)$ is symplectic if and only if there exists $P \in C^\infty(E)$ such that
\begin{equation}
\CS =\frac{1}{2}( {\bf L}_P -{\bf L}_P^*)
\label{SinP}
\end{equation}
where ${\bf L}_P= P_i D_x^i$.
\end{Lemma}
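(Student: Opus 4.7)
The proof parallels that of Lemma \ref{SymPot} in the time-independent case, since on $t$-semi-basic forms the operators $d_\sV^\sE$ and $I_\sE$ obey the same algebraic identities as their counterparts on $\EJ$, with the coordinate $t$ entering only as a parameter in smooth coefficients. In particular, the analogue of the Euler complex \ref{EComp} built from $\CF_{\rm t_{sb}}^s(E)$ and $\delta_\sV^\sE$ remains exact, by the obvious adaptation of the standard homotopy operator treating $t$ as a parameter.

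For the forward direction, assume $\Sigma$ is symplectic so that $\delta_\sV^\sE \Sigma = 0$. Exactness of the $t$-semi-basic Euler complex yields a symplectic potential $\psi \in \CF_{\rm t_{sb}}^1(E)$ with $\Sigma = \delta_\sV^\sE \psi$, and by the canonical form \ref{Crep} adapted to $\CF_{\rm t_{sb}}^1(E)$ any such $\psi$ can be written
\begin{equation*}
\psi = dx \wedge \theta^0_\sEL \cdot P, \qquad P \in C^\infty(E).
\end{equation*}
Using $d_\sV^\sE \theta^i_\sEL = 0$ and $d_\sV^\sE P = P_i \theta^i_\sEL$ from \ref{dHdVE}, a direct calculation gives
\begin{equation*}
d_\sV^\sE \psi = dx \wedge \theta^0_\sEL \wedge P_i \theta^i_\sEL = dx \wedge \theta^0_\sEL \wedge {\bf L}_P(\theta^0_\sEL).
\end{equation*}

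Next I would apply $I_\sE$ to this expression. Integration by parts, pushing the total $x$-derivatives off $\theta^i_\sEL = D_x^i \theta^0_\sEL$ onto the leading $\theta^0_\sEL$, produces the adjoint ${\bf L}_P^*$; together with the factor $1/s = 1/2$ built into the definition of $I_\sE$, this yields
\begin{equation*}
\Sigma = I_\sE(d_\sV^\sE \psi) = dx \wedge \theta^0_\sEL \wedge \tfrac{1}{2}\bigl({\bf L}_P - {\bf L}_P^*\bigr)(\theta^0_\sEL).
\end{equation*}
Comparing with the given form $\Sigma = dx \wedge \theta^0_\sEL \wedge \CS(\theta^0_\sEL)$ and using uniqueness of the skew-adjoint operator representing an element of $\CF_{\rm t_{sb}}^2(E)$ (the $t$-semi-basic version of Lemma \ref{LF2}) identifies $\CS = \tfrac{1}{2}({\bf L}_P - {\bf L}_P^*)$.

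For the converse, given $P \in C^\infty(E)$ the same calculation shows that $\psi = dx \wedge \theta^0_\sEL \cdot P$ is a symplectic potential for $\Sigma$, hence $\delta_\sV^\sE \Sigma = (\delta_\sV^\sE)^2 \psi = 0$ and $\Sigma$ is symplectic. The main technical step is the integration-by-parts computation that identifies the adjoint structure of $\CS$; this is a verbatim transcription of the corresponding calculation in Lemma \ref{SymPot}, and the only thing to check is that the coefficient $P$ being $t$-dependent does not disturb the argument — it does not, since all manipulations occur pointwise in $t$.
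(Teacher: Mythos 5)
Your proposal is correct and follows essentially the same route as the paper: the paper proves Lemma \ref{SymPot} by taking a symplectic potential $\psi = dx \wedge \theta^0 \cdot P$ via the canonical form \ref{Crep} and computing $\Sigma = \delta_\sV \psi$, then states Lemma \ref{SyminP} as the immediate $t$-semi-basic analogue, which is exactly the adaptation you carry out. Your added details — the exactness of the $t$-semi-basic Euler complex with $t$ as a parameter, the explicit integration-by-parts yielding $\tfrac{1}{2}({\bf L}_P - {\bf L}_P^*)$, and the appeal to the uniqueness in Lemma \ref{LF2} — simply make explicit what the paper leaves implicit.
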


We use Theorem \ref{HVF} to define Hamiltonian vector fields in this case.

\begin{Definition} \label{tHVF}
An evolutionary vector field $Y = pr(K \partial_u)$ where $K\in C^\infty(E)$ is Hamiltonian with respect to the symplectic form $\Sigma \in \CF^2_{\rm t_{sb}}(E)$ if
\begin{equation}
{\LD}^\natural_T \Sigma =I_\sE \circ \pi^{1,2} \circ {\LD}_T \Sigma =0
\label{LDT}
\end{equation}
where $ T=\partial_t +Y$ and ${\LD}^\natural_T = I_\sE \circ \pi^{1,2} \circ {\LD}_T $ is the projected Lie derivative.
\end{Definition}

Note that $T$ in Definition \ref{tHVF}  agrees with $T$ in equation \ref{DTX}. We can also write condition \ref{LDT} as follows.

\begin{Lemma} \label{IT1}  An evolutionary vector field $Y= pr (K \partial_u)$ is a Hamiltonian vector field
for the symplectic form $\Sigma=dx \wedge \theta^0_\sEL\wedge (s_i \theta^i_\sEL)$ if and only if
there exists $\xi \in \Omega^{0,2}(E)$ such that
\begin{equation}
\pi^{1,2} \circ \LD_T ( dx \wedge \theta^0_\sEL\wedge (s_i \theta^i_\sEL)) 
= dx \wedge D_x ( \xi ) = d_\sH^\sE \xi.
\label{dxi}
\end{equation}
\end{Lemma}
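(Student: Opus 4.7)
The plan is to derive the lemma as a direct consequence of Definition \ref{tHVF} together with the kernel characterization of the integration by parts operator $I_\sE$ stated right after equation \ref{CFsb}, namely that $I_\sE$ inherits the properties \ref{IEP} from its free counterpart on $\Omega^{1,s}_{\rm t_{sb}}(E)$ so that $\ker I_\sE = \mathrm{Image}\, d_\sH^\sE$.

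First I would verify that the form $\pi^{1,2}\circ \LD_T \Sigma$ on which $I_\sE$ is being applied lies in $\Omega^{1,2}_{\rm t_{sb}}(E)$. This is the degree subspace on which $I_\sE$ is defined. Since $\Sigma \in \CF^2_{\rm t_{sb}}(E) \subset \Omega^{1,2}_{\rm t_{sb}}(E)$ and the projection $\pi^{1,2}$ onto the bidegree $(1,2)$ part of the bicomplex on $E = \reals \times J^\infty(\reals,\reals)$ automatically removes any $dt$ component produced by the $\partial_t$ piece of $T = \partial_t + Y$, the projected Lie derivative indeed sits in $\Omega^{1,2}_{\rm t_{sb}}(E)$.

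Next, starting from Definition \ref{tHVF}, $Y$ is Hamiltonian if and only if $I_\sE\circ \pi^{1,2}\circ \LD_T \Sigma = 0$, that is, if and only if $\pi^{1,2}\circ \LD_T \Sigma \in \ker I_\sE$. Using the $t$-semibasic analogue of \ref{IEP}, which asserts $\ker I_\sE = \mathrm{Image}\, d_\sH^\sE$, this is equivalent to the existence of some $\xi \in \Omega^{0,2}(E)$ such that
\begin{equation*}
\pi^{1,2}\circ \LD_T \Sigma = d_\sH^\sE \xi.
\end{equation*}
Finally, from the structure equations \ref{dHdVE} we have $d_\sH^\sE \xi = dx\wedge D_x(\xi)$, which produces the right-hand side of the displayed identity in the lemma. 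Both implications are obtained simultaneously because each step is an equivalence.

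The only subtle point, and thus the main thing to check carefully, is that the key property $\ker I_\sE = \mathrm{Image}\, d_\sH^\sE$ from the free bicomplex (equation \ref{IEP}) transfers to the $t$-semi-basic subcomplex $\Omega^{r,s}_{\rm t_{sb}}(E)$ used here. This follows because the defining formula \ref{IE} for $I_\sE$ involves only $x$ total derivatives and interior products with $\partial_{u_i}$, operations which preserve the condition $\partial_t \hook \omega = 0$; the horizontal homotopy operator witnessing $\omega = I_\sE(\omega) + d_\sH^\sE \eta$ inherits the same property and so produces $\eta \in \Omega^{0,s}_{\rm t_{sb}}(E)$. With that in hand the lemma is immediate.
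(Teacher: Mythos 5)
Your proposal is correct and follows essentially the same route as the paper, whose proof consists of exactly the observation that $\ker I_\sE = \mathrm{Image}\, d_\sH^\sE$ allows Definition \ref{tHVF} to be rewritten as equation \ref{dxi}. The additional checks you supply (that $\pi^{1,2}\circ \LD_T \Sigma$ is $t$-semi-basic and that the kernel characterization \ref{IEP} transfers to $\Omega^{r,s}_{\rm t_{sb}}(E)$) are details the paper leaves implicit, and your justifications for them are sound.
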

\begin{proof}  We have  ${\rm kernel} \ I_\sE= {\rm Image}\  d_\sH^\sE$, therefore equation \ref{LDT} can be written as equation \ref{dxi}.
\end{proof}
A formula for $\xi$ in equation \ref{dxi} in terms of $\rho=s_i\theta^i_\sEL$ is given in the proof of Theorem \ref{keyT}.

The analogue to Lemma \ref{LDpsi} also holds in this case where $Y$ is replaced by $T$. In order to prove this we now show the commutation formula in equation \ref{CLD} holds where $Y$ is replaced by $T$.

\begin{Lemma} \label{COMO}  $\LD^\natural_{T} \delta _\sV \psi = \delta_\sV \LD^\natural_{T}  \psi $. 
\end{Lemma}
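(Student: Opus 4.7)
The plan is to split $T=\partial_t+Y$ with $Y=\pr(K\partial_u)$ evolutionary and use linearity of both Lie derivatives and projections to reduce the claim to two separate commutation identities: $\LD^\natural_Y\circ\delta_\sV^\sE=\delta_\sV^\sE\circ\LD^\natural_Y$ and $\LD^\natural_{\partial_t}\circ\delta_\sV^\sE=\delta_\sV^\sE\circ\LD^\natural_{\partial_t}$. The first is the $t$-semi-basic analogue of Lemma 3.24 of \cite{anderson:2016a}; since $Y$ carries no $\partial_t$ component, $\LD_Y\,dt=0$, so $Y$ preserves $\Omega^{r,s}_{\rm t_{sb}}(E)$ as well as the $(r,s)$ bigrading on it, and Anderson's verification that a vertical evolutionary Lie derivative commutes with $I$ and with $d_\sV$ transfers verbatim to the $E$-coframe $\{dx,\theta^i_\sEL\}$.

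For the second identity, the key observation will be that $\partial_t$ annihilates the generating coframe on $E$: $\partial_t\hook dx=0$ and $\partial_t\hook\theta^i_\sEL=0$, because $\theta^i_\sEL=du_i-u_{i+1}dx$ involves only $x,u_i,u_{i+1}$. Cartan's formula then forces $\LD_{\partial_t}dx=0$ and $\LD_{\partial_t}\theta^i_\sEL=0$, so $\LD_{\partial_t}$ acts on any form in $\Omega^{r,s}_{\rm t_{sb}}(E)$ only by applying $\partial_t$ to its coefficient functions. In particular $\LD_{\partial_t}$ preserves $t$-semi-basicness (one checks $\partial_t\hook\LD_{\partial_t}\omega=\LD_{\partial_t}(\partial_t\hook\omega)=0$) and preserves the $(r,s)$ bigrading. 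Because $[\partial_t,D_x]=0$ and $[\partial_t,\partial_{u_i}]=0$, it commutes with $d_\sH^\sE$ and $d_\sV^\sE$ from the formulas \ref{dHdVE}, and with $I_\sE$ from formula \ref{IE}. Preservation of the bigrading also gives $\pi^{1,2}\circ\LD_{\partial_t}=\LD_{\partial_t}\circ\pi^{1,2}$, so $\LD^\natural_{\partial_t}=\LD_{\partial_t}$ on $\CF^s_{\rm t_{sb}}(E)$ and the commutation with $\delta_\sV^\sE=I_\sE\circ d_\sV^\sE$ is immediate.

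Adding the two identities through $\LD^\natural_T=\LD^\natural_{\partial_t}+\LD^\natural_Y$ yields the lemma. The main obstacle I anticipate is bookkeeping rather than anything substantive: one has to check carefully that Anderson's identity $\LD^\natural_Y\circ\delta_\sV=\delta_\sV\circ\LD^\natural_Y$, originally stated on $\Omega^{r,s}(\EJ)$, transfers to $\Omega^{r,s}_{\rm t_{sb}}(E)$ with no new terms appearing from $t$-dependence of the coefficient functions, and this indeed follows from the fact that all operators involved act only through $D_x$ and $\partial_{u_i}$, which commute with multiplication by $t$-dependent functions.
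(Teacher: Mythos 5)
Your proof succeeds, and its skeleton is the same as the paper's: both split $T=\partial_t+Y$, dispose of the $Y$-part by citing Lemma 3.24 of \cite{anderson:2016a}, and then check the $\partial_t$-part separately. Where you differ is in how the $\partial_t$-identity is established. The paper specializes to the symplectic potential $\psi=dx\wedge\theta^0_\sEL\cdot P$, writes out both sides of $\LD^\natural_{\partial_t}\delta_\sV\psi=\delta_\sV\LD^\natural_{\partial_t}\psi$ explicitly, and reduces the equality to the mixed-partials identity $P_{t,i}=P_{i,t}$. Your operator-level argument ($\LD_{\partial_t}$ annihilates $dx$ and $\theta^i_\sEL$, hence acts only on coefficient functions and commutes with $d_\sH^\sE$, $d_\sV^\sE$, $I_\sE$ and $\pi^{1,2}$) is the same computation in disguise --- the mixed-partials equality is exactly your $[\partial_t,\partial_{u_i}]=0$ --- but it is slightly more general, since it applies to arbitrary forms in $\Omega^{r,s}_{\rm t_{sb}}(E)$ rather than only to the potential $\psi$.

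One intermediate assertion in your $Y$-step is false as stated: $\LD_Y$ does \emph{not} preserve $\Omega^{r,s}_{\rm t_{sb}}(E)$ when $K$ depends on $t$. From $\LD_Y(\partial_t\hook\omega)=\partial_t\hook\LD_Y\omega+[Y,\partial_t]\hook\omega$ and $[\partial_t,Y]=\pr\bigl(K_t\partial_u\bigr)$ one computes $\partial_t\hook\LD_Y\theta^i_\sEL=D_x^i(K_t)$, i.e.\ $\LD_Y\theta^i_\sEL$ contains the term $D_x^i(K_t)\,dt$; the vanishing of $\LD_Y\,dt$ is beside the point, since $t$ semi-basicness is a condition on contraction with $\partial_t$, not on $dt$ being preserved. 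This does not sink your conclusion: the projection $\pi^{1,2}$ built into $\LD^\natural_Y$ discards precisely these $dt$-terms, so on $t$ semi-basic forms $\LD^\natural_Y$ coincides with the fiberwise (fixed-$t$) operator on $\EJ$, and Anderson's Lemma 3.24 then transfers slice-by-slice exactly as you intend --- this is also the unstated reason the paper can cite that lemma verbatim in the time-dependent setting. With the justification repaired in this way, your proof is complete and correct.
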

\begin{proof}  Since $T=\partial_t + Y$ and $\LD^\natural_{Y} \delta _\sV \psi = \delta_\sV \LD^\natural_{Y}  \psi $ (Lemma 3.24 \cite{anderson:2016a}), we need to check
$$
\LD^\natural_{\partial_t} \delta _\sV \psi = \delta_\sV \LD^\natural_{\partial_t}  \psi 
$$
We write out both side of this equation. The left side is
\begin{equation}
I^\sE \circ \pi^{1,2} \left( \frac{1}{2} dx \wedge \theta^0_\sEL \wedge[ P_{i,t} \theta_\sEL^i -(-D_x)^i(P_{i,t} \theta^0_\sEL)] \right) =\frac{1}{2} dx \wedge \theta^0_\sEL \wedge [P_{i,t} \theta_\sEL^i -(-D_x)^i(P_{i,t} \theta_\sEL^0)].
\label{LS1}
\end{equation}
The right side is
\begin{equation}
\delta_\sV ( dx \wedge \theta^0_\sEL \cdot P_t )= \frac{1}{2} dx \wedge \theta^0_\sEL \wedge \left[  P_{t,i} \theta^i_\sEL- (-D_x)^i( P_{t,i} \theta^0_\sEL) \right].
\label{RS1}
\end{equation}
Since the mixed partials are equal $P_{t,i} = P_{i,t}$, equations \ref{LS1} and \ref{RS1} are equal, which proves the Lemma.
\end{proof}

\begin{Lemma} \label{LDpsi2} The evolutionary vector field $Y\in \Gamma$ is Hamiltonian for the symplectic form $\Sigma=\delta_\sV^\sE \psi$ if and only if there exists $\lambda \in \CF^0(E)$ such that
\begin{equation}
\LD^\natural_T \psi = \delta_\sV \lambda.
\label{CLpsi2}
\end{equation}
\end{Lemma}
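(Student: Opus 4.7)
The plan is to mimic the proof of Lemma \ref{LDpsi} almost verbatim, with the sole new ingredient being the commutation formula proved in Lemma \ref{COMO} which handles the extra $\partial_t$ piece in $T=\partial_t+Y$. Everything else — the exactness of the Euler complex \ref{EComp} (now in its $t$-semi-basic version \ref{CFsb}) and the characterization of Hamiltonian vector fields through $\LD^\natural_T \Sigma = 0$ in Definition \ref{tHVF} — is available from the preceding material.

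First I would assume $Y$ is Hamiltonian and unpack Definition \ref{tHVF}: the condition is $\LD^\natural_T \Sigma = 0$, i.e., $\LD^\natural_T (\delta_\sV^\sE \psi) = 0$. I would then invoke Lemma \ref{COMO} to move $\delta_\sV^\sE$ past $\LD^\natural_T$, obtaining $\delta_\sV^\sE (\LD^\natural_T \psi) = 0$. Since $\LD^\natural_T \psi \in \CF^1_{\rm t_{sb}}(E)$, the exactness of the Euler complex produces a $\lambda \in \CF^0_{\rm t_{sb}}(E)$ with $\LD^\natural_T \psi = \delta_\sV^\sE \lambda$, which is the desired equation \ref{CLpsi2}.

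For the converse, assume \ref{CLpsi2} and apply $\delta_\sV^\sE$ to both sides. Using $(\delta_\sV^\sE)^2 = 0$ on the right and Lemma \ref{COMO} on the left gives $\LD^\natural_T(\delta_\sV^\sE \psi) = \LD^\natural_T \Sigma = 0$, so $Y$ is Hamiltonian by Definition \ref{tHVF}.

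The only subtlety — and it has already been handled in Lemma \ref{COMO} — is verifying that $\LD^\natural_T$ commutes with $\delta_\sV^\sE$; for the pure $Y$ part this is Lemma 3.24 of \cite{anderson:2016a}, and for the $\partial_t$ part the computation in Lemma \ref{COMO} shows it reduces to the equality of mixed partials $P_{t,i}=P_{i,t}$. Once this commutation is in hand, there is nothing left to prove beyond a two-line application of exactness in each direction, so I would not expect any real obstacle.
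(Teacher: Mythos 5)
Your proposal is correct and follows essentially the same route as the paper's proof: both directions rest on Definition \ref{tHVF}, the commutation formula of Lemma \ref{COMO}, and the exactness of the $\delta_\sV^\sE$ complex. The paper merely compresses the two directions into a single chain of equivalences, $0=\LD^\natural_T\Sigma=\delta_\sV\LD^\natural_T\psi$ if and only if \ref{CLpsi2} holds by exactness, so there is no substantive difference.
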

\begin{proof} Using the exactness of the $\delta^\sE_\sV$ complex we show $\delta^\sE_\sV \LD^\natural_T \psi=0$ which is equivalent to equation \ref{CLpsi2}. By Definition \ref{tHVF}, $Y$ is Hamiltonian if and only if 
$$ 
0=\LD^\natural_T \Sigma=  \LD^\natural_T \delta_\sV \psi  = \delta_\sV \LD^\natural_T \psi
$$
where we have used Lemma \ref{COMO}. This proves the Lemma.
\end{proof}

Using Lemma \ref{LDpsi2}  we have the following corollary which is the $t$-dependent version of Corollary \ref{HVF2C}.

\begin{Corollary}  Let $\Sigma= dx \wedge\theta^0_\sE\wedge \CS(\theta^0_\sE)$ be a symplectic form with symplectic potential $\psi= dx \wedge \theta^0 \cdot P$. The evolutionary vector field
$Y= \pr  (K\partial_u) \in \Gamma$ is Hamiltonian if and only if there exists $H\in C^\infty(\EJ)$ such that
\begin{equation}
\frac{1}{2}\left( P_t + (\bLP-\bLP^*)(K)\right) =\frac{1}{2} P_t + \CS( K) = \Eop(H).
\label{HVF2t}
\end{equation}
\end{Corollary}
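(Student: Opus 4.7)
The plan is to apply Lemma~\ref{LDpsi2}, which rephrases the Hamiltonian condition on $Y$ as the existence of $\lambda\in\CF^0(E)$ with $\LD^\natural_T\psi=\delta_\sV\lambda$. A generic $\lambda\in\CF^0(E)$ has the form $\lambda=F\,dx$, and a direct computation of $d_\sV^\sE(F\,dx)$ followed by $I_\sE$ yields $\delta_\sV(F\,dx)=-dx\wedge\theta^0_\sE\cdot\Eop(F)$. Thus Hamiltonicity is equivalent to $\LD^\natural_T\psi$ having the form $dx\wedge\theta^0_\sE\cdot G$ with $G$ in the image of $\Eop$.

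To compute $\LD^\natural_T\psi$ I would split $T=\partial_t+Y$. The first piece is immediate, giving $\LD_{\partial_t}\psi=dx\wedge\theta^0_\sE\cdot P_t$. For the second piece, Cartan's formula together with $Y\hook\theta^0_\sE=K$ and $d\theta^0_\sE=dx\wedge\theta^1_\sE$ produces $\LD_Y\theta^0_\sE=K_t\,dt+\sum_i K_i\theta^i_\sE$, where the $K_t\,dt$ summand appears because $K$ depends on $t$; this in turn gives a $dx\wedge dt$ term in $\LD_T\psi$ that is killed by the projection $\pi^{1,1}$. After that projection, applying the integration-by-parts operator $I_\sE$ of equation~\ref{IE} reduces the resulting $t$-semi-basic $(1,1)$ form to canonical shape $dx\wedge\theta^0_\sE\cdot G$ with
\[
G=P_t+\bLP(K)+{\bf L}_K^*(P),
\]
where ${\bf L}_K=K_iD_x^i$.

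To finish, I would invoke the elementary identity
\[
{\bf L}_K^*(P)+\bLP^*(K)=\Eop(KP),
\]
which follows from $\partial_{u_i}(KP)=K_iP+KP_i$ and the definition of $\Eop$. This rewrites $G$ as $P_t+(\bLP-\bLP^*)(K)+\Eop(KP)$, so that by the exactness of the Euler complex~\ref{EComp}, $G$ lies in the image of $\Eop$ if and only if $P_t+(\bLP-\bLP^*)(K)$ does, equivalently $\tfrac{1}{2}P_t+\CS(K)=\Eop(H)$ for some $H\in C^\infty(E)$. The main obstacle is bookkeeping: tracking sign conventions through $I_\sE$ and $\delta_\sV$ correctly, and in particular verifying that the $K_t\,dt$ contribution from $\LD_Y\theta^0_\sE$ (which arises because $K$ depends explicitly on $t$) is killed by $\pi^{1,1}$, so that only $\bLP(K)$, $P_t$, and the symmetric partner ${\bf L}_K^*(P)$ survive, the last of which can be absorbed into the Lagrangian by the identity above.
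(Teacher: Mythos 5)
Your proposal is correct and takes essentially the same route as the paper: both reduce the Hamiltonian condition via Lemma \ref{LDpsi2} to $\LD^\natural_T \psi = \delta_\sV \lambda$ and then compute the projected Lie derivative of the symplectic potential $\psi = dx \wedge \theta^0_\sE \cdot P$, splitting $T = \partial_t + Y$. Your bookkeeping is in fact slightly more careful than the paper's one-line display: the literal computation gives $\LD^\natural_T \psi = dx \wedge \theta^0_\sE \cdot \left( P_t + \bLP(K) + {\bf L}_K^*(P) \right)$, and your identity ${\bf L}_K^*(P) + \bLP^*(K) = \Eop(KP)$ is exactly what reconciles this with the paper's stated $P_t + (\bLP - \bLP^*)(K)$ modulo a $\delta_\sV$-exact term (and your sign on $\delta_\sV(F\,dx)$ relative to equation \ref{Lcond} is immaterial, since $H$ is arbitrary and may absorb it).
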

\begin{proof}  We just need to compute
$$
\LD^\natural_T ( dx \wedge \theta^0_\sEL \cdot P) = dx \wedge \theta^0_\sEL \cdot P_t + dx \wedge \theta^0_\sEL \cdot \left( \bLP K-\bLP^* K \right) =  dx \wedge \theta^0_\sEL\cdot \left( P_t +   \bLP K-\bLP^* K \right)
$$
Using this computation in equation \ref{CLpsi2} with $\lambda =2 H dx$ gives equation \ref{HVF2t}.
\end{proof}
We call $H$ in \ref{HVF2t} the Hamiltonian.


\begin{Remark} A symplectic form $\Sigma$ is $t$-invariant if $\LD_{\partial_t} \Sigma = 0$. In this case $\Sigma$ determines a well defined symplectic form $\bar \Sigma$ on the quotient by the flow of $\partial_t$, $q:E \to E/\partial_t= \EJ$ such that $q^* \bar \Sigma = \Sigma$. Definition \ref{HVF0} where $Y$ and $\Sigma$ are $t$-invariant implies equation \ref{tHVF} for $\bar \Sigma$ and $\bar Y = q_* Y$.
\end{Remark}

\section{Variational and Symplectic Operator Equivalence}\label{Symplectic}

A time independent evolution equation $u_t=K(x,u,u_x,\dots, u_n) $ is Hamiltonian \cite{dorfman:1993a} if there exists a symplectic operator $\CS$ and a function $H$ called the Hamiltonian such that equation \ref{SymOp} holds. With this definition, the determination of the symplectic Hamiltonian equations is typically approached in two ways. The first way consists of determining the possible symplectic operators of a certain order \cite{dorfman:1993a}. Then for a given class of symplectic operators $\CS$, determine $K$ which satisfy equation \ref{SymOp}.  The second approach starts with a given $K$ and then determines if there exists a symplectic operator $\CS$ such that equation \ref{SymOp} holds. 

By comparison Theorem \ref{THC} combines these two questions and resolves the characterization of symplectic Hamiltonian evolution equations by the invariants $H^{1,2}(\CR)$. This simultaneously solves the existence of $\CS$ and the existence of the Hamiltonian function $H$ in \ref{SymOp}.

\subsection{$ H^{1,2}(\CR)$ and Symplectic Hamiltonian Evolution Equations. }

Given  a scalar evolution equation $u_t=K(t, x, u, u_x,\ldots)$, identify the manifolds $\CR$ and $E=\reals \times \EJ$ by identifying their coordinates which in turn induces an identification of smooth functions. Define the bundle map $\Pi:T^*(\CR) \to T^*(E)$ which is a projection operator by
\begin{equation}
\Pi(\theta^i) =  \theta_\sEL^i,\quad  \Pi(dx)  = dx, \quad \Pi(dt) = 0,
\label{bphi}
\end{equation}
where $\theta^i$ are given in equation \ref{DT} and $\theta^i_\sE$ are in equation \ref{CFE}.
  Also denote by $\Pi$ the induced projection map $\Pi:\Omega^{r,s}(\CR) \to \Omega_{\rm t_{sb}}^{r,s}(E)$ where
for example
\begin{equation}
\Pi\left(dx \wedge \theta^0 \wedge( s_i\theta^i)  + dt \wedge \beta \right) = dx \wedge \theta^0_\sEL \wedge(s_i \theta^i_\sEL).
\label{PSE}
\end{equation}

\begin{Lemma}  The map $\Pi:\Omega^{r,s}(\CR) \to \Omega_{\rm t_{sb}}^{r,s}(E) $ is a bicomplex homomorphism,
\begin{equation}
\Pi(\, d_\sH  \omega \,) = d^\sE_\sH \, \Pi(\omega), \qquad \Pi(d_\sV \omega) = d_\sV^\sE\, \Pi (\omega ) .
\label{PXC}
\end{equation}
\end{Lemma}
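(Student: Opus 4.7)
My plan is to verify the two commutation identities on a generating set of forms and then extend by the graded Leibniz rule. Since $\Pi$ is defined as a linear bundle map on cotangent spaces and then extended to higher exterior powers, it is automatically a graded algebra homomorphism: $\Pi(\alpha\wedge\beta)=\Pi(\alpha)\wedge\Pi(\beta)$. Both $d_\sH,d_\sV$ on $\CR$ and $d_\sH^\sE,d_\sV^\sE$ on $E$ are antiderivations of bidegree $(1,0)$ and $(0,1)$ respectively, so it suffices to check the two identities on the generators $f\in C^\infty(\CR)=C^\infty(E)$, on $dx$, on $dt$, and on the contact forms $\theta^i$.

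First I would handle functions. A key observation is that, under the coordinate identification $\CR\cong E$, the vector field $X=\partial_x+\sum_i u_{i+1}\partial_{u_i}$ on $\CR$ and the total derivative $D_x$ on $E$ act identically on any $f(t,x,u,u_x,\ldots)$. Hence $\Pi(d_\sH f)=\Pi(X(f)\,dx+T(f)\,dt)=X(f)\,dx=D_x(f)\,dx=d_\sH^\sE f$, and $\Pi(d_\sV f)=\Pi(f_{u_i}\theta^i)=f_{u_i}\theta^i_\sEL=d_\sV^\sE f$. For $dx$ and $dt$ the check is immediate: $d_\sH dx=d_\sV dx=0$ with $\Pi(dx)=dx$ and $d_\sH^\sE dx=d_\sV^\sE dx=0$, while $d_\sH dt=d_\sV dt=0$ and $\Pi(dt)=0$.

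For the contact forms I would invoke the structure equations \ref{SE_c}: $d_\sH\theta^i=dx\wedge\theta^{i+1}+dt\wedge X^i(d_\sV K)$ and $d_\sV\theta^i=0$. Applying $\Pi$ annihilates the $dt$-term, so $\Pi(d_\sH\theta^i)=dx\wedge\theta^{i+1}_\sEL$, which by equation \ref{dHdVE} equals $d_\sH^\sE\theta^i_\sEL$. The vertical identity reads $\Pi(d_\sV\theta^i)=0=d_\sV^\sE\theta^i_\sEL$, again trivial. The only mildly subtle point in the whole argument is precisely here: one must notice that the $dt$-piece of $d_\sH\theta^i$ has no counterpart in $d_\sH^\sE\theta^i_\sEL$, and that consistency is restored exactly because $\Pi$ kills $dt$. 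This is the step where things could misbehave if $\Pi$ were defined differently, so it is worth flagging, though the verification itself is routine.

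Finally, having established \ref{PXC} on the generators, I would extend to a general form $\omega\in\Omega^{r,s}(\CR)$ by induction on form degree using the Leibniz rule. The sign conventions match because $d_\sH,d_\sV,d_\sH^\sE,d_\sV^\sE$ all satisfy the usual graded Leibniz identity and $\Pi$ preserves wedge products verbatim. No obstacle of substance arises; the lemma is essentially a bookkeeping statement that the definitions of $\Pi$, $d_\sH^\sE$, and $d_\sV^\sE$ have been made compatibly with the restriction of the free bicomplex to $\CR$.
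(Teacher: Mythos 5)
Your proposal is correct and takes essentially the same route as the paper: the paper's (one-line) proof likewise verifies the identities \ref{PXC} on the contact forms $\theta^i$ via the structure equations \ref{SE_c} and \ref{dHdVE}, and then extends to general forms by the anti-derivation property. Your explicit checks on functions (where $X$ agrees with $D_x$), on $dx$, and on $dt$ simply spell out details the paper leaves implicit.
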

\begin{proof} Equation \ref{PXC} follows for the case $\omega=\theta^i$ directly from equations \ref{dHP2} and \ref{dHdVE},
and generically from the anti-derivation property of the operators. 
\end{proof}

\begin{Lemma} \label{phc} The function $\Pi : \Omega^{1,2}(\CR) \to  \Omega_{\rm t_{sb}}^{1,2}(E) $ induces a well defined injective linear map $\widehat \Pi: H^{1,2} (\CR) \to  \ker \delta^\sE_\sV\subset \CF_{\rm t_{sb}}^i(E) $ defined by
\begin{equation}
\widehat \Pi([\omega]) = I_\sEL \circ \Pi (\omega)
\label{HtoF}
\end{equation}
where $\omega$ is a representative of $[\omega]$. 
\end{Lemma}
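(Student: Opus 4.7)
The plan has three substantive parts: well-definedness, landing in $\ker \delta^\sE_\sV$, and injectivity.

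First I would dispose of well-definedness and linearity. Linearity of $\widehat\Pi$ is immediate from linearity of $\Pi$ and $I_\sE$. If $\omega_1$ and $\omega_2$ are two representatives, then $\omega_1 - \omega_2 = d_\sH \eta$ for some $\eta \in \Omega^{1,1}(\CR)$, and the bicomplex homomorphism property \ref{PXC} gives $\Pi(d_\sH \eta) = d^\sE_\sH \Pi(\eta)$. Since $\ker I_\sE = \operatorname{Im} d^\sE_\sH$ by \ref{IEP}, applying $I_\sE$ kills this, so $\widehat\Pi([\omega])$ is independent of the representative.

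Second, to show $\delta^\sE_\sV \widehat\Pi([\omega]) = 0$, I would invoke Theorem \ref{TCF3} and choose a $d_\sV$-closed representative $\omega$. Then from \ref{PXC} one has $d^\sE_\sV \Pi(\omega) = \Pi(d_\sV \omega) = 0$. Using the identity $I_\sE \circ d^\sE_\sV \circ I_\sE = I_\sE \circ d^\sE_\sV$ (which follows from $\Sigma - I_\sE(\Sigma) \in \operatorname{Im} d^\sE_\sH$, anticommutation $d^\sE_\sH d^\sE_\sV = -d^\sE_\sV d^\sE_\sH$, and $I_\sE \circ d^\sE_\sH = 0$), I compute
$$
\delta^\sE_\sV \widehat\Pi([\omega]) = I_\sE d^\sE_\sV I_\sE \Pi(\omega) = I_\sE d^\sE_\sV \Pi(\omega) = I_\sE \Pi(d_\sV \omega) = 0.
$$

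Third, for injectivity I would switch to the unique skew-adjoint representative of Theorem \ref{Urep}, writing $\omega = dx \wedge \theta^0 \wedge \epsilon - dt \wedge \bbeta(\epsilon)$ with $\epsilon = r_i \theta^i$ and $\epsilon^* = -\epsilon$. Since $\Pi$ kills $dt$ and sends $\theta^i \mapsto \theta^i_\sE$, we get $\Pi(\omega) = dx \wedge \theta^0_\sE \wedge \CE(\theta^0_\sE)$ with $\CE = r_i D_x^i$ skew-adjoint. By Lemma \ref{LF2} this already lies in $\CF^2_{\rm t_{sb}}(E)$, so $I_\sE \Pi(\omega) = \Pi(\omega)$. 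If $\widehat\Pi([\omega]) = 0$, then $dx \wedge \theta^0_\sE \wedge (r_i \theta^i_\sE) = 0$, which forces $r_i = 0$ for $i \geq 1$; skew-adjointness of $\CE$ then yields $r_0 = 0$, so $\epsilon = 0$ and the unique representative is zero, i.e.\ $[\omega] = 0$.

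The main subtlety is arranging two different representative choices for two different purposes: the $d_\sV$-closed representative of Theorem \ref{TCF3} makes $\delta^\sE_\sV$-closedness transparent, while the skew-adjoint representative of Theorem \ref{Urep} makes injectivity transparent (by forcing $I_\sE$ to act as the identity, eliminating the need to argue modulo $d^\sE_\sH$-exact forms). Well-definedness established in the first step is what permits jumping between the two representatives.
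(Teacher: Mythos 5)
Your proof is correct, and two of your three steps coincide with the paper's: well-definedness via the cochain property \ref{PXC} together with $\ker I_\sEL = {\rm Image}\ d_\sH^\sE$ from \ref{IEP}, and injectivity via the unique skew-adjoint representative of Theorem \ref{Urep}. The one genuine divergence is the closedness step. The paper does \emph{not} pass to a $d_\sV$-closed representative: working with an arbitrary representative $\omega$, it uses that $d_\sV\omega$ is $d_\sH$-closed and invokes the vanishing $H^{1,3}(\CR)=0$ (Theorem \ref{ZC}) to write $d_\sV\omega = d_\sH\xi$ with $\xi \in \Omega^{0,3}(\CR)$, then computes $I_\sEL\circ d_\sV^\sE\circ I_\sEL\circ \Pi(\omega) = I_\sEL\circ \Pi(d_\sV\omega) = I_\sEL\circ d_\sH^\sE\,\Pi(\xi)=0$, using the same identity $I_\sEL\circ d_\sV^\sE\circ I_\sEL = I_\sEL\circ d_\sV^\sE$ that you derive. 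Your route instead invokes the full canonical-form Theorem \ref{TCF3}; since that theorem's proof itself rests on Theorem \ref{ZC}, both arguments ultimately reduce to the same vanishing, but the paper's version is lighter --- it needs only $H^{1,3}(\CR)=0$, not the existence of a $d_\sV$-closed representative --- and it sidesteps the representative-switching that you correctly flag as requiring well-definedness to be settled first. In compensation, your write-up makes explicit two points the paper compresses: that $I_\sEL$ fixes $\Pi(\omega)$ because a form $dx\wedge\theta^0_\sEL\wedge \CS(\theta^0_\sEL)$ with $\CS$ skew-adjoint already lies in $\CF_{\rm t_{sb}}^2(E)$, and that the vanishing of $dx\wedge\theta^0_\sEL\wedge(r_i\theta^i_\sEL)$ directly kills only the $r_i$ with $i\geq 1$, the coefficient $r_0$ then being forced to zero by skew-adjointness.
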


\begin{proof} To show  $\widehat \Pi$ is well defined,  suppose  $\omega'=\omega + d_\sH\xi$. Then by equation \ref{PXC} and property 3 in equation \ref{IEP} applied to $I_\sEL$ gives
$$
I_\sEL \circ \Pi(\omega') =I_\sEL \circ (\Pi(\omega)+ \Pi(d_\sH \xi) ) =I_\sEL \circ (\Pi(\omega)+ d^\sE_\sH( \Pi(\xi) ) = I_\sEL \circ \Pi(\omega).
$$
Therefore $\widehat \Pi$ is well defined.

We now show $ \widehat \Pi([\omega])$ is $\delta^\sV_\sE$ closed. We use equation \ref{PXC} and compute
\begin{equation}
I_\sEL \circ d_\sV^\sE \circ I_\sEL \circ \Pi (\omega)=I_\sEL \circ d_\sV^\sE \circ \Pi (\omega)=
I_\sEL \circ \Pi (d_\sV \omega).
\label{ISF}
\end{equation}
Since $d_\sV \omega \in H^{1,3}(\CR)$,  Theorem \ref{ZC} implies there exists $\xi \in \Omega^{0,3}$ such that $d_\sV \omega = d_\sH \xi$ so equation \ref{ISF} becomes
$$
I_\sEL \circ d_\sV^\sE \circ I_\sEL \circ \Pi (\omega)=I_\sEL \circ  \Pi (d_\sH \xi)=I_\sEL \circ d_\sH^\sE \circ \Pi (\xi)
=0.
$$
Therefore  $\widehat \Pi([\omega])$ is $\delta_\sV$ closed. 

We now show $\widehat \Pi$ is injective. Let $[\omega]\in H^{1,2}(\CR)$ and let $\omega= dx \wedge \theta^0 \wedge \epsilon - dt \wedge \bbeta(\epsilon)$ be the unique representative from Theorem \ref{Urep}, where $\epsilon =r _i \theta^i$ and $\epsilon^* =-\epsilon$. Then $\widehat \Pi([\omega]) = dx \wedge \theta^0_\sEL\wedge(s_i\theta^i_\sEL)$, and $(s_i \theta^i_\sEL)^* = -(s_i \theta^i_\sEL)$ since $X=D_x$.  If $\widehat \Pi ([\omega])=0$, then $ s_i \theta^i_\sEL=0$
and $\omega=0$. This shows $\widehat \Pi([\omega]) \in \CF_{\rm t_{sb}}^2(E)$ and that $\widehat \Pi$ is injective.
\end{proof}

In particular we have 

\begin{Corollary} If $[\omega] \neq 0$ then $\widehat \Pi([\omega]) \in \CF_{\rm t_{sb}}^2(E)$ is a symplectic form.
\end{Corollary}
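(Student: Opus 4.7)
The claim follows almost immediately from Lemma \ref{phc} combined with Definition \ref{SD1}. By Definition \ref{SD1} (extended to the $t$-dependent setting of Section 5.3), an element of $\CF^2_{\rm t_{sb}}(E)$ is a symplectic form precisely when it is nonzero and $\delta_\sV^\sE$-closed. The plan is to assemble the three ingredients already proved in Lemma \ref{phc}.

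First, I would note that $\widehat\Pi([\omega])$ belongs to $\CF^2_{\rm t_{sb}}(E)$ by the very definition \ref{HtoF}, since $\widehat\Pi = I_\sEL \circ \Pi$ and the image of $I_\sEL$ lies in $\CF^2_{\rm t_{sb}}(E)$ (compare equation \ref{CFsb}). Second, Lemma \ref{phc} already verifies the closedness condition $\delta_\sV^\sE(\widehat\Pi([\omega])) = 0$ by using $\Pi \circ d_\sV = d_\sV^\sE \circ \Pi$ together with the vanishing theorem $H^{1,3}(\CR)=0$ (Theorem \ref{ZC}). Third, Lemma \ref{phc} establishes that $\widehat\Pi$ is injective by appealing to the unique skew-adjoint representative from Theorem \ref{Urep}.

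Therefore, assuming $[\omega] \neq 0$, the injectivity of $\widehat\Pi$ gives $\widehat\Pi([\omega]) \neq 0$, while the closedness gives $\delta_\sV^\sE \widehat\Pi([\omega]) = 0$. Both conditions in Definition \ref{SD1} are met, so $\widehat\Pi([\omega])$ is a symplectic form. No real obstacle arises here — the corollary is essentially a repackaging of Lemma \ref{phc} in the language of symplectic forms, and the only thing one needs to be careful about is explicitly citing the nonvanishing via injectivity rather than leaving it implicit.
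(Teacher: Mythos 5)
Your proof is correct and matches the paper's reasoning: the corollary is stated in the paper without a separate proof precisely because, as you observe, it is an immediate consequence of Lemma \ref{phc} (membership in $\CF^2_{\rm t_{sb}}(E)$, $\delta_\sV^\sE$-closedness, and injectivity) combined with the definition of a symplectic form. Your explicit citation of injectivity for the nonvanishing condition is exactly the intended argument.
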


We now set out to prove the fact that $\widehat \Pi$ in Lemma \ref{phc} is in fact a bijection which will imply  Theorem \ref{TIS2} in the Introduction.  We will use the following Lemma.

\begin{Lemma} Let $s_i,\xi_{ij} \in C^\infty(\CR)$ then
\begin{equation}
\begin{aligned}
dt \wedge\left(  \pi^{1,2}\circ {\LD}_T  ( dx \wedge \theta^0_\sEL \wedge s_i \theta^i_\sEL) \right) &=
dt \wedge  {\LD}_T  ( dx \wedge \theta^0_\sEL \wedge s_i \theta^i_\sEL) = dt \wedge  \LD_T (dx \wedge \theta^0 \wedge  s_i \theta^i), \\
dt \wedge dx \wedge   D_x( \xi_{ij} \theta^i_\sEL \wedge \theta^j_\sEL) & = dt \wedge dx \wedge X( \xi_{ij} \theta^i \wedge \theta^j)
\end{aligned}
\label{FBE1}
\end{equation}
\end{Lemma}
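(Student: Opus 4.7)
The plan hinges on the elementary identity
\[
\theta^i = \theta^i_\sEL - X^i(K)\,dt,
\]
which follows from comparing equation \ref{DT} with equation \ref{CFE} under the coordinate identification $\CR \cong E$. Its immediate consequence is that $dt \wedge \theta^i = dt \wedge \theta^i_\sEL$, and more generally
\[
dt \wedge dx \wedge \theta^{i_1} \wedge \cdots \wedge \theta^{i_k} = dt \wedge dx \wedge \theta^{i_1}_\sEL \wedge \cdots \wedge \theta^{i_k}_\sEL.
\]
Two further facts used throughout: since $\iota_T dx = 0$ and $\iota_T dt = 1$, Cartan's formula gives $\LD_T dx = 0$ and $\LD_T dt = 0$.

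For the rightmost equality in the first line, expand the difference
$dx \wedge \theta^0_\sEL \wedge s_i\theta^i_\sEL - dx \wedge \theta^0 \wedge s_i \theta^i$
using the identity; every surviving term carries a $dx \wedge dt$ factor. Since $\LD_T$ annihilates $dx$ and $dt$, the image under $\LD_T$ retains this $dx \wedge dt$ factor, so wedging again with $dt$ produces zero. This handles the second equality.

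For the first equality $dt \wedge \pi^{1,2} \LD_T \omega = dt \wedge \LD_T \omega$, with $\omega = dx \wedge \theta^0_\sEL \wedge s_i \theta^i_\sEL$, I would decompose an arbitrary $3$-form on $E$ into a $(1,2)$-semibasic part, a $(0,3)$-semibasic part, and parts containing $dt$. The projection $\pi^{1,2}$ keeps only the $(1,2)$-semibasic piece; the $dt$-containing pieces die under $dt \wedge$, so what must be ruled out is any pure $(0,3)$-semibasic contribution to $\LD_T \omega$. Expanding $\LD_T\omega = \iota_T d\omega + d\iota_T\omega$ via Cartan's formula, every monomial retains a $dx$ factor inherited from the outer $dx$ of $\omega$: the only possible escape would be through $d\theta^0_\sEL = dx \wedge \theta^1_\sEL$, but that still carries $dx$ and is then annihilated against the outer $dx$ of $\omega$. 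Hence no pure $(0,3)$-semibasic monomial can appear in $\LD_T\omega$, which gives the first equality.

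For the second line of the Lemma, expand both sides by the anti-derivation property. On the left, $d_\sH^\sE \theta^i_\sEL = dx \wedge \theta^{i+1}_\sEL$ gives (modulo the outer $dx$) $D_x(\theta^i_\sEL) = \theta^{i+1}_\sEL$; on the right, equation \ref{SE_c} gives $X(\theta^i) = \theta^{i+1}$. The remaining point is that $X$ restricted to $C^\infty(\CR)$ and $D_x$ restricted to $C^\infty(E)$ are literally the same operator $\partial_x + u_x \partial_u + u_{xx} \partial_{u_x} + \cdots$ under the coordinate identification, so $X(\xi_{ij}) = D_x(\xi_{ij})$ on functions. Combined with $dt \wedge dx \wedge \theta^i \wedge \theta^j = dt \wedge dx \wedge \theta^i_\sEL \wedge \theta^j_\sEL$ from the basic identity, the three corresponding terms on each side match up.

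The main obstacle is the ruling-out of the $(0,3)$-semibasic component in the first equality; once the Cartan expansion is examined and one observes that every monomial inherits a $dx$ from the outer factor of $\omega$, the rest of the argument is routine bookkeeping using the basic identity $dt \wedge \theta^i = dt \wedge \theta^i_\sEL$.
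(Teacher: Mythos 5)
Your proposal is correct and follows the same route as the paper, whose one-line proof invokes exactly your two key facts: $dt \wedge \theta^i_\sEL = dt \wedge \theta^i$ (since $\theta^i = \theta^i_\sEL - X^i(K)\,dt$) and $X = D_x$ under the identification of $\CR$ with $E$. Your additional bookkeeping -- Cartan's formula, $\LD_T\, dt = \LD_T\, dx = 0$, and ruling out a $(0,3)$ semibasic component of $\LD_T\omega$ via the persistent outer $dx$ factor -- simply makes explicit what the paper leaves implicit.
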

\begin{proof} Since  $dt \wedge \theta^i_\sEL=dt \wedge \theta^i $ and $X=D_x$ these identities follow.
\end{proof}

We now have the main theorem.

\begin{Theorem} \label{keyT}  Let $\CS= s_i D_x^i$ be a skew-adjoint differential operator. The form  $\Sigma =dx \wedge \theta^0_\sEL \wedge (s_i \theta^i_\sEL) $ is symplectic, and  $Y= pr (K \partial_u)$ is a Hamiltonian vector-field for $\Sigma$ if and only if
\begin{equation}
\omega = dx \wedge \theta^0 \wedge \epsilon  - dt \wedge  \bbeta( \epsilon ) 
\label{keyEQ}
\end{equation}
satisfies $d_\sH \omega=0$, where $\epsilon = \CS (\theta^0)$.
\end{Theorem}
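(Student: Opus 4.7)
The plan is to compute $d_\sH\omega$ explicitly and then identify its vanishing, via Lemmas \ref{IT1} and \ref{FBE1}, with the conjunction of the symplectic condition on $\Sigma$ and the Hamiltonian condition on $Y$. First I would repeat the calculation from the proof of Theorem \ref{Thmcf2} (using equation \ref{dHB1} and the telescoping identity \ref{XB1}) to obtain
$$d_\sH\omega \;=\; dt \wedge dx \wedge [\,T(\theta^0 \wedge \epsilon) + X(\bbeta(\epsilon))\,] \;=\; -dt \wedge dx \wedge \theta^0 \wedge \CL^*_\Delta(\epsilon),$$
so that $d_\sH\omega = 0$ if and only if $\CL^*_\Delta(\epsilon) \in \langle\theta^0\rangle$ on $\CR$.

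For the Hamiltonian side, Lemma \ref{IT1} rewrites the condition as $\pi^{1,2}\LD_T\Sigma = d_\sH^\sE \xi$ for some $\xi\in\Omega^{0,2}_{\rm t_{sb}}(E)$, and Lemma \ref{FBE1} translates this on $\CR$ to $T(\theta^0\wedge\epsilon) = X(\tilde\xi)$. The telescoping identity above then forces the canonical choice $\tilde\xi = -\bbeta(\epsilon)$, and using the injectivity of $X$ on $\Omega^{0,2}(\CR)$ (established in the proof of Theorem \ref{Thmcf2}) one concludes that $d_\sH\omega = 0$ is precisely this Hamiltonian identity written with the explicit $\xi = -\bbeta(\epsilon)$.

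For the symplectic side, I would invoke Theorem \ref{conf_2} and Corollary \ref{Curep2}: given $d_\sH\omega=0$, these produce $Q\in C^\infty(\CR)$ and a Lagrangian $L$ with $(\bFQ^*-\bFQ)(\Delta)=\Eop(Q\Delta+L)$, and setting $P=Q$ yields $\CS=\tfrac{1}{2}(\bLP-\bLP^*)$, which by Lemma \ref{SyminP} is exactly $\delta_\sV^\sE\Sigma=0$. The converse direction is symmetric: from $\CS=\tfrac{1}{2}(\bLP-\bLP^*)$ and equation \ref{HVF2t}, the adjoint identity $\bLP^*(K)=\Eop(PK)-{\bf L}_K^*(P)$ produces the variational-operator form $(\bFQ^*-\bFQ)(\Delta)=\Eop(Q\Delta+L)$ with $Q=P$ and $L=PK+2H$, whence Theorem \ref{conf_2} and Corollary \ref{Curep2} deliver the $d_\sH$-closed $\omega$ in the prescribed canonical form.

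The main obstacle is the careful management of the identifications among $\CJ$, $\CR$, and $E$, as in Remark \ref{thm_rmk} and the map $\Pi$ from Section \ref{Symplectic}: one must verify that $\pi_*\bFP = \bLP$ and the analogous relation for $K$, that the $D_t$-term in $\Eop(Q\Delta)$ on $\CJ$ produces the $P_t$-term on $E$, and that the adjoint identity $\bLP^*(K)=\Eop(PK)-{\bf L}_K^*(P)$ holds with the bicomplex sign conventions. Once these identifications are fixed, the two directions reduce to the algebraic manipulations sketched above.
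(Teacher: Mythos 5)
Your proposal is correct in outline and, on the Hamiltonian side, is essentially the paper's own argument: the paper likewise uses Lemma \ref{IT1} to trade the condition $\LD^\natural_T\Sigma=0$ for the existence of some $\xi$ with $\pi^{1,2}\circ\LD_T\Sigma=d_\sH^\sE\xi$, and Lemma \ref{FBE1} to transfer the computation to $\CR$, so that $\omega'=dx\wedge\theta^0\wedge\epsilon+dt\wedge\tilde\xi$ is $d_\sH$-closed, and conversely $\xi=-\Pi(\bbeta(\epsilon))$ witnesses the Hamiltonian condition when $d_\sH\omega=0$. Where you genuinely diverge is the symplectic side: the paper gets symplecticity of $\Sigma$ in one line from Lemma \ref{phc}, where $\widehat\Pi([\omega])$ is automatically $\delta_\sV^\sE$-closed because $d_\sV\omega$ represents a class in $H^{1,3}(\CR)=0$ (Theorem \ref{ZC}), whereas you route through the variational-operator correspondence (Theorem \ref{conf_2}, Corollary \ref{Curep2}) and the symplectic potential of Lemma \ref{SyminP}. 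Your detour is legitimate --- Section \ref{VOH12} is independent of Section \ref{FF2}, so there is no circularity --- and it has the virtue of making the identification $P=Q$ explicit, effectively absorbing a proof of Theorem \ref{TIS2} into this theorem; but it is heavier than needed, and note that once your step relating the Hamiltonian condition to $d_\sH\omega=0$ is in place, the converse half of your symplectic-side argument is redundant.

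Two specific repairs are needed. First, the claim that the telescoping identity \ref{XB1} plus injectivity of $X$ ``forces the canonical choice $\tilde\xi=-\bbeta(\epsilon)$'' is incomplete as stated: from the existence of an arbitrary $\tilde\xi$ you only obtain, up to sign, $X\left(\bbeta(\epsilon)+\tilde\xi\right)=-\theta^0\wedge\CL_\Delta^*(\epsilon)$, and membership of the right-hand side in the image of $X$ does not force it to vanish, since the image of $X$ meets $\theta^0\wedge\Omega^{0,1}(\CR)$ nontrivially --- for example $X(\theta^0\wedge\theta^1)=\theta^0\wedge\theta^2$. What closes this gap is exactly Corollary \ref{CFbeta}: using the skew-adjointness of $\epsilon$ (your hypothesis on $\CS$) and the integration-by-parts argument with the $J$ operator on $\CJ$, it shows $\CL_\Delta^*(\epsilon)=0$ outright for any closed form $dx\wedge\theta^0\wedge\epsilon-dt\wedge\beta$, after which the uniqueness argument of Theorem \ref{Thmcf2} pins $\beta=\bbeta(\epsilon)$; this is precisely the citation the paper makes at this point in its forward direction. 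Second, in your converse you cite Theorem \ref{conf_2} in the wrong direction: it manufactures a variational operator \emph{from} a cohomology class, while passing from your identity $(\bFQ^*-\bFQ)(\Delta)=\Eop(Q\Delta+L)$ (with $Q=P$ and $L=PK+2H$, which is correct and is the time-dependent analogue of Lemma \ref{CSDL}, the $P_t$ term cancelling against equation \ref{HVF2t}) to the $d_\sH$-closed representative \ref{keyEQ} is the content of Theorems \ref{e_o} and \ref{rho_e}. With that substitution the factor bookkeeping works out: the variational operator is $-2\CS$, and Theorem \ref{rho_e} returns $\epsilon=-\tfrac{1}{2}(-2\CS)(\theta^0)=\CS(\theta^0)$, as required.
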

\begin{proof}  
Supposed $\Sigma$ is symplectic and $Y$ is Hamiltonian, then Lemma \ref{IT1} produces $\xi=\xi_{ab} \theta^a_\sEL\wedge \theta^b _\sEL$ satisfying equation \ref{dxi}.   Let 
\begin{equation}
\omega=  dx \wedge \theta^0 \wedge (s_i \theta^i) + dt \wedge ( \xi_{ab} \theta^a \wedge \theta^b).
\label{t1omega}
\end{equation}
Equations \ref{FBE1}  and  \ref{dxi} give
$$
\begin{aligned}
d_\sH \omega &= dt \wedge T(dx \wedge \theta^0 \wedge (s_i \theta^i)) +dx  \wedge X( dt\wedge \xi_{ab} \theta^a \wedge \theta^b )\\
& = dt  \wedge {\LD}_T  ( dx \wedge \theta^0_\sEL \wedge (s_i \theta^i_\sEL ) +dx \wedge dt \wedge  D_x( \xi_{ab} \theta^a_\sEL \wedge \theta^b_\sEL) \\
& = dt \wedge  \left( \pi^{1,2}\circ {\LD}_T  (dx \wedge \theta_\sEL^0 \wedge  (s_i \theta^i_\sEL ) ) - dx \wedge D_x( \xi_{ab} \theta_\sEL^a \wedge \theta_\sEL^b) \right) =0.
\end{aligned}
$$
Therefore $[\omega ]\in H^{1,2}(\CR)$.  Now Corollary \ref{CFbeta} implies $\xi_{ab} \theta^a \wedge \theta^b = - \bbeta( s_i \theta^i)$ so that $\omega $ in equation \ref{t1omega} and equation \ref{keyEQ} are the same.

Suppose now that $\omega$ in equation \ref{keyEQ} is $d_H$ closed. By Lemma \ref{phc}, $\Sigma = \widehat \Pi( \omega)$ is a symplectic form. So we need only show that $Y$ is Hamiltonian. Again we refer to Lemma \ref{IT1} and show the existence of $\xi=\xi_{ab} \theta^a_\sEL\wedge \theta^b_\sEL$ in equation \ref{dxi}.

Writing $\bbeta( s_i \theta^i)= B_{ab} \theta^a \wedge \theta^b$ and using equations \ref{FBE1} we have
\begin{equation}
\begin{aligned}
d_H \omega &= dt \wedge T(dx \wedge \theta^0 \wedge (s_i \theta^i)) -dx  \wedge X( dt\wedge B_{ab} \theta^a \wedge \theta^b)\\
&=dt \wedge \left( \pi^{1,2} \circ {\LD}_T  (dx \wedge \theta_\sEL^0 \wedge  (s_i \theta^i_\sEL ) ) + dx \wedge D_x(B_{ab} \theta_\sEL^a \wedge \theta_\sEL^b) \right).
\end{aligned}
\end{equation}
This will vanish if and only if
\begin{equation}
 \pi^{1,2} \circ {\LD}_T  (dx \wedge \theta_\sEL^0 \wedge  (s_i \theta^i_\sEL ) ) + dx \wedge D_x( B_{ab} \theta_\sEL^a \wedge \theta_\sEL^b)=0
\label{LISH}
\end{equation}
because this term is $t$ semi-basic. Equation \ref{LISH}  produces $\xi =-\Pi(\bbeta( s_i \theta^i))= B_{ab}\theta^a_\sEL \wedge \theta^b_\sEL$ in equation \ref{dxi} and therefore $Y$ is a Hamiltonian vector field for $\Sigma$.
\end{proof}

We now summarize the results by the following Theorem whose proof follows directly from Lemma \ref{phc} and Theorem \ref{keyT}

\begin{Theorem} \label{BST} Let $u_t=K$ be an evolution equation, and let $Y=pr(K\partial_u)$ be the evolutionary vector field on $E$ and let $\CZ_Y(E)\subset \CF_{\rm t_{sb}}^2(E)$ be the subset of symplectic forms for which $Y$ is a Hamiltonian vector field.  Define the function $\Psi:\CZ_Y(E)\to H^{1,2}(\CR)$ given by
\begin{equation}
\Psi(dx \wedge \theta^0_\sEL\wedge \epsilon_\sEL ) = [dx \wedge \theta^0 \wedge \epsilon-  dt \wedge  \bbeta(\epsilon)],
\label{TY}
\end{equation}
where $ dx \wedge \theta^i_\sEL \wedge \epsilon_\sEL \in \CZ_Y(E)$ with $\epsilon_\sEL=\CS(\theta^0_\sEL)$ and $\CS=s_i D_x^i$ is the corresponding symplectic operator, and $\epsilon =\CS(\theta^0)=s_iX^i(\theta^0)$. The function $\Psi:\CZ_Y(E)\to H^{1,2}(\CR)$ is an isomorphism and $ \hat \Psi =\Psi^{-1}$ where $\hat \Psi$ is defined in equation \ref{HtoF}.
\end{Theorem}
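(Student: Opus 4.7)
The plan is to assemble Theorem \ref{BST} as an essentially formal consequence of Lemma \ref{phc} and Theorem \ref{keyT}, with uniqueness of the canonical representative from Theorem \ref{Urep} doing the rest of the work. First I would verify that $\Psi$ is well-defined and linear. Given $\Sigma = dx \wedge \theta^0_\sEL \wedge \epsilon_\sEL \in \CZ_Y(E)$ with corresponding skew-adjoint symplectic operator $\CS = s_i D_x^i$, the form $\epsilon := \CS(\theta^0) = s_i\theta^i \in \Omega^{0,1}(\CR)$ is skew-adjoint, and $\omega = dx\wedge \theta^0\wedge \epsilon - dt\wedge \bbeta(\epsilon)$ lies in $\Omega^{1,2}(\CR)$. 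Theorem \ref{keyT} then asserts that $d_\sH\omega = 0$ precisely because $\Sigma$ is symplectic and $Y$ is Hamiltonian for $\Sigma$, so $[\omega]\in H^{1,2}(\CR)$. Linearity of $\Psi$ is immediate from the linearity of $\CS \mapsto \epsilon$ and of the assignment $\epsilon \mapsto \bbeta(\epsilon)$.

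Next I would check $\widehat\Pi \circ \Psi = \mathrm{id}_{\CZ_Y(E)}$. From the definition of $\Pi$ in equation \ref{bphi}, we have $\Pi(dt)=0$ and $\Pi(\theta^i)=\theta^i_\sEL$, so
\[
\Pi(\omega) = dx\wedge \theta^0_\sEL \wedge \epsilon_\sEL = \Sigma.
\]
Because $\Sigma$ is already a functional $2$-form (it has the canonical shape of Lemma \ref{LF2} with skew-adjoint operator $\CS$), the defining property $I_\sEL^2=I_\sEL$ in \ref{IEP} gives $I_\sEL(\Sigma)=\Sigma$. Hence $\widehat\Pi(\Psi(\Sigma)) = I_\sEL \circ \Pi(\omega) = \Sigma$.

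For the other direction I would check $\Psi \circ \widehat\Pi = \mathrm{id}_{H^{1,2}(\CR)}$. Given $[\omega]\in H^{1,2}(\CR)$, select the unique representative
\[
\omega = dx\wedge \theta^0\wedge \epsilon - dt\wedge \bbeta(\epsilon), \qquad \epsilon^* = -\epsilon,
\]
provided by Theorem \ref{Urep}. Then $\widehat\Pi([\omega]) = I_\sEL\circ \Pi(\omega) = dx\wedge \theta^0_\sEL \wedge \epsilon_\sEL$, which is again in $\CF^2_{\rm t_{sb}}(E)$ since $\epsilon_\sEL$ is skew-adjoint. By Lemma \ref{phc} this form is $\delta^\sE_\sV$-closed, hence symplectic, and by Theorem \ref{keyT} applied in the reverse direction — using that $d_\sH\omega = 0$ for the chosen representative — the vector field $Y$ is Hamiltonian for $\widehat\Pi([\omega])$. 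Thus $\widehat\Pi([\omega])\in \CZ_Y(E)$, and applying $\Psi$ to it recovers $[\omega]$ by the uniqueness clause of Theorem \ref{Urep}.

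The only point that requires any real thought, rather than bookkeeping, is making sure the image of $\widehat\Pi$ actually lands in $\CZ_Y(E)$ rather than merely in the space of symplectic forms; this is the content that Lemma \ref{phc} alone does not supply and that forces the use of Theorem \ref{keyT}. Once that is in hand, the bijectivity of $\Psi$ and the identification $\widehat\Psi = \Psi^{-1}$ follow from the two calculations above. There are no delicate analytic or combinatorial obstacles: the proof is essentially a diagram chase through $\Pi$, $I_\sEL$, and the unique normal form of Theorem \ref{Urep}.
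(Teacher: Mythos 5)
Your proposal is correct and follows exactly the route the paper intends: the paper's own ``proof'' of Theorem \ref{BST} is the one-line assertion that it follows directly from Lemma \ref{phc} and Theorem \ref{keyT}, and your diagram chase (forward direction of Theorem \ref{keyT} for well-definedness, $I_\sEL\circ\Pi$ fixing the skew-adjoint normal form for $\widehat\Pi\circ\Psi=\mathrm{id}$, and the uniqueness clause of Theorem \ref{Urep} together with the reverse direction of Theorem \ref{keyT} for $\Psi\circ\widehat\Pi=\mathrm{id}$) supplies precisely the bookkeeping the paper leaves implicit. You also correctly identify the one substantive point, namely that landing in $\CZ_Y(E)$ rather than merely among symplectic forms is what forces the use of Theorem \ref{keyT} beyond Lemma \ref{phc}.
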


With Theorems \ref{THA} and \ref{BST} in hand the proof of Theorem \ref{TIS2} and \ref{THC} are now easily given.
\begin{proof} (Theorems \ref{TIS2} and \ref{THC})
  Suppose that $\CS= s_i D_x^i\in \CZ_Y(E)$ is a symplectic operator for
the scalar evolution equation $\Delta=u_t-K$ and that $Y=\pr( K\partial_u)$ is a Hamiltonian vector field for $\Sigma$. Then with $\Psi$ from equation \ref{TY} in  Theorem \ref{BST} and $\Phi$ in equation \ref{FISO} in Theorem \ref{THA} we have,
$$
\Phi^{-1}\circ \Psi(\CS)= - \frac{1}{2} \ts_i D_x^i
$$
is a variational operator and so $\CS$ is a variational operator for $\Delta$ (by the abuse of notation in Remark \ref{thm_rmk}). The fact that $\Phi^{-1}\circ \Psi$ is an isomorphism then proves Theorem \ref{TIS2}.

As above we identify a symplectic operator $\CS$ on $E$ as an operator on $\CJ$, then the function $\Phi$ in equation \ref{FISO} defines an isomorphism between symplectic operators for $\Delta$ and $H^{1,2}(\CR)$. This proves Theorem \ref{THC}.
\end{proof}

As our final Lemma we show for completeness how formula \ref{thm_m} can be determined from the symplectic potential.

\begin{Lemma} Let $\CS=s_i D_x^i$ be a symplectic operator and
let $\psi = dx\wedge \theta^0_E \cdot P\in C^\infty(E)$ be a symplectic potential.  The unique representative for $\Psi(\CS)\in H^{1,2}(\CR)$ in Theorem \ref{rho_e} 
has $\epsilon  = \CS( \theta^0)$. Furthermore there exists a representative $\omega$ for $\Psi(\CS)$ where $\omega $ in equation \ref{conf_2e} can be written $\omega = d_\sV \eta$ where
\begin{equation}
\eta = dx \wedge \theta^0 \cdot  P - dt \wedge \gamma.
\label{etaP}
\end{equation}
 \end{Lemma}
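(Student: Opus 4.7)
The first claim follows directly by combining Theorem \ref{BST} with the uniqueness in Theorem \ref{Urep}. By Theorem \ref{BST}, the class $\Psi(\CS)$ is represented by $dx\wedge\theta^0\wedge\CS(\theta^0)-dt\wedge\bbeta(\CS(\theta^0))$. Since $\CS$ is symplectic, Lemma \ref{LF2} gives $\CS^*=-\CS$, so $\epsilon:=\CS(\theta^0)$ satisfies the skew-adjointness $\epsilon^*=-\epsilon$ that characterizes the unique representative in Theorem \ref{Urep}; hence the unique representative indeed has $\epsilon=\CS(\theta^0)$.

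For the second claim, my plan is to build $\eta$ by running the first variational formula on a Lagrangian whose coefficient of $u_t$ is exactly $P$. By Lemma \ref{SyminP}, $\CS=\tfrac{1}{2}(\bLP-\bLP^*)$; under the identification of Remark \ref{thm_rmk} this lifts to the variational operator $\CE=\bFP^*-\bFP$ on $\CJ$. Applying Corollary \ref{CFQ} with $Q=P$ yields a Lagrangian $L_0(t,x,u,u_x,\ldots)$ satisfying $\CE(\Delta)=\Eop(P\Delta+L_0)$. I then set $L:=P\Delta+L_0=Pu_t+(L_0-PK)$; the crucial feature of $L$ is that it is linear in $u_t$ and independent of all higher time derivatives, so only one boundary contribution of the form $(\,\cdot\,)\wedge dx$ can arise from integration by parts.

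The core step is to run the first variational formula (as in Theorem \ref{e_o}) on $\Lambda:=L\,dt\wedge dx$, producing $d_\sV\Lambda=dt\wedge dx\wedge\vartheta^0\cdot\Eop(L)+d_\sH\tilde\eta$, and track the boundary form $\tilde\eta$. The $Pu_t$ piece contributes $P\zeta^{1,0}\wedge dt\wedge dx$ to $d_\sV L\wedge dt\wedge dx$, and integration by parts in $t$ gives
\begin{equation*}
P\zeta^{1,0}\wedge dt\wedge dx\;=\;-D_t(P)\vartheta^0\wedge dt\wedge dx\;-\;d_\sH(P\vartheta^0\wedge dx),
\end{equation*}
which deposits exactly the summand $-P\vartheta^0\wedge dx=dx\wedge P\vartheta^0$ into $\tilde\eta$. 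The remaining terms $u_tP_i\vartheta^i$ (for $i\geq 1$) and $(L_0-PK)_i\vartheta^i$ (for $i\geq 0$) require only $x$-integration by parts, whose boundary pieces are exclusively wedged with $dt$. Hence $\tilde\eta=dx\wedge P\vartheta^0-dt\wedge\tilde\gamma$ for some $\tilde\gamma\in\Omega^{0,1}(\CJ)$. Pulling back, $\eta:=\iota^*\tilde\eta=dx\wedge P\theta^0-dt\wedge\gamma$ with $\gamma:=\iota^*\tilde\gamma$, and Theorem \ref{e_o} shows $\omega:=d_\sV\eta$ is $d_\sH$-closed. Finally, Theorem \ref{rho_e} identifies its unique representative by $\epsilon=-\tfrac{1}{2}\iota^*\CE(\vartheta^0)=\CS(\theta^0)$, matching the first part and yielding $[\omega]=\Psi(\CS)$.

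The main technical obstacle is the careful bookkeeping of the integration by parts to isolate $dx\wedge P\vartheta^0$ as the sole $\wedge dx$-summand of $\tilde\eta$; the linearity of $L$ in $u_t$ is what makes this isolation clean, because no other mechanism in the IBP can produce a $\wedge dx$ boundary contribution.
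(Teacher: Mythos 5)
Your argument is correct in substance but takes a genuinely different route from the paper's. The paper never leaves the cohomological machinery: it starts from the $d_\sV$-closed representative $\omega_0=d_\sV\eta_0$ with $\eta_0=dx\wedge\theta^0\cdot Q-dt\wedge\gamma_0$ supplied by Theorem \ref{TCF3}, uses Corollary \ref{Curep2} together with Lemma \ref{SyminP} to conclude that $Q$ and $P$ are symplectic potentials for the \emph{same} operator $\CS$, invokes exactness of the $\delta_\sV$ complex \ref{EComp} to write $\psi=\psi_0+d_\sV^\sE(A\,dx)+d_\sH^\sE\xi$, and then corrects $\eta_0$ by $d_\sV(A\,dx)+d_\sH\xi$ (which changes $\omega_0$ only by the $d_\sH$-exact term $-d_\sH d_\sV\xi$), so that $\Pi(\eta)=\psi$ and the $dx\wedge\theta^0$-coefficient becomes $P$. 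You instead build $\eta$ explicitly by running the first variational formula on the Lagrangian $P\Delta+L_0=Pu_t+(L_0-PK)$ and tracking the boundary form; your sign bookkeeping in the single $t$-integration by parts is right, and your key observation is also correct: since $L$ is linear in $u_t$ and all remaining integrations by parts are in $x$ (depositing only $dt\wedge(\cdot)$ terms into $\tilde\eta$), the sole $\wedge\,dx$-summand of $\tilde\eta$ is $dx\wedge P\vartheta^0$. The final identification $[\omega]=\Psi(\CS)$ via Theorems \ref{e_o}, \ref{rho_e} and uniqueness (Theorem \ref{Urep}) is sound, as is your first paragraph, which matches the paper's one-line appeal to equation \ref{TY} of Theorem \ref{BST}. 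Your route has the virtue of producing $\gamma$ by an explicit computation; the paper's abstract correction avoids all computation but gives no formula.

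The one step you cannot take verbatim is ``applying Corollary \ref{CFQ} with $Q=P$.'' That corollary asserts the existence of \emph{some} $Q$ with $\CE={\bf F}_Q^*-{\bf F}_Q$ and $\CE(\Delta)=\Eop(Q\Delta+L)$; it does not say that every potential with the same skew part, in particular your prescribed $P$, admits such an $L$. The repair is short but genuinely needed (it is the exact counterpart of the paper's $\psi=\psi_0+d_\sV^\sE(A\,dx)+d_\sH^\sE\xi$ step). If ${\bf F}_Q^*-{\bf F}_Q={\bf F}_P^*-{\bf F}_P$ then ${\bf F}_{Q-P}$ is self-adjoint, so by the Helmholtz condition $Q-P=\Eop(A)$ for some $A(t,x,u,u_x,\ldots)$; since $\Eop(A)\,u_t=D_tA-\partial_tA-D_x\sigma$ for a suitable $\sigma$, one gets $\Eop(Q\Delta+L)=\Eop(P\Delta+L_0)$ with $L_0=L-\partial_tA-\Eop(A)K$, which is $u_t$-independent as your bookkeeping requires. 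Alternatively you can bypass Corollary \ref{CFQ} entirely: the time-dependent analogue of Lemma \ref{CSDL} gives $\CS(u_t)=\Eop\bigl(-\tfrac{1}{2}Pu_t\bigr)-\tfrac{1}{2}P_t$, and combining this with the Hamiltonian condition \ref{HVF2t}, $\tfrac{1}{2}P_t+\CS(K)=\Eop(H)$, yields directly $({\bf F}_P^*-{\bf F}_P)(\Delta)=\Eop\left(P\Delta+PK+2H\right)$, i.e.\ $L_0=PK+2H$. With either patch your proof is complete.
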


\begin{proof} By equation \ref{TY} of Theorem \ref{BST} we have the unique representative as stated in the Lemma. 

To prove the second part of the lemma by using Theorem \ref{TCF3} to construct  a representative $\omega_0$ for $\Psi(\CS)$ such that $\omega_0= d_\sV \eta_0$ with $\eta_0= dx \wedge \theta^0 \cdot  Q - dt \wedge \gamma_0$. 

By equation  \ref{epsQ} of  Corollary \ref{Curep2}  and equation \ref{SinP} for the operator in the form $\epsilon=\CS(\theta^0)$ gives
\begin{equation}
\epsilon  = \frac{1}{2}({\bf L}_Q- {\bf L}_Q^*) \theta^0= \frac{1}{2}({\bf L}_P- {\bf L}_P^*) \theta^0.
\label{epseq}
\end{equation}
Lemma \ref{SyminP} and equation \ref{epseq} show $\psi_0 = dx \wedge \theta^0_\sEL \cdot Q$ is a symplectic potential for $\CS$ and that  $\delta_\sV^\sE \psi_0= \delta_\sV^\sE \psi$. Therefore using equation \ref{IEP} (for $I_\sE$) and
the exactness of the $d_\sV^\sE$ complex, 
\begin{equation}
\psi =\psi_0 +d^\sE_\sV (A dx) +  d_\sH^\sE\, \xi 
\label{psipsi0}
\end{equation}
for some $A\in C^\infty(E)$ and $\xi \in \Omega^{0,1}(E)$.  We then let
\begin{equation}
\eta= \eta_0 + d_\sV (A dx) + d_\sH \xi, \quad {\rm and} \quad
\omega = \omega_0 - d_\sH d_\sV \xi,
\label{etaomega}
\end{equation}
where we are computing $d_\sH$ and  $d_\sV $ on $\CR$. Note that by equation \ref{PXC} and \ref{psipsi0} we have $\Pi(\eta) = \psi $
so that $\eta$ has the form in equation \ref{etaP}. We then compute using equation \ref{etaomega}
$$
d_\sV \eta= d_\sV(\eta_0+d_\sH \xi ) = \omega_0 +d_\sV d_\sH \xi = \omega,
$$
which proves the lemma.
\end{proof}

\subsection{Time Independent Operators}

Equation \ref{SymOp} defines when the time independent evolution equation $u_t=K(x,u,u_x,\ldots)$ is a Hamiltonian system with symplectic operator $\CS$. This is precisely the same definition that the ordinary differential equation $K(x,u,u_x,\ldots)=0$ admits a variational operator. The following simple lemma is the key to decoupling the variational operator problem 
for time independent  scalar evolution equations.  

\begin{Lemma} \label{CSDL} Let $ \CS=s_i D_x^i$ be a time independent symplectic operator with symplectic potential $P \in C^\infty(J^\infty(\reals,\reals))$ (equation \ref{CF31} in Lemma \ref{SymPot}).  Then 
\begin{equation}
\CS(u_t) = \Eop \left(- \frac{1}{2} \tP u_t \right).
\label{CSPu}
\end{equation}
\end{Lemma}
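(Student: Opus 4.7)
The plan is to prove equation \ref{CSPu} by a direct computation of $\Eop(-\frac{1}{2}P u_t)$, exploiting the fact that $P$ is time-independent. Since $P = P(x,u,u_x,u_{xx},\ldots)$ has no dependence on $t$ or on any mixed/time derivatives $u_{a,i}$ with $a\geq 1$, the Lagrangian $L = -\frac{1}{2}Pu_t$ on $J^\infty(\reals^2,\reals)$ has only two families of nonzero partial derivatives with respect to jet coordinates, namely
\[
\frac{\partial L}{\partial u_t} = -\frac{1}{2}P, \qquad \frac{\partial L}{\partial u_i} = -\frac{1}{2}P_i u_t \quad (i\geq 0),
\]
with all other partials vanishing.

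First I would write out the Euler-Lagrange operator on $J^\infty(\reals^2,\reals)$ restricted to these two families:
\[
\Eop(L) = -D_t\!\left(\frac{\partial L}{\partial u_t}\right) + \sum_{i\geq 0}(-D_x)^i\!\left(\frac{\partial L}{\partial u_i}\right) = \frac{1}{2}D_t(P) - \frac{1}{2}\sum_{i\geq 0}(-D_x)^i(P_i u_t).
\]
Next I would rewrite the two pieces in operator form. For the first, since $P$ is $t$-independent, $D_t(P) = \sum_{i\geq 0} P_i\, D_t(u_i) = \sum_{i\geq 0} P_i\, D_x^i(u_t) = \bFP(u_t)$, because $D_t(u_i) = D_tD_x^i u = D_x^i u_t$. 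For the second, the formula $\bFP^*(\rho) = \sum_i (-D_x)^i(P_i\rho)$ identifies the sum as $\bFP^*(u_t)$.

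Combining yields $\Eop(-\tfrac{1}{2}P u_t) = \tfrac{1}{2}(\bFP - \bFP^*)(u_t) = \CS(u_t)$, where the last equality is the definition of $\CS$ in terms of its symplectic potential $P$ from Lemma \ref{SymPot} (equation \ref{CF31}). There is no serious obstacle here: the only subtlety is the identification $D_t(P) = \bFP(u_t)$, which relies crucially on $P$ being time-independent (so that $D_t$ acts only through the chain rule on the $u_i$-variables). In the time-dependent case a $\tfrac{1}{2}P_t$ term would appear, mirroring the extra term that already surfaces in \ref{HVF2t}.
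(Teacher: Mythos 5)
Your proof is correct and takes essentially the same route as the paper's: both reduce $\Eop\left(-\tfrac{1}{2}\tP u_t\right)$ to $\tfrac{1}{2}(\bFP-\bFP^*)(u_t)$ via the key time-independence identity $D_t(\tP)=\tP_i D_x^i(u_t)=\bFP(u_t)$ and then conclude from equation \ref{CF31}. The only (cosmetic) difference is that the paper obtains the initial decomposition by citing the product formula $\Eop(fg)={\bf F}_f^*(g)+{\bf F}_g^*(f)$ (equation 5.80 in \cite{Olver:1993a}) together with ${\bf F}_{u_t}^*=-D_t$, whereas you re-derive that same decomposition from first principles by computing the partial derivatives of the Lagrangian $-\tfrac{1}{2}\tP u_t$ directly.
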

\begin{proof} 
By the product formula in the calculus of variations (equation 5.80 in \cite{Olver:1993a}) the left side of equation \ref{CSPu} is
\begin{equation}
\begin{aligned}
\Eop \left(- \frac{1}{2} \tP u_t \right)& =-\frac{1}{2} \left( {\bf F}_{\tP}^* u_t + {\bf F}^*_{u_t} \tP \right) \\
& = -\frac{1}{2} \left( {\bf F}_{\tP}^* u_t - D_t \tP \right) \\
& = -\frac{1}{2} \left( {\bf F}_{\tP}^* u_t - \tP_i D_x^i u_t \right)\\
& = -\frac{1}{2} \left( {\bf F}_{\tP}^* u_t - {\bf F}_P  u_t \right).
\end{aligned}
\label{CSpu2}
\end{equation}
Equation \ref{CSpu2} together with the fact from equation \ref{CF31} $2\CS(u_t) = {\bf F}_{\tP}u_t - {\bf F}_{\tP}^*u_t$ show that the two sides of equation \ref{CSPu} agree. 
\end{proof} 

We then have the following.

\begin{Theorem} \label{TFAE} Let $\CS$ be a $t$-independent symplectic operator. The following are equivalent,
\begin{enumerate}

\item $u_t=K(x,u,u_x,\ldots)$ is Hamiltonian the sense of $\CS(K)=\Eop(H)$.

\item $\CS$ is a symplectic variational operator for the ODE $K=0$,

\item $\CS$ is a variational operator for $u_t=K$ (see Remark \ref{thm_rmk}).

\end{enumerate}

\end{Theorem}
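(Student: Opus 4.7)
The plan is to prove the cycle $(1)\Rightarrow(3)\Rightarrow(1)$ and to observe that $(1)\Leftrightarrow(2)$ is immediate. Indeed, $(1)$ and $(2)$ are the same statement by the sentence preceding the theorem: a variational operator for the ODE $K=0$ is by definition a skew-adjoint operator $\CS$ together with a Lagrangian $H\in C^\infty(\EJ)$ satisfying $\CS(K)=\Eop(H)$, which is exactly the Hamiltonian condition $(1)$. So the content of the theorem is the equivalence of these with the variational-operator property $(3)$ on the evolution side, and the key tool is Lemma \ref{CSDL}.

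For $(1)\Rightarrow(3)$: Assume $\CS(K)=\Eop(H)$ with $H\in C^\infty(\EJ)$. By linearity of $\CS$ and Lemma \ref{CSDL},
$$
\CS(u_t-K)=\CS(u_t)-\CS(K)=\Eop\!\left(-\tfrac12 Pu_t\right)-\Eop(H)=\Eop\!\left(-\tfrac12 Pu_t-H\right),
$$
so setting $L=-\tfrac12 Pu_t-H\in C^\infty(J^\infty(\reals^2,\reals))$ realizes $\CS(u_t-K)=\Eop(L)$, which is the definition of $\CS$ being a variational operator for $\Delta=u_t-K$.

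For $(3)\Rightarrow(1)$: Assume $\CS(u_t-K)=\Eop(L)$ for some $L\in C^\infty(J^\infty(\reals^2,\reals))$. Applying Lemma \ref{CSDL} once more,
$$
\CS(K)=\Eop\!\left(-\tfrac12 Pu_t\right)-\Eop(L)=\Eop\!\left(-\tfrac12 Pu_t-L\right)=\Eop(\tilde H),
$$
where $\tilde H=-\tfrac12 Pu_t-L$ a priori depends on $t$ and on $u_t,u_{tt},\ldots$. The left side, however, is a function on $\EJ$ because $\CS$ and $K$ are both time-independent. My plan to extract an $\EJ$-Lagrangian is to apply the Volterra homotopy: define
$$
H(x,u,u_x,\ldots)=\int_0^1 u\cdot \CS(K)\big|_{u\to\lambda u,\,u_i\to\lambda u_i}\,d\lambda.
$$
Since the integrand depends only on $(x,u,u_x,\ldots)$, we get $H\in C^\infty(\EJ)$. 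The relation $\Eop(\tilde H)=\CS(K)$ forces the Helmholtz self-adjointness of ${\bf F}_{\CS(K)}$ on $J^\infty(\reals^2,\reals)$; but since $\CS(K)$ involves only $x$-derivatives of $u$, this is identical to the self-adjointness condition on $\EJ$. Standard homotopy identities then give $\Eop(H)=\CS(K)$, establishing $(1)$.

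The main obstacle is the passage in $(3)\Rightarrow(1)$ from the time-dependent Lagrangian $\tilde H$ to a genuinely $t$-independent Lagrangian $H$ on $\EJ$. The point is that the integrability conditions satisfied by $\CS(K)$ as an Euler-Lagrange expression on the large jet space automatically reduce to the Helmholtz conditions on $\EJ$ because $\CS(K)$ contains no $t$-derivatives; the Volterra formula (or, equivalently, adding an appropriate horizontal divergence to $\tilde H$) then produces $H$ explicitly. Everything else in the argument is a direct application of Lemma \ref{CSDL} and the linearity of $\CS$ and $\Eop$.
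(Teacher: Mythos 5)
Your proposal is correct, and two thirds of it coincides with the paper's own proof: the paper likewise dismisses $(1)\Leftrightarrow(2)$ as definitional, and proves $(1)\Rightarrow(3)$ by exactly your cancellation $\CS(u_t-K)=\Eop\left(-\tfrac{1}{2}\tP u_t-\tH\right)$ from Lemma \ref{CSDL}. Where you genuinely diverge is $(3)\Rightarrow(1)$. The paper never confronts an arbitrary Lagrangian: it takes hypothesis (3) ``in the form of equation \ref{thm_m}'', i.e.\ it invokes the canonical-form machinery (Theorem \ref{conf_2}, Corollary \ref{CFQ}, with $\CS=\tfrac{1}{2}({\bf F}_{\tP}-{\bf F}_{\tP}^*)$ from Lemma \ref{SymPot}) to replace the a priori Lagrangian $L(t,x,u,u_t,u_x,\ldots)$ by the canonical one $\tP\Delta+\tL$ with $\tL\in C^\infty(\CR)$ containing no $t$-derivatives of $u$; then the same Lemma \ref{CSDL} substitution immediately yields $\CS(K)=\Eop(\tL-\tP K)$ up to an overall constant, with the Hamiltonian read off explicitly from the data. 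You instead keep the arbitrary Lagrangian, conclude only that $\CS(K)$ lies in the image of $\Eop$ on $J^\infty(\reals^2,\reals)$, and descend to $\EJ$ by classical inverse-problem theory: the Helmholtz self-adjointness of ${\bf F}_{\CS(K)}$, which, as you correctly observe, reduces verbatim to the $\EJ$ condition because $\CS(K)$ contains no $t$-derivatives and hence ${\bf F}_{\CS(K)}$ involves only $D_x$; the Volterra homotopy then produces $H$, which is automatically $t$-independent since $\CS(K)$ is. Both routes are sound. The paper's route stays entirely inside its cohomological framework and gives an explicit Hamiltonian, at the price of invoking the heavy Theorem \ref{conf_2} (whose own proof uses vertical homotopies, Remark \ref{findL}); your route is more elementary and essentially independent of Sections \ref{CFNU}--\ref{VOH12}, resting only on Lemma \ref{CSDL} and standard facts (Theorem 5.68 in \cite{Olver:1993a}), at the price of the usual star-shaped-domain caveat of the homotopy formula---a caveat equally implicit in the paper's vertical-exactness arguments---and a less explicit $H$.
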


This converts the symplectic Hamiltonian question for the evolution equation into a variational operator problem for the ODE $K=0$, see \cite{fels:2018a}.

\begin{proof}  Suppose $u_t = K(x,u,...,u_n)$  is Hamiltonian for the $t$-independent symplectic operator $\CS$, so that $\CS(K) = \Eop(H)$ on $J^\infty(\reals,\reals)$. Therefore  $\CS$ is a variational operator for the ODE $K=0$. So (1) and (2) are trivially equivalent. 

We show (1) implies (3). Suppose that $\CS(K) = \Eop(H)$. Using equation \ref{CSpu2} in Lemma \ref{CSDL} we have
$$
\CS (u_t-K) =\CS(u_t)- \Eop(\tH) = \Eop\left( -\frac{1}{2} \tP u_t-\tH \right).
$$
Therefore $\CS$ is a variational operator for $u_t-K$.

Finally we show (3) implies (1). Starting with hypothesis (3) in the form of equation \ref{thm_m} we have,
\begin{equation}
({\bf F}_{\tP}^* -{\bf F}_{\tP})(u_t -K )= \Eop( \Delta \tP +\tL)= \Eop(\tP u_t-\tP K +\tL),
\label{EQE1}
\end{equation}
where $\tL = L \circ \pi$ (see \ref{thm_rmk}). Substituting from equation \ref{CSpu2} into equation \ref{EQE1} we get 
$$
({\bf F}_{\tP}^* -{\bf F}_{\tP})(-K )= \Eop(-\tP K +\tL).
$$
Therefore
$$
\CS (K)  = \Eop( 2 (L - PK)),
$$
and $u_t=K$ is a time independent Hamiltonian evolution equation for the symplectic operator $\CS$.
\end{proof}


\section{First Order Operators}\label{FirstOrder}

For a third order evolution equation 
\begin{equation}
u_t=K(t,x,u,u_x,u_{xx},u_{xxx})
\label{TOE}
\end{equation}
we write the conditions $\theta^0 \wedge \CL_\Delta^*(\epsilon) = 0$, when $\epsilon$ is first order and skew-adjoint. This will prove Theorem \ref{FOT} in the Introduction.


\begin{proof} (Theorem \ref{FOT}) By Theorems \ref{rho_e} and \ref{Urep}  the skew-adjoint operator $\CE=2 \tR D_x + X(\tR)$ is a variational operator for \ref{TOE} if and only if
the  skew-adjoint form $\epsilon= -R\theta^1-\frac{1}{2}R_0\theta^0$ is a solution to 
\begin{equation}
-\CL_\Delta^*(\epsilon)\wedge \theta^0 = \left(T(\epsilon)  -X^3(K_3 \epsilon) +X^2(K_2 \epsilon) -X(K_1\epsilon) +K_0\epsilon \right)\wedge \theta^0=0
\label{findrho}
\end{equation}
where $K_i = \partial_{u_i} K$. Using $T(\theta^0) = d_\sV K = K_i \theta^i$ and $T(\theta^1) = X( d_\sV K )=X( K_i \theta^i)$ we have
\begin{equation}
T(\epsilon) = -T( R )  \theta^1 -\frac{1}{2}T(X(R))) \theta^0 - RX( K_i \theta^i) -\frac{1}{2} X(R) K_i \theta^i.  
\label{Trho}
\end{equation}
The highest possible $\theta^i \wedge \theta^0 $ term in equation \ref{findrho} using \ref{Trho} is $\theta^4$. We find from equation \ref{findrho}
$$
[ \theta^4\wedge \theta^0]= -RK_3  + RK_3 =0.
$$
While for $\theta^3\wedge \theta^0$, $\theta^2\wedge \theta^0$ and $\theta^1\wedge \theta^0$ we have from equations \ref{Trho} and \ref{findrho},
\vspace{-0.15in}
\begin{equation}
\label{lt3}
\begin{aligned}
\quad\\
[\theta^3\wedge \theta^0]=& 2X(K_3) R -2K_2 R +3K_3 X( R ), \\
[\theta^2\wedge \theta^0]=&-3X(K_2R)+3X^2(RK_3) +\frac{3}{2} X(K_3 X( R )) , \\
[\theta^1\wedge \theta^0]=&- T(R) - 2 K_0 R +K_1X(R)  +\frac{3}{2} X^2(K_3 X(R ))\\ &+X^3(K_3 R)-X^2(K_2 R)-X(K_2 X(R)).
\end{aligned}
\end{equation}

For the coefficient of $\theta^3\wedge \theta^0$ to be zero we have from equation \ref{lt3},
\begin{equation}
X(R) = \frac{2}{3K_{3}}(K_{2} - X(K_{3})) R = \hat K_2 R
\label{XR}
\end{equation}
where $\hat K_2 =  \frac{2}{3K_{3}}\left(K_2 - X(K_3)\right)$.
The coefficient of $\theta^2\wedge \theta^0$ in equation \ref{lt3} is zero on account of \ref{XR}. 
For the coefficient of $\theta^1\wedge \theta^0$ in \ref{lt3} to be zero gives
\begin{equation}
T(R) = - 2 K_0 R +K_1X(R)  +\frac{3}{2} X^2(K_3 X(R))+X^3(K_3 R)-X^2(K_2 R)-X(K_2 X(R)).
\label{FoT}
\end{equation}
Simplifying equation \ref{FoT} using equation \ref{XR}  we get
\begin{equation}
\begin{aligned}
T(R)   = \left(
-2K_0+  K_1\hat K_2 -\frac{1}{2}\left(X(K_3){\hat K_2^2} +K_3 \hat K_2^3\right)+X( K_3 X(\hat K_2)) \right) R
\end{aligned}
\label{TRF}
\end{equation}
It follows that a non-vanishing $R$ (which we may assume to be positive) satisfying equations \ref{XR} and \ref{TRF} is necessary and sufficient for the existence of a first order variational operator for $\Delta=u_t-K$ in equation \ref{TOE} is equivalent to $A=X(\log R)$ and $B=T(\log R)$ satisfying the conditions in Theorem  \ref{FOT}. This proves Theorem \ref{FOT}.
\end{proof}

For the KdV equation $u_t = u_{xxx}+ u u_x$  the form $\kappa$ in equation \ref{kappadef} is
$$
\kappa = -u_x dt, \quad d_\sH \kappa= -u_{xx} dx \wedge dt.
$$
Therefore according to Theorem \ref{FOT} there is no first order symplectic formulation for the KdV equation as a Hamiltonian evolution equation.

\subsection{First order Hamiltonians Operators and Bi-Hamiltonian Evolution equations}

Let $ v_t= \CD \circ \Eop(H(x,v,v_x,\ldots))$ be a Hamiltonian evolution equation where $\CD$ is a first order Hamiltonian operator. According to \cite{Olver:1988a} or \cite{vino:1986a} we may choose coordinates (using a contact transformation) such that $\CD= D_x$.  The following is Theorem 1 in \cite{Nutku:2002a} in the context of scalar evolution equations.

\begin{Lemma}  \label{PFT} The potential form of the Hamiltonian evolution equation,
\begin{equation}
v_t = D_x \Eop(H_1(x,v,v_x,\ldots)).
\label{CFOH}
\end{equation}
is given by the equation
\begin{equation}
u_t = \Eop(H_1)|_{v=u_x}.
\label{pf2}
\end{equation}
The potential form \ref{pf2} admits $\CE=D_x$ as a first order variational operator, and satisfies
\begin{equation}
D_x\left( u_t - \Eop(H_1)|_{v=u_x} \right) = \Eop( -\frac{1}{2}{u_x}{u_t} + H_1|_{v=u_x} ).
\label{vop1}
\end{equation}
\end{Lemma}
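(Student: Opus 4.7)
The plan is to verify equation \ref{vop1} directly by computing each piece of the Euler--Lagrange expression on the right-hand side; the variational operator claim then follows from the definition \ref{VOPe} (equivalently Corollary \ref{CFQ}), and the identification of $u_t = \Eop(H_1)|_{v=u_x}$ as the potential form follows by a one-line $x$-integration of \ref{CFOH}.

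First I compute $\Eop(-\tfrac{1}{2} u_x u_t)$ on $\CJ$. The only nonzero jet partial derivatives of $-\tfrac{1}{2} u_x u_t$ are $\partial/\partial u_x = -\tfrac{1}{2} u_t$ and $\partial/\partial u_t = -\tfrac{1}{2} u_x$, so
\[
\Eop(-\tfrac{1}{2} u_x u_t) = -D_x(-\tfrac{1}{2} u_t) - D_t(-\tfrac{1}{2} u_x) = D_x u_t.
\]
Next I compute $\Eop(H_1|_{v=u_x})$. In the coordinates $(x,u,u_t,u_x,u_{xx},\ldots)$ on $\CJ$, the function $H_1|_{v=u_x}$ depends only on $x$ and on $u_x,u_{xx},\ldots$, so $\partial_u(H_1|_{v=u_x}) = 0$ and by the chain rule $\partial_{u_{i+1}}(H_1|_{v=u_x}) = (\partial_{v_i} H_1)|_{v=u_x}$ for $i \geq 0$. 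Because $D_x$ commutes with the substitution $v = u_x, v_x = u_{xx}, \ldots$, I obtain
\[
\Eop(H_1|_{v=u_x}) = \sum_{i \geq 0} (-D_x)^{i+1}\!\left((\partial_{v_i} H_1)|_{v=u_x}\right) = -D_x\!\left(\Eop(H_1)|_{v=u_x}\right).
\]

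Adding the two computations yields
\[
\Eop\!\left(-\tfrac{1}{2} u_x u_t + H_1|_{v=u_x}\right) = D_x u_t - D_x\!\left(\Eop(H_1)|_{v=u_x}\right) = D_x\!\left(u_t - \Eop(H_1)|_{v=u_x}\right),
\]
which is exactly \ref{vop1}. By definition \ref{VOPe}, this identity says that $\CE = D_x$ is a first order variational operator for $\Delta = u_t - \Eop(H_1)|_{v=u_x}$ with Lagrangian $L = -\tfrac{1}{2} u_x u_t + H_1|_{v=u_x}$.

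For the first assertion of the lemma, substituting $v = u_x$ into \ref{CFOH} gives $u_{xt} = v_t = D_x(\Eop(H_1)|_{v=u_x})$, i.e.\ $D_x(u_t - \Eop(H_1)|_{v=u_x}) = 0$; this is satisfied on the locus $u_t = \Eop(H_1)|_{v=u_x}$ (any $x$-independent function of $t$ can be absorbed into $u$), and this is what makes $u$ a potential for $v$. There is no genuine obstacle in the proof; the entire content is the index-shift identity $\Eop(H_1|_{v=u_x}) = -D_x(\Eop(H_1)|_{v=u_x})$, which is elementary.
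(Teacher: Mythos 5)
Your proof is correct and follows essentially the same route as the paper: integrate \ref{CFOH} in $x$ to obtain the potential form, compute $\Eop(-\tfrac{1}{2}u_xu_t)=u_{tx}$, and use the change-of-variables identity $\Eop(H_1|_{v=u_x})=-D_x\left(\Eop(H_1)|_{v=u_x}\right)$. The only difference is cosmetic: the paper cites this identity (exercise 5.49 in \cite{Olver:1993a}, i.e.\ $\Eop(L|_{v=u_x})=(D_x)^*\Eop(L)|_{v=u_x}$), whereas you derive it directly via the chain rule and the fact that $D_x$ intertwines with the substitution $v_i\mapsto u_{i+1}$.
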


There is an abuse of notation in this lemma where $D_x$ is used as the total $x$ derivative operator in either variable $u$ or $v$ depending on context.

\begin{proof}   Starting with equation  \ref{CFOH},   let $v=u_x$ so that \ref{CFOH} becomes
\begin{equation}
u_{tx} = \left(D_x \Eop(H_1) \right) \vert_{v=u_x}=  D_x \left( \Eop(H_1)\vert_{v=u_x}\right).
\label{ppf}
\end{equation}
Integrating equation \ref{ppf} with respect to $x$ gives the potential form \ref{pf2}.

To prove equation \ref{vop1} holds we simply need the change of variables formula, see exercise 5.49 in \cite{Olver:1993a},
\begin{equation}
\Eop\left(  H_1|_{v=u_x}\right) = (D_x)^*\left( \Eop(H_1|_{v=u_x}) \right)= -D_x \left(\Eop(H_1|_{v=u_x})\right).
\label{CVCV}
\end{equation}
Equation \ref{CVCV} together with the simple fact $-2 \Eop(u_tu_x) = u_{tx}$ proves equation \ref{vop1}.
\end{proof}

The second term in the right hand side of equation \ref{vop1}  is just the pullback of the Hamiltonian function in \ref{CFOH}. We also note the following simple corollary.

\begin{Corollary} Every  Hamiltonian evolution equation $v_t=\CD (\Eop (H_1(x,v,v_x,\ldots)))$ with first order Hamiltonian operator $\CD$ is the symmetry reduction of an equation $u_t= K(x,u, u_x,\ldots)$, of the same order, which admits an invariant first order variational operator. 
\end{Corollary}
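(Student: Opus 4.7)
The plan is to unwind the corollary as a direct packaging of Lemma \ref{PFT} together with the translation symmetry $\partial_u$ inherent in the potential form, plus the normalization reducing a first order Hamiltonian operator to $D_x$. I expect essentially no hard obstacle; the only subtle point is keeping track of orders and verifying that the $\partial_u$-invariance of the variational operator is automatic.

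First I would invoke the result cited at the start of Section \ref{FirstOrder} (from \cite{Olver:1988a}, \cite{vino:1986a}) which says a first order Hamiltonian operator $\CD$ can be put into the canonical form $\CD = D_x$ by a contact transformation. Since contact transformations preserve both the order of an evolution equation and the variational bicomplex structure (in particular, push $\partial_u$-invariant variational operators to $\partial_u$-invariant variational operators after the corresponding change of dependent variable), it suffices to treat the normalized case $v_t = D_x\,\Eop(H_1(x,v,v_x,\ldots))$.

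Second, by Lemma \ref{PFT} the potential form of this equation is
\begin{equation*}
u_t = \Eop(H_1)\big|_{v=u_x},
\end{equation*}
and it admits $\CE = D_x$ as a first order variational operator via equation \ref{vop1}. I would then note that if $H_1$ has order $m$ in its jet argument, then $\Eop(H_1)$ has order $2m$ in $v$, so $v_t = D_x\,\Eop(H_1)$ has order $2m+1$; substituting $v = u_x$ raises every $v$-derivative index by one, so the potential equation is likewise of order $2m+1$. Thus the potential equation has the same order as the original Hamiltonian equation, as the statement requires.

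Third, I would verify the symmetry-reduction assertion. The function $\Eop(H_1)\big|_{v = u_x}$ depends on $u$ only through $u_x, u_{xx},\ldots$, so $\partial_u$ is a point symmetry of the potential equation $u_t = \Eop(H_1)|_{v=u_x}$. The invariants of $\partial_u$ together with its prolongation are $(t,x,u_x,u_{xx},\ldots) = (t,x,v,v_x,\ldots)$, and differentiating the potential equation with respect to $x$ gives $u_{tx} = D_x(\Eop(H_1)|_{v=u_x})$, which is exactly $v_t = D_x\,\Eop(H_1)$ after the identification $v=u_x$. Hence the original equation is the $\partial_u$-symmetry reduction of its potential form. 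Finally, $\CE = D_x$ is manifestly $\partial_u$-invariant (its single coefficient is constant, and $[D_x,\partial_u]=0$), so the variational operator on the potential form is itself invariant under the reducing symmetry. This completes the plan.
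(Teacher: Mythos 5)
Your proposal is correct and takes essentially the same route the paper intends: the paper states this as an immediate consequence of Lemma \ref{PFT} together with the normalization $\CD = D_x$ via a contact transformation quoted from \cite{Olver:1988a} and \cite{vino:1986a}, offering no further proof. Your order count and the verification that $\partial_u$ is the reducing symmetry with $[D_x,\partial_u]=0$ simply make explicit the details the paper leaves unstated, and they are accurate.
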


\subsection{Bi-Hamiltonian Evolution Equations with a First Order Hamiltonian Operator}

We now present sufficient conditions when the potential form of a compatible bi-Hamiltonian system admits a second variational operator. 

\begin{Theorem} \label{biht}  Let $v_{t}=K(x,v,v_x,\ldots)=D_x\left( \Eop(H_1(x,v,v_x,\ldots)) \right) $ be a Hamiltonian evolution equation
with potential form
\begin{equation}
u_t = \Eop(H_1) \vert_{v=u_x}.
\label{pe2}
\end{equation}
Let $\CD_0$ be  second time independent Hamiltonian operator with Hamiltonian $H_0(x,v,v_x,\ldots)$ satisfying,
$$
v_t =D_x\left( \Eop(H_1) \right)= \CD_0  (\Eop(H_0)).
$$
Assume $\CD_0$ also satisfies the compatibility condition (equation 7.29 in \cite{Olver:1993a})
\begin{equation}
\CD_0 \left(\Eop( H_1 )\right) = D_x \Eop(H_2).
\label{cc1}
\end{equation}
Then the right hand side of the potential form satisfies 
\begin{equation}
\CE (\Eop(H_1) \vert_{v=u_x} )=- \Eop ( H_2\vert_{v=u_x})
\label{EKT}
\end{equation} 
where $\CE = \CD_0|_{v=u_x}$\footnote{ $\CD_0$ is the push-forward of $\CE$ by the quotient map ${\bf q}:(t,x,u,u_x,\ldots)\to (t, x, v, v_x,\ldots)$.}.  
Furthermore if $\CE=\CD_0|_{v=u_x}$ is symplectic,  then $\CE$ is a variational operator for the evolution equation \ref{pe2} and
\begin{equation}
\CE( u_t -K) = \Eop( Q u_t + H_2\vert_{v=u_x})
\label{CEH2}
\end{equation}
where $Q$ is defined in equation  \ref{CEEQS} where $\CE= F_Q^* - F_Q$.
\end{Theorem}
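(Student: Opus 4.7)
The plan is to establish \ref{EKT} first by pushing the compatibility condition \ref{cc1} through the substitution $v = u_x$, and then derive \ref{CEH2} by combining \ref{EKT} with the formula $\CE(u_t) = \Eop(Q u_t)$ supplied by Lemma \ref{CSDL}. The argument breaks cleanly into two independent steps, each of which reduces to a short calculation using results already in hand.

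For the first step, the key observation is that the substitution $v = u_x$ lifts to the jets by $v_k = u_{x,k}$, so the chain rule gives the intertwining $D_x(F|_{v=u_x}) = (D_x F)|_{v=u_x}$ for any $F(x,v,v_x,\ldots)$, where on the left $D_x$ is the total derivative in the $u$-jets and on the right in the $v$-jets. Consequently, for any total differential operator $\CD_0 = \sum d_i(x,v,v_x,\ldots) D_x^i$, the restricted operator $\CE = \CD_0|_{v=u_x}$ satisfies $\CE(F|_{v=u_x}) = (\CD_0 F)|_{v=u_x}$, which is just the infinitesimal content of the footnote's identification of $\CD_0$ as the push-forward of $\CE$ by ${\bf q}$. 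Applying this with $F = \Eop(H_1)$ and invoking \ref{cc1},
\begin{equation*}
\CE(\Eop(H_1)|_{v=u_x}) = (\CD_0 \Eop(H_1))|_{v=u_x} = (D_x \Eop(H_2))|_{v=u_x} = D_x(\Eop(H_2)|_{v=u_x}).
\end{equation*}
The change-of-variables identity \ref{CVCV} used in Lemma \ref{PFT}, applied with $H_2$ in place of $H_1$, yields $\Eop(H_2|_{v=u_x}) = -D_x(\Eop(H_2)|_{v=u_x})$; combining gives \ref{EKT}.

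For the second identity, the hypothesis that $\CE$ is symplectic together with Lemma \ref{SymPot} produces a symplectic potential $P$ with $\CE = \frac{1}{2}(\bFP - \bFP^*)$. By Corollary \ref{CFQ} every variational operator has the form $\bFQ^* - \bFQ$, and matching against $\frac{1}{2}(\bFP - \bFP^*)$ one sees that $Q = -\frac{1}{2}P$ works. Lemma \ref{CSDL} then gives $\CE(u_t) = \Eop(-\frac{1}{2} P u_t) = \Eop(Q u_t)$. Combining with \ref{EKT} produces
\begin{equation*}
\CE(u_t - K) = \CE(u_t) - \CE(\Eop(H_1)|_{v=u_x}) = \Eop(Q u_t) + \Eop(H_2|_{v=u_x}) = \Eop(Q u_t + H_2|_{v=u_x}),
\end{equation*}
which is \ref{CEH2}. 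Because the right-hand side is the Euler-Lagrange expression of a Lagrangian, $\CE$ is by definition a variational operator for the potential equation.

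The only delicate point is the intertwining in the first step: one must verify that not only $D_x^i$ but also the variable coefficients $d_i(x,v,v_x,\ldots)$ of $\CD_0$ transform correctly under $v = u_x$. Since $\CD_0$ is time-independent and the substitution is simply a change of independent variables on a fixed jet space, this reduces to routine chain-rule bookkeeping. Once it is established, the remaining ingredients of the theorem are just the compatibility condition \ref{cc1}, the change-of-variables formula \ref{CVCV}, and Lemma \ref{CSDL}, all of which are proved in the paper.
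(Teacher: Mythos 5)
Your proposal is correct and takes essentially the paper's route: identity \ref{EKT} is derived exactly as in the text, by pushing the compatibility condition \ref{cc1} through the substitution $v=u_x$ and then applying the change-of-variables formula \ref{CVCV}. Your second step simply inlines what the paper accomplishes by citing Theorem \ref{TFAE}\,(1)$\Rightarrow$(3) --- namely the combination of Lemma \ref{SymPot} and Lemma \ref{CSDL} with the explicit choice $Q=-\tfrac{1}{2}P$, which is legitimate since any $Q$ with $\CE=\bFQ^*-\bFQ$ satisfies the defining property in \ref{CEEQS} and yields the same right-hand side in \ref{CEH2}.
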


\begin{proof}
First we apply $\CE=\CD_0|_{v=u_x}$ to the right hand side of equation \ref{pe2}, and use condition \ref{cc1} to get
\begin{equation}
\begin{aligned}
\CE( \Eop(H_1) \vert_{v=u_x} ) & =  \left(\CD( \Eop(H_1) \right)\vert_{v=u_x} \\
& = \left( D_x (\Eop( H_2) \right)\vert_{v=u_x}\\
& = - \Eop( H_2\vert_{v=u_x}).
\end{aligned}
\label{phv1}
\end{equation}
Again the last line follows from the change of variables formula in the calculus variations (exercise 5.49 in \cite{Olver:1993a}). This verifies equation \ref{EKT}.  Then by part (1) of Theorem \ref{TFAE} equation \ref{EKT} shows that $\CE$ is a variational operator for equation \ref{pe2}. If $Q$ is the function from equation \ref{CEEQS} we then have $\CE(u_t)=(F_Q^* - F_Q)(u_t)=E(Qu_t)$ and equation \ref{phv1} that
\begin{equation}
\CE(u_t  -  \Eop(H_1) \vert_{v=u_x}) =\Eop\left( Q u_t +H_2\vert_{v=u_x}\right).
\label{QH}
\end{equation}
\end{proof}

Theorem \ref{biht} makes the hypothesis that $\CE=\CD|_{v=u_x}$ is a symplectic operator. This holds  in the case of the Hamiltonian operators given by Theorem 5.3 in \cite{dorfman:1993a},
\begin{equation}
\CD = h(v)\left( \sqrt{ c_1 + c_2 \int_0^v \frac{1}{h(y)}dy} D_x \circ  \sqrt{ c_1 + c_2 \int_0^v \frac{1}{h(y)}dy} + D_x^3\right) \circ h(v)
\label{CDdorf}
\end{equation}
satisfy the compatibility conditions with $D_x$ in Corollary 3.2 of \cite{cooke:1991a} when $h(v) = (k_1 v+k_2)^{-1}$. This gives
\begin{equation}
\CE=\CD|_{v=u_x} =  \frac{1}{k_1u_x+k_2}\left(\sqrt{c_1+\frac{1}{2}k_1u_x^2+k_2 u_x} D_x \circ \sqrt{c_1+\frac{1}{2}k_1u_x^2+k_2 u_x}  + D_x^3\right) \circ \frac{1}{k_1u_x+k_2}
\label{comsym}
\end{equation}
which are symplectic \cite{dorfman:1993a}.

\section{Examples}

\begin{Example} \label{HD1} The Harry-Dym equation $ z_t = z^3 z_{xxx} $ \cite{dorfman:1993a, wang:2002a} is a compatible bi-Hamiltonian system,
\begin{equation}
z_t = \widehat \CD_1 \Eop( \widehat H_1)= \widehat \CD_0 \Eop( \widehat H_0)
\label{HDZ}
\end{equation}
where
\begin{equation}
\widehat \CD_1=  {z^2}\circ D_x \circ {z^2} \ , \  \widehat H_1  = -\frac{1}{2}\frac{z_x^2}{z} \ , \ {\rm and} \quad  \widehat \CD_0  = z^3\circ  D_x^3 \circ  z^3 \ , \ \widehat H_0 = -\frac{1}{z}.
\label{HDOPSZ}
\end{equation}

The change of variable $z=v^{-1}$ maps the Hamiltonian operator $\widehat \CD_1$ to canonical form  \cite{vino:1986a,Olver:1988a},  and the Hamilonian operators and the associated Hamiltonians in equation \ref{HDOPSZ} become
\begin{equation}
\CD_1=  D_x \ , \  H_1  = -\frac{1}{2}\frac{v_x^2}{v^3} , \quad {\rm and} \quad
\CD_0=  v^{-1}D_x^3\circ v^{-1} \ , \  H_0  = - v.
\label{HDv}
\end{equation}
The Harry-Dym equation \ref{HDZ} in these coordinates is then,
\begin{equation}
v_t= D_x \Eop( H_1)=   \CD_0 (\Eop(H_0))=\frac{v_{xxx}}{v^3}-6\frac{v_x v_{xx}}{v^4}+\frac{6v_x^3}{v^5}.
\label{HDw}
\end{equation}

The potential form of equation \ref{HDw} is found by letting $v=w_x$  and integrating to get (see also \ref{pf2}) 
\begin{equation}
w_t= E(H_1)_{v=w_x}=\frac{w_{xxx}}{w_x^3}-\frac{3w_{xx}^2}{2 w_x^4}.
\label{PHD}
\end{equation}
Equation \ref{vop1} of Lemma \ref{PFT}  as it applies to the potential Harry-Dym equation \ref{PHD}  produces the following variational operator equation for $D_x$,
$$
D_x\left( w_t - \frac{w_{xxx}}{w_x^3}+\frac{3w_{xx}^2}{2 w_x^4}\right)=
\Eop\left(  -\frac{1}{2} w_x w_t -\frac{1}{2}\frac{w_{xx}^2}{w_x^3} \right) .
$$

We now apply Theorem  \ref{biht} to obtain a second variational operator. The compatibility condition in equation \ref{cc1} is satisfied with the operators from equation \ref{HDv} with
\begin{equation}
\CD_0( \Eop( H_1)) = D_x \Eop( H_2),\quad {\rm where} \quad H_2= \frac{1}{2}\frac{v_{xx}^2}{v^5} -\frac{15}{8} \frac{v_x^4}{z^7}.
\label{CCHD1}
\end{equation}
The operator $\CD_0$ in equation \ref{HDv} is of the form \ref{CDdorf} so that by equation \ref{comsym}
\begin{equation}
\CE=\CD_0|_{v=w_x}= w_x^{-1}D_x^3\circ w_x^{-1} 
\label{HDCE}
\end{equation}
is a symplectic or variational operator. Since the compatibility  condition in equation \ref{cc1} is satisfied and $\CE$ is a symplectic operator Theorem \ref{biht} applies.  The operator $\CE$ in equation \ref{HDCE} is a variational operator  for the potential Harry-Dym equation in \ref{PHD}. The function $Q$ in equation \ref{CEEQS} is easily determined for $\CE$ (using the fact that $-2\CE$ is a symplectic operator) to be 
\begin{equation}
Q=\frac{w_{xx}^2-w_x w_{xxx}}{2w_x^{3}}.
\label{HDQ}
\end{equation}
Equation \ref{CEH2} with $Q$ in equation \ref{HDQ} and
$H_2$ in equation \ref{CCHD1} (with $v=w_x$) gives the
variational operator equation for the potential Harry-Dym equation \ref{PHD},
$$
\CE \left( w_t -\frac{w_{xxx}}{w_x^3}+\frac{3w_{xx}^2}{2 w_x^4} \right) = 
\Eop\left(  \frac{w_{xx}^2-w_x w_{xxx}}{2w_x^{3}}w_t+\frac{1}{2}\frac{w_{xxx}^2}{w_x^5} -\frac{15}{8} \frac{w_{xx}^4}{w_x^7}\right).
$$
\smallskip

If we return to the original coordinates for the Harry-Dym equation and make the change of variable given by $ x=u, w=u_x, w_x=u_{xx}u_x^{-1}, \ldots $ to the potential form in equation \ref{PHD} we get the Schwarzian KdV (or Krichever-Novikov) equation (pg. 120 in \cite{dorfman:1993a}),
\begin{equation}
u_t = u_{xxx} -\frac{3}{2} \frac{u_{xx}^2}{u_x}.
\label{SKDV}
\end{equation}
In particular the Schwarzian KdV in equation \ref{SKDV} is the potential form of the Harry-Dym equation \ref{HDZ}. These different coordinate representations of the Harry-Dym and the Schwarzian KdV is summarized by the diagram,
\begin{equation}
\begin{gathered}
\begindc{\commdiag}[30]
\obj(0, 30)[KN]{$u_t = u_{xxx}  -\frac{3}{2} \frac{u_{xx}^2}{u_x}$}
\obj(80, 30)[PKN]{$w_t=\frac{w_{xxx}}{w_x^3}-\frac{3w_{xx}^2}{2 w_x^4} 
 $}
\obj(0, 0)[HD]{$ z_t = z^3 z_{xxx}$\,}
\obj(80, 0)[PHD]{$v_t=- v^{-1} D_x^3(v^{-1}) $}
\mor{KN}{HD}{$(x=u,z=u_x,z_x=u_{xx}u_x^{-1})$}[\atleft, \solidarrow]
\mor{KN}{PKN}{$(x=w,v=x, w_x=u_x^{-1})$}[\atleft, \solidline]
\mor{HD}{PHD}{$(x=x,v=z^{-1})$}[\atright, \solidline]
\mor{PKN}{PHD}{$(x=x,v=w_x)$}[\atleft, \solidarrow]
\enddc
\end{gathered}
\end{equation}

The variational or symplectic operators for the Schwarzian KdV are obtained by applying the change of variables $ x=u, w=u_x, w_x=u_{xx}u_x^{-1},\ldots$ to $D_x$ and equation \ref{HDCE} giving the well known symplectic or variational operators for the Schwarzian KdV \cite{dorfman:1993a},
\begin{equation}
\CE_1  = u_x ^{-1} D_x \circ u_x^{-1}=\frac{1}{u_x^{2}} D_x -\frac{u_{xx}}{u_x^3}, \quad \CE_0= \frac{1}{u_x^2} D_x^3- 3\frac{u_{xx}}{u_x^3} D_x^2 +\left(3\frac{u_{xx}^2}{u_x^4} -\frac{u_{xxx}}{u_x^3} \right) D_x.
\label{CHSKdV}
\end{equation}
With quotient map ${\bf q}(t, x, u, u_x,u_{xx}, \ldots) =(t=t,x=u,z=u_x,z_x=u_{xx}u_x^{-1},\ldots)$, 
the operators from \ref{CHSKdV} project ${\bf q}_* \CE_i = \tilde \CD_i$ to the Hamiltonian operators in equation \ref{HDOPSZ}.


We now compute the explicit unique representative for the $H^{1,2}(\CR)$ cohomology class 
 for the Schwarzian-KdV \ref{SKDV} corresponding to the 
first operator in \ref{CHSKdV} (Theorem \ref{Urep}). This is computed using formula \ref{omegad} in Theorem \ref{THA} to be,
\begin{equation}
\omega_1 = -\frac{1}{2u_x^2} dx \wedge \theta^0 \wedge \theta^1
+dt \wedge\left[ \theta^0 \wedge \left(
\frac{4u_{xxx}u_x-3u_{xx}^2}{4u_x^4} \theta^1 +
\frac{u_{xx}}{2u_x^3}  \theta^2- \frac{1}{2u_x^2}\theta^3\right) 
+ \frac{1}{u_x^2}  \theta^1 \wedge \theta^2 \right].
\end{equation}
We have $d_\sV \omega_1=0$ and for the forms $\eta$ and $\lambda$ in Theorem \ref{conf_2}  we may choose
\begin{equation}
\begin{aligned}
\eta_1&=\frac{1}{2u_x} dx \wedge \theta^0 + dt \wedge \left(\frac{u_{xx}^2-2u_{xxx}u_x}{4u_x^3}\theta^0+ \frac{u_{xx}}{2 u_x^2} \theta^1+\frac{1}{2u_x}\theta^2  \right)\\
\lambda_1&=-\frac{3u_{xx}^2}{4u_x^2} dt \wedge dx.
\end{aligned}
\label{lam1}
\end{equation}
Likewise formula \ref{omegad} for the second operator in \ref{CHSKdV} gives the unique cohomology representative ((Theorem \ref{Urep}),
\begin{equation}
\begin{aligned}
\hat \omega_0  &= dx \wedge \theta^0 \wedge\left( \frac{u_x u_{xxx} -3 u_{xx}^2}{2u_x^4} \theta^1 +\frac{3u_{xx}}{2u_x^3} \theta^2 - \frac{1}{2u_x^2}\theta^3\right) 
-\frac{1}{2 u_x^2} dt \wedge \theta^2 \wedge \theta^3  \\
& 
\quad 
+dt \wedge\theta^1 \wedge \left(\frac{1}{2u_x^2} \theta^4 -\frac{u_{xx}}{u_x^3}\theta^3  -\frac{5u_x u_{xxx}-6u_{xx}^2}{2u_x^4}\theta^2 \right)- \frac{1}{2u_x^2} dt \wedge \theta^0\wedge \theta^5 \\
&\quad
+\frac{2u_x^3u_{xxxxx}-18u_x^2u_{xx}u_{xxxx}-12u_x^2u_{xxx}^2+69u_x u_{xx}^2u_{xxx}-39u_{xx}^4}{4 u_x^6} dt \wedge \theta^0 \wedge \theta^1\\
&
\quad + dt \wedge \theta^0 \left( 
\frac{10u_x^2u_{xxxx}-48u_x u_{xx}u_{xxx}+39u_{xx}^3}{4u_x^5}\theta^2+
\frac{3(4u_x u_{xxx}-7u_{xx}^2)}{4u_x^4}\theta^3 +\frac{2u_{xx}}{ u_x^3}\theta^4  \right).
\end{aligned}
\end{equation}
In this case $d_\sV \omega_0\neq 0$, but $[\hat \omega_0]=[\omega_0]$ where
\begin{equation}
\omega_0 =\hat \omega^0 + d_\sH \left( \frac{u_{xx}}{2u_x^3} \theta^0  \wedge \theta^1\right)
\label{clom3}
\end{equation}
and $d_\sV \omega_0= 0$.  Futhermore with $\omega_0$ in equation \ref{clom3} the forms $\eta$ and $\lambda$ 
in Theorem \ref{conf_2} can be chosen to be
\begin{equation}
\begin{aligned}
\eta_0 & =\frac{2u_{xx}^2-u_x u_{xxx}}{2u_x^3} dx \wedge \theta^0 
+\frac{2u_{xx}^2-u_x u_{xxx}}{2u_x^3} dt \wedge \theta^2 +\frac{u_{xxxx} u_x^2-3 u_x u_{xx} u_{xxx}+u_{xx}^3}{2 u_x^4}  dt \wedge \theta^1\\
&\quad -\frac{2u_x^3u_{xxxxx}-10u_x^2u_{xx}u_{xxxx}-6u_x^2u_{xxx}^2+27u_x u_{xx}^2u_{xxx}-12u_{xx}^4}{4u_x^5} dt \wedge \theta^0\\
\lambda_0&=-\frac{u_{xx}^4}{8u_x^4} dt \wedge dx.
\end{aligned}
\label{L3SKdV}
\end{equation}
For $\lambda_i$ in equations \ref{lam1} and \ref{L3SKdV}, it is difficult to determine whether $[\lambda_i]\in H^{2,0}(\CR)$ is trivial or not (see Theorem \ref{kerPi}).  However, it is possible but not easy to show $\lambda_i \neq d \kappa_i$ where $\kappa_i$ is $t$-invariant by using the infinite sequence of conservation laws \cite{dorfman:1993a} for the Krichever-Novikov (Schwarzian KdV) equation \ref{SKDV}. The forms $\lambda_i$ define a non-trivial cohomology class in the $t$-invariant variational bi-complex for \ref{SKDV}.

\end{Example}

\begin{Example}  \label{HD2}  The Harry Dym equation can be written in the form
\begin{equation}
v_t = D_x^3\left( \frac{1}{\sqrt{v}}\right)= \CD_i \left(\Eop( H_i ) \right), \qquad i=0,1
\label{HDD3}
\end{equation}
where the Hamilonian operators and their Hamiltonians are
$$
\CD_0=  D_x^3 \ , \  H_0  = 2 \sqrt{v}\quad {\rm and} \quad
\CD_1= 2v D_x +v_x\ , \  H_1  = \frac{1}{8}v^{-\frac{5}{2} }v_x^2.
$$
Equation \ref{HDD3} is obtained from equation \ref{HDw} by substituting $v= -2^{\frac{1}{3}} \sqrt{\hat v}$.

Another {\it potential form} (or integrable extension) for the Harry-Dym equation \ref{HDD3} can be obtained  by letting $v=u_{xxx}$ in equation \ref{HDD3} so that
$$
u_{txxx} =\left( D_x^3  \Eop( H_0) \right) \biggr\rvert_{v=u_{xxx}},
$$
which after integrating three times gives,
\begin{equation}
u_t =  \Eop( H_0) \rvert_{v=u_{xxx}}= \sqrt{ \frac{1}{u_{xxx}} }.
\label{HD3PF}
\end{equation}
We show that $D_x^3$ is a variational operator. First using the change of variables formula in the calculus of variation for $v=u_{xxx}$ (exercise 5.49 \cite{Olver:1993a}) we have
\begin{equation}
\left(-D_x^3 \Eop( H_0)\right) \biggr\rvert_{v=u_{xxx}} = \Eop\left( H_0\vert_{v=u_{xxx}} \right).
\label{vop3hd3}
\end{equation}
The operator $D_x^3$ is symplectic which together with equation \ref{vop3hd3} shows that $D_x^3$ is a variational operator for equation \ref{HD3PF} and giving,
$$
D_x^3\left( u_t -  \Eop( H_0)\vert_{v=u_{xxx}}   \right) = 
u_{txxx} + \Eop\left(H_0\vert_{v=u_{xxx}}\right)= \Eop\left( -\frac{1}{2} u_t u_{xxx}+ 2\sqrt{u_{xxx}}\right).
$$

In equation \ref{cc1} compatibility was used to show the second Hamiltonian operator for a bi-Hamiltonian equation became a variational operator for the potential form. In order to use a similar argument in this case we need to show $\CD_1 \Eop(H_0) = \CD_0 \Eop( H_{-1})$. We find
\begin{equation}
\CD_1\left(\Eop (H_0)\right)
=( 2 vD_x +v_x)\left(\frac{1}{\sqrt{v}}\right) = 0 = \CD_0( 0 ).
\label{CCCH3}
\end{equation}
In analogy to equation \ref{cc1}, this gives rise with $H_{-1}=0$ to the variational operator
\begin{equation}
\CE =\CD_1\vert_{v=u_{xxx}} = 2u_{xxx} D_x + u_{xxxx}.
\label{TwoHD}
\end{equation}
Using the fact that operator $\CE$ in equation \ref{TwoHD} is a symplectic operator, the compatibility condition \ref{CCCH3} gives
\begin{equation}
\begin{aligned}
\CE \left( u_t -  \Eop( H_0) \vert_{v=u_{xxx}}\right) &= 2u_{xxx}u_{tx} + u_{xxxx}u_t - \CE\left(\Eop(H_0) \right)\vert_{v=u_{xxx}} \\
&=\Eop\left( \frac{1}{2} u_{xx}^2 u_t \right).
\end{aligned}
\label{mapleHDold5}
\end{equation}
Equation \ref{mapleHDold5} shows directly that $\CE$ in \ref{TwoHD} is a variational operator for equation \ref{HD3PF}.

It is worth noting that $\CE(K) = 0$ in this example and that 
$[\omega]  = d_\sV [\eta]$ where $[\eta ] \in H^{1,1}(\CR)$. The representative    
$$
\omega = dx \wedge \theta^0 \wedge\epsilon - dt \wedge \bbeta(\epsilon) + d_\sH ( \theta^0 \wedge \theta^1 \cdot u_{xx} ) 
+\frac{1}{3} d_\sH \circ d_\sV(  uu_{xx}\theta^1-(u_x u_{xx}+u u_{xxx})\theta^0 )
$$
with  $ \epsilon =-\frac{1}{2}\CE(\theta^0)= - u_{xxx}\theta^1 -\frac{1}{2}u_{xxxx} \theta^0$ satisfies $\omega = d_\sV \eta$ where (see equation \ref{CF2})
$$
\eta = dx \wedge \theta^0 \cdot\left(-\frac{2}{3}u_x u_{xxx}-\frac{1}{3}uu_{xxxx}\right)- dt \wedge \bbeta(-\frac{2}{3}u_x u_{xxx}-\frac{1}{3}uu_{xxxx}).
$$
Since $d_\sH \eta = 0$, $[\eta] \in H^{1,1}(\CR)$.  This also produces an example where 
$$
Q= -\frac{2}{3}u_x u_{xxx}-\frac{1}{3}uu_{xxxx}
$$
satisfies ${\bf L}_\Delta^*(Q)  = 0$, as well as equation \ref{FQH11}. By Theorem \ref{CLisdV}, Corollary \ref{cordvcl} or Corollary \ref{A5}, $Q$ is not the characteristic of a classical conservation law 
\end{Example}

\begin{Example}\label{ECKDV} The cylindrical KdV equation is (see \cite{wang:2002a}) 
\begin{equation}
v_t = v_{xxx} + v v_x -\frac{v}{2t}
\label{CKDV}
\end{equation}
while it's potential form is
\begin{equation}
u_t = u_{xxx}+\frac{1}{2}u_x^2-\frac{u}{2t}.
\label{PCKdV}
\end{equation}
The form $\kappa$ in equation  \ref{kappadef} in Theorem \ref{FOT} is $\kappa= t^{-1} dt=d_\sH (\log t) $ and so equation \ref{PCKdV} admits $ \CE_1= t D_x$  as a variational operator.   In equation \ref{thm_m} we have $Q_1 = -\frac{1}{2} t u_x$ leading to
$$
\CE_1\left( u_t - u_{xxx}-\frac{1}{2}u_x^2+\frac{u}{2t}\right)=
\Eop\left( -\frac{1}{2}t u_x \left( u_t - u_{xxx}-\frac{1}{2}u_x^2+\frac{u}{2t}\right)-\frac{1}{12}t u_x^3 \right).
$$
By solving the equation $\theta^0 \wedge \CL_\Delta^*(\epsilon)=0$ from  \ref{Urep}
for third order forms $\epsilon$ we find that equation \ref{PCKdV} admits a third order symplectic or variational operator,
$$
\CE_0= t^2D_x^3+\frac{1}{3}(2t^2 u_x+t x)D_x+\frac{1}{6}(2t^2u_{xx}+t).
$$
For $ \CE_0$  we have 
$$
Q_0=-\frac{1}{6}\left(t^2 u_x^2+txu_x+3u_{xxx}t^2\right)
$$
in equation \ref{thm_m} leading to
$$
\CE_0\left( u_t - u_{xxx}-\frac{1}{2}u_x^2+\frac{u}{2t}\right)=\Eop\left( Q_0 \left( u_t - u_{xxx}-\frac{1}{2}u_x^2+\frac{u}{2t}\right)-\frac{1}{72}\left(t^2u_x^4+2txu_x^3\right)\right)
$$

If we now compute the reduction of the potential cylindrical KdV by substituting $w = \sqrt{t}\, u_x$ into
the $x$-derivative of  equation  \ref{PCKdV} we get
\begin{equation}
w_t = w_{xxx} +\frac{1}{\sqrt{t}} w w_x = \CD_1( \Eop(H_1))= \CD_0( \Eop(H_0))
\label{CVCKDV}
\end{equation}
where
\begin{equation}
\CD_1= D_x, \ H_1= \frac{1}{2} w_{x}^2 +\frac{1}{6\sqrt{t}} w^3, \quad  
\CD_0= D_x^3 + \frac{2 w}{3\sqrt{t}}D_x + \frac{w_{x}}{3\sqrt{t}}, \quad H_0 = \frac{1}{2} w^2.
\label{HSCKDV}
\end{equation}
Equation \ref{CVCKDV} can of course be obtained from the cylindrical KdV equation \ref{CKDV} by the change of  variables $w=\sqrt{t}\, v$. It is unclear (to the authors) if the  cylindrical KdV in equation \ref{PCKdV} is  a bi-Hamiltonian evolution equation for which $\CD_1$ and $\CD_0$ in equation \ref{HSCKDV} are Hamiltonian operators. Reference  \cite{wang:2002a} states there are no Hamiltonians for the cylindrical KdV.   It is straightforward to work out the symplectic or variational operators for the potential cylindrical KdV  from equation \ref{CVCKDV} following Theorem \ref{biht}.

More generally for any evolution equation of the form
\begin{equation}
u_t= u_{xxx} +a(t) u_x^2.
\label{genpckdv}
\end{equation}
admits $D_x$ as a first variational operator. We find after a long computation that equation \ref{genpckdv} 
admits a third order variational in the case where $a(t) \dot a(t)\neq 0$ only when
\begin{equation}
a(t) = \pm \frac{1}{\sqrt{ c_1 t +c_2} }.
\label{At}
\end{equation}
For the  $+$ sign in equation \ref{At}, the change of variables $t=c_1^{-1}(\hat t -c_2), \ x=c_1^{-\frac{1}{3}}\hat x $,  $u=\frac{1}{2}c_1^{\frac{1}{3}} \hat u $, takes equation \ref{genpckdv} with $a(t)$ in \ref{At} to the potential form of the cylindrical KdV obtained from equation \ref{CVCKDV}.  The same result holds in the other cases with slightly different changes of variable.

\end{Example}

\section{Conclusions}  The determination of a symplectic operator for a scalar evolution equation has been shown to be equivalent to the existence of a variational operator which is determined by a non-vanishing cohomology class in $H^{1,2}(\CR)$.  The arguments used to prove this clearly extend to other types of differential equations including systems.   In particular Theorem \ref{e_o} holds independently of $\Delta$ being a evolution equation and so the variational operators for $\Delta$ always determine an element of the cohomology $H^{n-1,2}(\Delta)$ as in Theorem \ref{e_o}.

There remain many open theoretical questions such as  whether the compatibility condition for symplectic operators  appears in the cohomology.  Another interesting problem is  to determine under what conditions the
symmetry reduction of a variational operator equation a Hamiltonian system (the converse of Lemma \ref{PFT}).  

Many difficult computational questions have also not been resolved. We were unable to compute the dimension of $H^{1,2}(\CR)$ in our examples. Preliminary computations using equation $\theta^0\wedge {\bf L}_\Delta^*(\rho)=0$ from Theorem \ref{Thmcf1} suggests $\dim H^{1,2}(\CR)= 2$ for the Krichever-Novikov equation in Example 1 and others. However we were not able to give a full proof of this fact.   We have also not explored in any detail the obvious generalization of Noether's Theorem which arises from the existence of a variational operator or equivalently by utilizing a non-trivial element of $H^{1,2}(\CR)$. This would provide an alternate derivation for identifying symmetries and conservation laws for symplectic Hamiltonian systems (see Theorem 7.15 in \cite{Olver:1993a}).

\begin{appendices}
\section{The Vertical Differential}\label{SdV}

The vertical differential induces a mapping $d_\sV: H^{r,s} (\CR) \to H^{r,s+1}(\CR)$ defined
by $d_\sV [\omega] = [ d_\sV \omega]$.  Let $u_t=K$ be a scalar evolution equation with equation manifold. We now examine when $[\omega ] \in {\rm Image} \, d_\sV$.

\begin{Theorem} \label{CLisdV}  Let $[\zeta] \in H^{1,1}(\CR)$.  There exists $[\kappa] \in H^{1,0}(\CR)$ such that 
$[\zeta ] = d_\sV[\kappa]$ if and only if $\delta_\sV \circ \Pi(\zeta)=0$ where $\Pi:\Omega^{1,1}(\CR) \to \Omega^{1,1}_{\rm t_{sb}}(E)$ is defined as in equation \ref{PSE} and $\zeta$ is any representative of $[\zeta]$. 
\end{Theorem}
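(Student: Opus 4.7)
The plan is to prove each direction separately, using the canonical form of Theorem \ref{Thmcf2} to reduce the converse to a Helmholtz-style calculation. First, $\delta_\sV\circ\Pi$ descends to $H^{1,1}(\CR)$: if $\zeta'=\zeta+d_\sH\mu$ then $\Pi(\zeta')-\Pi(\zeta)=d_\sH^\sE\Pi(\mu)$ and $\delta_\sV\circ d_\sH^\sE=I_\sEL\circ d_\sV^\sE\circ d_\sH^\sE=-I_\sEL\circ d_\sH^\sE\circ d_\sV^\sE=0$ by $\ker I_\sEL=\mathrm{Image}\,d_\sH^\sE$. For the ``only if'' direction, if $[\zeta]=d_\sV[\kappa]$ one chooses $\kappa$ with $d_\sH\kappa=0$ and $\zeta=d_\sV\kappa+d_\sH\eta$; then $\Pi(\zeta)=d_\sV^\sE\Pi(\kappa)+d_\sH^\sE\Pi(\eta)$, and $\delta_\sV$ annihilates both summands, the first by $(d_\sV^\sE)^2=0$ and the second by the previous identity.

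For the converse, apply Theorem \ref{Thmcf2} with $s=1$ to pick the unique canonical representative
$$
\zeta=dx\wedge\theta^0\cdot Q-dt\wedge\bbeta(Q),\qquad \CL^*_\Delta(Q)=0.
$$
Then $\Pi(\zeta)=dx\wedge\theta^0_\sEL\cdot Q\in\CF^1_{\rm t_{sb}}(E)$, and a direct application of the integration-by-parts formula \ref{IE} for $I_\sEL$ gives
$$
\delta_\sV\Pi(\zeta)=\tfrac12\,dx\wedge\theta^0_\sEL\wedge(\bLQ-\bLQ^*)(\theta^0_\sEL),
$$
where $\bLQ=Q_iD_x^i$ is the Fr\'echet derivative of $Q$ on $E$. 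By the $t$-dependent analogue of Lemma \ref{LF2}, the hypothesis $\delta_\sV\Pi(\zeta)=0$ is equivalent to the self-adjointness $\bLQ=\bLQ^*$.

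I would then invoke the classical Helmholtz-type characterization of conservation-law characteristics for $u_t=K$: the pair of conditions $\CL^*_\Delta(Q)=0$ and $\bLQ=\bLQ^*$ is equivalent to $\Eop(Q\Delta)=0$ on $\CJ$. This is verified by expanding $\Eop(Q\Delta)=\bFQ^*(\Delta)+{\bf F}_\Delta^*(Q)$ as a polynomial in the variables $u_{t,m}$ on $\CJ$: the coefficient of each $u_{t,m}$ reproduces one component of the self-adjointness relation $\bLQ=\bLQ^*$, while the $u_{t,\cdot}$-free part, combined with self-adjointness, becomes $\CL^*_\Delta(Q)=0$. By exactness of the Euler complex on $\CJ$, $\Eop(Q\Delta)=0$ then produces $A,B\in C^\infty(\CJ)$ with $Q\Delta=D_tA+D_xB$, and pulling back gives a conservation law $\kappa:=\iota^*(A\,dx-B\,dt)\in\Omega^{1,0}(\CR)$. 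Integration by parts in $x$ shows that the canonical form of $[d_\sV\kappa]\in H^{1,1}(\CR)$ has characteristic $\tilde Q=\pm Q$; after replacing $\kappa$ by $-\kappa$ if necessary, the uniqueness clause in Theorem \ref{Thmcf2} forces $[d_\sV\kappa]=[\zeta]$. The main technical obstacles are the integration-by-parts computation that identifies $\delta_\sV\Pi(\zeta)$ in closed form, and the polynomial-in-$u_{t,m}$ expansion that translates the Helmholtz condition $\Eop(Q\Delta)=0$ on $\CJ$ into the pair of conditions on $\CR\cong E$; both are bookkeeping but require careful tracking of signs and of the distinction between the two ambient manifolds.
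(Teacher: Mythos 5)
Your proposal is correct, but it takes a genuinely different route from the paper's. The paper never passes to the canonical form: starting from an arbitrary representative $\zeta = dx\wedge(a_i\theta^i)-dt\wedge\beta$, it uses the hypothesis together with $\ker I_\sEL = \mathrm{Im}\, d_\sH^\sE$ and the exactness of the Euler complex \ref{EComp} to write $dx\wedge(a_i\theta^i_\sEL)=d_\sV^\sE(g\,dx)+d_\sH^\sE(m_i\theta^i_\sEL)$, corrects $\zeta$ by $d_\sH(m_i\theta^i)$, and then shows the corrected representative $\hat\zeta$ is $d_\sV$-closed on $\CR$ (the key observation being that $d_\sH d_\sV\hat\zeta=0$ forces $X(d_\sV\hat\beta)=0$, hence $d_\sV\hat\beta=0$). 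Vertical exactness then yields $\kappa$ with $\hat\zeta=d_\sV\kappa$, and since $d_\sH\kappa$ is $d_\sV$-closed it is a two-form pulled back from $\reals^2$, removable by the triviality of the de Rham cohomology of $\reals^2$. This argument stays entirely inside the bicomplexes on $\CR$ and $E$, needing neither $\CJ$, nor characteristics, nor the uniqueness clause of Theorem \ref{Thmcf2}. Your route instead reduces to the canonical representative, identifies $\delta_\sV\Pi(\zeta)=0$ with the Helmholtz condition ${\bf L}_Q={\bf L}_Q^*$ (your $I_\sEL$ computation and the appeal to the $t$-dependent Lemma \ref{LF2} are correct, matching the pattern of Lemma \ref{SymPot}), translates the pair $\{\CL_\Delta^*(Q)=0,\ {\bf L}_Q={\bf L}_Q^*\}$ into $\Eop(Q\Delta)=0$ on $\CJ$ via the $u_{t,m}$-expansion (also correct, including the point that self-adjointness is needed before the $u_t$-free part becomes $\CL_\Delta^*(Q)$), and then uses exactness of the Euler complex on $\CJ$. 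What this buys is an explicit divergence identity $Q\Delta=D_tA+D_xB$ and, in effect, a direct proof of what the paper records separately as Corollary \ref{cordvcl}; your well-definedness and ``only if'' arguments, which the paper leaves implicit, are also fine.

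One step is stated too casually: ``integration by parts in $x$ shows the canonical form of $[d_\sV\kappa]$ has characteristic $\tilde Q=\pm Q$.'' Integration by parts yields $\tilde Q=\pm\sum_i(-X)^i\bigl(\partial_{u_i}(\iota^*A)\bigr)$, and identifying this with $Q$ is not pure bookkeeping, because the horizontal homotopy on $\CJ$ may produce a density $A$ depending on the $u_{t,\cdot}$ coordinates. One must first normalize: writing $A=A'+N$ with $A'$ the $u_t$-free lift of $\iota^*A$ and $N$ vanishing on $\CR$, one checks $D_tA'+D_xB'={\bf F}_{A'}(\Delta)$ and integrates by parts to obtain $\bigl(\sum_i(-D_x)^i\partial_{u_i}A'\bigr)\Delta=D_tA'+D_xB''$; then uniqueness of $u_t$-free characteristics for the (totally nondegenerate) evolution equation gives $\sum_i(-D_x)^i\partial_{u_i}A'=Q$. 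Both ingredients are classical, so the gap is fixable, but it is a third technical obstacle beyond the two you list. With it supplied, your closing move --- both classes contain the canonical representative $dx\wedge\theta^0\cdot Q-dt\wedge\bbeta(Q)$, hence coincide by the uniqueness clause of Theorem \ref{Thmcf2} --- is valid.
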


This answers the question of when $[\zeta]$ is the image of a classical conservation law $[\kappa]$.  To relate Theorem \ref{CLisdV} to the theory of characteristics for a conservation law, suppose $[\zeta]\in H^{1,1}(\CR)$ with (unique) canonical representative given in Theorem \ref{Thmcf2} by
$$
\zeta=dx \wedge \theta^0 \cdot  Q- dt \wedge \bbeta(Q)
$$
where the function $Q$ satisfies $\CL_\Delta^*(Q) = 0$. Theorem \ref{CLisdV} states that the function $Q$ is the characteristic of a classical conservation law for $\Delta$ if and only if $Q=E(L)$.  The test for this condition is the Helmholtz condition $\delta_\sV^\sE ( dx \wedge \theta^0_E \cdot Q) =0$.

\begin{proof}  Suppose $[\zeta]\in  H^{1,1}(\CR) $ where $\zeta=dx \wedge(a_i \theta^i)- dt \wedge \beta$ is a representative, then
$$
I_\sEL \circ d_\sV^\sE \circ \Pi (dx \wedge (a_i\theta^i)-dt \wedge \beta ) =  I_\sEL \circ  d_\sV^\sE \left( dx\wedge (a_i \theta^i_\sE)\right) = 0.
$$
This implies by equation \ref{IEP}, 
\begin{equation}
dx \wedge (a_i \theta_\sE^i)= d_\sV^\sE( g dx) + d_\sH^\sE ( m_i \theta^i_\sE ).
\label{talpha}
\end{equation}

Let $\mu = m_i \theta^i \in \Omega^{1,0}(\CR)$ and
\begin{equation}
\hat \zeta=\zeta - d_\sH \mu.
\label{heta}
\end{equation}
so that $[\hat \zeta]=[\zeta]$.  Now by equation \ref{PXC}, the definition of $\Pi$ in equation \ref{bphi}, and equation \ref{talpha},
$$
\Pi( \hat \zeta) = \Pi(\zeta) - d_\sH^\sE \circ \Pi( \mu) = dx \wedge (a_i \theta_\sE^i)- d_\sH^\sE( m_i \theta^i_\sE)= d_\sV^\sE( g dx ) = g_i \theta^i_\sE.
$$
Therefore there exists $ \hat \beta \in \Omega^{0,1}(\CR)$ such that,
\begin{equation}
\hat \zeta  = g_i \theta^i \wedge dx - dt \wedge \hat \beta= d_\sV (g dx)- dt \wedge \hat \beta.
\label{hzeta}
\end{equation}

Now $ d_\sH d_\sV \hat \zeta = - d_\sV d_\sH \zeta = 0$ and therefore from equation \ref{hzeta},
\begin{equation}
d_\sH d_\sV \hat \zeta = dt \wedge dx \wedge X( d_\sV \beta) =0.
\label{dhdvz}
\end{equation}
However $d_\sV \beta \in \Omega^{0,2}(\CR)$ and the only way the contact two form $d_\sV \beta$ satisfies equation \ref{dhdvz} is if $d_\sV \beta=0$. This implies from equation \ref{hzeta} that $d_\sV \hat \eta= 0$. 

Using the vertical exactness of $\Omega^{1,1}(\CR)$ we conclude there exists $\kappa \in \Omega^{1,0}(\CR)$ such that $\hat \zeta = d_\sV \kappa$. Now 
$$
d_\sV d_\sH \kappa = -d_\sH d_\sV \kappa= -d_\sH \hat \zeta = 0.
$$
Again by vertical exactness of the (augmented) variational bicomplex for $d_\sV : \Omega^{2,0}(\CR) \to \Omega^{2,1}(\CR) $ applied to $d_\sH \kappa$ we have,
$$
d_\sH \kappa =  a(t,x) dt \wedge dx.
$$
Since $\reals^2$ is simply connected we may write 
\begin{equation}
d_\sH \kappa= a(t,x)dt\wedge dx=d( g(t,x) dx +h(t,x) dt).
\label{dhka}
\end{equation}
Finally let 
$$
\hat \kappa  = \kappa - g(t,x) dx -h(t,x) dt,
$$
so that $d_\sV \hat \kappa = d_\sV \kappa = \hat \zeta$, and equation \ref{dhka} gives
$$
d_\sH \hat \kappa=d_\sH \kappa -d(  g(t,x) dx +h(t,x) dt) = 0.
$$
Therefore $ [\zeta]= d_\sV [\hat \kappa]$ and $[\hat \kappa]\in H^{1,0}(\CR)$. 
\end{proof}

\begin{Corollary} \label{cordvcl}  Let $[\zeta]\in H^{1,1}(\CR)$ with
canonical representative given by
$$
\zeta= dx \wedge \theta^0 \cdot Q- dt \wedge \beta
$$
where $\CL_\Delta^*(Q) = 0$ (see Theorem \ref{Thmcf1}). Then $[\zeta]= d_\sV [\kappa]$ where $[\kappa] \in H^{1,0}(\CR)$ if and only if the function $Q$ is in the image of the Euler operator. That is if and only if there exists $A(t,x,u, u_x,u_{xx},\ldots) \in C^\infty( E)$ such that $Q = \Eop(A)$.
\end{Corollary}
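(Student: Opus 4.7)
The plan is to reduce the corollary directly to Theorem \ref{CLisdV} combined with the exactness of the Euler complex \ref{EComp}. First I would apply the projection $\Pi$ from equation \ref{bphi} to the canonical representative $\zeta = dx\wedge\theta^0\cdot Q - dt\wedge\bbeta(Q)$. Since $\Pi(dt)=0$ and $\Pi(\theta^i) = \theta^i_\sEL$, the $dt$-component is annihilated, producing
$$\Pi(\zeta) = dx \wedge \theta^0_\sEL \cdot Q \in \CF_{\rm t_{sb}}^1(E),$$
which is manifestly already in the image of $I_\sE$.

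By Theorem \ref{CLisdV}, $[\zeta] = d_\sV[\kappa]$ for some $[\kappa]\in H^{1,0}(\CR)$ if and only if $\delta_\sV^\sE\bigl(\Pi(\zeta)\bigr) = 0$, so the corollary reduces to the assertion
$$\delta_\sV^\sE\bigl(dx\wedge\theta^0_\sEL\cdot Q\bigr) = 0 \ \Longleftrightarrow\ Q = \Eop(A) \text{ for some } A\in C^\infty(E).$$
This is the classical Helmholtz characterization of Euler--Lagrange expressions, and it is nothing more than the exactness of the Euler complex \ref{EComp} at $\CF_{\rm t_{sb}}^1(E)$.

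For $(\Leftarrow)$, given $Q = \Eop(A)$, let $\lambda = A\, dx \in \CF^0(E)$. A direct computation from the definitions \ref{dHdVE} and \ref{IE} of $d_\sV^\sE$ and $I_\sE$ yields $\delta_\sV^\sE\lambda = \pm\, dx\wedge\theta^0_\sEL\cdot\Eop(A)$, whence $\delta_\sV^\sE\bigl(dx\wedge\theta^0_\sEL\cdot Q\bigr) = \pm(\delta_\sV^\sE)^2\lambda = 0$. For $(\Rightarrow)$, if $dx\wedge\theta^0_\sEL\cdot Q$ is $\delta_\sV^\sE$-closed, the exactness of \ref{EComp} supplies $\lambda = A\,dx\in \CF^0(E)$ with $\delta_\sV^\sE\lambda = dx\wedge\theta^0_\sEL\cdot Q$; comparing with the formula $\delta_\sV^\sE(A\,dx) = \pm\,dx\wedge\theta^0_\sEL\cdot\Eop(A)$ and reading off the coefficient of $dx\wedge\theta^0_\sEL$ gives $Q = \pm\Eop(A)$, so after absorbing a sign into $A$ we obtain the desired representation.

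I do not foresee a substantial obstacle. The only point requiring care is that, since we are working on $E = \reals\times\EJ$ rather than on $\EJ$ itself, $t$ enters only as a parameter in the operators $d_\sV^\sE$ and $I_\sE$ restricted to $\Omega_{\rm t_{sb}}^{r,s}(E)$. Consequently the standard Helmholtz identity $\delta_\sV^\sE(A\, dx) = \pm\,dx\wedge\theta^0_\sEL\cdot\Eop(A)$ and the exactness of \ref{EComp} transfer verbatim from the classical time-independent setting on $\EJ$ to the $t$-semi-basic version on $E$ introduced in Section \ref{FFS}.
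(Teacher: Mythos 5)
Your proposal is correct and matches the paper's own reasoning: the corollary is deduced exactly as you do, by applying $\Pi$ to the canonical representative so that Theorem \ref{CLisdV} reduces the question to the Helmholtz condition $\delta_\sV^\sE(dx\wedge\theta^0_\sEL\cdot Q)=0$, which by the exactness of the Euler complex \ref{EComp} (which, as the paper notes, carries over to the $t$-semi-basic setting with $t$ as a parameter) holds if and only if $Q=\Eop(A)$. Your sign hedging is unnecessary but harmless, since with the paper's conventions $\delta_\sV^\sE(A\,dx)=dx\wedge\theta^0_\sEL\cdot\Eop(A)$ exactly.
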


\begin{Corollary} If $u_t=K$ is even order, then every solution $Q$ to $\CL_\Delta^*(Q)=0$ is
the characteristic of a conservation law. 
\end{Corollary}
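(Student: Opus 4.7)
The plan is to transport the hypothesis $\CL_\Delta^*(Q)=0$ into the cohomology group $H^{1,2}(\CR)$, which vanishes for even-order equations, and extract from that vanishing a Helmholtz self-adjointness condition on $Q$. Corollary \ref{cordvcl} then finishes the job.

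Given $Q\in C^\infty(\CR)$ with $\CL_\Delta^*(Q)=0$, Theorem \ref{Thmcf2} with $s=1$ produces the $d_\sH$-closed form $\omega=dx\wedge\theta^0\cdot Q-dt\wedge\bbeta(Q)\in\Omega^{1,1}(\CR)$. Apply $d_\sV$: since $d_\sH d_\sV=-d_\sV d_\sH$, the form $d_\sV\omega$ is $d_\sH$-closed and hence defines a class $[d_\sV\omega]\in H^{1,2}(\CR)$. Because $u_t=K$ is even order, the corollary immediately following Theorem \ref{ZC} gives $H^{1,2}(\CR)=0$, so $[d_\sV\omega]=0$.

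The representative $d_\sV\omega=d_\sV\bigl(dx\wedge\theta^0\cdot Q-dt\wedge\bbeta(Q)\bigr)$ is already in the shape required by Corollary \ref{Curep2}, with $\gamma=\bbeta(Q)$, so Corollary \ref{Curep2} identifies the unique skew-adjoint representative of $[d_\sV\omega]$ from Theorem \ref{Urep} as the form with $\epsilon=\tfrac{1}{2}(\CL_Q-\CL_Q^*)\theta^0$. Since $[d_\sV\omega]=0$ and the zero form is trivially the unique representative of the zero class, this $\epsilon$ must vanish; reading $\epsilon=r_i\theta^i$ coefficient by coefficient as in the derivation of Corollary \ref{CU3} forces all $r_i=0$, hence $\CL_Q=\CL_Q^*$ as total differential operators on $\CR$.

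Finally, $\CL_Q=\CL_Q^*$ is the classical Helmholtz self-adjointness condition for $Q$. In the time-dependent Euler complex \ref{EComp} on $E=\reals\times J^\infty(\reals,\reals)$ this is exactly the statement $\delta_\sV^\sE\bigl(dx\wedge\theta^0_\sEL\cdot Q\bigr)=0$, and exactness of that complex at $\CF^1_{\rm t_{sb}}(E)$ yields a Lagrangian $A\in C^\infty(E)$ with $Q=\Eop(A)$. Corollary \ref{cordvcl} then translates $Q=\Eop(A)$ into the conclusion that $Q$ is the characteristic of a classical conservation law. No serious obstacle is anticipated: the only care needed is to notice that $d_\sV\omega$ is literally of the form to which Corollary \ref{Curep2} applies, and that the uniqueness clause of Theorem \ref{Urep} forces the skew-adjoint part $\epsilon$ of the zero class to vanish.
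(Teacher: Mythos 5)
Your proof is correct and is essentially the argument the paper leaves implicit: the corollary is stated without proof right after Corollary \ref{cordvcl}, and the evidently intended route is exactly yours --- even order gives $H^{1,2}(\CR)=0$ (corollary to Theorem \ref{ZC}), so by Corollary \ref{Curep2} together with the uniqueness clause of Theorem \ref{Urep} (the zero form being the unique representative of the trivial class) the skew part $\tfrac{1}{2}(\CL_Q-\CL_Q^*)$ must vanish, whence $Q=\Eop(A)$ by exactness of the $t$-semi-basic $\delta_\sV^\sE$ complex and Corollary \ref{cordvcl} concludes. The two points that needed care --- that $d_\sV\zeta$ is literally of the shape required by Corollary \ref{Curep2} with $\gamma=\bbeta(Q)$, and that triviality of the class really forces $\epsilon=0$ --- are both handled correctly in your write-up.
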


As is well known, the characteristics of a conservation law are solutions to $\CL_\Delta^*(Q)=0$ but the converse is not necessarily true and Corollary \ref{cordvcl} identifies those which are. See Example \ref{HD2} for a solution to $\CL_\Delta^*(Q)=0$ which is not a characteristic of a conservation law.

We now examine the case of $H^{1,2}(\CR)$. 

\begin{Theorem}\label{kerPi} Let $[\omega] \in H^{1,2}(\CR)$.  Then $[\omega]=d_\sV [\eta]$ where $[\eta] \in H^{1,1}(\CR)$ if and only if $[\omega] \in \ker \Lambda$ where $\Lambda: H^{1,2}(\CR) \to H^{2,0}(\CR)$ is defined in equation \ref{defLambda}.
\end{Theorem}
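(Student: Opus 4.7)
The plan is to unwind the definition of $\Lambda$ and then play the two directions off against each other using the $d_\sH$--$d_\sV$ anticommutation relation together with vertical exactness of the bicomplex and triviality of the de Rham cohomology of $\reals^2$.

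For the direction ($\Rightarrow$), suppose $[\omega] = d_\sV[\eta']$ with $[\eta']\in H^{1,1}(\CR)$, so we may pick a representative with $d_\sH \eta'=0$. Then $\omega := d_\sV \eta'$ is itself the $d_\sV$-closed representative demanded by Theorem \ref{TCF3}, and the $\eta$ of Lemma \ref{exlam} can be taken to be $\eta'$ itself. The associated $\lambda \in \Omega^{2,0}(\CR)$ then satisfies $d_\sV \lambda = d_\sH \eta' = 0$. Writing $\lambda = L\,dt\wedge dx$, this forces $L$ to be a function of $(t,x)$ only, and since $\reals^2$ has trivial de Rham cohomology we may write $L\,dt\wedge dx = d\alpha = d_\sH\alpha$ for some $\alpha\in \Omega^1(\reals^2)\subset \Omega^{1,0}(\CR)$. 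Hence $[\lambda]=0$ in $H^{2,0}(\CR)$, i.e.\ $\Lambda([\omega])=0$. (This is essentially the same move already used at the end of the proof of Corollary \ref{LamHom}.)

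For the direction ($\Leftarrow$), suppose $\Lambda([\omega])=0$. Fix the $d_\sV$-closed representative $\omega = d_\sV \eta$ from Theorem \ref{TCF3} and the $\lambda$ with $d_\sH \eta = d_\sV \lambda$ from Lemma \ref{exlam}. The hypothesis says there is $\mu\in \Omega^{1,0}(\CR)$ with $\lambda = d_\sH \mu$. Then
\begin{equation*}
d_\sH \eta = d_\sV \lambda = d_\sV d_\sH \mu = -d_\sH d_\sV \mu,
\end{equation*}
so $d_\sH(\eta + d_\sV \mu)=0$. Define $\eta' := \eta + d_\sV \mu \in \Omega^{1,1}(\CR)$, so $[\eta']\in H^{1,1}(\CR)$. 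Using $d_\sV^2=0$ we compute
\begin{equation*}
d_\sV \eta' = d_\sV \eta + d_\sV^2 \mu = d_\sV \eta = \omega,
\end{equation*}
hence $[\omega] = d_\sV[\eta']$, as required.

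The only subtlety worth flagging is the well-definedness of the map $d_\sV : H^{1,1}(\CR) \to H^{1,2}(\CR)$, which was already noted in the paper as a general feature of the bicomplex: if $\eta_1 - \eta_2 = d_\sH \xi$ then $d_\sV(\eta_1-\eta_2) = -d_\sH d_\sV \xi$ is $d_\sH$-exact. There is no real obstacle in this proof; the two directions are each a short diagram chase, with the $(\Rightarrow)$ direction needing the triviality of the de Rham cohomology of the base $\reals^2$ (already invoked in Corollary \ref{LamHom}) to conclude that the obstruction $\lambda$ is not just $d$-exact but actually $d_\sH$-exact.
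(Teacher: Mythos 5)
Your proposal is correct and takes essentially the same approach as the paper: your ($\Leftarrow$) direction, shifting $\eta$ to $\eta' = \eta + d_\sV\mu$ and using $d_\sH d_\sV = -d_\sV d_\sH$, is the paper's argument verbatim, and your ($\Rightarrow$) direction reduces the $d_\sV$-closed $\lambda$ to a two-form on $\reals^2$ and invokes the triviality of its de Rham cohomology, which is precisely the step the paper borrows from the second part of the proof of Corollary \ref{LamHom}.
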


\begin{proof}  Let $[\omega] \in H^{1,2}(\CR)$ with representative $\omega$ satisfying $ \omega = d_\sV \eta$ and $\lambda$ be as in Lemma \ref{exlam}. That is 
$$
[\omega] = [d_\sV \eta] \ \quad d_\sH \eta = d_\sV \lambda.
$$
Suppose now that $\lambda = d_\sH\kappa$ so that $[\omega] \in \ker \Lambda$. Let $\hat \eta= \eta + d_\sV \kappa$. 
Then
$$
[d_\sV \hat \eta]= [d_\sV \eta ], \quad d_\sH \hat \eta= d_\sH \eta + d_\sH d_\sV\kappa= d_\sV \lambda - d_\sV d_\sH \kappa = 0.
$$
Therefore $[\omega] = d_\sV [\hat \eta] $ where $[\hat \eta]\in H^{1,1}(\CR)$. This proves sufficiency of the condition.

Suppose now that $[\omega]= d_\sV [\eta]$ where $[\eta] \in H^{1,1}(\CR)$. Let $\omega$ be the 
representative such that $ \omega = d_\sV \eta$. By hypothesis $d_\sH \eta=0$ and so
for $\lambda $ in Lemma \ref{exlam} we have
$$
d_\sH \eta = d_\sV \lambda = 0.
$$
The same argument as in the second part of the proof of Corollary \ref{LamHom} implies that there exists $\kappa \in \Omega^{1,0}(\reals^2)$ such that  $\lambda  = d_\sH \kappa$. Therefore $\Lambda([\omega]) =[\lambda]= [ d_\sH \kappa] = 0$.
\end{proof}
Theorem \ref{kerPi} is demonstrated in Example \ref{HD2}.

As a simple corollary to Theorem \ref{kerPi}  we can identify the elements of $H^{1,1}(\CR)$ which 
are not the image of a conservation law as follows.

\begin{Corollary} \label{A5} The map $d_\sV : H^{1,1}(\CR)/ d_\sV(H^{1,0}(\CR)) \to \ker \Lambda$ is an isomorphism.
Moreover, we can identify $\eta\in H^{1,1}(\CR)/d_\sV(H^{1,0}(\CR))$ with the space of functions $Q\in C^\infty(\CR)$ such that 
\begin{equation}
({\bf F}_{\tQ}^*- {\bf F}_{\tQ} )( u_t-K) = \Eop\left( \tQ( u_t-K) \right) \qquad    \delta_\sV^\sE ( dx \wedge \theta_\sE \cdot \tQ ) \neq 0 .
\label{FQH11}
\end{equation}
\end{Corollary}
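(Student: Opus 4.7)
\noindent\textbf{Proof proposal for Corollary~\ref{A5}.} The argument splits into two essentially independent pieces: proving the isomorphism $d_\sV\colon H^{1,1}(\CR)/d_\sV H^{1,0}(\CR)\to \ker\Lambda$ at the cohomological level, and then translating it via Theorem~\ref{Thmcf2} into the function picture stated in \eqref{FQH11}.

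For the isomorphism, $d_\sV$ descends to the quotient since $d_\sV^2=0$, and surjectivity onto $\ker\Lambda$ is exactly the content of Theorem~\ref{kerPi}. For injectivity I assume $[\eta]\in H^{1,1}(\CR)$ with $d_\sV\eta=d_\sH\xi$ for some $\xi\in\Omega^{0,2}(\CR)$. Anti-commutativity gives $d_\sH d_\sV\xi=-d_\sV d_\sH\xi=-d_\sV^2\eta=0$, and since any $d_\sH$-closed form in $\Omega^{0,s}(\CR)$ with $s\ge 1$ vanishes (a fact used both in the proof of Theorem~\ref{Thmcf2} and in Corollary~\ref{LamHom}), I conclude $d_\sV\xi=0$, and vertical exactness supplies $\phi\in\Omega^{0,1}(\CR)$ with $\xi=d_\sV\phi$. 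Then $d_\sV(\eta+d_\sH\phi)=0$ yields $\eta+d_\sH\phi=d_\sV\kappa$ for some $\kappa\in\Omega^{1,0}(\CR)$. Because $d_\sH\eta=0$, the form $d_\sH\kappa\in\Omega^{2,0}(\CR)$ satisfies $d_\sV d_\sH\kappa=0$, which forces $d_\sH\kappa=a(t,x)\,dt\wedge dx$. The Poincar\'e lemma on $\reals^2$ then writes $a\,dt\wedge dx=d(g\,dx+h\,dt)$, and replacing $\kappa$ by $\hat\kappa=\kappa-g\,dx-h\,dt$ gives $[\hat\kappa]\in H^{1,0}(\CR)$ with $[\eta]=d_\sV[\hat\kappa]$ in $H^{1,1}(\CR)$. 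This is essentially the two-dimensional Poincar\'e trick that closes the proof of Theorem~\ref{CLisdV}.

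For the identification, Theorem~\ref{Thmcf2} at $s=1$ attaches to every class in $H^{1,1}(\CR)$ a unique representative $\eta=dx\wedge\theta^0\cdot Q-dt\wedge\bbeta(Q)$ with $Q\in C^\infty(\CR)$ and $\CL_\Delta^*(Q)=0$. The Leibniz rule $\Eop(Q\Delta)={\bf F}_Q^*(\Delta)+{\bf F}_\Delta^*(Q)$ rewrites the first condition in \eqref{FQH11} as ${\bf F}_Q(\Delta)+{\bf F}_\Delta^*(Q)=0$ on $J^\infty(\reals^2,\reals)$. Since $Q$ and $K$ are pulled back from $\CR$ and carry no $u_t,u_{tt},\ldots$ dependence, splitting $D_t(Q)=\partial_t Q+{\bf F}_Q(u_t)$ collapses this expression to $\CL_\Delta^*(Q)$, making the first half of \eqref{FQH11} equivalent to $\CL_\Delta^*(Q)=0$. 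By Corollary~\ref{cordvcl}, the class $[\eta]$ lies in $d_\sV H^{1,0}(\CR)$ iff $Q=\Eop(A)$ for some $A$, and exactness of the Euler complex \eqref{EComp} translates this into the Helmholtz condition $\delta_\sV^\sE(dx\wedge\theta_\sE\cdot Q)=0$. Negating gives the inequation in \eqref{FQH11}, so the nonzero classes in the quotient are precisely those whose canonical $Q$ satisfies both displayed conditions.

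The main obstacle I anticipate is the bookkeeping in the injectivity step: it requires chaining vertical exactness, anti-commutativity, vanishing of $d_\sH$-closed $(0,s)$ forms, and a two-dimensional Poincar\'e lemma in the correct order, though each step individually is routine given the machinery already assembled. The only other genuinely computational ingredient is verifying ${\bf F}_Q(\Delta)+{\bf F}_\Delta^*(Q)=\CL_\Delta^*(Q)$ on $J^\infty(\reals^2,\reals)$; this is short but leans essentially on $Q$ having no $t$-derivative variables.
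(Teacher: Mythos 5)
Your proposal is correct and takes essentially the same route the paper intends: the paper states Corollary \ref{A5} without proof as a direct consequence of Theorem \ref{kerPi}, and your injectivity argument reproduces the second half of the proof of Theorem \ref{CLisdV}, while the identification with functions $Q$ assembles Theorem \ref{Thmcf2} (at $s=1$), Corollary \ref{cordvcl}, and the exactness of the Euler complex exactly as the surrounding text indicates. Your verification that ${\bf F}_Q(\Delta)+{\bf F}_\Delta^*(Q)=\CL_\Delta^*(Q)$, using that $Q$ carries no $t$-derivative coordinates so the $u_{t,i}$ terms cancel, is precisely the bridge between the first condition in \ref{FQH11} and the closure condition $\CL_\Delta^*(Q)=0$ of Theorem \ref{Thmcf2}.
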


\end{appendices}

\begin{bibdiv}
\begin{biblist}

\bib{vino:1986a}{article}{
author={Astashov, A.M.},
author={Vinogradov, A.M.},
title={On the structure of Hamiltonian operators in field theory},
journal={J. Geom. Pays.},
volume={3},
number={2},
year={1986},
pages={263--287}
}

\bib{anderson:1992a}{article}{
  author={Anderson, I. M.},
  title={Introduction to the Variational Bicomplex},
  journal={Mathematical Aspects of Classical Field Theory},
  year={1992},
  volume={132},
  pages={51--73}
}

\bib{anderson:2016a}{book}{
  author={Anderson, I. M.},
  title={The Variational Bicomplex},
  }

\bib{anderson-duchamp:1984a}{article}{
 author={Anderson, I. M.}, author={Duchamp, T.E.},
title={Variational principles for second-order quasi-linear scalar equations},
journal={Journal of Differential Equations},
volume={51}, number={1}, year={1984}, pages={1--47}
}

\bib{anderson-kamran:1997a}{article}{
  author={Anderson, I. M.},author={Kamran, N.},
   title={The Variational Bicomplex for Hyperbolic Second-order Scalar Partial Differential Equations in the Plane},
  journal={Duke Math. Jour.},
  year={1997},
  volume={87},number={2},
  pages={265--319}
}

\bib{anderson-thompson:1992a}{article}{
  author={Anderson, I. M.},
  author={Thompson, G.},
  title={The Inverse Problem of the Calculus of Variations for Ordinary Differential Equations},
  journal={Men. Amer. Math. Soc.},
  year={1992},
  volume={98}
 }

\bib{cooke:1991a}{article}{
author={Cooke, D.B.},
title={Compatibility conditions for Hamiltonian pairs},
  journal={Journal of Mathematical Physics},
  volume={32},
year={1991},
pages={3071--3076}
 }

\bib{Douglas:1941a}{article}{
  author={Douglas, J.},
  title={Solution to the inverse problems of the calculus of variations},
  journal={Trans. Amer. Math. Soc.},
  year={1941},
  volume={50},
  pages={71--128}
}

\bib{do-prince:1941a}{article}{
  author={Do, T., Prince, G.},
  title={New progress in the inverse problem in the calculus of variations},
  journal={Differential Geometry and its Applications},
  year={2016},
  volume={45},
  pages={148--179}
}

\bib{dorfman:1993a}{book}{
author={Dorfman, I.},
title={Dirac structures and integrability of nonlinear evolution equations},
publisher = {Wiley and Sons}, 
address = {Hoboken},
year={1993}
 }

\bib{douglas:1941a}{article}{
  author={Douglas, J.},
  title={Solution to the inverse problems of the calculus of variations},
  journal={Trans. Amer. Math. Soc.},
  year={1941},
  volume={50},
  pages={71--128}
}

\bib{fels:1996a}{article}{
  author={Fels, M.E.},
   title={The Inverse Problem of the Calculus of Variations for Scalar Fourth-Order Ordinary Differential Equations},
  journal={Trans. Amer. Math. Soc.},
  year={1996},
  volume={348},
  number={12},
  pages={5007--5029}
}

\bib{fels:2018a}{article}{
  author={Fels, M.E.},
   title={Symplectic Operators for Third Order Ordinary Differential Equation},
  journal={In Preparation}
}

\bib{krupka:1981a}{article}{
  author={Krupka, D.},
   title={On the local structure of the Euler-Lagrange mapping of the calculus of variations},
  journal={Proc. Conf. on Diff. Geom. Appl.},
  year={1981},
  pages={181--188}
}

\bib{Nutku:2002a}{article}{
  author={Nutku, Y.},
  author={Pavlov, M. V.},
  title={Multi-Lagrangians for integrable systems},
  journal={Journal of Mathematical Physics},
  year={2002},
  volume={43},
  pages={1441--1459}
 }

 \bib{Olver:1988a}{article}{
  author={Olver, P.J.},
  title={DarbouxÕ Theorem for Hamiltonian Differential Operators},
  journal={Journal of Differential Equations},
  year={1988},
  volume={77},
  pages={10--33}
 }

\bib{Olver:1993a}{book}{
author={Olver, P. J.},
title={Applications of Lie Groups to Differential Equations; Second Edition},
series = {Graduate Texts in Mathematics},
volume = {107},
publisher = {Springer-Verlag}, 
address = {New York},
year={2000}
}
 
\bib{pavlov:2017a}{article}{
author={Pavlov, M.V.},
author={Vitolo, R.F.},
title={Remarks on the Lagrangian representation of bi-Hamiltonian equations
},
journal={J. Geom. Phys.},
volume={113},
year={2017},
pages={239--249}
}

 \bib{saunders:2010a}{article}{
  author={Saunders, D.J.},
   title={Thirty years of the inverse problem in the calculus of variations},
  journal={Reports on Mathematical Physics},
  year={2010},
  volume={66},
  number={1},
  pages={43--53}
}

 \bib{tsujishita:1982a}{article}{
  author={Tsujishita, T.},
   title={On variational bicomplexes associated to differential equations},
  journal={Osaka J. Math.},
  year={1982},
  volume={14},
  number={1},
  pages={311-363}
}

\bib{vino:1984a}{article}{
author={Vinogradov, A.M.},
title={
The $C$-spectral sequence, Lagrangian formalism, and conservation laws. I, The linear theory},
journal={J. Math. Anal. Appl.},
volume={100},
number={1},
year={1984},
pages={1--40}
} 

\bib{vino:1984b}{article}{
author={Vinogradov, A.M.},
title={
The $C$-spectral sequence, Lagrangian formalism, and conservation laws. II, The non-linear theory},
journal={J. Math. Anal. Appl.},
volume={100},
number={1},
year={1984},
pages={41--129}
}

\bib{wang:2002a}{article}{
author={Wang, J.P.},
title={A List of 1 + 1 Dimensional Integrable Equations
and Their Properties},
journal={Journal of Nonlinear Mathematical Physics},
volume={9},
number={1},
year={2002},
pages={213--233}
}

\end{biblist}
\end{bibdiv}

\end{document}